\numberwithin{equation}{section}
\numberwithin{figure}{section}
\newtheorem{definition}{Definition} 
\newtheorem{theorem}{Theorem}
\newtheorem{lemma}[theorem]{Lemma} 
\newtheorem{corollary}[theorem]{Corollary}
\newtheorem{proposition}[theorem]{Proposition}
\newtheorem{example}{Example}
\newtheorem*{remark}{Remark}
\DeclareMathOperator{\spn}{Span}
\DeclareMathOperator{\rank}{rank}
\DeclareMathOperator{\diag}{diag}
\DeclareMathOperator{\conv}{Conv}
\newcommand{\Address}{{
\bigskip
\footnotesize
\textsc{Jianghao Zhang: Mathematical Institute, University of Bonn, Endenicher Allee 60, 53115 Bonn, Germany}\par\nopagebreak
\textit{Email address}: \texttt{Jianghao@uni-bonn.de}
}}
\begin{document}
\title{\textbf{Non-Degenerate Multilinear Singular Multipliers with Fractional Rank}}
\author{Jianghao Zhang}
\date{}
\maketitle

\begin{abstract}
We establish $L^p$ estimates for multilinear multipliers acting on $(n-1)$-tuples of functions on $\mathbb{R}^d$. We assume that the multiplier satisfies symbol estimates outside a linear subspace of dimension $m$. The difficulty of proving $L^p$ bounds increases with the rank $\frac{m}{d}$. Such bounds are unachievable using existing tools when $\frac{m}{d} \geq \frac{n}{2}$. And when $\lceil \frac{m}{d}\rceil<\frac{n}{2}$, our results do not improve upon those in \cite{MTTMultilinearSingularMultipliers}. Therefore, our focus is on the fractional rank case $\frac{m}{d}<\frac{n}{2}\leq \lceil \frac{m}{d}\rceil$ proposed in \cite{DPTFractionalRank}. We extend the results of \cite{DPTFractionalRank} by covering a broader range of Lebesgue exponents. Additionally, we develop the tree counting argument, thereby including cases with larger $\frac{m}{d}$ compared to \cite{DPTFractionalRank}. In doing so, we also identify and correct an inaccuracy in \cite{DPTFractionalRank}.
\end{abstract}

\section{Introduction} \label{Section1}
We use $\xi_j=(\xi_{j, 1}, \cdots, \xi_{j, d})\in \mathbb{R}^d$ and $\xi:=(\xi^{'}, \xi_n):=(\xi_1, \cdots, \xi_n)\in (\mathbb{R}^d)^n$ for frequency variables. We will consider the boundedness of multilinear multipliers of the form:
\begin{equation}  \label{Section1SymmetricFormToBeDefined}
\int_{\Gamma_0} \mathfrak{m}(\xi_1, \cdots, \xi_n)\widehat{f_1}(\xi_1)\cdots \widehat{f_n}(\xi_n).
\end{equation}
Here the function $\mathfrak{m}$, defined on $\Gamma_0:=\{\xi\in (\mathbb{R}^d)^n: \sum_{j=1}^n \xi_j=0\}$, is singular on a subspace $\Gamma\subset \Gamma_0$. The precise definition is given as follows:
\begin{definition}
Let $\Gamma\subset \Gamma_0$ be a linear subspace and $\mathfrak{m}: \Gamma_0\rightarrow \mathbb{C}$ be a function satisfying
\begin{equation}  \label{Section1MultiplierDecay1}
|\partial^{\alpha}\mathfrak{m} (\xi)|\lesssim_{\alpha} d(\xi, \Gamma)^{-|\alpha|},\ \forall \alpha: |\alpha|\leq N_2
\end{equation}
for some large $N_2$. Let $f_1, \cdots, f_{n-1}\in \mathcal{S}(\mathbb{R}^d)$. Define
\begin{equation*}
\begin{split}
T_{\mathfrak{m}} &(f_1, \cdots, f_{n-1})(x) \\
&:=\int_{(\mathbb{R}^d)^{n-1}} e^{2\pi i x\cdot(\sum_{j=1}^{n-1} \xi_j)} \mathfrak{m}(\xi_1, \cdots, \xi_{n-1}, -\sum_{j=1}^{n-1} \xi_j)\widehat{f_1}(\xi_1)\cdots \widehat{f_{n-1}}(\xi_{n-1}) d\xi_1\cdots d\xi_{n-1}.
\end{split}
\end{equation*}
The associated multilinear form is given by
\begin{equation*}
\begin{split}
\Lambda_{\mathfrak{m}} (f_1, \cdots, f_{n}) &:=\langle T_{\mathfrak{m}} (f_1, \cdots, f_{n-1}), f_n \rangle \\
&=\int_{(\mathbb{R}^d)^{n-1}} \mathfrak{m}(\xi_1, \cdots, \xi_{n-1}, -\sum_{j=1}^{n-1} \xi_j)\widehat{f_1}(\xi_1)\cdots \widehat{f_{n-1}}(\xi_{n-1}) \widehat{f_n}(-\sum_{j=1}^{n-1} \xi_j) d\xi_1\cdots d\xi_{n-1},
\end{split}
\end{equation*}
where the last line also serves as the definition of \eqref{Section1SymmetricFormToBeDefined}.
\end{definition}
\begin{remark}
When integrating over $\Gamma_0$, we always use the normalized Lebesgue measure as above.
\end{remark}

There has been a long interest in studying the boundedness of $T_{\mathfrak{m}}$. The decay condition \eqref{Section1MultiplierDecay1} makes $T_{\mathfrak{m}}$ have modulation symmetry along $\Gamma$, so tools from time-frequency analysis are particularly useful. This field can be traced back to the classical works on pointwise convergence of Fourier series \cite{CarlesonCarleson}\cite{FeffermanCarleson}. Then seminal breakthroughs \cite{LTBHT1}\cite{LTBHT2}, made by Lacey and Thiele, established boundedness of the bilinear Hilbert transform. It is the first non-trivial example of $T_{\mathfrak{m}}$ with $\dim(\Gamma)>0$. Their method also led to a new short proof of Carleson's theorem \cite{LTCarleson}. See \cite{DTL^pOuterMeasure}, \cite{UWBilinearHilbertUniform}, \cite{DTTwoDimensionalBHT}, \cite{KovacTwistedParaproduct}\cite{DSTMultilinearCubicalStructure}, \cite{BMTheHelicoidalMethod}, \cite{LiePolynomialCarleson}\cite{Zorin-KranichPolynomialCarleson}, and references therein for some recent developments.

The systematic study for general $T_{\mathfrak{m}}$ with $d=1$ originated in \cite{MTTMultilinearSingularMultipliers}. For general $d$, the quantity $\frac{m}{d}$ measures the complexity of our operators. We call this quantity rank, and it can be a fractional number when $d\geq 2$. We only consider non-degenerate $\Gamma$ such that the rank is strictly smaller than $\frac{n}{2}$, because this is the range where our existing time-frequency tools work well.

Now let's give some definitions about the non-degeneracy of $\Gamma$. There are two types of non-degenerate conditions. The first can be traced back to \cite{MTTMultilinearSingularMultipliers}, which is sufficient for the case $\lceil\frac{m}{d}\rceil<\frac{n}{2}$. The second was introduced in \cite{DPTFractionalRank} to handle fractional rank operators such that $\frac{m}{d}<\frac{n}{2}\leq \lceil\frac{m}{d}\rceil$. Since the $\lceil\frac{m}{d}\rceil<\frac{n}{2}$ case was essentially solved in \cite{MTTMultilinearSingularMultipliers}, we will mainly consider the range $\frac{m}{d}<\frac{n}{2}\leq \lceil\frac{m}{d}\rceil$. This case requires both non-degenerate conditions.
\begin{definition}
Given $\Theta\subset[n]\times [d]$, consider the projection $\Pi_{\Theta}$ from $\Gamma_0$ to the coordinates $(\xi_{j, k})_{(j, k)\in \Theta}$. Let $v\in \Gamma_0$. Write $(v)_{\Theta}$ for $\Pi_{\Theta}v$. For $A\subset [n]$, consider $\Pi_{\Theta}$ with $\Theta:=A\times [d]$, let $P_{A}$ denote $\Pi_{\Theta}$ restriced on $\Gamma$.
\end{definition}
\begin{remark}
Using this notation, We interpret \eqref{Section1MultiplierDecay1} as follows: Regard $\mathfrak{m}$ as a function of $\xi^{'}:=(\xi_1, \cdots, \xi_{n-1})$, and \eqref{Section1MultiplierDecay1} means
\begin{equation}  \label{Section1MultiplierDecay2}
|\partial^{\alpha}\mathfrak{m} (\xi^{'})|\lesssim_{\alpha} d(\xi^{'}, P_{[n-1]}\Gamma)^{-|\alpha|},\ \forall \alpha: |\alpha|\leq N.  
\end{equation}
In fact, assume \eqref{Section1MultiplierDecay2} holds. Then the same type of estimates holds if we regard $\mathfrak{m}$ as the function of any $n-1$ variables. Thus we use $\Gamma_0, \Gamma$ and write \eqref{Section1MultiplierDecay2} as \eqref{Section1MultiplierDecay1} to emphasize that $\xi_1, \cdots, \xi_n$ are at symmetric positions.
\end{remark}

\begin{definition}[Type \uppercase\expandafter{\romannumeral1}] \label{Section1DefinitionType1NoN-Degenerate}
Assume $\frac{m}{d}<\frac{n}{2}$. We say $\Gamma$ satisfies Type \uppercase\expandafter{\romannumeral1} non-degenerate condition if for any $A\subset [n], \#A=\lceil \frac{m}{d}\rceil$, we have
\begin{equation*}
\dim \Big(P_A(\Gamma) \Big)=m.    
\end{equation*}
\end{definition}
\begin{remark}
This means that $\Gamma$ is the graph over the coordinates $(\xi_{j, k})_{(j, k)\in \Theta}$ for some $\Theta\subset A\times [d]$ such that $\# \Theta=m$.
\end{remark}

When $\frac{m}{d}<\frac{n}{2}\leq \lceil\frac{m}{d}\rceil$, there are two possibilities. The first is the case $\frac{m}{d}\leq \frac{n-1}{2}$. This situation was handled in \cite{DPTFractionalRank}, and we call it the medium rank case. The second is the case $\frac{n-1}{2}<\frac{m}{d}$. Call this situation the large rank case. The large rank case was claimed to be solved in \cite{DPTFractionalRank}, but the non-degenerate condition there holds for no $\Gamma$. For example, consider (41) in \cite{DPTFractionalRank}. Let all terms on the right to be zero and $\overrightarrow{\xi}^{(1)}=\cdots=\overrightarrow{\xi}^{(d)}$. We see the number of remaining variables are more than the number of remaining equations, so there exist non-zero solutions. We will extend the result for the medium rank case in \cite{DPTFractionalRank} and also tackle the large rank case. However, for the large rank case, Our argument only works well when $n$ is large in terms of $d$. If $n$ is small, the operator can satisfy a large group of symmetry like the triangular Hilbert transform. See Example \ref{Section1ExampleFractionalRankSmalln} for more details.

\begin{definition}[Type \uppercase\expandafter{\romannumeral2}] \label{Section1Non-DegenerateCondition2}
Assume $\frac{m}{d}<\frac{n}{2}\leq \lceil\frac{m}{d}\rceil$. Consider the linear map $\mathfrak{L}$ defined on $\Gamma^L$ which maps $(v^{(1)}, \cdots, v^{(L)})$ to
\begin{equation}  \label{Section1EquationTheCrucialMap}
\Big( \big(P_A(v^{(1)})-P_A(v^{(k)})\big)_{2\leq k\leq L}, \big( P_{B^{(k)}}(v^{(k)})\big)_{1\leq k\leq L} \Big).
\end{equation}
We define $L$, $A, B^{(1)}, \cdots, B^{(L)}$ and Type \uppercase\expandafter{\romannumeral2} non-degeneracy as follows:
\begin{itemize}
\item When $\frac{m}{d}\leq \frac{n-1}{2}$, let $L:=2$. Note that $n$ must be an even integer. Write $n=2n^{'}+2$. Let $A, B^{(1)}, B^{(2)}$ be any disjoint subsets of $[n]$ such that $\#A=1, \#B^{(1)}=\#B^{(2)}=n^{'}$. We say $\Gamma$ satisfies Type \uppercase\expandafter{\romannumeral2} non-degenerate condition if the linear map $\mathfrak{L}$ satisfies $\ker (\mathfrak{L})=\{0\}$ for any possible choice of $A, B^{(1)}, \cdots, B^{(L)}$.
\item When $\frac{n-1}{2}<\frac{m}{d}$, let $L:=d$. Write $m=ld+r, 0\leq r\leq d-1$. Further, write $l+1-(d-r)=ad+b, 0\leq b\leq d-1$. Let $A, U_1, \cdots, U_{2d-1}, W$ be any disjoint subsets of $[n]$ such that $\#A=d-r, \#U_1=\cdots=\#U_{2d-1}=a, \#W=b$. Define
\begin{equation*}
B^{(k)}:=(U_k\cup \cdots\cup U_{k+d-1})\cup W,\ 1\leq k\leq d.
\end{equation*}
We say $\Gamma$ satisfies Type \uppercase\expandafter{\romannumeral2} non-degenerate condition if $l+1-(d-r)\geq 0$ and the linear map $\mathfrak{L}$ satisfies $\ker (\mathfrak{L})=\{0\}$ for any possible choice of $A, B^{(1)}, \cdots, B^{(L)}$.
\end{itemize}
\end{definition}
These conditions look complicated, but they hold for almost all $\Gamma$ in the following sense.
\begin{definition}
Fix $m\in \mathbb{N}_+$. Consider $\Gamma_0^m$ and the product of normalized Lebesgue measures on it. If a property about tuples $(v_1, \cdots, v_m)\in \Gamma_0^m$ is true outside a zero-measure set, then we say it holds for generic tuples.
\end{definition}

\begin{proposition} \label{SectionPropositionConditionsGeneric}
Let $n, d, m\in \mathbb{N}_+, n\geq 3$ such that $\frac{m}{d}<\frac{n}{2}$. Then for generic tuples $(v_1, \cdots, v_m)\in \Gamma_0^m$, the linear space
\begin{equation*}
\Gamma:=\spn\{v_1, \cdots, v_m\} 
\end{equation*}
has dimension $m$ and satisfies Type \uppercase\expandafter{\romannumeral1} non-degenerate condition. Moreover, assume $\frac{m}{d}<\frac{n}{2}\leq \lceil\frac{m}{d}\rceil$. When $\frac{m}{d}\leq \frac{n-1}{2}$, $\Gamma$ also satisfies Type \uppercase\expandafter{\romannumeral2} non-degenerate condition for generic tuples $(v_1, \cdots, v_m)\in \Gamma_0^m$. When $\frac{n-1}{2}<\frac{m}{d}$ and $n>10d^3$, $\Gamma$ also satisfies Type \uppercase\expandafter{\romannumeral2} non-degenerate condition for generic tuples $(v_1, \cdots, v_m)\in \Gamma_0^m$.
\end{proposition}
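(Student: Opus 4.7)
The plan is to recognize each non-degeneracy condition as the non-vanishing of a polynomial in the coordinates of $(v_1,\ldots,v_m)\in\Gamma_0^m\cong\mathbb{R}^{md(n-1)}$, and therefore to reduce the genericity statement to producing, for each choice of the finite combinatorial data (the subset $A$ for Type~I; the tuple $(A,B^{(1)},\ldots,B^{(L)})$ for Type~II), at least one explicit tuple where this polynomial is nonzero. A proper algebraic subvariety has Lebesgue measure zero, and a finite union of such is still measure zero, so this is sufficient.

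The ``$\dim\Gamma=m$'' and Type~I claims reduce to classical minor non-vanishing: $\dim\Gamma=m$ iff some $m\times m$ minor of $[v_1\mid\cdots\mid v_m]$ is nonzero, and $\dim P_A(\Gamma)=m$ iff some $m\times m$ minor of $[P_A(v_1)\mid\cdots\mid P_A(v_m)]$ is nonzero. An explicit tuple is obtained by fixing $\Theta\subset A\times[d]$ of size $m$ (possible since $d\cdot\#A\geq m$) and choosing the $v_i$ whose $\Theta$-coordinates form the standard basis of $\mathbb{R}^m$, with the balancing mass parked on a single coordinate outside $A\times[d]$; this is allowed because $\lceil m/d\rceil<n$. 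Relabeling indices in $[n]$ handles every admissible $A$.

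For Type~II one represents $\mathfrak{L}$ in the basis $v_1,\ldots,v_m$ of $\Gamma$ as a matrix $M$ whose entries are linear in the coordinates of the $v_i$; a dimension check shows $\dim\Gamma^L=Lm$ while the target dimension is at least $Lm$ (with equality in the large rank case), so $\ker\mathfrak{L}=\{0\}$ is equivalent to some $Lm\times Lm$ minor of $M$ being nonzero, a polynomial non-vanishing condition. In the medium rank case $L=2$, $\#(A\cup B^{(k)})=n/2$ and $m<dn/2$ leave room for $P_{A\cup B^{(k)}}|_\Gamma$ to be injective; choosing $v_i$ so that an appropriate $2m\times 2m$ submatrix of $M$ is block-triangular with invertible diagonal blocks, via the same standard-basis recipe on suitable $\Theta_k\subset(A\cup B^{(k)})\times[d]$, shows the polynomial is not identically zero.

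The main obstacle is Type~II in the large rank regime, where $L=d$, the sets $B^{(k)}=U_k\cup\cdots\cup U_{k+d-1}\cup W$ overlap pairwise in $d-1$ of the $U_j$'s, and no single projection $P_{A\cup B^{(k)}}|_\Gamma$ is injective; the triviality of $\ker\mathfrak{L}$ must be extracted from the joint behaviour of the $d$ overlapping projections. The hypothesis $n>10d^3$ provides combinatorial slack: the index budget $\#A+(2d-1)a+b$ is of order $2l+2d$, so $n>10d^3$ leaves an ample residual of indices in $[n]$ uncovered by the combinatorial data. My plan is to design $\Gamma$ adapted to a partition of $[n]$ into roughly $d$ slabs of size $\sim n/d$, each slab supplying a prescribed chunk of the standard basis to $\Gamma$, and to verify that every admissible choice of $(A,U_1,\ldots,U_{2d-1},W)$ leaves each slab with enough of its own coordinates to give an appropriate $Lm\times Lm$ submatrix of $M$ a permuted lower-triangular structure with invertible diagonal blocks. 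The technical heart is to carry out this bookkeeping uniformly in the combinatorial choice; the cubic overhead $10d^3$ arises from the worst-case distribution of the $U_j$'s across the slabs against the constraints $\#U_j=a$ and $\#W=b\leq d-1$.
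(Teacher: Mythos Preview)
Your overall strategy---reduce each non-degeneracy condition to the non-vanishing of a polynomial in the coordinates of $(v_1,\ldots,v_m)$, then exhibit a witness---matches the paper's. For $\dim\Gamma=m$ and Type~I your plan is essentially what the paper does.

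The divergence is in \emph{how} non-vanishing is established for Type~II. You propose to build an explicit numerical tuple $(v_1,\ldots,v_m)$ and evaluate the polynomial there. The paper instead treats the relevant coordinates of the $v_i$ as independent formal variables, writes down the coefficient matrix of the linear system $\mathfrak{L}(w^{(1)},\ldots,w^{(L)})=0$ in the basis $(t_i^{(k)})$, and shows its determinant is a nonzero polynomial by isolating a single monomial in the Leibniz expansion. In the medium rank case this amounts to requiring the monomial to contain $\diag(X')\diag(Y')$ for appropriate sub-blocks $X',Y'$, which forces the remaining factor to be the determinant of a small block of $Z$ with all distinct entries. In the large rank case the paper runs a five-step iterative deletion: first absorb the $Y$-blocks, then peel off $\diag(X_i'')$ for $i=1,\ldots,d$ (which cascades because each $X_i$ appears in exactly $d$ columns with a staircase pattern), iterate $d-1$ times, then use the remaining $X_d,\ldots,X_{2d-1}$ to make the residual $Z$-columns pairwise disjoint---here $n>10d^3$ ensures $ad+(d-1)(d-r)>d(d-1)(d-r)$ so this disjointness is achievable---and finally do a symmetric row deletion on the $Z$-blocks. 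Each step fixes further diagonal entries of the target monomial.

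Your explicit-witness approach is legitimate in principle, but your large-rank plan (partition $[n]$ into $d$ slabs carrying standard-basis chunks, then argue uniformly over all choices of $(A,U_1,\ldots,U_{2d-1},W)$) is not developed enough to see that it succeeds. The difficulty is that the coefficient matrix $G$ has heavy block repetition: each $X_i$ sits in $d$ different big-columns, $Y$ appears $d$ times, and $Z$ appears $d-1$ times with alternating signs. A single explicit $\Gamma$ must make all of these simultaneously cooperate, and it is not clear your slab picture handles the coupling through the $P_A(w^{(1)}-w^{(k)})$ rows. The paper's formal-variable method sidesteps this entirely: because distinct blocks have disjoint sets of formal entries, tracking one monomial through the staircase is purely combinatorial. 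If you pursue your route, you would likely end up re-deriving the same staircase structure to verify your witness works; it may be simpler to adopt the monomial-tracking argument directly, noting that permutation symmetry lets you fix one canonical $(A,U_1,\ldots,U_{2d-1},W)$ rather than designing a witness uniform in the combinatorial data.
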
 
We will prove Proposition \ref{SectionPropositionConditionsGeneric} later. Now we state our main theorem.
\begin{theorem}  \label{MainTheorem1}
Let $n, d, m\in \mathbb{N}_+, n\geq 3$ such that $\frac{m}{d}<\frac{n}{2}$ and $1<p_1, \cdots, p_{n-1}\leq \infty, 0<q<\infty$ such that $\frac{1}{q}=\frac{1}{p_1}+\cdots+\frac{1}{p_{n-1}}$. Denote $q^{'}$ by $p_n$. Let $\Gamma\subset \Gamma_0$ be an $m\text{-dimensional}$ linear subspace and $\mathfrak{m}: \Gamma_0\rightarrow \mathbb{C}$ be a function satisfying \eqref{Section1MultiplierDecay1} for some large $N_2$ depending on $n, d, m$ and $p_j, j\in [n-1]$. Consider the estimate
\begin{equation} \label{MainEstimate}
\|T_{\mathfrak{m}}(f_1, \cdots, f_{n-1})\|_q \lesssim \prod_{j=1}^{n-1} \|f_j\|_{p_j},\ \forall f_1, \cdots, f_n\in \mathcal{S}(\mathbb{R}^d).
\end{equation}
\begin{enumerate}
\item When $\lceil \frac{m}{d} \rceil<\frac{n}{2}$, assume $\Gamma$ satisfies Type \uppercase\expandafter{\romannumeral1} non-degenerate condition. Then \eqref{MainEstimate} holds whenever $(p_1, \cdots, p_{n-1}, q)$ satisfies
\begin{equation} \label{Section1RangeOfExponents}
\begin{split}
\sum_{j\in \Upsilon} \frac{1}{p_j}<n-2\alpha+\frac{1}{2},\ \forall \Upsilon\subset[n],\ \#\Upsilon=n-2\alpha+1, \\
\sum_{j\in \Upsilon} \frac{1}{p_j}<n-2\alpha+1,\ \forall \Upsilon\subset[n],\ \#\Upsilon=n-1,
\end{split}
\end{equation}
with $\alpha:=\lceil \frac{m}{d} \rceil$.
\item When $\frac{n}{2}\leq \lceil \frac{m}{d} \rceil$ and $\frac{m}{d}\leq \frac{n-1}{2}$, assume $\Gamma$ satisfies Type \uppercase\expandafter{\romannumeral1} and Type \uppercase\expandafter{\romannumeral2} non-degenerate conditions. Then \eqref{MainEstimate} holds whenever $(p_1, \cdots, p_{n-1}, q)$ satisfies \eqref{Section1RangeOfExponents} with $\alpha=\frac{n-1}{2}$.
\item When $\frac{n}{2}\leq \lceil \frac{m}{d} \rceil$ and $\frac{n-1}{2}<\frac{m}{d}$, assume $\Gamma$ satisfies Type \uppercase\expandafter{\romannumeral1} and Type \uppercase\expandafter{\romannumeral2} non-degenerate conditions. Then \eqref{MainEstimate} holds whenever $(p_1, \cdots, p_{n-1}, q)$ satisfies
\begin{align*}
\sum_{j\in \Upsilon} \frac{1}{p_j}<\frac{n+1}{2}-\alpha,\ \forall \Upsilon\subset[n],\ \#\Upsilon=1, \\
\sum_{j\in \Upsilon} \frac{1}{p_j}<\frac{n+2}{2}-\alpha,\ \forall \Upsilon\subset[n],\ \#\Upsilon=2, \\
\sum_{j\in \Upsilon} \frac{1}{p_j}<n-2\alpha+1,\ \forall \Upsilon\subset[n],\ \#\Upsilon=n-1,
\end{align*}
with $\alpha:=\frac{m}{d}$.
\end{enumerate}
\end{theorem}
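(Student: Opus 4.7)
The plan is to prove Theorem \ref{MainTheorem1} within the time-frequency analysis framework for singular multipliers, extending the machinery of \cite{MTTMultilinearSingularMultipliers} and \cite{DPTFractionalRank}.

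First I would carry out a wave packet decomposition of $\mathfrak{m}$ relative to $\Gamma$. The decay assumption \eqref{Section1MultiplierDecay1} makes it natural to partition $\mathfrak{m}$ into annular pieces at distance $\sim 2^k$ from $\Gamma$, and then further tile each annulus by smooth cubes of side $\sim 2^k$. Since $\Gamma$ is a graph over some $m$ coordinates by Type~I non-degeneracy, discretizing and integrating out the remaining frequency variables converts $\Lambda_{\mathfrak{m}}$ into a superposition of model sums of the form
\[
\sum_{P} c_P \prod_{j=1}^{n} \langle f_j, \phi_{P,j}\rangle,
\]
where $P$ runs over a collection of multi-tiles, the $\phi_{P,j}$ are $L^2$-normalized wave packets, and $|c_P|\lesssim 1$.

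Second, I would reduce \eqref{MainEstimate} to restricted weak-type bounds at the extreme points of the polytope cut out by the constraints on $1/p_j$, via the generalized restricted type interpolation of \cite{MTTMultilinearSingularMultipliers}. For each vertex one must show, after passing to major subsets $E_j'\subset E_j$, an estimate $|\Lambda_{\mathfrak{m}}(\mathbf{1}_{E_1'},\ldots,\mathbf{1}_{E_n'})| \lesssim \prod_j |E_j|^{\theta_j}$. This is attacked by the classical size and density organization: one partitions the tile collection by the $L^2$-sizes $\sigma_j$ of the coefficients $\langle f_j, \phi_{P,j}\rangle$ and by densities $\mu_j$ against the exceptional sets, groups tiles into trees, bounds each tree by a John-Nirenberg style argument, and counts trees by an $L^2$-orthogonality lemma. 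Case~(1) then follows by a direct adaptation of the MTT tree and counting estimates, with the full MTT range \eqref{Section1RangeOfExponents} available since $\lceil m/d\rceil<n/2$ provides enough free coordinates. Case~(2) follows by adapting the DPT tree count, whose telescoping argument already exploits a two-copy version of Type~II non-degeneracy (that is, $L=2$).

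The main obstacle is Case~(3), the large rank regime $\frac{n-1}{2} < \frac{m}{d}$. Here Type~I non-degeneracy produces too few free coordinates to run the tree count on a single copy of the collection, and the two-copy argument of Case~(2) no longer suffices. The remedy is to perform the counting on $L=d$ tensor copies of the model sum simultaneously: Type~II non-degeneracy asserts precisely that the linear map $\mathfrak{L}$ of \eqref{Section1EquationTheCrucialMap} is injective, which yields the $L^2$-decoupling needed to close the count across the $d$ copies. I expect two subtleties to be the hardest pieces of the argument. The first is arranging the block decomposition $A, U_1,\ldots, U_{2d-1}, W$ of Definition \ref{Section1Non-DegenerateCondition2} so that the disjointness and cardinality constraints are consistent with the tile geometry; this is where the condition $l+1 \geq d-r$ appears, and where the inaccuracy of \cite{DPTFractionalRank} is corrected. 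The second is tracking how the sizes and densities of the $L$ copies interact, which forces the restricted shape of the admissible polytope in Case~(3), namely only the constraints on $\#\Upsilon\in\{1,2,n-1\}$ rather than on all intermediate cardinalities. Once these combinatorial ingredients are in place, it remains to interpolate across the admissible vertices to recover the open region claimed in \eqref{Section1RangeOfExponents} and its Case~(3) analogue.
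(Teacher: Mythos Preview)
Your high-level outline matches the paper's architecture: discretize $\mathfrak{m}$ into a model sum over multi-tiles (Section~\ref{Section3}), organize tiles into trees, prove single-tree estimates via Littlewood--Paley and John--Nirenberg arguments (Section~\ref{Section4}), count trees (Section~\ref{Section5}), and close with restricted weak-type interpolation after removing an exceptional set (Section~\ref{Section6}).

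However, your description of the core counting step in Cases~(2) and~(3) mischaracterizes the mechanism. You write that the injectivity of $\mathfrak{L}$ ``yields the $L^2$-decoupling needed to close the count across the $d$ copies.'' The paper's argument is not analytic but purely combinatorial and pointwise. For each $x$, one bounds the counting function $N_{\mathbf{F}^{\geq 2}(\lambda)}(x)$ by applying the Katz--Tao entropy inequality (Lemma~\ref{Section5LemmaKatz-Tao}) with $\Omega = \mathbf{F}^{\geq 2,x}(\lambda)$ and the maps $h_l = \sigma_A$, producing many $L$-tuples of trees sharing a common $\sigma_A$-value; then Type~II non-degeneracy is used only to show that the resulting set $\mathbf{G}$ injects into $\prod_k \prod_{j\in B^{(k)}} \mathbf{F}_j^x(\lambda_j)$ (Proposition~\ref{Section5PropositionInjectiveMap2}). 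No cross-copy orthogonality or Bessel estimate is invoked; the only $L^2$ inputs remain the single-index Bessel estimates of Proposition~\ref{Section4PropositionMassSelectionBesselTypeEstimate}, applied to each $j$ separately to control $\|N_{\mathbf{F}_j(\lambda_j)}\|_p$. If you attempt the count as an $L^2$-decoupling across copies you will not find the needed orthogonality, and the argument will stall.

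A second technical point you omit is the split $\mathbf{F}(\lambda) = \mathbf{F}^{\geq 2}(\lambda) \cup \mathbf{F}^{=1}(\lambda)$. The injection in Proposition~\ref{Section5PropositionInjectiveMap2} relies on Lemma~\ref{Section5LemmaKeyobservationForInjection1}, which in turn requires the central-grid construction of Lemma~\ref{Section4LemmaCentralGrid} and the existence of a second tile in each tree. Singleton trees fail this and must be handled separately: the paper introduces a further decomposition by parameters $\kappa$ and replaces the order $\lesssim$ by the stricter order $\leq$ (Propositions~\ref{Section5PropositionCountingFunctionEstimate2} through~\ref{Section5PropositionInjectiveMap2ForSingle-ElementTrees}). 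This split is not cosmetic; without it the injection argument breaks.
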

\begin{remark}
In the first two cases, we get boundedness in the range including $1<p_j\leq \infty, j\in [n-1], 1<q<\infty$. The first one is obtained in \cite{MTTMultilinearSingularMultipliers}. In fact, they proved boundedness in a larger range of $q$, but the proof given here does not rely on induction. The second one extends the local $L^2$ range ($2<p_j<\infty, 1<q<2$) in \cite{DPTFractionalRank}. Bounedness of the third case is new.
\end{remark}

One can also consider multilinear operators in the kernel formulation. Let $\mathfrak{L}_1, \cdots, \mathfrak{L}_{n-1}: \mathbb{R}^{d(n-1)-m}\rightarrow \mathbb{R}^d$ be linear operators such that
\begin{equation} \label{Section1EquationKernelRankCondition}
\rank{
\begin{pmatrix}
\mathfrak{L}_1 \\
\vdots \\
\mathfrak{L}_{n-1} 
\end{pmatrix} }=d(n-1)-m.
\end{equation}
Assume $K: \mathbb{R}^{d(n-1)-m}\rightarrow \mathbb{C}$ is a C-Z kernel in some suitable sense. Then we can write
\begin{equation}  \label{Section1EquationKernelRepresentation}
\int_{\mathbb{R}^{d(n-1)-m}} \prod_{j=1}^{n-1} f_j(x+\mathfrak{L}_j t)K(t) dt    
\end{equation}
into the form of $T_{\mathfrak{m}} (f_1, \cdots, f_{n-1})(x)$ with
\begin{equation} \label{SectionEquationDefineMultiplierfromKernel}
\mathfrak{m}(\xi_1, \cdots, \xi_n):=\widehat{K}\big(-(\mathfrak{L}_1^{*}\xi_1+\cdots+\mathfrak{L}_{n-1}^{*}\xi_{n-1})\big).
\end{equation}
Here
\begin{equation*}
\Gamma:=\{(\xi_1, \cdots, \xi_n)\in \Gamma_0: \mathfrak{L}_1^{*}\xi_1+\cdots+\mathfrak{L}_{n-1}^{*}\xi_{n-1}=0\}    
\end{equation*}
has dimension $m$. However, not all $T_{\mathfrak{m}}$'s have such a kernel representation.

Now we come to examples of $T_{\mathfrak{m}}$ in the kernel representation \eqref{Section1EquationKernelRepresentation}. For simplicity, let $K(t):=\frac{t_1}{|t|^{d(n-1)-m+1}}$ be the Riesz kernel in the following examples.
\begin{example} \label{Section1ExampleBHT}
Let $n=3, d=1, m=1$. Take $\mathfrak{L}_1, \mathfrak{L}_2: \mathbb{R}\rightarrow \mathbb{R}$ such that $\mathfrak{L}_1 t=-t, \mathfrak{L}_2 t=t$. Then
\begin{equation*}
T_{\mathfrak{m}} (f_1, f_2)(x):=\int_{\mathbb{R}} f_1(x-t)f_2(x+t)K(t) dt.
\end{equation*}
It belongs to the first case of Theorem \ref{MainTheorem1}. Since
\begin{equation*}
\Gamma=\{(\tau, \tau, -2\tau): \tau\in \mathbb{R}\}
\end{equation*}
satisfies Type \uppercase\expandafter{\romannumeral1} non-degenerate condition, we get it is bounded in a certain range. In fact, $T_{\mathfrak{m}}$ is the bilinear Hilbert transform handled in \cite{LTBHT1}\cite{LTBHT2}.
\end{example}

\begin{example} \label{Section1ExampleFractionalRankNon-Degenerate}
Let $n=4, d=2, m=3$. Take $\mathfrak{L}_1, \mathfrak{L}_2, \mathfrak{L}_3: \mathbb{R}^3\rightarrow \mathbb{R}^2$ such that $\mathfrak{L}_1 t=(t_2, t_3), \mathfrak{L}_2 t=(t_3, 2t_1), \mathfrak{L}_3 t=(t_1, 3t_2)$. Then
\begin{equation*}
T_{\mathfrak{m}} (f_1, f_2)(x):=\int_{\mathbb{R}^3} f_1 \big(x+(t_2, t_3) \big) f_2 \big(x+(t_3, 2t_1) \big) f_3 \big(x+(t_1, 3t_2) \big)K(t) dt.
\end{equation*}
It belongs to the second case of Theorem \ref{MainTheorem1}. Since
\begin{equation*}
\Gamma=\{ \big( (\tau, \tau^{'}), (-\tau^{'}, \tau^{''}), (-2\tau^{''}, -\frac{\tau}{3}), (\tau^{'}+2\tau^{''}-\tau, \frac{\tau}{3}-\tau^{'}-\tau^{''})\big): \tau, \tau^{'}, \tau^{''}\in \mathbb{R}\}
\end{equation*}
satisfies Type \uppercase\expandafter{\romannumeral1} and Type \uppercase\expandafter{\romannumeral2} non-degenerate conditions, we get it is bounded in a certain range. The boundedness of $T_{\mathfrak{m}}$ in the range $2<p_j<\infty, 1<q<2$ was first established in \cite{DPTFractionalRank}.
\end{example}

\begin{example} \label{Section1ExampleFractionalRankDegenerateMildly}
Let $n=4, d=2, m=3$. Take $\mathfrak{L}_1, \mathfrak{L}_2, \mathfrak{L}_3: \mathbb{R}^3\rightarrow \mathbb{R}^2$ such that $\mathfrak{L}_1 t=(t_2, t_3), \mathfrak{L}_2 t=(t_3, t_1), \mathfrak{L}_3 t=(t_1, t_2)$. Then
\begin{equation*}
T_{\mathfrak{m}} (f_1, f_2)(x):=\int_{\mathbb{R}^3} f_1 \big(x+(t_2, t_3) \big) f_2 \big(x+(t_3, t_1) \big) f_3 \big(x+(t_1, t_2) \big)K(t) dt.
\end{equation*}
It does not fit the second case of Theorem \ref{MainTheorem1} because the singular space
\begin{equation}  \label{Section1EquationMildlyDegenerateSingularSpace}
\Gamma=\{ \big( (\tau, \tau^{'}), (-\tau^{'}, \tau^{''}), (-\tau^{''}, -\tau), (\tau^{'}+\tau^{''}-\tau, \tau-\tau^{'}-\tau^{''})\big): \tau, \tau^{'}, \tau^{''}\in \mathbb{R}\}
\end{equation}
does not satisfy Type \uppercase\expandafter{\romannumeral2} non-degenerate condition. However, the degeneracy is mild, and our method can obtain its boundedness in a smaller range. See Section \ref{Section7} for more details.
\end{example}

\begin{example} \label{Section1ExampleVeryLargeRankTHT}
Let $n=3, d=2, m=3$. Take $\mathfrak{L}_1, \mathfrak{L}_2: \mathbb{R}\rightarrow \mathbb{R}^2$ such that $\mathfrak{L}_1 t=(0, t), \mathfrak{L}_2 t=(t, 0)$. Then
\begin{equation*}
T_{\mathfrak{m}} (f_1, f_2)(x):=\int_{\mathbb{R}} f_1 \big(x+(0, t) \big)f_2 \big(x+(t, 0) \big)K(t) dt.
\end{equation*}
This is the famous triangular Hilbert transform. It has rank $\frac{m}{d}=\frac{3}{2}=\frac{n}{2}$ and have a large group of symmetry:
\begin{equation*}
|T_{\mathfrak{m}} (e^{i g_1(x_1)}f_1, e^{ig_2(x_2)}f_2)|=|T_{\mathfrak{m}} (f_1, f_2)|.
\end{equation*}
for any real-valued functions $g_1, g_2$. No bound is known for this operator. Another famous example with rank $\frac{n}{2}$ is the trilinear Hilbert transform \cite{TaoMultilinearHilbertCancellation}. See \cite{TaoMultilinearHilbertCancellation}\cite{Zorin-KranichSimplexHilbertCancellation}\cite{DKTSimplexHilbertPowerCancellation}\cite{KTZKTriangularHilbertDyadicPartialResult} for some progress tackling these operators.
\end{example}

\begin{example} \label{Section1ExampleFractionalRankSmalln}
Let $n=3, d=3, m=4$. Take $\mathfrak{L}_1, \mathfrak{L}_2: \mathbb{R}^2\rightarrow \mathbb{R}^3$ such that $\mathfrak{L}_1 t=(0, t), \mathfrak{L}_2 t=(t, 0)$. Then
\begin{equation*}
T_{\mathfrak{m}} (f_1, f_2)(x):=\int_{\mathbb{R}^2} f_1 \big(x+(0, t) \big)f_2 \big(x+(t, 0) \big)K(t) dt.
\end{equation*}
This corresponds to the block matrix $\mathbf{T}_2$ in \cite{BDLClassificationOfTrilinear}. The relation $\frac{3-1}{2}<\frac{4}{3}$ fits the third case of Theorem \ref{MainTheorem1}. However, $n=3$ is too small. We can see $l=1, r=1$ and $B^{(k)}=\emptyset$ ($l+1-(d-r)=0$) in Definition \ref{Section1Non-DegenerateCondition2}. Thus $(v, \cdots, v)\in \ker(\mathfrak{L}), \forall v\in \Gamma$, and Type \uppercase\expandafter{\romannumeral2} non-degenerate condition fails. In fact, $T_{\mathfrak{m}}$ has a large group of symmetry as Example \ref{Section1ExampleVeryLargeRankTHT}, so the techniques in this paper are not able to handle it. See the last remark in Section \ref{Section8} for more discussions.
\end{example}

The proof of Theorem \ref{MainTheorem1} builds on the classical framework \cite{MTTMultilinearSingularMultipliers}\cite{DPTFractionalRank}\cite{MS2}: First use Whitney decomposition to exploit the decay condition \eqref{Section1MultiplierDecay1}, tensorize $\mathfrak{m}$, and discretize the operator. Then consider subcollections called trees to apply C-Z type arguments. After this, we gather trees with comparable densities together using orthogonality. Finally sum the geometric series of essentially different densities. In the second and third cases of our theorems, the classical method for gathering trees with comparable densities is inefficient, making the geometric series divergent. The focus of \cite{DPTFractionalRank} and our paper is to count trees more efficiently. One also needs to adjust the classical definition of trees to make the new counting strategy work. To tackle the large rank case, we construct the new non-degenerate condition in Definition \ref{Section1Non-DegenerateCondition2}. We also modify and develop other parts of the argument in \cite{DPTFractionalRank} to go outside the local $L^2$ range.

\paragraph{Outline of the Paper.}
\begin{itemize}
\item In Section \ref{Section2}, we show Proposition \ref{SectionPropositionConditionsGeneric}.
\item In Section \ref{Section3}, we do the discretization step following the classical framework.
\item In Section \ref{Section4}, we define trees and prove related propositions. Some parts of this section are classical, but we include them for completeness.
\item In Section \ref{Section5}, we develop the more efficient tree counting strategy.
\item In Section \ref{Section6}, we prove Theorem \ref{MainTheorem1}.
\item In Section \ref{Section7}, we show how to adapt our method to tackle the mildly degenerate operator in Example \ref{Section1ExampleFractionalRankDegenerateMildly}.
\item In Section \ref{Section8}, we give some remarks including the simplification of the proof of Theorem \ref{MainTheorem1} for $(p_1, \cdots, p_{n-1}, q)\in (1, \infty)^n$.
\end{itemize}
\paragraph{Notations and Conventions.}
\begin{itemize}
\item Write $[n]:=\{1, \cdots, n\}$. In Section \ref{Section2}, we also use the notation $[n_1, n_2]:=\{n_1, \cdots, n_2\}$ for $n_1, n_2\in \mathbb{Z}, n_1\leq n_2$.
\item For a matrix $A=(a_{j, k})_{j, k\in [n]}$, we write
\begin{equation*}
\diag (A):=\prod_{j=1}^n a_{j, j}.
\end{equation*}
\item For each $j\in [n]$, let $I_j\subset\mathbb{R}^d$ denote dyadic cubes in the physical space.
\item For each $j\in [n]$, we use $\Xi_j$ for shifted dyadic cubes in the frequency space. Here a shifted dyadic cube has the form $2^{l}\big([0, 1)^d+z+\alpha \big), l\in \mathbb{Z}, z\in \mathbb{Z}^d, \alpha\in \{0, \frac{1}{3}, \frac{2}{3}\}^d$.
\item Assume $Q$ is a cube. Let $\mathfrak{c}(Q)$ denote its center. The side length of $Q$ is sometimes called its scale and denoted by $s(Q)$. For $C>0$, let $CQ$ denote the cube with center $\mathfrak{c}(Q)$ and side length $Cs(Q)$. Denote the projection of $Q$ onto the $k\text{-th}$ coordinate by $Q_k$.
\item Let $I_j$ be one dyadic cube in the physical space and $\Xi_j$ be one shifted dyadic cube in the frequency space. Assume $s(I_j)\cdot s(\Xi_j)=1$. We call the product $I_j\times \Xi_j$ a rectangle and denote it by $\mathcal{R}_j$. Use $I(\mathcal{R}_j), \Xi(\mathcal{R}_j)$ to denote the physical and frequency cubes of a given rectangle $R_j$. We also write $\mathcal{R}:=(\mathcal{R}_1, \cdots, \mathcal{R}_n)$ if $I(\mathcal{R}_1)=\cdots=I(\mathcal{R}_n)$. In this case, call $\mathcal{R}$ a vector rectangle and let $I(\mathcal{R}):=I(\mathcal{R}_1), \Xi(\mathcal{R}):=\Xi(\mathcal{R}_1)\times \cdots \times \Xi(\mathcal{R}_n)$. Sometimes we may use the letter $\mathcal{P}$ instead of $\mathcal{R}$.  
\item There will be three large but fixed constants: $1\ll C_1\ll C_2\ll C_3$. Here $C_1$ is for Whitney decomposition, $C_2$ is for dilation of frequency cubes when defining trees, and $C_3$ is for sparse conditions. We will also use the large but fixed integer constants $N_{\epsilon}\ll N_1\ll N_2$ for rapid decay, which are irrelevant to $C_1, C_2, C_3$. Here $\epsilon>0$ is a small constant appearing in Section \ref{Section6}, and we choose the constant $N_{\epsilon}$ according to the value of $\epsilon$.
\item For $X, Y>0$, the notation $X\sim Y$ means there exists a (not important) constant $C>0$ such that $\frac{1}{C}Y\leq X\leq CY$.
\end{itemize}
\paragraph{Acknowledgement.}
The author would like to thank Prof. Christoph Thiele for suggesting the topic, giving valuable advice, and reading the draft of this paper. The author also appreciates Dr. Rajula Srivastava's help during the author's study and application.

%%%%%

\section{About the Non-Degenerate Conditions} \label{Section2}
The proof of Proposition \ref{SectionPropositionConditionsGeneric} relies on the following lemma.
\begin{lemma}  \label{Section2LemmaPolynomialVarietyisGeneric}
Let $P(x_1, \cdots, x_n)$ be a non-zero polynomial of $n$ variables. Then we have
\begin{equation*}
|\{(z_1, \cdots, z_n)\in \mathbb{R}^n: P(z_1, \cdots, z_n)=0\}|=0.
\end{equation*}
\end{lemma}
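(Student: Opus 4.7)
The plan is to proceed by induction on the number of variables $n$, using Fubini's theorem to reduce to a one-variable statement at each stage.

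For the base case $n=1$, a non-zero polynomial in one real variable has only finitely many roots (bounded by its degree), hence its zero set has Lebesgue measure zero. This is the only genuinely analytic fact we need; everything else is bookkeeping.

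For the inductive step, assume the claim holds for non-zero polynomials in $n-1$ variables. Given a non-zero $P(x_1, \ldots, x_n)$, I would regard it as a polynomial in $x_n$ with coefficients $a_k(x_1, \ldots, x_{n-1})$, i.e.
\begin{equation*}
P(x_1, \ldots, x_n) = \sum_{k=0}^{D} a_k(x_1, \ldots, x_{n-1})\, x_n^k.
\end{equation*}
Since $P$ is not identically zero, at least one coefficient $a_{k_0}$ is a non-zero polynomial in $n-1$ variables. By the inductive hypothesis, the set $E := \{(x_1, \ldots, x_{n-1}) : a_{k_0}(x_1, \ldots, x_{n-1}) = 0\}$ has $(n-1)\text{-dimensional}$ Lebesgue measure zero. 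For any fixed $(x_1, \ldots, x_{n-1}) \notin E$, the univariate polynomial $x_n \mapsto P(x_1, \ldots, x_n)$ is non-zero (its $x_n^{k_0}$ coefficient survives), hence has at most $D$ real roots; that is, the slice $\{x_n : P(x_1, \ldots, x_n)=0\}$ has one-dimensional measure zero.

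To conclude, I would apply Fubini's theorem (or Tonelli, since the integrand is non-negative) to the characteristic function of the zero set $Z(P)$: the measure of $Z(P)$ equals the integral over $(x_1, \ldots, x_{n-1}) \in \mathbb{R}^{n-1}$ of the one-dimensional measure of the corresponding slice. This slice integrand is zero outside $E$ by the argument above, and $E$ itself has $(n-1)$-dimensional measure zero, so the total integral vanishes. There is no serious obstacle here; the only subtle point to be careful about is measurability of $Z(P)$, which is immediate since $P$ is continuous and $Z(P) = P^{-1}(\{0\})$ is closed. The lemma follows.
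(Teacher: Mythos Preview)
Your proof is correct and follows essentially the same approach as the paper: induction on $n$, expanding in $x_n$ with coefficients in the remaining variables, using the induction hypothesis on a non-vanishing coefficient, and Fubini to control the slices. The paper picks the leading coefficient while you pick any non-zero coefficient $a_{k_0}$, but this is an inessential difference.
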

\begin{proof}
We prove by induction on $n$. If $n=1$, the variety it finite. Assume the conclusion for any $n^{'}\leq n-1$ is true, we show it is also true for $n$. Write
\begin{equation*}
P(x_1, \cdots, x_n)=\sum_{l=1}^{L} P_l(x_1, \cdots, x_{n-1})x_n^{l},
\end{equation*}
where $P_L(x_1, \cdots, x_{n-1})$ is not identically zero. Then
\begin{equation*}
\{P=0\}\subset \{P_L=0\}\cup \{P_L\neq 0, P=0\}.    
\end{equation*}
Now apply Fubini. For the first term on the right, integrate $x_1, \cdots, x_{n-1}$ before $x_n$. For the second term on the right, integrate $x_n$ before $x_1, \cdots, x_{n-1}$. We get both sets have measure zero by the induction hypothesis.
\end{proof}
Recall the normalized Lebesgue measure $\mu$ on $\Gamma_0$ satisfies
\begin{equation} \label{Section2EquationBasicPropertyOfMeasureOnGamma0}
\mu(E)=\int_{\mathbb{R}^{d(n-1)}} 1_{P_{[j]^c\times [d]}E}\ dx_1\cdots dx_{j-1}dx_{j+1}\cdots dx_{n}
\end{equation}
for any $E\subset \Gamma_0$ and $j\in [n]$.

We first show a generic tuple $(v_1, \cdots, v_m)$ makes $\Gamma$ have dimension $m$. Note that $\frac{m}{d}<\frac{n}{2}\leq n-1$, so we can take $\Theta\subset [n-1]\times[d]$ such that $\#\Theta=m$. Fix $\Theta$. It suffices to show
\begin{equation*}
\mu^m\Big( \{(v_1, \cdots, v_m)\in \Gamma_0^m: \det\big( (v_1)_{\Theta}, \cdots, (v_m)_{\Theta} \big)=0\}\Big)=0.
\end{equation*}
Use \eqref{Section2EquationBasicPropertyOfMeasureOnGamma0} to expand LHS into a integral. Applying Fubini and Lemma \ref{Section2LemmaPolynomialVarietyisGeneric} gives the result.

Then we prove Type \uppercase\expandafter{\romannumeral1} non-degenerate condition holds for generic tuples. Fix $\Theta\subset A\times [d]$ for some $\#A=\lceil \frac{m}{d} \rceil$. We only need to prove $\Pi_{\Theta}$ restricted on $\Gamma$ has a non-trivial kernel only when $(v_1, \cdots, v_m)$ is in a zero-measure set. This is equivalent to $\det\big( (v_1)_{\Theta}, \cdots, (v_m)_{\Theta} \big)=0$, so the previous argument implies $\big{\{}\{(v_1, \cdots, v_m)\in \Gamma_0^m: \ker (P_\Theta)\neq \{0\} \big{\}}$ has measure zero.

Finally, we prove Type \uppercase\expandafter{\romannumeral2} non-degenerate condition also holds for generic tuples if $\frac{m}{d}<\frac{n}{2}\leq \lceil\frac{m}{d}\rceil$. Let's tackle the $\frac{m}{d}\leq \frac{n-1}{2}$ case first. Recall $\mathfrak{L}$ is defined in \eqref{Section1EquationTheCrucialMap}. It suffices to show the set of tuples making $\ker (\mathfrak{L})\neq \{0\}$ has measure zero for any fixed choice of $A, B^{(1)}, B^{(2)}$. By permutation of the indices if necessary, we can further assume $A=\{2n^{'}+1\}$ and $B^{(1)}=[1, n^{'}], B^{(2)}=[n^{'}+1, 2n^{'}]$. Consider the system of equations for $(w^{(1)}, w^{(2)})\in \Gamma^2$:
\begin{equation}  \label{Section2EquationType2Non-Degenerate1}
\begin{cases}
P_{B^{(1)}}w^{(1)}=0, \\
P_{B^{(2)}}w^{(2)}=0, \\
P_{A}(w^{(1)}-w^{(2)})=0.
\end{cases}
\end{equation}
Then $\ker (\mathfrak{L})\neq \{0\}$ is equivalent to the statement that \eqref{Section2EquationType2Non-Degenerate1} has a non-trivial solution. Write
\begin{equation}  \label{Section2EquationExpressw}
w^{(\cdot)}=\sum_{i=1}^m t^{(\cdot)}_i v_i,
\end{equation}
and plug it into \eqref{Section2EquationType2Non-Degenerate1}. Then we get a system of equations about $(t_1^{(1)}, \cdots, t_m^{(1)}, t_1^{(2)}, \cdots, t_m^{(2)})\in \mathbb{R}^{2m}$. The coefficient matrix is
\begin{equation}  \label{Section2EquationType2Non-Degenerate1Matrix}
\left( \begin{array}{c|c|c|c|c|c}
(v_1)_{[n^{'}]\times [d]} & \cdots & (v_m)_{[n^{'}]\times [d]} & 0 & \cdots & 0 \\
\hline
0 & \cdots & 0 & (v_1)_{[n^{'}+1, 2n^{'}]\times [d]} & \cdots & (v_m)_{[n^{'}+1, 2n^{'}]\times [d]}  \\
\hline
(v_1)_{\{2n^{'}+1\}\times [d]} & \cdots & (v_m)_{\{2n^{'}+1\}\times [d]} & -(v_1)_{\{2n^{'}+1\}\times [d]} & \cdots & -(v_m)_{\{2n^{'}+1\}\times [d]}
\end{array} \right).
\end{equation}
Here each small block on the first two big rows is of $dn^{'}\times 1$, and each small block on the third big row is of $d\times 1$. It suffices to show this $(d(2n^{'})+1)\times 2m$ matrix has rank less than $2m$ only in a zero-measure set of $(v_1, \cdots, v_m)$. Note that $\frac{m}{d}\leq \frac{n-1}{2}$ implies $2m\leq d(n-1)=d(2n^{'}+1)$. Write $m=ld+r$. Then $l=n^{'}$ and $2r\leq d$. Delete the last $d-2r$ rows of \eqref{Section2EquationType2Non-Degenerate1Matrix}. The remaining matrix has the form:
\begin{equation*}
F:=\left( \begin{array}{c|c}
X & 0 \\
\hline
0 & Y \\
\hline
Z & -Z
\end{array} \right).
\end{equation*}
Here each of $X, Y, Z$ has distinct entries, and any two of them have disjoint sets of entries. Now use \eqref{Section2EquationBasicPropertyOfMeasureOnGamma0} to calculate. Note that all involved variables $v_{j, k}$'s satisfy $j\leq n-1$. It suffices to prove $\det (F)$ is a non-zero polynomial by Lemma \ref{Section2LemmaPolynomialVarietyisGeneric}.

Write
\begin{equation*}
X=(X^{'}, X^{''}),\ Y=(Y^{''}, Y^{'}),
\end{equation*}
where $X^{'}, Y^{'}$ are of $dn^{'}\times dn^{'}$, and $X^{''}, Y^{''}$ are of $dn^{'}\times r$. In the expansion of $\det (F)$, consider those monomials containing
\begin{equation*}
\diag (X^{'})\cdot \diag (Y^{'}).
\end{equation*}
To get such terms, one must use the entries at the positions $(1, 1), (2, 2), \cdots, (dn^{'}, dn^{'}); (dn^{'}+1, dn^{'}+2r+1), \cdots, (2dn^{'}, dn^{'}+2r+dn^{'})$. It then suffices to show the determinant of the remaining matrix is a non-zero polynomial because this implies $\det(F)$ has certain non-zero monomials. The remaining matrix consists of the last $r$ columns of $Z$ and the first $r$ columns of $-Z$. But $2r\leq 2(d-1)\leq m$ implies all its entries are distinct, so its determinant is a non-zero polynomial.

Now we come to the more complicated case: $\frac{n-1}{2}<\frac{m}{d}$ and $n>10d^3$. The idea is similar to before. We can fix $A=[(2d-1)a+b+1, (2d-1)a+b+(d-r)], U_1=[1, a], \cdots, U_{2d-1}=[(2d-2)a+1, (2d-1)a], W=[(2d-1)a+1, (2d-1)a+b]$. We prove the system equations for $(w^{(1)}, \cdots, w^{(d)})\in \Gamma^d$:
\begin{equation}  \label{Section2EquationType2Non-Degenerate2}
\begin{cases}
P_{B^{(k)}} w^{(k)}=0,\ \forall 1\leq k\leq d, \\
P_A(w^{(1)}- w^{(k)})=0,\ \forall 2\leq k\leq d.
\end{cases}
\end{equation}
has a non-trivial solution only in a zero-measure set of $(v_1, \cdots, v_m)\in \Gamma_0^m$. Plug \eqref{Section2EquationExpressw} into \eqref{Section2EquationType2Non-Degenerate2} like above. Then we get a system of equations about $(t_1^{(1)}, \cdots, t_m^{(1)}, \cdots, t_1^{(d)}, \cdots, t_m^{(d)})\in \mathbb{R}^{dm}$. The coefficient matrix has the form:
\begin{equation}  \label{Section2EquationType2Non-Degenerate2Matrix}
G:=\left( \begin{array}{c|c|c|c|c}
X_1 & 0 & 0 & \cdots & 0 \\
\hline
X_2 & 0 & 0 & \cdots & 0  \\
\hline
\cdots & \cdots & \cdots & \cdots & \cdots  \\
\hline
X_d & 0 & 0 & \cdots & 0  \\
\hline
0 & X_2 & 0 & \cdots & 0  \\
\hline
0 & X_3 & 0 & \cdots & 0  \\
\hline
\cdots & \cdots & \cdots & \cdots & \cdots  \\
\hline
0 & X_{d+1} & 0 & \cdots & 0  \\
\hline
\cdots & \cdots & \cdots & \cdots & \cdots  \\
\hline
0 & 0 & 0 & \cdots & X_d \\
\hline
0 & 0 & 0 & \cdots & X_{d+1} \\
\hline
\cdots & \cdots & \cdots & \cdots & \cdots  \\
\hline
0 & 0 & 0 & \cdots & X_{2d-1} \\
\hline
Y & 0 & 0 & \cdots & 0  \\
\hline
0 & Y & 0 & \cdots & 0  \\
\hline
\cdots & \cdots & \cdots & \cdots & \cdots  \\
\hline
0 & 0 & 0 & \cdots & Y \\
\hline
Z & -Z & 0 & \cdots & 0  \\
\hline
Z & 0 & -Z & \cdots & 0  \\
\hline
\cdots & \cdots & \cdots & \cdots & \cdots  \\
\hline
Z & 0 & 0 & \cdots & -Z 
\end{array} \right).
\end{equation}
Here each of $X_1, \cdots, X_{2d-1}$ is of $ad\times m$, and $Y, Z$ are of $bd\times m, (d-r)d\times m$ respectively. There are $ad\cdot d\cdot d+bd\cdot d+(d-r)d\cdot(d-1)=md$ rows and $md$ columns. Moreover, each of $X_1, \cdots, X_{2d-1}, Y, Z$ has distinct entries, and any two of them have disjoint sets of entries. Note that all involved variables $v_{j, k}$'s satisfy $j\leq (2d-1)a+b+(d-r)\leq n-1$. It suffices to prove $\det (G)$ is a non-zero polynomial.

In the expansion of $\det (G)$, we shall require the monomials to contain a series of entries and delete certain rows and columns as above. There are $d$ big columns in \eqref{Section2EquationType2Non-Degenerate2Matrix}, each of which consists of $m$ successive columns. In each big column, we number the $m$ columns using $1, 2, \cdots, m$. During the deleting procedures below, we use $\Upsilon_k$ to record which columns are left in the $k\text{-th}$ big column. Thus each $\Upsilon_k=[1, m]$ at the beginning.
\begin{enumerate}
\item Write each $Y$ into
\begin{equation*}
(Y^{'}, Y^{''}),
\end{equation*}
where $Y^{'}, Y^{''}$ have $bd, m-bd$ columns respectively. Consider those monomials in $\det(G)$ containing 
\begin{equation*}
\diag(Y^{'})^d.    
\end{equation*}
After generating it, there is no $Y$'s block left in \eqref{Section2EquationType2Non-Degenerate2Matrix}, and each $\Upsilon_k$ becomes $[bd+1, m]$.
\item Then write $X_i, 1\leq i\leq d$ into
\begin{equation*}
(X_i^{'}, X_i^{''}, X_i^{'''}),
\end{equation*}
where $X_i^{'}, X_i^{''}, X_i^{'''}$ have $bd, ad, m-bd-ad$ columns respectively. The previous step deleted $X_i^{'}, 1\leq i\leq d$. Further require the monomials to contain
\begin{equation*}
\prod_{i=1}^d \diag(X_i^{''}).    
\end{equation*}
Note that $\diag(X_1^{''})$ can only come from the first big column. After generating $\diag(X_1^{''})$, $\diag(X_2^{''})$ can only come from the second big column. The pattern works for $\diag(X_i^{''}), i=3, \cdots, d$. In this way, we delete the rows in $[1, ad], [ad^2+1, ad^2+ad], \cdots, [(d-1)ad^2+1, (d-1)ad^2+ad]$, and each $\Upsilon_k$ becomes $[bd+ad+1, m]$. The remaining matrix has the form (We omit $Z$'s blocks.):
\begin{equation*}
\left( \begin{array}{c|c|c|c|c}
X_2^{'''} & 0 & 0 & \cdots & 0 \\
\hline
X_3^{'''} & 0 & 0 & \cdots & 0  \\
\hline
\cdots & \cdots & \cdots & \cdots & \cdots  \\
\hline
X_d^{'''} & 0 & 0 & \cdots & 0  \\
\hline
0 & X_3^{'''} & 0 & \cdots & 0  \\
\hline
0 & X_4^{'''} & 0 & \cdots & 0  \\
\hline
\cdots & \cdots & \cdots & \cdots & \cdots  \\
\hline
0 & X_{d+1}^{'''} & 0 & \cdots & 0  \\
\hline
\cdots & \cdots & \cdots & \cdots & \cdots  \\
\hline
0 & 0 & 0 & \cdots & X_{d+1}^{'''} \\
\hline
0 & 0 & 0 & \cdots & X_{d+2}^{'''} \\
\hline
\cdots & \cdots & \cdots & \cdots & \cdots  \\
\hline
0 & 0 & 0 & \cdots & X_{2d-1}^{'''} \\
\hline
\cdots & \cdots & \cdots & \cdots & \cdots 
\end{array} \right).
\end{equation*}
\item Now we find that $X_2^{'''}, \cdots, X_{d+1}^{'''}$ are at the same position as $X_1, \cdots, X_d$ in the previous step. Do the previous procedure again to $X_2^{'''}, \cdots, X_{d+1}^{'''}$: Write $X_i^{'''}, 2\leq i\leq d+1$ into
\begin{equation*}
(X_i^{''''}, X_i^{'''''}),
\end{equation*}
where $X_i^{''''}, X_i^{'''''}$ have $ad, m-bd-2ad$ columns respectively. Require the monomials to contain
\begin{equation*}
\prod_{i=2}^{d+1} \diag(X_i^{''''}),  
\end{equation*}
so that each $\Upsilon_k$ becomes $[bd+2ad+1, m]$. After this, we see $X_3^{'''''}, \cdots, X_{d+2}^{'''''}$ are at the same position as $X_1, \cdots, X_d$ in the previous step. Iterate this argument $d-1$ times in total. Our $\Upsilon_k$ finally becomes $[bd+(d-1)ad+1, m]$. And the remaining matrix is:
\begin{equation*}
\left( \begin{array}{c|c|c|c|c}
X_d^{'\cdots'} & 0 & \cdots & 0 \\
\hline
0 & X_{d+1}^{'\cdots'} & 0 & \cdots & 0  \\
\hline
\cdots & \cdots & \cdots & \cdots & \cdots  \\
\hline
0 & 0 & 0 & \cdots & X_{2d-1}^{'\cdots'}  \\
\hline
\cdots & \cdots & \cdots & \cdots & \cdots  
\end{array} \right).
\end{equation*}
\item Now we do a similar removal. This time each $X_i^{'\cdots'}, d\leq i\leq 2d-1$ only appear once, so we can remove any $V_k \subset [bd+(d-1)ad+1, m], \#V_k=ad, 1\leq k\leq d$ using the above argument. Since $n>10d^3$, we have $m-bd-(d-1)ad=ad+(d-1)(d-r)>d(d-1)(d-r)=d(m-bd-ad^2)$. Thus we can make the new $\Upsilon_k:=[bd+(d-1)ad+1, m]\backslash V_k, 1\leq k\leq d$ pairwise disjoint. Only $Z$'s blocks in \eqref{Section2EquationType2Non-Degenerate2} are left, and the remaining matrix is:
\begin{equation}  \label{Section2EquationType2Non-Degenerate2LastStep}
\left( \begin{array}{c|c|c|c|c}
Z_{\Upsilon_1} & -Z_{\Upsilon_2} & 0 & \cdots & 0 \\
\hline
Z_{\Upsilon_1} & 0 & -Z_{\Upsilon_3} & \cdots & 0  \\
\hline
\cdots & \cdots & \cdots & \cdots & \cdots  \\
\hline
Z_{\Upsilon_1} & 0 & 0 & \cdots & Z_{\Upsilon_d}  \\
\end{array} \right).
\end{equation}
Here each of $Z_{\Upsilon_1}, \cdots, Z_{\Upsilon_d}$ is of $d(d-r)\times (d-1)(d-r)$. Moreover, each of them has distinct entries, and any two of them have disjoint sets of entries.
\item At last, there are $d-1$ big rows in \eqref{Section2EquationType2Non-Degenerate2LastStep} numbered by $2\leq j\leq d$. We will remove $(d-1)(d-r)$ rows in each of them. For the $j\text{-th}$ big row, we number its $d(d-r)$ rows using $1, 2, \cdots, d(d-r)$ and use $\widetilde{\Upsilon}_j$ to record which rows are left in it. Like before, each $Z_{\Upsilon_2}, \cdots, Z_{\Upsilon_d}$ only appear once. Hence we can remove any $\widetilde{V}_j \subset[1, d(d-r)], \#\widetilde{V}_j=(d-1)(d-r), 2\leq j\leq d$ by requiring suitable entries of $Z_{\Upsilon_2}, \cdots, Z_{\Upsilon_d}$. Making $\widetilde{\Upsilon}_j:=[1, d(d-r)]\backslash \widetilde{V}_j, 2\leq j\leq d$ pairwise disjoint, we obtain the matrix:
\begin{equation*}
\left( \begin{array}{c}
(Z_{\Upsilon_1})_{\widetilde{\Upsilon}_2} \\
(Z_{\Upsilon_1})_{\widetilde{\Upsilon}_3} \\
\cdots \\
(Z_{\Upsilon_1})_{\widetilde{\Upsilon}_d} \\
\end{array} \right)
\end{equation*}
with all entries distinct. Thus its determinant is a non-zero polynomial. This finishes the proof of Proposition \ref{SectionPropositionConditionsGeneric}.
\end{enumerate}

%%%%%

\section{Discretization} \label{Section3}
The discretization procedure, modified from \cite{DPTFractionalRank}\cite{DTTMultilinearMaximal}, consists of three steps.
\begin{description}
\item[Whitney decomposition and tensorization]
Fix a large constant $C_1$. Apply the Whitney decomposition to $\mathbb{R}^{d(n-1)}\backslash (P_{[n-1]}\Gamma)$. We get
\begin{equation*}
\mathbb{R}^{d(n-1)}\backslash (P_{[n-1]}\Gamma):=\bigcup Q^{'},
\end{equation*}
such that
\begin{equation}  \label{Section3EquationWhitneyConditionForn-1}
d(Q^{'}, P_{[n-1]}\Gamma)\sim C_1s(Q^{'}),\ \forall Q^{'}.
\end{equation}
Take a partition of unity $\{\chi_{Q^{'}}\}_{Q^{'}}$ associated with $\{2Q^{'}\}$ in the sense that $\chi_{Q^{'}}$ is supported in $2Q^{'}$ and
\begin{equation*}
|\partial^{\alpha} \chi_{Q^{'}}(\xi^{'})|\lesssim s(Q^{'})^{-|\alpha|}, \forall \alpha
\end{equation*}
holds for any $Q^{'}$. Regard $\mathfrak{m}$ as a function of $\xi^{'}$ in the natural way. Write
\begin{equation*}
\mathfrak{m}(\xi^{'}):=\sum_{Q^{'}} m(\xi^{'}) \chi_{Q^{'}}(\xi^{'}):=\sum_{Q^{'}} \mathfrak{m}_{Q^{'}}(\xi^{'}).
\end{equation*}
We can assume there are finite $Q^{'}$'s by the dominated convergence theorem and Fatou's lemma. Apply the Fourier expansion to each $\mathfrak{m}_{Q^{'}}$ by mapping $2Q^{'}$ to $[-\frac{1}{4}, \frac{1}{4})^{d(n-1)}$. Then add a bump function $\rho(\xi^{'}):=\rho_1(\xi_1)\cdots \rho_{n-1}(\xi_{n-1})$ such that each $\rho_j$ is $1$ on $[-\frac{1}{4}, \frac{1}{4}]^d$ and supported in $(-\frac{1}{2}, \frac{1}{2})^d$. We can write
\begin{equation*}
\mathfrak{m}_{Q^{'}}(\xi^{'})=\sum_{w\in \mathbb{Z}^{d(n-1)}} a_{Q^{'}, w} e^{2\pi i w\cdot \frac{\xi^{'}-\mathfrak{c}(Q^{'})}{4s(Q^{'})}} \rho \big(\frac{\xi^{'}-\mathfrak{c}(Q^{'})}{4s(Q^{'})} \big),\ \forall \xi^{'}\in \mathbb{R}^{d(n-1)}.
\end{equation*}
Here $a_{Q^{'}, w}$ satisfies
\begin{equation*}
|a_{Q^{'}, w}|\lesssim_N (1+|w|)^{-\frac{N_2}{2}}
\end{equation*}
uniformly in $Q^{'}$. Unfold $Q^{'}=Q_1\times \cdots \times Q_{n-1}, w=(w_1, \cdots, w_{n-1})$. We can then write $\sum_{Q^{'}} \mathfrak{m}_{Q^{'}}(\xi^{'})$ into
\begin{align*}
\sum_{Q^{'}} & \sum_{w\in \mathbb{Z}^{d(n-1)}} a_{Q^{'}, w} e^{2\pi i w\cdot \frac{\xi^{'}-\mathfrak{c}(Q^{'})}{4s(Q^{'})}} \prod_{j=1}^{n-1} \rho_{j}\big(\frac{\xi_j-\mathfrak{c}(Q_j)}{4s(Q^{'})} \big) \\
& =\sum_{w\in \mathbb{Z}^{d(n-1)}} (1+|w|)^{-100d(n-1)} \sum_{Q^{'}} \prod_{j=1}^{n-1} \Big( a_{Q^{'}, w}^{\frac{1}{n-1}}(1+|w|)^{100d} e^{2\pi i w_j\cdot \frac{\xi_j-\mathfrak{c}(Q_j)}{4s(Q^{'})}} \rho_{j}\big(\frac{\xi_j-\mathfrak{c}(Q_j)}{4s(Q^{'})} \big) \Big)  \\
& =:\sum_{w\in \mathbb{Z}^{d(n-1)}} (1+|w|)^{-100d(n-1)}\sum_{Q^{'}} \prod_{j=1}^{n-1} \varphi_{j}^{Q^{'}, w}\big(\frac{\xi_j-\mathfrak{c}(Q_j)}{4s(Q^{'})} \big),
\end{align*}
where $a_{Q^{'}, w}^{\frac{1}{n-1}}$ is an arbitrary complex number such that $(a_{Q^{'}, w}^{\frac{1}{n-1}})^{n-1}=a_{Q^{'}, w}$. It suffices to handle the multiliear multiplier in the form:
\begin{equation}  \label{Section3EquitionTensorizedm1}
\sum_{Q^{'}} \prod_{j=1}^{n-1} \varphi_{Q^{'}, j} \big(\frac{\xi_j-\mathfrak{c}(Q_j)}{4s(Q^{'})} \big)
\end{equation}
for any collection of functions $\{\varphi_{Q^{'}, j}\}_{Q^{'}, j}$ such that $\varphi_{Q^{'}, j}$ is supported in $(-\frac{1}{2}, \frac{1}{2})^d$ and satisfies
\begin{equation}  \label{Section3EquitionTensorizedmDerivativeBounds}
|\partial^{\alpha} \varphi_{Q^{'}, j}|\lesssim_{\alpha} 1,\ \forall \alpha: |\alpha|\leq \frac{N_2}{10n}
\end{equation}
uniformly in $Q^{'}$. In fact, assume we can estimate multipliers having the form \eqref{Section3EquitionTensorizedm1} with a constant only depending on the above derivative bounds. Then we can sum over $w$ and get the estimate for $\mathfrak{m}$. For each $Q^{'}$, fix a cube $Q_n\subset \mathbb{R}^d$ such that $s(Q_n)\sim s(Q^{'})$ and
\begin{equation*}
-4(Q_1+\cdots+Q_{n-1})\subset Q_n.
\end{equation*}
Denote $Q^{'}\times Q_n$ by $Q$. Pick a bump function $\varphi_{n}$ such that it is $1$ on $[-\frac{1}{4}, \frac{1}{4}]^d$ and supported in $(-\frac{1}{2}, \frac{1}{2})^d$. We keep such localization for $\xi_n:=-(\xi_1+\cdots +\xi_{n-1})$ by adding $\varphi_n$ to \eqref{Section3EquitionTensorizedm1}:
\begin{equation}  \label{Section3EquitionTensorizedm2}
\sum_{Q^{'}} \prod_{j=1}^{n-1} \varphi_{Q^{'}, j} \big(\frac{\xi_j-\mathfrak{c}(Q_j)}{4s(Q^{'})} \big)=\sum_{Q^{'}} \Big( \prod_{j=1}^{n-1} \varphi_{Q^{'}, j} \big(\frac{\xi_j-\mathfrak{c}(Q_j)}{4s(Q^{'})} \big) \Big)\cdot \varphi_n \big(\frac{\xi_n-\mathfrak{c}(Q_n)}{4s(Q_n)} \big).
\end{equation}
\item[Regularization]
Now we replace these $Q_j$'s with shifted dyadic cubes.
\begin{definition}  \label{Section3DefinitionGrid}
Let $\mathscr{G}$ be a collection of sets in $\mathbb{R}^d$. We say $\mathscr{G}$ is a grid if
\begin{equation*}
G\cap G^{'}\neq \emptyset \Longrightarrow G\subset G^{'} \text{\ or\ } G^{'}\subset G, \forall G, G^{'} \in \mathscr{G}.
\end{equation*}
Moreover, let $W>1$ be a given constant. We say $\mathscr{G}$ is a $W\text{-central}$ grid if it is a grid and
\begin{equation*}
G\subsetneqq G^{'}\Longrightarrow WG\subset G^{'}, \forall G, G^{'} \in \mathscr{G}.
\end{equation*}
\end{definition}
Note that we can divide the collection of shifted dyadic cubes $\big{\{}2^{l} \big([0, 1)^d+z+\alpha \big): l\in \mathbb{Z}, z\in \mathbb{Z}^d, \alpha\in \{0, \frac{1}{3}, \frac{2}{3}\}^d\big{\}}$ into $3^d$ groups so that each group is a grid. To show this, it suffices to consider the $d=1$ case. We can divide it into the following three groups:
\begin{gather*}
\{2^{l} \big([0, 1)+z\big): l\in \mathbb{Z}, z\in \mathbb{Z}\}, \\
\{2^{2l+1}\big([0, 1)+z+\frac{1}{3}\big), 2^{2l} \big([0, 1)+z+\frac{2}{3}\big): l\in \mathbb{Z}, z\in \mathbb{Z}\}, \\
\{2^{2l+1} \big([0, 1)+z+\frac{2}{3}\big), 2^{2l} \big([0, 1)+z+\frac{1}{3}\big): l\in \mathbb{Z}, z\in \mathbb{Z}\}.
\end{gather*}
This is why we regard shifted dyadic cubes as more regular objects. Each $Q_j$ can be replaced by a slightly larger shifted dyadic cube using the Lemma \ref{Section3LemmaShiftedDyadic} below.
\begin{lemma}  \label{Section3LemmaShiftedDyadic}
Let $Q_j\subset \mathbb{R}^d$ be a cube. Then there exists a shifted dyadic cube $\Xi_j$ such that $s(\Xi_j)\sim s(Q_j)$ and $\Xi_j\supset Q_j$.
\end{lemma}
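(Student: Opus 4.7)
My plan is to reduce to the one-dimensional case and then verify directly that the three shifts $\{0, 1/3, 2/3\}$ supplied by the definition are sufficient once we allow a constant loss in scale. The underlying geometric fact is that three equally spaced arcs of length $s$ cannot cover a circle of circumference greater than $3s$, so after inflating the scale by a factor of $4$ we can always find a covering shift.

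In one dimension, I will first choose the scale. Let $l\in\mathbb{Z}$ be the smallest integer with $2^l\geq 4 s(Q_j)$; then $2s(Q_j)<2^l\leq 4s(Q_j)$, which already guarantees $s(\Xi_j)=2^l\sim s(Q_j)$. Writing $Q_j=[a,a+s)$ with $s=s(Q_j)$, the family $\{2^l([0,1)+z+\alpha):z\in\mathbb{Z}\}$ partitions $\mathbb{R}$, and $Q_j$ fails to be contained in any of its members exactly when $a\bmod 2^l$ falls in the "bad" arc $[2^l\alpha-s,2^l\alpha)\bmod 2^l$, an arc of length $s$ on the circle $\mathbb{R}/2^l\mathbb{Z}$. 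As $\alpha$ ranges over $\{0,1/3,2/3\}$ these three bad arcs are translates of each other by $2^l/3$ on the circle. Because $2^l>3s$, consecutive bad arcs are separated by a gap of length $2^l/3-s>0$, so the three arcs are pairwise disjoint and have empty common intersection. Thus every value of $a\bmod 2^l$ avoids at least one bad arc, and the corresponding shift $\alpha$ produces the desired containing interval.

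For general $d$, I simply apply the one-dimensional argument coordinatewise at the single scale $2^l$ chosen above. Since the definition of a shifted dyadic cube $2^l([0,1)^d+z+\alpha)$ allows $\alpha=(\alpha_1,\ldots,\alpha_d)$ to be chosen independently in each coordinate (with a common scale), the good pairs $(z_k,\alpha_k)$ produced by the one-dimensional argument assemble into the required shifted dyadic cube $\Xi_j=2^l([0,1)^d+z+\alpha)\supset Q_j$ with $s(\Xi_j)=2^l\sim s(Q_j)$. The only nontrivial point is the circle-covering observation; the rest is bookkeeping, and the lemma does not interact with the multiplier or with the non-degeneracy hypotheses, so I expect no real obstacle.
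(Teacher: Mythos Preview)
Your argument is correct and follows the same route as the paper: reduce to $d=1$ and exploit the three shifts $\{0,1/3,2/3\}$ via a pigeonhole on the circle $\mathbb{R}/2^l\mathbb{Z}$. One small slip: your choice of $l$ actually yields $4s(Q_j)\leq 2^l<8s(Q_j)$ rather than $2s(Q_j)<2^l\leq 4s(Q_j)$, but this is harmless since you only need $2^l>3s(Q_j)$; the paper's version is marginally more direct, normalizing to $1/4\leq s(Q_j)\leq 1/2$ and observing that $[a,b)$ lies in the shifted dyadic interval $[z/3,\,z/3+1)$ for $z=\lfloor 3a\rfloor$.
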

\begin{proof}
We can assume $d=1$ and $\frac{1}{4}\leq s(Q_j)\leq \frac{1}{2}$. Write $Q_j=[a, b)$. Let $z$ be the largest integer such that $\frac{z}{3}\leq a$. Then we must have $b<\frac{z}{3}+1$, otherwise
\begin{equation*}
b-a\geq \frac{z}{3}+1-\frac{z+1}{3}=\frac{2}{3},
\end{equation*}
a contradiction. Thus $Q_j\subset [\frac{z}{3}, \frac{z}{3}+1)$.
\end{proof}
For each $Q_j$ in $Q=Q_1\times \cdots\times Q_n$, we take $\Xi_j$ using Lemma \ref{Section3LemmaShiftedDyadic}. We can also make $s(\Xi_1)=\cdots=s(\Xi_n)$ by replacing some $\Xi_j$'s with their parents. Let $\Xi:=\Xi_1\times \cdots \times \Xi_n$. Further write those bump functions in \eqref{Section3EquitionTensorizedm2} as follows:
\begin{align*}
\varphi_{Q^{'}, j} \big(\frac{\xi_j-\mathfrak{c}(Q_j)}{4s(Q^{'})} \big) & =:\varphi_{\Xi, j} \big(\frac{\xi_j-\mathfrak{c}(\Xi_j)}{4s(\Xi)} \big),\ 1\leq j\leq n-1 \\
\varphi_n \big(\frac{\xi_n-\mathfrak{c}(Q_n)}{4s(Q_n)} \big) & =:\varphi_{\Xi, n} \big(\frac{\xi_n-\mathfrak{c}(\Xi_n)}{4s(\Xi)} \big).
\end{align*}
These new $\varphi_{\Xi, j}$'s still satisfy $\varphi_{Q^{'}, j}$'s support condition and derivative bounds \eqref{Section3EquitionTensorizedmDerivativeBounds}. And RHS of \eqref{Section3EquitionTensorizedm2} becomes
\begin{equation}  \label{Section3EquitionTensorizedm3}
\sum_{Q^{'}} \prod_{j=1}^n \varphi_{\Xi, j} \big(\frac{\xi_j-\mathfrak{c}(\Xi_j)}{4s(\Xi)} \big),
\end{equation}
where $\Xi$ on the right depends on $Q^{'}$. It may happen that two different $Q^{'}, \widetilde{Q^{'}}$ give the same $\Xi$, but one can divide $\{Q^{'}\}$ into $O(1)$ groups to avoid this. In fact, we have the following stronger result.
\begin{lemma}  \label{Section3LemmaRankOfCubes}
Let $j_1, \cdots, j_{\lceil \frac{m}{d}\rceil}\in [n]$ be distinct indices. Then for any $Q^{'}$, there are at most $O(1)$ many $\widetilde{Q^{'}}$'s which give the same $(\Xi_{j_1}, \cdots, \Xi_{j_{\lceil \frac{m}{d}\rceil}})$ produced by $Q^{'}$.
\end{lemma}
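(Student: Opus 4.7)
The plan is to show that the data $(\Xi_{j_1}, \ldots, \Xi_{j_{\lceil m/d\rceil}})$ essentially pins down $\mathfrak{c}(Q^{'})$ up to scale $O(C_1 s(Q^{'}))$, from which the $O(1)$ bound follows since $Q^{'}$ is a cube of side $s(Q^{'})$. The three ingredients I would combine are: the common scale coming from the convention $s(\Xi_1)=\cdots=s(\Xi_n)$, the Whitney condition \eqref{Section3EquationWhitneyConditionForn-1}, and the Type \uppercase\expandafter{\romannumeral1} non-degenerate condition.

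First I would observe that $s(\Xi_{j_k})\sim s(Q^{'})$ by Lemma \ref{Section3LemmaShiftedDyadic} and the common scale convention, so there are only $O(1)$ candidate scales for $Q^{'}$. Next, each $\Xi_{j_k}$ constrains a coordinate of $\mathfrak{c}(Q^{'})$ to a set of diameter $O(s(Q^{'}))$: if $j_k\in[n-1]$ this is immediate from $Q_{j_k}\subset \Xi_{j_k}$, while if $j_k=n$ the inclusion $-\sum_{j=1}^{n-1}Q_j\subset \tfrac{1}{4}Q_n\subset \Xi_n$ gives an analogous constraint on $\sum_{j=1}^{n-1}\mathfrak{c}(Q_j)$. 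Set $A:=\{j_1,\ldots,j_{\lceil m/d\rceil\}}\subset[n]$.

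By \eqref{Section3EquationWhitneyConditionForn-1} there exists $v\in\Gamma$ with $|P_{[n-1]}v-\mathfrak{c}(Q^{'})|\lesssim C_1 s(Q^{'})$. Since $\Gamma\subset\Gamma_0$ forces $v_n=-\sum_{j=1}^{n-1}v_j$, the estimates of the previous paragraph translate into a constraint on $P_A v$: namely $P_A v$ lies in a set of diameter $O(C_1 s(Q^{'}))$ determined by $(\Xi_{j_1},\ldots,\Xi_{j_{\lceil m/d\rceil}})$. Now the Type \uppercase\expandafter{\romannumeral1} non-degenerate condition (Definition \ref{Section1DefinitionType1NoN-Degenerate}) says $\dim P_A(\Gamma)=m=\dim\Gamma$, so $P_A|_\Gamma$ is a linear bijection onto its image whose inverse has operator norm depending only on $\Gamma$. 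Therefore $v$ itself is determined up to scale $O(C_1 s(Q^{'}))$, which localizes $P_{[n-1]}v$ and consequently $\mathfrak{c}(Q^{'})$ to a set of diameter $O(C_1 s(Q^{'}))$. Since $C_1$ is a fixed constant, only $O(1)$ Whitney cubes of the given scale can have their centers in this set.

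The main technical nuisance I expect is the bookkeeping when $n\in A$: the shifted dyadic cube $\Xi_n$ does not directly see any single coordinate of $\mathfrak{c}(Q^{'})$, so one must translate the containment of $-\sum Q_j$ in $\Xi_n$ into a statement about the $n$-th block of $v\in\Gamma$ via the $\Gamma_0$-relation, taking some care that the accumulated errors remain of order $C_1 s(Q^{'})$ rather than blowing up in $n$. Once this is set up cleanly, the injectivity supplied by Type \uppercase\expandafter{\romannumeral1} non-degeneracy does all the remaining work.
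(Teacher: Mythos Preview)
Your proposal is correct and follows essentially the same route as the paper: both use the Whitney condition to approximate $\mathfrak{c}(Q^{'})$ by a point of $\Gamma$, invoke the Type \uppercase\expandafter{\romannumeral1} non-degenerate condition to invert $P_A|_{\Gamma}$, and finish with a volume-packing argument. The paper avoids your anticipated nuisance with $n\in A$ by working from the outset with the point $\mathfrak{c}(Q):=\big(\mathfrak{c}(Q_1),\ldots,\mathfrak{c}(Q_{n-1}),-\sum_{j=1}^{n-1}\mathfrak{c}(Q_j)\big)\in\Gamma_0$, so that all $n$ coordinates are treated symmetrically and no separate bookkeeping for the $n$-th block is needed.
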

\begin{proof}
Recall $Q:=Q^{'}\times Q_n$. Define
\begin{gather*}
\mathfrak{c}(Q^{'}):=\big(\mathfrak{c}(Q_1), \cdots, \mathfrak{c}(Q_{n-1}) \big)\in Q^{'}, \\
\mathfrak{c}(Q):=\big(\mathfrak{c}(Q_1), \cdots, \mathfrak{c}(Q_{n-1}), -\mathfrak{c}(Q_1)-\cdots-\mathfrak{c}(Q_{n-1}) \big)\in \Gamma_0\cap Q.
\end{gather*}
Then
\begin{equation*}
d\big (\mathfrak{c}(Q), \Gamma \big)\sim d\big(\mathfrak{c}(Q^{'}), P_{[n-1]} \Gamma \big)\sim C_1s(Q^{'}).
\end{equation*}
Take $\xi\in \Gamma$ such that $d(\mathfrak{c}(Q), \xi)\sim C_1s(Q^{'})$. By Type \uppercase\expandafter{\romannumeral1} non-degenerate condition, we can choose $\Theta\subset \{j_1, \cdots, j_{\lceil \frac{m}{d}\rceil}\}\times [d], \#\Theta=m$ such that for any $(j, k)\in [n]\times [d]$, there exists linear operator $\mathfrak{L}_{j, k}$ satisfying
\begin{equation*}
\xi_{j, k}=\mathfrak{L}_{j, k}\xi_{\Theta}, \ \forall \xi\in \Gamma.    
\end{equation*}
Thus
\begin{equation}  \label{Section3EquationRankOfCubes1}
d \big(\mathfrak{c}(Q)_{j, k}, \mathfrak{L}_{j, k}\mathfrak{c}(Q)_{\Theta} \big)\leq d \big(\mathfrak{c}(Q)_{j, k}, \xi_{j, k} \big)+d(\xi_{j, k}, \mathfrak{L}_{j, k}\xi_{\Theta})+d \big(\mathfrak{L}_{j, k}\xi_{\Theta}, \mathfrak{L}_{j, k}\mathfrak{c}(Q)_{\Theta} \big)\lesssim C_1s(Q^{'}).
\end{equation}
The same argument for $\widetilde{Q^{'}}$ gives
\begin{equation}  \label{Section3EquationRankOfCubes2}
d \big(\mathfrak{c}(\widetilde{Q})_{j, k}, \mathfrak{L}_{j, k}\mathfrak{c}(\widetilde{Q})_{\Theta}\big)\lesssim C_1s(\widetilde{Q^{'}}).
\end{equation}
Now the condition that $Q^{'}, \widetilde{Q^{'}}$ give the same $(\Xi_{j_1}, \cdots, \Xi_{j_{\lceil \frac{m}{d}\rceil}})$ implies $s(Q^{'})\sim s(\widetilde{Q^{'}})$ and
\begin{equation}  \label{Section3EquationRankOfCubes3}
d \big(\mathfrak{c}(Q)_{\Theta}, \mathfrak{c}((\widetilde{Q}))_{\Theta}\big)\lesssim C_1 s(Q^{'}).
\end{equation}
Combining \eqref{Section3EquationRankOfCubes1}, \eqref{Section3EquationRankOfCubes2}, and \eqref{Section3EquationRankOfCubes3} gives
\begin{equation}  \label{Section3EquationRankOfCubes4}
d(Q^{'}, \widetilde{Q^{'}})\leq d\big( \mathfrak{c}(Q^{'}), \mathfrak{c}(\widetilde{Q^{'}})\big)\lesssim C_1 s(Q^{'}).
\end{equation}
Note that $Q^{'}$'s are disjoint. Relation \eqref{Section3EquationRankOfCubes4} together with a common volume-packing argument immediately gives the desired result.
\end{proof}
Combining Lemma \ref{Section3LemmaRankOfCubes} and the pigeon hole principle, we can divide $\{Q^{'}\}$ into $O(1)$ groups such that $\Xi(Q^{'})$'s are distinct inside each group. Thus it suffices to study
\begin{equation}  \label{Section3EquitionTensorizedm4}
\sum_{\Xi} \prod_{j=1}^n \varphi_{\Xi, j} \big(\frac{\xi_j-\mathfrak{c}(\Xi_j)}{4s(\Xi)} \big)
\end{equation}
to handle our multiplier \eqref{Section3EquitionTensorizedm3}. Here $\Xi$ ranges in a finite collection of shifted cubes satisfying $\Xi\cap \Gamma_0\neq \emptyset, d(\Xi, \Gamma)\sim C_1 s(\Xi)$, and $\varphi_{\Xi, j}$ is supported in $(-\frac{1}{2}, \frac{1}{2})^d$ and satisfies
\begin{equation} \label{Section3EquitionTensorizedmDerivativeBoundsForXi}
|\partial^{\alpha}\varphi_{\Xi, j}|\lesssim_{\alpha} 1,\ \forall \alpha: |\alpha|\leq \frac{N_2}{10n}
\end{equation} 
uniformly in $\Xi$.
\item[Fulfilling the uncertainty principle]
Plug \eqref{Section3EquitionTensorizedm4} into $\Lambda_{\mathfrak{m}}(f_1, \cdots, f_n)$. Using
\begin{equation}  \label{Section3EquationBasicIntegralFormula}
\int_{(\mathbb{R}^d)^{n-1}} \widehat{F_1}(\xi_1)\cdots \widehat{F_{n-1}}(\xi_{n-1}) \widehat{F_n}(-\sum_{j=1}^{n-1} \xi_j) d\xi_1\cdots d\xi_{n-1}=\int_{\mathbb{R}^d} F_1\cdots F_{n},
\end{equation}
we get
\begin{equation}  \label{Section3EquationTensorizedMultilinearForm}
\Lambda_{\mathfrak{m}}(f_1, \cdots, f_n)=\sum_{\Xi} \int_{\mathbb{R}^d} \prod_{j=1}^n \Big(\varphi_{\Xi, j} \big(\frac{D-\mathfrak{c}(\Xi_j)}{4s(\Xi)} \big)f_j \Big).
\end{equation}
The functions in brackets have localized Fourier supports. And we have the following uncertainty principle: Let $g$ be a function $\mathbb{R}^d$. If $\hat{g}$ is supported in a cube $\Xi_j\subset \mathbb{R}^d$, then we expect
\begin{equation*}
g(x)\approx \sum_{I_j: s(I_j)\cdot s(\Xi_j)=1} a_{I_j}e^{2\pi i\mathfrak{c}(\Xi_j)} 1_{I_j}(x),
\end{equation*}
where $I_j$ ranges in a tiling of $\mathbb{R}^d$. Lemma \ref{Section3LemmaUncertaintyRigorousVersion} below is a rigorous version of this heuristic.
\begin{lemma}  \label{Section3LemmaUncertaintyRigorousVersion}
Let $\rho\in \mathcal{S}(\mathbb{R}^d)$ satisfy $\hat{\rho}$ is supported in $(-\frac{1}{2}, \frac{1}{2})^{d}$ and
\begin{equation*}
\sum_{z\in \mathbb{Z}^d} |\hat{\rho}(\xi-\frac{z}{3})|^2=1,\ \forall \xi\in \mathbb{R}^d. 
\end{equation*}
For dyadic cube $I_j$ and shifted dyadic cube $\Xi_j$ such that $s(I_j)\cdot s(\Xi_j)=1$, define
\begin{equation*}
\varphi_{\mathcal{R}_j}(x):=|I_j|^{-\frac{1}{2}}e^{2\pi i\mathfrak{c}(\Xi_j) \big(x-\mathfrak{c}(I_j) \big)}\rho \big( \frac{x-\mathfrak{c}(I_j)}{s(I_j)}\big)
\end{equation*}
for $\mathcal{R}_j:=I_j\times \Xi_j$. Then for any dyadic scale $s$, we have
\begin{equation*}
g=\sum_{\mathcal{R}_j: s\big(\Xi_j(R_j)\big)=s} \langle g | \varphi_{\mathcal{R}_j} \rangle \varphi_{\mathcal{R}_j}, 
\end{equation*}
where the convergence holds pointwise uniformly for $g\in \mathcal{S}(\mathbb{R}^d)$ and in $L^2$ for $g\in L^2(\mathbb{R}^d)$.
\end{lemma}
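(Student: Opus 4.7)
The plan is to pass to the Fourier side and reduce the identity to the hypothesis on $\rho$ via Poisson summation.

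First, I would compute
\begin{equation*}
\widehat{\varphi}_{\mathcal{R}_j}(\xi) = |I_j|^{1/2}\, \widehat{\rho}\bigl(s(I_j)(\xi - \mathfrak{c}(\Xi_j))\bigr)\, e^{-2\pi i \mathfrak{c}(I_j)\cdot \xi},
\end{equation*}
so that $\widehat{\varphi}_{\mathcal{R}_j}$ is supported in $\mathfrak{c}(\Xi_j)+(-s/2,s/2)^d$. Applying Plancherel to $\langle g,\varphi_{\mathcal{R}_j}\rangle$, the series on the right-hand side of the claim becomes, on the Fourier side,
\begin{equation*}
\sum_{\Xi_j}\sum_{I_j}|I_j|\,\widehat{\rho}\bigl(s(I_j)(\xi-\mathfrak{c}(\Xi_j))\bigr)\int \widehat{g}(\eta)\,\overline{\widehat{\rho}\bigl(s(I_j)(\eta-\mathfrak{c}(\Xi_j))\bigr)}\,e^{2\pi i\mathfrak{c}(I_j)\cdot(\eta-\xi)}\, d\eta.
\end{equation*}

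Next, I would fix $\Xi_j$ at scale $s$ and carry out the inner sum over dyadic $I_j$ with $s(I_j)=1/s$. The $I_j$-dependence then lies entirely in the phases $e^{2\pi i \mathfrak{c}(I_j)\cdot(\eta-\xi)}$, and Poisson summation over $\{\mathfrak{c}(I_j)\}=(1/s)(\mathbb{Z}^d+\mathbf{1}/2)$ produces a $\delta$-train in $\eta-\xi$ on the dual lattice $s\mathbb{Z}^d$. Since $\widehat{\rho}$ is supported in $(-1/2,1/2)^d$, the product of the two $\widehat{\rho}$-factors forces $|\eta-\xi|_\infty<s$, so only the $k=0$ term survives and the inner sum collapses to $|\widehat{\rho}((\xi-\mathfrak{c}(\Xi_j))/s)|^2\widehat{g}(\xi)$. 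The outer sum then gives $\widehat{g}(\xi)\sum_{\Xi_j}|\widehat{\rho}((\xi-\mathfrak{c}(\Xi_j))/s)|^2$, and since the centers $\mathfrak{c}(\Xi_j)$, as $z\in\mathbb{Z}^d$ and $\alpha\in\{0,1/3,2/3\}^d$ vary, form the lattice $(s/3)\mathbb{Z}^d+(s/2)\mathbf{1}$, a change of variables reduces the sum to $\sum_{z\in\mathbb{Z}^d}|\widehat{\rho}(\xi/s-\mathbf{1}/2-z/3)|^2=1$ by the hypothesis on $\rho$. Hence the Fourier transform of the right-hand side equals $\widehat{g}(\xi)$ and the identity follows upon inverting.

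For convergence, the $L^2$ statement comes from performing the same computation with truncated sums and invoking Plancherel together with dominated convergence of the $\widehat{\rho}$-cutoffs. For pointwise uniform convergence on Schwartz $g$, I would establish the rapid-decay estimate $|\langle g,\varphi_{\mathcal{R}_j}\rangle|\lesssim_N |I_j|^{1/2}(1+|\mathfrak{c}(I_j)|)^{-N}(1+|\mathfrak{c}(\Xi_j)|)^{-N}$ by integration by parts in $x$ (for the $\mathfrak{c}(\Xi_j)$-decay) and by Schwartz decay of $g$ directly (for the $\mathfrak{c}(I_j)$-decay), combined with the Schwartz tail bound $|\varphi_{\mathcal{R}_j}(x)|\lesssim_N |I_j|^{-1/2}(1+|x-\mathfrak{c}(I_j)|/s(I_j))^{-N}$, to obtain an absolutely convergent series uniformly in $x$. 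The main obstacle is the rigorous justification of the Poisson summation step: the $\delta$-train manipulation must be legitimized by first restricting to a finite subfamily of $I_j$, extracting the $\widehat{\rho}$-localization to check that all non-diagonal lattice shifts vanish identically at the relevant points, and only then passing to the limit; once this bookkeeping is in place, the identity is essentially a one-line Fourier calculation with $\rho$'s partition-of-unity hypothesis playing the decisive role.
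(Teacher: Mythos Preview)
Your proposal is correct and follows essentially the same route as the paper: pass to the Fourier side, fix a frequency cube $\Xi_j$, sum over the spatial cubes $I_j$, use the support of $\widehat{\rho}$ to kill off-diagonal terms, and then invoke the partition-of-unity hypothesis on $\widehat{\rho}$ when summing over $\Xi_j$. The rapid-decay estimate you give for $\langle g,\varphi_{\mathcal{R}_j}\rangle$ is exactly what the paper uses to secure uniform pointwise convergence for Schwartz $g$.

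The one stylistic difference worth noting is in the inner sum over $I_j$. You phrase it as Poisson summation producing a $\delta$-train, which then needs the bookkeeping you describe to be made rigorous. The paper instead reduces (after translation and dilation) to the identity
\[
|\widehat{\rho}(\xi)|^2\widehat{g}(\xi)=\sum_{z\in\mathbb{Z}^d}\langle \widehat{g}\,|\,e^{-2\pi i z\cdot\xi}\widehat{\rho}\rangle\, e^{-2\pi i z\cdot\xi}\widehat{\rho}(\xi),
\]
viewed as an equality of $L^2$ functions on $(-\tfrac12,\tfrac12)^d$, and simply checks that the Fourier coefficients agree. This sidesteps the distributional subtlety you flagged. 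For the $L^2$ convergence for general $g\in L^2$, the paper proves uniform $L^2$ boundedness of the finite partial-sum operators via Schur's test and then passes to the limit by density, which is a bit more robust than the dominated-convergence sketch you give; you may want to adopt that step.
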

\begin{proof}
In this proof, we omit the index $j$ for simplicity. For fixed $s$ and $g\in \mathcal{S}(\mathbb{R}^d)$, we have
\begin{equation}  \label{Section3EquationUncertaintyRigorousVersion0}
|\langle g | \varphi_{\mathcal{R}} \rangle|=|\langle \hat{g} | \widehat{\varphi_{\mathcal{R}}} \rangle|\lesssim \Big(1+|\mathfrak{c}\big(I(R) \big)|+|\mathfrak{c}\big(\Xi(R) \big)| \Big)^{-100d}.
\end{equation}
Hence the pointwise convergence is uniform. To show the limit is $g$, we prove
\begin{equation}  \label{Section3EquationUncertaintyRigorousVersion1}
(|\hat{\rho} \big( \frac{x-\mathfrak{c}(\Xi)}{s(\Xi)}\big)|^2\hat{g})^{\vee}=\sum_{\mathcal{R}: \Xi(\mathcal{R})=\Xi} \langle g | \varphi_{\mathcal{R}} \rangle \varphi_{\mathcal{R}}
\end{equation}
holds pointwise for any shifted dyadic cube $\Xi$. Assume \eqref{Section3EquationUncertaintyRigorousVersion1} is true. Then summing over $\Xi: s(\Xi)=s$ gives the desired result because the Fourier transform of LHS converges to $\hat{g}$ in $L^1$. To prove \eqref{Section3EquationUncertaintyRigorousVersion1}, we only need to show the Fourier transforms of both sides agree in $L^1$. And it suffices to show they agree in $L^2$ since both have compact supports. By translation and dilation, our goal becomes
\begin{equation*}
|\hat{\rho}(\xi)|^2\hat{g}(\xi)=\sum_{z\in \mathbb{Z}^d} \langle \hat{g} | e^{-2\pi i z\cdot \xi} \hat{\rho}\rangle e^{-2\pi i z\cdot \xi} \hat{\rho}(\xi).
\end{equation*}
Both sides are $L^2$ functions on $(-\frac{1}{2}, \frac{1}{2})^{d}$. It suffices to check that their Fourier coefficients agree, which is a direction calculation.

By \eqref{Section3EquationUncertaintyRigorousVersion0}, we also get the $L^2$ convergence for $g\in \mathcal{S}(\mathbb{R}^d)$. Let $\mathscr{R}\subset \{\mathcal{R}_j: s\big(\Xi_j(R_j)\big)=s\}$ be a finite collection. The partial sum operator
\begin{equation}  \label{Section3EquationPartialSumOperator}
A_{\mathscr{R}} h:=\sum_{\mathcal{R}\in \mathscr{R}}\langle h | \varphi_{\mathcal{R}} \rangle \varphi_{\mathcal{R}},\ h\in L^2(\mathbb{R}^d)
\end{equation}
is $L^2$ bounded uniformly in $\mathscr{R}$. This is proved by the common argument combining Schur's test and orthogonality (as in the proof of Theorem \ref{Section4TheoremLittlewood-PaleyDyadic} below). Then we see the $L^2$ convergence for $g\in L^2(\mathbb{R}^d)$ via approximation, which completes the proof.
\end{proof}
Let $f_1, \cdots, f_{n-1}\in \mathcal{S}(\mathbb{R}^d)$ and $f_n$ be a simple function. Consider each $\Xi$ in the sum of \eqref{Section3EquationTensorizedMultilinearForm}. We replace $f_j$ with $A_{\mathscr{R}_j} f_j$ defined by \eqref{Section3EquationPartialSumOperator}, where $s=s(\Xi)$ and $\mathscr{R}_j$ is a finite collection associated with $s$. The new multilinear form is as follows:
\begin{equation}  \label{Section3EquationTensorizedMultilinearFormNew}
\widetilde{\Lambda_{\mathfrak{m}}}(f_1, \cdots, f_n):=\sum_{\Xi} \sum_{\mathcal{R}_j\in \mathscr{R}_j} \Bigg(\int_{\mathbb{R}^d} \prod_{j=1}^n \Big(\varphi_{\Xi, j} \big(\frac{D-\mathfrak{c}(\Xi_j)}{4s(\Xi)} \big)\varphi_{\mathcal{R}_j} \Big) \Bigg) \prod_{j=1}^n \langle f_j | \varphi_{\mathcal{R}_j} \rangle.
\end{equation}
Denote the associated multilinear operator acting on $f_1, \cdots, f_{n-1}$ by $\widetilde{T_{\mathfrak{m}}}$. When all involved finite collections $\mathscr{R}_j$ goes to their full collections, we have
\begin{equation}  \label{Section3EquationTensorizedMultilinearFormNewApproximateOld}
\widetilde{\Lambda_{\mathfrak{m}}}(f_1, \cdots, f_n)\to \Lambda_{\mathfrak{m}}(f_1, \cdots, f_n)
\end{equation}
by Lemma \ref{Section3LemmaUncertaintyRigorousVersion}. Note that $\widetilde{\Lambda_{\mathfrak{m}}}$ is also well-defined when $f_1, \cdots, f_n\in L^2(\mathbb{R}^d)$. Therefore, as in \cite{MTTMultilinearSingularMultipliers}\cite{MS2}\cite{DOPBilinearVariational}, we only need to prove bounds of the following form:
\begin{equation}  \label{Section3EquationTensorizedMultilinearFormNewBounds}
|\widetilde{\Lambda_{\mathfrak{m}}}(f_1, \cdots, f_{n-1}, 1_{E_n\backslash \Omega})|\lesssim (\prod_{j=1}^{n-1} |E_j|^{\frac{1}{p_j}}) |E_n|^{\frac{1}{q^{'}}}
\end{equation}
for $|f_j|\leq 1_{E_j}, 1\leq j\leq n-1$ and $(p_1, \cdots, p_{n-1}, q)$ in the range of Theorem \ref{MainTheorem1}. Here $E_j, j\in [n]$ are bounded sets, and $\Omega$ is a suitable exceptional set such that $ \Omega\subset E_n, |\Omega|\leq \frac{1}{2}|E_n|$. More precisely, we deduce Theorem \ref{MainTheorem1} from \eqref{Section3EquationTensorizedMultilinearFormNewBounds} step by step:
\begin{enumerate}
\item The estimate \eqref{Section3EquationTensorizedMultilinearFormNewBounds} is equivalent to the restricted weak bound for $\widetilde{T_{\mathfrak{m}}}$. (We will explain more details on this equivalence in Section \ref{Section6}.) We can use multilinear interpolation to get $\widetilde{T_{\mathfrak{m}}}$ is bounded for $(p_1, \cdots, p_{n-1}, q)$ in the full range of Theorem \ref{MainTheorem1}. In this multilinear interpolation, we fix those $f_j$ such that $p_j=\infty$ and regard $\widetilde{T_{\mathfrak{m}}}$ is a multilinear operator on other functions. This gives the bounds in full range under the additional assumption that $f_j$ is compactly supported if $p_j=\infty$. A further approximation removes this assumption.
\item Let $f_1, \cdots, f_{n-1}\in \mathcal{S}(\mathbb{R}^d)$ and $f_n$ be a simple function. Apply \eqref{Section3EquationTensorizedMultilinearFormNewApproximateOld} to obtain the bounds for $\Lambda_{\mathfrak{m}}$, which imply $T_{\mathfrak{m}}$'s strong bounds when $q\geq 1$ and weak bounds when $q<1$. Then we can define $T_{\mathfrak{m}}$ for simple functions.
\item Let $f_j\in \mathcal{S}(\mathbb{R}^d)$ approximate simple functions if $p_j\neq \infty$. We can extend the weak bounds for $T_{\mathfrak{m}}$ to simple functions.
\item Apply multilinear interpolation to get $T_{\mathfrak{m}}$ is bounded in the full range of Theorem \ref{MainTheorem1}. Here we still fix $f_j\in \mathcal{S}(\mathbb{R}^d)$ if $p_j=\infty$ and let other $f_j$ be simple functions in the multilinear interpolation.
\end{enumerate}
Back to \eqref{Section3EquationTensorizedMultilinearFormNew}, we can further simplify the summation. Apply \eqref{Section3EquationBasicIntegralFormula}. We see
\begin{equation*}
\int_{\mathbb{R}^d} \prod_{j=1}^n \Big(\varphi_{\Xi, j} \big(\frac{D-\mathfrak{c}(\Xi_j)}{4s(\Xi)} \big)\varphi_{\mathcal{R}_j} \Big)=0
\end{equation*}
unless $\Xi(\mathcal{R}_j)\cap \Xi_{j}\neq \emptyset, \forall j\in [n]$. Moreover, write $I_j:=I(\mathcal{R}_j)$ and $s:=s(I_j)$. We have
\begin{equation}  \label{Section3EquationTranslatePhisicalIntervalsEstimate}
\begin{split}
|\int_{\mathbb{R}^d} & \prod_{j=1}^n \Big(\varphi_{\Xi, j} \big(\frac{D-\mathfrak{c}(\Xi_j)}{4s(\Xi)} \big)\varphi_{\mathcal{R}_j} \Big)| \\
&\lesssim \int_{\mathbb{R}^d}\int_{\mathbb{R}^{dn}} \prod_{j=1}^n \big( s^{-d}(1+|\frac{t_j}{s}|)^{-\frac{N_2}{10n}}|I_j|^{-\frac{1}{2}}(1+|\frac{x-t_j-\mathfrak{c}(I_j)}{s}|)^{-\frac{N_2}{10n}} \big) dt_1 \cdots dt_n dx  \\
&\lesssim s^{d(1-\frac{n}{2})} (1+\max_{j, \tilde{j}} \{|\frac{\mathfrak{c}(I_j)-\mathfrak{c}(I_{\tilde{j}})}{s}|\})^{-\frac{N_2}{20n}}.
\end{split}    
\end{equation}
This suggests we only need to consider the diagonal terms satisfying $I_1=\cdots=I_n$. Now we make this intuition rigorous.
\begin{definition} \label{Section3DefinitionWavePacket}
Let $I_j$ be a dyadic cube and $\Xi_j$ be a shifted dyadic cube  such that $s(I_j)\cdot s(\Xi_j)=1$. Define $\mathcal{R}_j:=I_j\times \Xi_j$. A function $\varphi_{\mathcal{R}_j}$ is called a wave packet adapted to $\mathcal{R}_j$ if it can be written into the following form:
\begin{equation*}
\varphi_{\mathcal{R}_j}(x)=|I_j|^{-\frac{1}{2}}e^{2\pi i\mathfrak{c}(\Xi_j)\big(x-\mathfrak{c}(I_j) \big)}\zeta_{\mathcal{R}_j} \big( \frac{x-\mathfrak{c}(I_j)}{s(I_j)}\big),
\end{equation*}
where $\zeta_{\mathcal{R}_j}\in \mathcal{S}(\mathbb{R}^d)$ satisfies
\begin{enumerate}
\item The Fourier transform $\widehat{\zeta_{\mathcal{R}_j}}$ is supported in $(-\frac{1}{2}, \frac{1}{2})^{d}$.
\item For any $\alpha: |\alpha|\leq 1$, we have
\begin{equation}  \label{Section3EquationWavePacketDecay}
|\partial^{\alpha} \zeta_{\mathcal{R}_j}(x)|\lesssim (1+|x|)^{-N_1}.
\end{equation}
The implicit constant in \eqref{Section3EquationWavePacketDecay} does not depend on $\mathcal{R}_j$.
\end{enumerate}
\end{definition}
For example, those $\varphi_{\mathcal{R}_j}$'s in Lemma \ref{Section3LemmaUncertaintyRigorousVersion} are wave packets. Let $\varphi_{\mathcal{R}_j}$ be a wave packet and $z_j\in \mathbb{Z}^d$. We can write
\begin{equation*}
\begin{split}
\varphi_{\mathcal{R}_j}(x) & =|I_j|^{-\frac{1}{2}}e^{2\pi i\mathfrak{c}(\Xi_j)\big(x-\mathfrak{c}(I_j) \big)}\zeta \big( \frac{x-\mathfrak{c}(I_j)}{s(I_j)}\big) \\
& =(1+|z_j|)^{N_1}\Big( (1+|z_j|)^{-N_1} |I_j|^{-\frac{1}{2}}e^{2\pi i\mathfrak{c}(\Xi_j)\big(x-\mathfrak{c}(I_j) \big)}\zeta \big( \frac{x-\mathfrak{c}(I_j)-z_js(I_j)}{s(I_j)}+z_j\big)\Big) \\
& =:c(1+|z_j|)^{N_1} \varphi_{\big( I_j+z_js(I_j) \big)\times \Xi_j}(x),
\end{split}
\end{equation*}
where $|c|=1$ and $\varphi_{\big( I_j+z_js(I_j) \big)\times \Xi_j}$ is another wave packet adapted to ${\big( I_j+z_js(I_j) \big)\times \Xi_j}$. This means translating physical intervals of wave packets costs a polynomial factor of the distance. If $\Xi(\mathcal{R}_j)\cap \Xi_{j}\neq \emptyset, \forall j\in [n]$, then $\frac{\mathfrak{c}\big( \Xi(\mathcal{R})\big)-\mathfrak{c}(\Xi)}{s(\Xi)}\in (\{-\frac{2}{3}, -\frac{1}{3}, 0, \frac{1}{3}, \frac{2}{3}\}^d)^n=:W$. Classifying the value of this ratio and using the above translation property together with \eqref{Section3EquationTranslatePhisicalIntervalsEstimate}, we can write RHS of \eqref{Section3EquationTensorizedMultilinearFormNew} into
\begin{equation}  \label{Section3EquationAlmostFinalDiscreteModelSum}
\sum_{w\in W}\sum_{z^{'}\in (\mathbb{Z}^d)^{n-1}} \sum_{\mathcal{R}\in \mathscr{R}^{w, z^{'}}} c_{\mathcal{R}}^{w, z^{'}} |I(\mathcal{R})|^{1-\frac{n}{2}}\prod_{j=1}^n \langle f_j | \varphi_{\mathcal{R}_j}^{z^{'}} \rangle.
\end{equation}
Here $\mathscr{R}^{w, z^{'}}$ is a finite collection of vector rectangles, $c_{\mathcal{R}}^{w, z^{'}}$ is a constant satisfying
\begin{equation*}
|c_{\mathcal{R}}^{w, z^{'}}|\lesssim (1+|z^{'}|)^{-100d(n-1)},
\end{equation*}
and $\varphi_{\mathcal{R}_j}^{z^{'}}$ is a wave packet adapted to $\mathcal{R}_j$. Fixing $w, z^{'}$, we only need to estimate
\begin{equation*}
\sum_{\mathcal{R}\in \mathscr{R}} |I(\mathcal{R})|^{1-\frac{n}{2}}\prod_{j=1}^n |\langle f_j | \varphi_{\mathcal{R}_j} \rangle|.
\end{equation*}
\item[Sparsification]
We need a final reduction to facilitate later geometric arguments.
\begin{definition}
Let $d\in \mathbb{N}_+$. We say a collection $\mathscr{G}$ of cubes in $\mathbb{R}^d$ is $L\text{-sparse}$ if the following properties hold:
\begin{enumerate}
\item For any $G, \widetilde{G}\in \mathscr{G}$, $s(G)>s(\widetilde{G})$ implies $s(G)>L s(\widetilde{G})$.
\item For any $G, \widetilde{G}\in \mathscr{G}$, $s(G)=s(\widetilde{G}), G\neq \widetilde{G}$ implies $d(G, \widetilde{G})>L s(G)$.  
\end{enumerate}
\end{definition}
By the pigeon hole principle, we can divide $\mathscr{R}$ into $O(1)$ groups such that for each group $\mathscr{P}$, the collections $\{\Xi(\mathcal{R}_j): \mathscr{R}\in \mathscr{P}\}, j\in [n]$ are all $C_3\text{-sparse}$ grids.
\end{description}
After all these reductions, Theorem \ref{MainTheorem1} can be deduced from Theorem \ref{MainTheorem1Discrete} below.
\begin{theorem}  \label{MainTheorem1Discrete}
Let $n, d, m, p_1, \cdots, p_{n-1}, q$ and $\Gamma$ be as in Theorem \ref{MainTheorem1}. Suppose $\mathscr{R}$ is a finite collections of vector rectangles such that:
\begin{enumerate}
\item We have $\Xi(\mathcal{R})\cap \Gamma_0\neq \emptyset$ for any $\mathcal{R}\in \mathscr{R}$.
\item The relation $d \big(\Xi(\mathcal{R}), \Gamma \big)\sim C_1 s\big( \Xi(\mathcal{R}) \big)$ holds for any $\mathcal{R}\in \mathscr{R}$.
\item Let $j_1, \cdots, j_{\lceil \frac{m}{d}\rceil}\in [n]$ be distinct indices. Then $\Xi(\mathcal{R}_{j_1}), \cdots, \Xi(\mathcal{R}_{j_{\lceil \frac{m}{d}\rceil}})$ uniquely determine all $\Xi(\mathcal{R}_j), j\in [n]$ for any $\mathcal{R}\in \mathscr{R}$.
\item The collections $\{\Xi(\mathcal{R}_j): \mathcal{R}\in \mathscr{R}\}, j\in [n]$ are all $C_3\text{-sparse}$ grids.
\end{enumerate}
For any $\mathcal{R}\in \mathscr{R}$ and $j\in [n]$, let $\varphi_{\mathcal{R}_j}$ be a wave packet adapted to $\mathcal{R}_j$. Then for any $(c_{\mathcal{R}})_{\mathcal{R}\in \mathscr{R}}: |c_{\mathcal{R}}|=1, \forall \mathcal{R}\in \mathscr{R}$, the operator
\begin{equation*}
T_{\mathscr{R}}^{dis}(f_1, \cdots, f_{n-1}):=\sum_{\mathcal{R}\in \mathscr{R}} c_{\mathcal{R}}|I(\mathcal{R})|^{1-\frac{n}{2}}\prod_{j=1}^{n-1} \langle f_j | \varphi_{\mathcal{R}_j} \rangle \varphi_{\mathcal{R}_n}
\end{equation*}
satisfies \eqref{MainEstimate} with $T_{\mathfrak{m}}$ replaced by $T_{\mathscr{R}}^{dis}$ (in each of the three cases). The implicit constant is independent with $\mathscr{R}$ and $(c_{\mathcal{R}})_{\mathcal{R}\in \mathscr{R}}$.
\end{theorem}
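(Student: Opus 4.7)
The plan is to prove Theorem \ref{MainTheorem1Discrete} in its multilinear form via the restricted weak-type reformulation \eqref{Section3EquationTensorizedMultilinearFormNewBounds}. Given bounded $E_1,\ldots,E_n\subset\mathbb{R}^d$ and $|f_j|\leq 1_{E_j}$, I would construct an exceptional set $\Omega\subset E_n$ with $|\Omega|\leq\tfrac12|E_n|$ as a sublevel set of Hardy--Littlewood maximal functions and appropriate wave-packet square functions applied to the $1_{E_j}$'s, calibrated so that every vector rectangle $\mathcal{R}$ with $I(\mathcal{R})\not\subset\Omega$ obeys density bounds $|\langle f_j,\varphi_{\mathcal{R}_j}\rangle|\leq \delta_j|I(\mathcal{R})|^{1/2}$ with $\delta_j$ controlled by $|E_j|/|E_n|$. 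After normalizing $|E_n|=1$, the task reduces to estimating $\sum_{\mathcal{R}\in\mathscr{R}}|I(\mathcal{R})|^{1-n/2}\prod_{j=1}^n|\langle f_j,\varphi_{\mathcal{R}_j}\rangle|$ by the product side of \eqref{Section3EquationTensorizedMultilinearFormNewBounds}.

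The engine is a coordinate-wise tree decomposition. For each $j\in[n]$ I would define $\mathrm{size}_j(\mathscr{P})$ as the supremum, over subtrees $\mathbf{T}\subset\mathscr{P}$ lacunary in the $j$-direction, of $|I(\mathbf{T})|^{-1/2}\big(\sum_{\mathcal{R}\in\mathbf{T}}|\langle f_j,\varphi_{\mathcal{R}_j}\rangle|^2\big)^{1/2}$, with a parallel $L^1$-type density notion for overlapping trees. A single tree estimate, proved by Bessel orthogonality in the lacunary coordinate, pointwise maximal-function control in the overlapping ones, and absorption of boundary errors using the sparse-grid assumption (4) and the wave-packet decay \eqref{Section3EquationWavePacketDecay}, bounds the contribution of a tree $\mathbf{T}$ by $|I(\mathbf{T})|$ times the product of its sizes or densities over coordinates. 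Greedy tree selection then partitions $\mathscr{R}$ into subfamilies $\mathscr{P}_{k_1,\ldots,k_n}$ whose $j$-sizes are $\sim 2^{-k_j}$, and the classical Bessel count $\sum_{\mathbf{T}}|I(\mathbf{T})|\lesssim 2^{2k_j}\|f_j\|_2^2$ yields, via H\"older across coordinates, a geometric series in $(k_1,\ldots,k_n)$. In case (1) this series converges in the full stated range by itself.

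The main obstacle is cases (2) and (3), where the single-coordinate Bessel count is too weak and the resulting series diverges on the claimed range. Here I would exploit Type \uppercase\expandafter{\romannumeral2} non-degeneracy: the injectivity of $\mathfrak{L}$ on $\Gamma^L$ in \eqref{Section1EquationTheCrucialMap} means any $L$-tuple of distinct tree tops is pinned down by its $P_{B^{(k)}}$-projections together with the common value of $P_A$ across the tuple. Applying Bessel's inequality $L$ times, once per copy of the tree in $\Gamma^L$, in the overlapping $B^{(k)}$-coordinates, and using assumption (3) of Theorem \ref{MainTheorem1Discrete} with Type \uppercase\expandafter{\romannumeral1} non-degeneracy to reconstruct every other frequency coordinate from any $\lceil m/d\rceil$ of the lacunary ones, I expect to obtain a tree count that saves a factor $L$ in the critical $A$-coordinate compared with the trivial one. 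Substituting this improved count turns the divergent series into a convergent one precisely in the exponent regions of parts (2) and (3), with $\alpha=(n-1)/2$ and $\alpha=m/d$ respectively. The passage from these endpoint bounds to the open Lebesgue range is by the multilinear restricted weak-type interpolation described after \eqref{Section3EquationTensorizedMultilinearFormNewBounds}, fixing $L^\infty$ inputs and interpolating in the remaining ones.
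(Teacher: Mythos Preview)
Your overall scaffolding---restricted weak-type formulation, exceptional set via maximal functions, coordinate-wise sizes, single-tree estimate, and greedy selection with Bessel-type tree counts---matches the paper and suffices for case~(1). The gap is in cases~(2) and~(3), where two concrete mechanisms are missing.

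First, the phrase ``applying Bessel's inequality $L$ times'' does not by itself convert the injectivity of $\mathfrak{L}$ on $\Gamma^L$ into an improved count for \emph{single} vector trees. What the paper does is a combinatorial amplification: fix $x$, let $\Omega=\mathbf{F}^{\geq 2,x}(\lambda)$, and use the Katz--Tao entropy inequality (Lemma~\ref{Section5LemmaKatz-Tao}) to get
\[
(\#\Omega)^L\lesssim \#\mathbf{G}\cdot\Bigl(\prod_{j\in A}\#\mathbf{F}_j^x(\lambda_j)\Bigr)^{L-1},
\]
where $\mathbf{G}$ is the set of $L$-tuples of trees sharing a common $\sigma_A$-projection. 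Then Type~\uppercase\expandafter{\romannumeral2} non-degeneracy makes $\prod_k\sigma_{B^{(k)}}$ injective on $\mathbf{G}$, so $\#\mathbf{G}\leq\prod_k\prod_{j\in B^{(k)}}\#\mathbf{F}_j^x$. Taking the $L$-th root yields the fractional exponents $\tfrac12$ (case~2) or $\tfrac{m}{d(n-1)}$ (case~3) on the $N_{\mathbf{F}_j}$'s, which is exactly what makes the series converge. Your ``save a factor $L$ in the $A$-coordinate'' is the right intuition but not a proof; without the Katz--Tao step there is no clear route from injectivity on tuples to a pointwise bound on $N_{\mathbf{F}^{\geq 2}}$.

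Second, the injectivity of the projection maps on $\mathbf{F}^{\geq 2,x}$ (Proposition~\ref{Section5PropositionInjectiveMap1} and Lemma~\ref{Section5LemmaKeyobservationForInjection1}) genuinely requires each vector tree to contain at least two elements: one needs a strictly larger $C_2\circ\Xi(\mathcal{R})$ inside the tree, together with the $C_3^{1/3}$-central grid property, to force the containment that produces a contradiction. For singleton trees this fails, and the paper handles $\mathbf{F}^{=1}(\lambda)$ separately by a further decomposition over level sets $\kappa$ of the individual coefficients $|\langle f_j,\varphi_{\mathcal{R}_j}\rangle|/|I(\mathcal{R})|^{1/2}$, working with the stricter order $\leq$ instead of $\lesssim$ and a weaker geometric lemma (Lemma~\ref{Section5LemmaKeyobservationForInjection2}). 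Your proposal does not anticipate this split, and the argument as written would break on maximal vector rectangles that are isolated in $\mathscr{R}(\lambda)$.
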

\begin{remark}
We can assume the third property of $\mathscr{R}$ because of Lemma \ref{Section3LemmaRankOfCubes}, the dividing procedure after that, and the fact that all $\mathcal{R}\in \mathscr{R}$ belong to the same $\mathscr{R}^{w, z^{'}}$.
\end{remark}
In fact, consider the dual form
\begin{equation}  \label{Section3EquationMainGoalDiscreteVersion}
\Lambda_{\mathscr{R}}^{dis}(f_1, \cdots, f_n):=\sum_{\mathcal{R}\in \mathscr{R}} |I(\mathcal{R})|^{1-\frac{n}{2}}\prod_{j=1}^n |\langle f_j | \varphi_{\mathcal{R}_j} \rangle|.
\end{equation}
The estimates for $T_{\mathscr{R}}^{dis}$ in Theorem \ref{MainTheorem1Discrete} are equivalent to \eqref{Section3EquationTensorizedMultilinearFormNewBounds} with $\widetilde{\Lambda_{\mathfrak{m}}}$ replaced by $\Lambda_{\mathscr{R}}^{dis}$. They imply \eqref{Section3EquationTensorizedMultilinearFormNewBounds}, and therefore Theorem \ref{MainTheorem1}. Below we will focus on proving \eqref{Section3EquationTensorizedMultilinearFormNewBounds} with $\widetilde{\Lambda_{\mathfrak{m}}}$ replaced by $\Lambda_{\mathscr{R}}^{dis}$. Fix a finite collection $\mathscr{R}$, bounded sets $E_j, j\in [n]$, and $|f_j|\leq 1_{E_j}, j\in [n]$ from now on. We can also assume each term in \eqref{Section3EquationMainGoalDiscreteVersion} is non-zero.

%%%%%

\section{Trees, Mass, and the Selection Algorithm for Scalar Trees} \label{Section4}
We would like to define our trees using $C_2\Xi(\mathcal{R})$, but we will work with slightly different objects to get the central grid structure. Lemma \ref{Section4LemmaCentralGrid} below modifies the centralization lemma in \cite{GLBHTUniform}.
\begin{lemma}  \label{Section4LemmaCentralGrid}
Let $L\gg 1$ and $\mathscr{A}$ be a finite $L\text{-sparse}$ collection of intervals in $\mathbb{R}$. Then there exists a new collection $\mathscr{G}=\{G(A): A\in \mathscr{A}\}$ of intervals such that
\begin{enumerate}
\item The property $A\subset G(A)\subset 2A$ hold for any $A\in \mathscr{A}$.
\item $\mathscr{G}$ is a $L^{\frac{1}{2}}\text{-central}$ grid.
\end{enumerate}
\end{lemma}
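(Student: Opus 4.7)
My plan is to organize $\mathscr{A}$ into a forest and then recursively build $G(A)$ from the leaves up, expanding each interval just enough to absorb the $L^{1/2}$-dilates of its children. The $L$-sparsity provides ample room between scales for this to fit inside $2A$.

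First I would set up a parent map $\pi$ on $\mathscr{A}$ by defining $\pi(A)$ to be the smallest $B\in\mathscr{A}$ with $s(B)>s(A)$ such that $(2L^{1/2}+1)A\subset 2B$ (so that $A$ sits comfortably inside $2B$ with an $L^{1/2}$-sized buffer); if no such $B$ exists, $A$ is a root. Using $L$-sparsity one checks that $\pi$ is well-defined and yields a forest structure on $\mathscr{A}$.

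Second I would define $G(A)$ recursively from the leaves. If $A$ has no $\pi$-children, set $G(A)=A$. Otherwise, letting $\mathscr{C}(A)=\{A'\in\mathscr{A}:\pi(A')=A\}$, set
\[
G(A) := \conv\!\left( A \cup \bigcup_{A'\in\mathscr{C}(A)} L^{1/2} G(A') \right).
\]

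Third I would verify the two claimed properties. For (1), $A\subset G(A)$ is immediate. For the bound $G(A)\subset 2A$: by induction $G(A')\subset 2A'$, so the center of $G(A')$ lies within $s(A')/4$ of the center of $A'$ and hence $L^{1/2}G(A')$ has right endpoint at most $r_{A'}+L^{1/2}s(A')-s(A')/4$; the parent condition $(2L^{1/2}+1)A'\subset 2A$ gives $r_{A'}+L^{1/2}s(A')\le r_A+s(A)/2$, so $L^{1/2}G(A')\subset 2A$, and the convex hull preserves this containment. For (2), if $G(A_1)\cap G(A_2)\neq\emptyset$ with $s(A_1)<s(A_2)$, then using $L$-sparsity and the $L^{1/2}$-buffer built into $\pi$ one shows $A_1$ must be a descendant of $A_2$ in the forest; iterating the relation $L^{1/2}G(A')\subset G(\pi(A'))$ (true by the recursive definition) up the ancestor chain from $A_1$ to $A_2$ gives $L^{1/2}G(A_1)\subset G(A_2)$, yielding the nested grid property and $L^{1/2}$-centralness simultaneously.

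The main obstacle is this ancestral claim in (2): showing that intersecting $G$'s always come from ancestrally related intervals rather than merely geographically adjacent ones. Same-scale siblings are easily separated by $L$-sparsity, but the subtle case is a smaller interval $A_1$ that is close to some $A_2$ without satisfying $(2L^{1/2}+1)A_1\subset 2A_2$. Here one must carefully combine the $L^{1/2}$-buffer in the definition of $\pi$ with the inductive bound on the size of $G(A_1)$ and $G(A_2)$ to force disjointness of the two subtrees' $G$'s, and this geometric bookkeeping is the central technical step of the argument.
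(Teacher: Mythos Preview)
Your bottom-up forest construction is a genuinely different route from the paper's. The paper works top-down: it removes one interval $A_1$ of \emph{largest} scale, applies the induction hypothesis to $\mathscr{A}\setminus\{A_1\}$, and then defines $G(A_1)$ by sweeping through scales $s_2>s_3>\cdots$ and greedily absorbing $L^{1/2}G(A)$ for every $A$ at the current scale that lies within $10\,s_k$ of the set built so far. Because absorption is driven by geometric proximity rather than a precomputed tree, the grid and $L^{1/2}$-central properties follow almost by construction: if $G(A)\cap G(A_1)\neq\emptyset$ then $A$ was necessarily close enough at its own scale to have been swallowed. Your approach trades this for a cleaner recursive definition via the parent map $\pi$, but then must pay for it with the ancestral claim.

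That claim is the real content, and your sketch does not establish it. The delicate situation is this: take $A_2=[0,1]$, a tiny non-descendant $B$ with $(2L^{1/2}+1)B$ just barely sticking past $1.5$, and a $\pi$-child $C$ of $A_2$ placed so that $L^{1/2}G(C)$ pushes the right edge of $G(A_2)$ close to $2B$. For the claim to hold, you must show that any such $C$ with $s(C)$ small enough to matter would actually have $\pi(C)=B$ rather than $\pi(C)=A_2$ --- i.e., that the $(2L^{1/2}+1)$-buffer in the definition of $\pi$ forces $C$ into $B$'s subtree. This does work (the scale gap $s(C)<s(B)/L$ makes $(2L^{1/2}+1)C$ short enough to fit inside $2B$ whenever $C$ is positioned to threaten overlap), but it requires a genuine case analysis over the possible scale relationships between $C$ and $B$, and over whether the relevant descendants sit in the same or different $\pi$-trees. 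You have flagged this as bookkeeping; it is the heart of the proof, and you should write it out. The paper's greedy absorption is designed precisely to avoid this case analysis.
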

\begin{proof}
We prove by induction on $\#\mathscr{A}$. The $\#\mathscr{A}=1$ case is obvious. Assume the conclusion holds for $n-1$, we prove it for $n$. Write the elements in $\{s(A): A\in \mathscr{A}\}$ as $s_1>\cdots>s_{n^{'}}$. Take one $A_1\in \mathscr{A}$ such that $s(A_1)=a_1$. Apply the induction hypothesis to $\mathscr{A}\backslash\{A_1\}$. We get $G(A), A\neq A_1$. Now we define $G(A_1)$. First let $G_1:=A_1$. The define $G_k, 2\leq k\leq n^{'}$, as follows:
\begin{equation*}
G_{k}:=\conv \Big( G_{k-1}\cup\big( \bigcup_{\substack{A: s(A)=s_k,\\d \big(G(A), G_{k-1} \big)\leq 10s_k}} L^{\frac{1}{2}}G(A) \big) \Big).
\end{equation*}
We let $G(A_1):=G_{n^{'}}$. First note that
\begin{equation}  \label{Section4EquationCentralGridLengthInequality}
|G_{k}|\leq |G_{k-1}|+O(W^{\frac{1}{2}}s_{k}),\ \forall 2\leq k\leq n^{'}.
\end{equation}
Thus
\begin{equation*}
|G_{n^{'}}|\leq s_1+\sum_{k=2}^{n^{'}} O(W^{\frac{1}{2}}s_{k})<(1+\frac{1}{100})s_1.
\end{equation*}
This means $G_{n^{'}}\subset 2A_1$ since $A_1\subset G_{n^{'}}$. It then suffices to show $G(A)\cap G(\widetilde{A})\neq \emptyset, s(A)<s(\widetilde{A})$ implies $L^{\frac{1}{2}}G(A)\subset G(\widetilde{A})$. By the induction hypothesis, we only need to tackle the $\widetilde{A}=A_1$ case. Assume $s(A)=s_{k_0}$. It suffices to prove $\big(G(A), G_{k_0-1} \big)\leq 10s_{k_0}$. If this does not hold, then $d\big(G(A), G_{k_0-1} \big)> 10s_{k_0}$. Write $G_{k_0-1}=[a, b)$. Assume $G(A)\subset (b, \infty)$ without loss of generality. Now \eqref{Section4EquationCentralGridLengthInequality} implies
\begin{equation*}
|G_{n^{'}}|\leq |G_{k_0}|+\frac{1}{100}s_{k_0},
\end{equation*}
so $\big(G(A), G_{k_0} \big)<\frac{1}{100}s_{k_0}$. Combine this with $d\big(G(A), G_{k_0-1} \big)> 10s_{k_0}$. We get there exits $A^{'}$ such that $s \big(G(A^{'}) \big)=s_{k_0}$ and
\begin{gather*}
d\big(G(A^{'}), G_{k_0-1} \big)\leq 10s_{k_0}, \\
L^{\frac{1}{2}} G(A^{'})\cap (b, \infty)\neq \emptyset.
\end{gather*}
But then we have
\begin{gather*}
A\subset \big(b, b+O(L^{\frac{1}{2}}s_{k_0}) \big),  \\
A^{'}\subset (b-2L^{\frac{1}{2}}s_{k_0}, b+20s_{k_0}),
\end{gather*}
which means $d(A, A^{'})=O(L^{\frac{1}{2}}s_{k_0})\ll Ls_{k_0}$, a contradiction. This finishes the proof.
\end{proof}
Recall the fourth property in Theorem \ref{MainTheorem1Discrete}. Apply Lemma \ref{Section4LemmaCentralGrid} to $\{C_2\Xi(\mathcal{R}_j): \mathcal{R}\in \mathscr{R}\}, j\in [n]$, with $L:=C_3^{\frac{2}{3}}$. We get the $C_3^{\frac{1}{3}}\text{-central}$ grid $\{G\big(C_2\Xi(\mathcal{R}_j) \big): \mathcal{R}\in \mathscr{R}\}, j\in [n]$.
\begin{definition}
For $\mathcal{R}\in \mathscr{R}, j\in [n]$, denote $G\big(C_2\Xi(\mathcal{R}_j) \big)$ by $C_2\circ \Xi(\mathcal{R}_j)$ and $\Big( C_2\circ \Xi(\mathcal{R}_1) \Big)\times \cdots \times\Big( C_2\circ \Xi(\mathcal{R}_n) \Big)$ by $C_2\circ \Xi(\mathcal{R})$.
\end{definition}
Now we define two kinds of order relation.
\begin{definition}
Let $I_j\subset \mathbb{R}^d$ be a dyadic interval, $\xi_j\in \mathbb{R}^d$ be a point, and $\mathcal{R}_j$ be a rectangle. Then we write
\begin{enumerate}
\item $\mathcal{R}_j\leq (I_j, \xi_j)$ if $I(\mathcal{R}_j)\subset I_j$ and $\xi_j\subset \Xi(\mathcal{R}_j)$,
\item $\mathcal{R}_j\lesssim (I_j, \xi_j)$ if $I(\mathcal{R}_j)\subset I_j$ and $\xi_j\subset C_2\circ \Xi(\mathcal{R}_j)$.
\end{enumerate}
Moreover, let $\mathcal{R}_j, \widetilde{\mathcal{R}}_j$ be two rectangles. Write
\begin{enumerate}
\item $\mathcal{R}_j\leq \widetilde{\mathcal{R}}_j$ if $I(\mathcal{R}_j)\subset I(\widetilde{\mathcal{R}}_j)$ and $\Xi(\widetilde{\mathcal{R}}_j)\subset \Xi(\mathcal{R}_j)$,
\item $\mathcal{R}_j\lesssim \widetilde{\mathcal{R}}_j$ if $I(\mathcal{R}_j)\subset I(\widetilde{\mathcal{R}}_j)$ and $C_2 \circ \Xi(\widetilde{\mathcal{R}}_j)\subset C_2\circ \Xi(\mathcal{R}_j)$.
\end{enumerate}
We also need the vector version of these relations. Let $I\subset \mathbb{R}^d$ be a dyadic interval, $\xi\in \Gamma_0$ be a point, and $\mathcal{R}$ be a vector rectangle. Define
\begin{enumerate}
\item $\mathcal{R}\leq (I, \xi)$ if $\mathcal{R}_j\leq (I, \xi_j), \forall j\in[n]$,
\item $\mathcal{R}\lesssim (I, \xi)$ if $\mathcal{R}_j\lesssim (I, \xi_j), \forall j\in[n]$.
\end{enumerate}
Similarly, for two vector rectangles $\mathcal{R}=(\mathcal{R}_1, \cdots, \mathcal{R}_n), \widetilde{\mathcal{R}}=(\widetilde{\mathcal{R}}_1, \cdots, \widetilde{\mathcal{R}}_n)$, define
\begin{enumerate}
\item $\mathcal{R}\leq \widetilde{\mathcal{R}}$ if $\mathcal{R}_j\leq \widetilde{\mathcal{R}}_j, \forall j\in[n]$,
\item $\mathcal{R}\lesssim \widetilde{\mathcal{R}}$ if $\mathcal{R}_j\lesssim \widetilde{\mathcal{R}}_j, \forall j\in[n]$.
\end{enumerate}
\end{definition}
We will work with the relation $\lesssim$ most of the time including Definition \ref{Section4DefinitionTrees} below, but the relation $\leq$ will also be useful in Section \ref{Section5}.
\begin{definition}  \label{Section4DefinitionTrees}
Let $j\in [n]$. A non-empty collection $\mathscr{T}_j\subset \{\mathcal{R}_j: \mathcal{R}\in \mathscr{R}\}$ is called a $j\text{-tree}$ if there exists a dyadic interval $I_j\subset \mathbb{R}^d$ and a point $\xi_j\in \mathbb{R}^d$ such that $\mathcal{R}_j\lesssim (I_j, \xi_j), \forall \mathcal{R}_j\in \mathscr{T}_j$. All such $I_j$'s are intersect with each other, so the smallest one exists. Denote it by $I_{\mathscr{T}_j}$. If there exists $\xi_j$ such that $\xi_j\subset C_2\circ \Xi(\mathcal{R}_j)\backslash 10\Xi(\mathcal{R}_j), \forall \mathcal{R}_j\in \mathscr{T}_j$, then we say $\mathscr{T}_j$ is lacunary. Moreover, A non-empty collection $\mathscr{T}\subset \mathscr{R}$ is called a vector tree if there exists a dyadic interval $I\subset \mathbb{R}^d$ and a point $\xi\in \Gamma_0$ such that $\mathcal{R}\lesssim (I, \xi), \forall \mathcal{R}\in \mathscr{T}$. Denote the smallest $I$ by $I_{\mathscr{T}}$.
\end{definition}
Lacunary $j\text{-trees}$ provide us with the classical configuration to apply the C-Z theory. More precisely, we shall estimate sums over trees by $\mathbf{M}_j$ defined below and estimate $\mathbf{M}_j$ using a localized C-Z type argument.
\begin{definition}
Let $j\in [n]$ and $\mathscr{P}_j\subset \{\mathcal{R}_j: \mathcal{R}\in \mathscr{R}\}$ be a collection. Define
\begin{equation*}
\mathbf{M}_j(\mathscr{P}_j):=\sup_{\substack{\mathscr{T}_j\subset \mathscr{P}_j,\\\mathscr{T}_j \text{\ is a lacunary $j\text{-tree}$}}} \big(\frac{1}{|I_{\mathscr{T}_j}|} \sum_{\mathcal{R}_j\in \mathscr{T}_j} |\langle f_j|\varphi_{\mathcal{R}_j} \rangle|^2 \big)^{\frac{1}{2}}.
\end{equation*}
\end{definition}
\begin{remark}
Recall we have fixed $f_j, j\in [n]$. We let $\mathbf{M}_j(\mathscr{P}_j):=0$ if $\mathscr{P}_j=\emptyset$.
\end{remark}
The geometry of Whitney decomposition guarantees every vector tree has at least two lacunary indices so that we can bound sums over trees using $\mathbf{M}_j$.
\begin{lemma}  \label{Section4LemmaTreeEstimate}
Let $\mathscr{T}$ be a vector tree. Then
\begin{equation*}
\sum_{\mathcal{R}\in \mathscr{T}} |I(\mathcal{R})|^{1-\frac{n}{2}} \prod_{j=1}^n |\langle f_j | \varphi_{\mathcal{R}_j} \rangle|\lesssim |I_{\mathscr{T}}|\prod_{j=1}^n \mathbf{M}_j(\mathscr{T}_j),
\end{equation*}
where $\mathscr{T}_j:=\{\mathcal{R}_j: \mathcal{R}\in \mathscr{T}\}$.
\end{lemma}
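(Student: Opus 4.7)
The plan is to follow the classical Muscalu-Tao-Thiele tree estimate strategy: isolate two lacunary indices, treat the remaining $n-2$ indices by the trivial single-rectangle bound, and close with Cauchy-Schwarz on the two lacunary slots.

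First, and this is the main obstacle, I need to show that every vector tree possesses at least two lacunary indices. The key geometric input is the Whitney condition $d(\Xi(\mathcal{R}),\Gamma)\sim C_1 s(\Xi(\mathcal{R}))$, which forces every frequency cube to sit far from $\Gamma$; the tree top $\xi$, by contrast, lies in the ambient subspace $\Gamma_0$ and in each $C_2\circ\Xi(\mathcal{R}_j)$. If at most one index were lacunary, then for each of the $n-1$ non-lacunary indices $j$ the annular region $C_2\circ\Xi(\mathcal{R}_j)\setminus 10\Xi(\mathcal{R}_j)$ would have to be blocked at every point by some $10\Xi(\widetilde{\mathcal{R}}_j)$ in the tree; combined with the $C_3$-sparse and $C_3^{1/3}$-central grid structure supplied by Lemma \ref{Section4LemmaCentralGrid}, this forces the scalar rectangles to cluster around $\xi_j$ in a specific nested configuration controlled by $C_2$. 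Aggregating these constraints over all $j$ together with the linear condition $\sum_j \xi_j=0$ and the rank relation from Lemma \ref{Section3LemmaRankOfCubes} would then pin $\xi$ into a $C_2 s$-neighborhood of $\Gamma$ at the smallest scale $s$ in the tree. Since $C_1\gg C_2$, this contradicts the Whitney separation and yields at least two lacunary indices.

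Having secured two lacunary indices $j_1,j_2$, I handle the remaining indices trivially: because $C_2\gg 10$, any singleton $\{\mathcal{R}_j\}$ is itself a lacunary $j$-tree (the annulus is non-empty), whence by the definition of $\mathbf{M}_j$,
\[
|\langle f_j \,|\, \varphi_{\mathcal{R}_j}\rangle|\leq|I(\mathcal{R})|^{1/2}\,\mathbf{M}_j(\mathscr{T}_j),\qquad j\notin\{j_1,j_2\}.
\]
Substituting these bounds and using $|I(\mathcal{R}_j)|=|I(\mathcal{R})|$, the scale factor $|I(\mathcal{R})|^{1-n/2}$ combines with the $n-2$ copies of $|I(\mathcal{R})|^{1/2}$ to give total power zero, reducing the desired inequality to
\[
\sum_{\mathcal{R}\in\mathscr{T}}|\langle f_{j_1} \,|\, \varphi_{\mathcal{R}_{j_1}}\rangle|\,|\langle f_{j_2} \,|\, \varphi_{\mathcal{R}_{j_2}}\rangle|\lesssim|I_{\mathscr{T}}|\,\mathbf{M}_{j_1}(\mathscr{T}_{j_1})\,\mathbf{M}_{j_2}(\mathscr{T}_{j_2}).
\]

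The final step is Cauchy-Schwarz in $\mathcal{R}$ to decouple the two lacunary slots, followed by the lacunary tree bound
\[
\sum_{\mathcal{R}_{j_k}\in\mathscr{T}_{j_k}}|\langle f_{j_k} \,|\, \varphi_{\mathcal{R}_{j_k}}\rangle|^2\leq|I_{\mathscr{T}_{j_k}}|\,\mathbf{M}_{j_k}(\mathscr{T}_{j_k})^2\leq|I_{\mathscr{T}}|\,\mathbf{M}_{j_k}(\mathscr{T}_{j_k})^2,
\]
which is immediate from the definition of $\mathbf{M}_{j_k}$ applied to the lacunary sub-tree $\mathscr{T}_{j_k}$. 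The only bookkeeping subtlety is that the vector sum $\sum_{\mathcal{R}\in\mathscr{T}}$ may overcount a fixed scalar $\mathcal{R}_{j_k}$; however, the third property of Theorem \ref{MainTheorem1Discrete}, together with the shared physical cube $I(\mathcal{R})=I(\mathcal{R}_{j_k})$ (which fixes the common frequency scale) and the sparsity plus centrality of the grids, bounds this multiplicity by an absolute constant. Multiplying the two Cauchy-Schwarz factors produces $|I_{\mathscr{T}}|\mathbf{M}_{j_1}(\mathscr{T}_{j_1})\mathbf{M}_{j_2}(\mathscr{T}_{j_2})$, which combined with the $\mathbf{M}_j$ factors pulled out earlier gives the claimed inequality.
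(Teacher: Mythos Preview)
Your proposal has a genuine gap in the first step. You assert that the entire vector tree $\mathscr{T}$ admits a single pair $(j_1,j_2)$ for which both scalar trees $\mathscr{T}_{j_1}$ and $\mathscr{T}_{j_2}$ are lacunary, but the argument you sketch is too vague, and it rests on an incorrect relation between the constants: the paper fixes $1\ll C_1\ll C_2$ (the Whitney constant is \emph{smaller} than the tree-dilation constant), so your intended contradiction ``$\xi$ lies in a $C_2 s$-neighborhood of $\Gamma$, violating the Whitney separation $\sim C_1 s$'' goes the wrong way.

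More fundamentally, the paper does not look for a single lacunary pair valid across all of $\mathscr{T}$, and none need exist: as the frequency scale varies, the indices $j$ for which $\xi_j$ falls into the annulus $C_2\circ\Xi_j\setminus 10\Xi_j$ can change. Instead the paper introduces an auxiliary constant $C_0$ with $1\ll C_0\ll C_1$ and proves two facts about each frequency cube $\Xi=\Xi(\mathcal{R})$ in the tree: (i) for all but at most one $\Xi$ there is some $j$ with $\xi_j\in C_2\circ\Xi_j\setminus C_0\Xi_j$ (here the Whitney separation is played against $C_0$, which is legitimate since $C_0\ll C_1$); and (ii) whenever such a $j$ exists, the constraint $\sum_j\xi_j=0$ forces a second index $j'\neq j$ with $\xi_{j'}\in C_2\circ\Xi_{j'}\setminus 10\Xi_{j'}$. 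Because the pair $(j,j')$ depends on $\Xi$, the tree is split as
\[
\mathscr{T}=\mathscr{T}(\Xi_{\mathrm{exc}})\cup\bigcup_{j\neq j'}\mathscr{T}(j,j'),
\]
where the exceptional piece lives at a single frequency scale and each $\mathscr{T}(j,j')$ is lacunary in both slots $j$ and $j'$. Your sup-bound for the $n-2$ remaining indices and the Cauchy--Schwarz step are then applied to each of the $O(n^2)$ pieces separately; summing them gives the lemma. Without this decomposition, the reduction to a single Cauchy--Schwarz over the whole tree is not justified.
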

\begin{proof}
Take $\xi\in \Gamma_0$ such that $\mathcal{R}\lesssim (I_{\mathscr{T}}, \xi), \forall \mathcal{R}\in \mathscr{T}$. Fix a large constant $C_0\ll C_1$. Let $\Upsilon:=\{\Xi(\mathcal{R}): \mathcal{R}\in \mathscr{T}\}$. Here are some facts about $\Upsilon$:
\begin{enumerate}
\item For $\Xi, \widetilde{\Xi}\in \Upsilon$, $s(\Xi)=s(\widetilde{\Xi})$ implies $\Xi=\widetilde{\Xi}$. This is due to $C_2 \circ \Xi\cap C_2 \circ \widetilde{\Xi}\neq \emptyset$ and the $C_3\text{-sparse}$ property.
\item For all but at most one $\Xi\in \Upsilon$, we can find one $j$ such that $\xi_j\in C_2\circ \Xi_j\backslash C_0\Xi_j$. In fact, if there exist two $\Xi, \widetilde{\Xi}\in \Upsilon$ such that $\xi\in C_0\Xi\cap C_0\widetilde{\Xi}$. The first implies $s(\Xi)\neq s(\widetilde{\Xi})$. Assume $s(\Xi)<s(\widetilde{\Xi})$ without loss of generality. Then $d(\Xi, \widetilde{\Xi})\lesssim C_0 s(\widetilde{\Xi})$. Combining this with $d(\Xi, \Gamma)\sim C_1 s(\Xi)$ gives
\begin{equation*}
d(\widetilde{\Xi}, \Gamma)\leq d(\widetilde{\Xi}, \Xi)+d(\Xi, \Gamma)\lesssim C_0 s(\widetilde{\Xi})+C_1 s(\Xi)\ll C_1 s(\widetilde{\Xi}),
\end{equation*}
a contradiction.
\item If $\Xi\in \Upsilon$ satisfy $\Xi_j\in C_2\circ \Xi_j\backslash C_0\Xi_j$ for some $j$, then there must exist $j^{'}\neq j$ such that $\xi_{j^{'}}\in C_2\circ \Xi_{j^{'}}\backslash 10\Xi_{j^{'}}$. Otherwise $\xi_{j^{'}}\in 10\Xi_{j^{'}}, \forall j^{'}\neq j$. Take $\tau\in \Xi\cap \Gamma_0$. Then $d(\xi_j, \tau_j)\gtrsim C_0s(\Xi)$ but $d(\xi_{j^{'}}, \tau_{j^{'}})\lesssim s(\Xi), \forall j^{'}\neq j$. On the other hand, we have
\begin{equation*}
\sum_{j} \xi_j=\sum_{j} \tau_j=0\Longrightarrow \xi_j-\tau_j=\sum_{j^{'}\neq j} (\tau_{j^{'}}-\xi_{j^{'}}).
\end{equation*}
This is a contradiction since $C_0$ is large.
\end{enumerate}
The above properties imply that we can write
\begin{equation*}
\mathscr{T}=\mathscr{T}(\Xi)\cup \Big( \bigcup_{j\neq j^{'}} \mathscr{T}(j, j^{'}) \Big),
\end{equation*}
where $\Xi(\mathcal{R})=\Xi, \forall \mathcal{R}\in \mathscr{T}(\Xi)$, and $\xi_j\in C_2\circ \Xi_j\backslash 10\Xi_j, \xi_{j^{'}}\in C_2\circ \Xi_{j^{'}}\backslash 10\Xi_{j^{'}}, \forall \mathcal{R}\in \mathscr{T}(j, j^{'})$. Note that for $\mathcal{R}=(\mathcal{R}_1, \cdots, \mathcal{R}_n)$, the first property above implies $\mathcal{R}\mapsto \mathcal{R}_j$ is a bijection. We can then estimate the sum over $\mathscr{T}(j, j^{'})$ as follows:
\begin{equation*}
\begin{split}
\sum_{\mathcal{R}\in \mathscr{T}(j, j^{'})} & |I(\mathcal{R})|^{1-\frac{n}{2}} \prod_{j=1}^n |\langle f_j | \varphi_{\mathcal{R}_j} \rangle| \\
& \leq \Big( \prod_{j^{''}\neq j, j^{'}} \sup_{\mathcal{R}_{j^{''}}\in \mathscr{T}_{j^{''}}} \frac{|\langle f_{j^{''}} | \varphi_{\mathcal{R}_{j^{''}}} \rangle|}{|I(\mathcal{R})|^{\frac{1}{2}}} \Big)\Big( \sum_{\mathcal{R}\in\mathscr{T}(j, j^{'})} |\langle f_j | \varphi_{\mathcal{R}_j} \rangle||\langle f_{j^{'}} | \varphi_{\mathcal{R}_{j^{'}}} \rangle| \Big) \\
& \leq \Big(\prod_{j^{''}\neq j, j^{'}} \mathbf{M}_{j^{''}}(\mathscr{T}_{j^{''}})\Big) |I_{\mathscr{T}}| \Big(\frac{1}{|I_{\mathscr{T}}|}\sum_{\mathcal{R}\in\mathscr{T}(j, j^{'})}|\langle f_j | \varphi_{\mathcal{R}_j} \rangle|^2\Big)^{\frac{1}{2}} \Big(\frac{1}{|I_{\mathscr{T}}|}\sum_{\mathcal{R}\in\mathscr{T}(j, j^{'})}|\langle f_{j^{'}} | \varphi_{\mathcal{R}_{j^{'}}} \rangle|^2\Big)^{\frac{1}{2}} \\
& \leq |I_{\mathscr{T}}|\prod_{j=1}^n \mathbf{M}_j(\mathscr{T}_j).
\end{split}
\end{equation*}
Finally consider $\mathscr{T}(\Xi)$. Let $j=1, j^{'}=2$ and take $\tau_1\in C_2\circ \Xi_1\backslash 10\Xi_1, \tau_2\in C_2\circ \Xi_2\backslash 10\Xi_2$. One can estimate exactly as above. Then summing over $\mathscr{T}(\Xi)$ and all $\mathscr{T}(j, j^{'})$ completes the proof.
\end{proof}
Now we estimate $\mathbf{M}_j$. The following theorem is essentially the Littlewood-Paley theory. It's classical, but we include the proof.
\begin{theorem}  \label{Section4TheoremLittlewood-PaleyDyadic}
Let $C>1$ be a constant. Let $\mathscr{I}$ be a finite collection of dyadic intervals in $\mathbb{R}^d$ and $(\xi_I)_{I\in \mathscr{I}}$ be a family of points in $\mathbb{R}^d$ such that $\xi_I\in \big( -Cs(I)^{-1}, Cs(I)^{-1} \big)^d\backslash \big( -s(I)^{-1}, s(I)^{-1} \big)^d$. Let $(\varphi_{I})_{I\in \mathscr{I}}$ be a family of functions such that each $\varphi_{I}$ can be written into the following form:
\begin{equation*}
\varphi_{I}(x)=|I|^{-\frac{1}{2}}e^{2\pi i \xi_I\big(x-\mathfrak{c}(I) \big)}\zeta_{I} \big( \frac{x-\mathfrak{c}(I)}{s(I)}\big),
\end{equation*}
where $\zeta_{I}\in \mathcal{S}(\mathbb{R}^d)$ satisfies
\begin{enumerate}
\item The Fourier transform $\widehat{\zeta_{I}}$ is supported in $(-\frac{1}{2}, \frac{1}{2})^{d}$.
\item For any $\alpha: |\alpha|\leq 1$, we have
\begin{equation*}
|\partial^{\alpha} \zeta_{I}(x)|\lesssim (1+|x|)^{-10d}.
\end{equation*}
The implicit constant does not depend on $I$.
\end{enumerate}
Then the square function
\begin{equation*}
\mathfrak{s}f(x):=\Big( \sum_{I\in \mathscr{I}} \frac{|\langle f|\varphi_{I} \rangle|^2}{|I|}1_{I}(x) \Big)^{\frac{1}{2}}
\end{equation*}
is $L^2\rightarrow L^2$ bounded and $L^1\rightarrow L^{1, \infty}$ bounded.
\end{theorem}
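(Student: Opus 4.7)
The plan is to prove the $L^2 \to L^2$ and $L^1 \to L^{1,\infty}$ bounds separately. The $L^2$ bound reduces to showing that $\{\varphi_I\}_{I \in \mathscr{I}}$ forms a Bessel system, which follows from Schur's test applied to the Gram matrix. The weak-type bound is then extracted by Calderón-Zygmund decomposition, using wave packet cancellation to control the bad part.

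For the $L^2$ bound, observe that $\|\mathfrak{s}f\|_2^2 = \sum_{I \in \mathscr{I}} |\langle f, \varphi_I\rangle|^2$, so it suffices to prove the operator $Tf := (\langle f, \varphi_I\rangle)_{I \in \mathscr{I}}$ is bounded $L^2(\mathbb{R}^d) \to \ell^2(\mathscr{I})$. By Schur's test applied to $TT^*$, this reduces to checking $\sup_I \sum_{I'} |\langle \varphi_I, \varphi_{I'}\rangle| \lesssim 1$. The key geometric fact is that $\widehat{\varphi_I}$ is supported in the annulus $\{\xi : |\xi|_\infty \in [\tfrac{1}{2}s(I)^{-1}, (C+\tfrac{1}{2})s(I)^{-1}]\}$, so the inner product vanishes whenever the scale ratio $s(I)/s(I')$ lies outside a window of $O(\log C)$ dyadic scales. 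For the remaining comparable scales, the Schwartz decay of $\zeta_I, \zeta_{I'}$ yields the standard wave packet estimate $|\langle \varphi_I, \varphi_{I'}\rangle| \lesssim (1 + |\mathfrak{c}(I) - \mathfrak{c}(I')|/\max(s(I), s(I')))^{-10d}$, whose sum over $I'$ at a fixed scale is $O(1)$.

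For the weak-type bound at height $\lambda$, apply the Calderón-Zygmund decomposition $f = g + \sum_Q b_Q$ with the standard properties $\|g\|_\infty \lesssim \lambda$, $\|g\|_1 \leq \|f\|_1$, each $b_Q$ supported in a dyadic cube $Q$ with mean zero and $\|b_Q\|_1 \lesssim \lambda |Q|$, and $\sum_Q |Q| \lesssim \|f\|_1/\lambda$. The good part is dispatched via Chebyshev and the $L^2$ bound: $|\{\mathfrak{s}g > \lambda/2\}| \lesssim \|g\|_2^2/\lambda^2 \lesssim \|g\|_\infty \|g\|_1/\lambda^2 \lesssim \|f\|_1/\lambda$. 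Setting $\Omega^* := \bigcup_Q 4Q$, we have $|\Omega^*| \lesssim \|f\|_1/\lambda$, and it remains to prove $\int_{(\Omega^*)^c} \mathfrak{s}b \, dx \lesssim \|f\|_1$. The crucial reduction here is Minkowski's inequality in the $\ell^2$ fiber, which yields $\mathfrak{s}b(x) \leq \sum_Q \mathfrak{s}b_Q(x)$ and thereby reduces the task to $\int_{(4Q)^c} \mathfrak{s}b_Q(x) \, dx \lesssim \|b_Q\|_1$ for each bad cube.

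For this single-cube estimate, use the crude bound $\mathfrak{s}b_Q(x) \leq \sum_I |\langle b_Q, \varphi_I\rangle| |I|^{-1/2} 1_I(x)$ and split the sum by the relative scale of $I$ and $Q$. When $s(I) \geq s(Q)$, invoke the mean-zero property of $b_Q$ together with the Lipschitz bound $|\nabla \varphi_I(y)| \lesssim C s(I)^{-1} |I|^{-1/2} (1+|y-\mathfrak{c}(I)|/s(I))^{-10d}$ (arising from both the envelope derivative and the oscillation $|\xi_I| \lesssim C s(I)^{-1}$) to gain the factor $s(Q)/s(I)$. When $s(I) < s(Q)$, use that $I \subset (4Q)^c$ forces $\mathrm{dist}(I, Q) \gtrsim s(Q)$, so the Schwartz tail of $\varphi_I$ supplies a strong decay factor in the ratio $s(I)/s(Q)$. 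In both regimes, the sum over $I$ at a fixed scale is $O(1)$ by the summability of $(1+|z|)^{-10d}$ on $\mathbb{Z}^d$, and summation over dyadic scales forms a convergent geometric series. I expect the main technical nuisance to be the bookkeeping in the Lipschitz estimate of the first regime, where the oscillation term $|\xi_I|$ contributes a constant $C$ rather than decay; this is acceptable since $C$ is fixed but must be tracked consistently. Everything else is routine Calderón-Zygmund analysis once the wave packet inner-product bounds are in place.
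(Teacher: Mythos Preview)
Your proposal is correct and follows essentially the same route as the paper: Schur-type almost-orthogonality for the $L^2$ bound (the paper phrases this as dividing into $O(1)$ groups with exact scale orthogonality, you phrase it as $O(\log C)$ interacting scales, but these are equivalent), and Calder\'on--Zygmund decomposition for the weak-type bound with the same scale dichotomy (mean-zero cancellation for $s(I)\gtrsim s(Q)$, Schwartz tail decay for $s(I)\lesssim s(Q)$ using that $I$ meeting $(4Q)^c$ forces $d(I,Q)\gtrsim s(Q)$). The bookkeeping you flag around the Lipschitz estimate and the constant $C$ is exactly the minor technicality it appears to be.
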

\begin{proof}
We first prove the $L^2$ boundedness. By dividing $\mathscr{I}$ into $O(1)$ groups, we can assume $\langle \varphi_{I}|\varphi_{I^{'}}\rangle=0$ if $s(I)\neq s(I^{'})$. Consider $A: f\mapsto (\langle f|\varphi_{I}\rangle)_{I\in \mathscr{I}}$. It suffices to show $\|A^{*}Af\|_2\lesssim \|Af\|_2$, which means
\begin{equation*}
\sum_{I, I^{'}} \langle f|\varphi_{I}\rangle \overline{\langle f|\varphi_{I^{'}}\rangle} \langle \varphi_{I}|\varphi_{I^{'}}\rangle \lesssim \sum_{I} |\langle f|\varphi_{I}\rangle|^2
\end{equation*}
By Schur's test and orthogonality, it suffices to show
\begin{equation*}
\sum_{I^{'}: s(I^{'})=s(I)} |\langle \varphi_{I}|\varphi_{I^{'}}\rangle|\lesssim 1
\end{equation*}
for any $I$. But this is an easy consequence of the estimate:
\begin{equation}  \label{Dection4EquationSupportPerturbationLemma}
|\langle \varphi_{I}|\varphi_{I^{'}}\rangle|\lesssim 
 \big(1+\frac{d(I, I^{'})}{s(I)} \big)^{-5d}.
\end{equation}
Now we use the C-Z decomposition \cite{Stein} to prove the $L^1\rightarrow L^{1, \infty}$ boundedness. Given $\lambda>0$, write $f=g+b:=g+\sum_{Q} b_Q$. We have
\begin{equation*}
|\{\mathfrak{s}f>\lambda\}|\leq |\{\mathfrak{s}g>\frac{\lambda}{2}\}|+|\{\mathfrak{s}b>\frac{\lambda}{2}\}|.  
\end{equation*}
For the first term on the right, we estimate
\begin{equation*}
|\{\mathfrak{s}g>\frac{\lambda}{2}\}|\lesssim \frac{\|\mathfrak{s}g\|_2}{\lambda^2}\lesssim \frac{\|g\|_2}{\lambda^2}\lesssim \frac{\|f\|_1}{\lambda}.
\end{equation*}
For the second term, it suffices to show
\begin{equation}  \label{Section4EquationC-ZIntermidiateStep1}
|\{x\in \Big(\bigcup_{Q} C_d Q \Big)^c: \mathfrak{s}b>\frac{\lambda}{2}\}\lesssim \frac{\|f\|_1}{\lambda}
\end{equation}
since
\begin{equation*}
\sum_{Q} |Q|\lesssim \frac{\|f\|_1}{\lambda}.
\end{equation*}
The estimate \eqref{Section4EquationC-ZIntermidiateStep1} is implied by
\begin{equation*}
\int_{(C_dQ)^c} |\mathfrak{s}b_Q|\lesssim \|b_Q\|_1,\ \forall Q.
\end{equation*}
To prove this, write
\begin{equation}  \label{Section4EquationC-ZIntermidiateStep2}
\int_{(C_dQ)^c} |\mathfrak{s}b_Q|=\int_{(C_dQ)^c} \Big( \sum_{I\in \mathscr{I}} \frac{|\langle b_Q|\varphi_{I} \rangle|^2}{|I|}1_{I}(x) \Big)^{\frac{1}{2}}\leq \int_{(C_dQ)^c} \sum_{I\in \mathscr{I}} \frac{|\langle b_Q|\varphi_{I} \rangle|}{|I|^{\frac{1}{2}}}1_{I}(x).
\end{equation}
We can restrict $I\cap (C_dQ)^c\neq \emptyset$. First consider the sum over $I: s(I)\leq s(Q)$ in RHS of \eqref{Section4EquationC-ZIntermidiateStep2}. The cancellation property doesn't play a role in this case, so we estimate
\begin{equation*}
|\langle b_Q\big{|} |I|^{\frac{1}{2}}\varphi_{I} \rangle|\lesssim \big(1+\frac{d(I, Q)}{s(I)} \big)^{-10d}\|b_Q\|_1.
\end{equation*}
Plug this into RHS of \eqref{Section4EquationC-ZIntermidiateStep2}. We then obtain
\begin{equation*}
\sum_{\substack{I: I\cap (C_dQ)^c\neq \emptyset,\\ s(I)\leq s(Q)}} \big(1+\frac{d(I, Q)}{s(I)} \big)^{-10d}\lesssim \sum_{2^k\leq s(Q)}\sum_{\substack{I: I\cap (C_dQ)^c\neq \emptyset,\\ s(I)=2^k}} \big(\frac{s(Q)}{2^k} \big)^{-5d}\big(1+\frac{d(I, Q)}{2^k} \big)^{-5d}\lesssim 1.
\end{equation*}
Then consider the sum over $I: s(I)>s(Q)$ in RHS of \eqref{Section4EquationC-ZIntermidiateStep2}. Using $\int b_Q=0$, we estimate
\begin{equation*}
|\langle b_Q\big{|} |I|^{\frac{1}{2}}\varphi_{I} \rangle|=|\langle b_Q\big{|} |I|^{\frac{1}{2}}\varphi_{I}(\cdot)-|I|^{\frac{1}{2}}\varphi_{I} \big(\mathfrak{c}(Q) \big) \rangle|\lesssim \frac{s(Q)}{s(I)} \big(1+\frac{d(I, Q)}{s(I)} \big)^{-10d}\|b_Q\|_1.
\end{equation*}
Plug this into RHS of \eqref{Section4EquationC-ZIntermidiateStep2}. We have
\begin{equation*}
\sum_{\substack{I: I\cap (C_dQ)^c\neq \emptyset,\\ s(I)>s(Q)}} \frac{s(Q)}{s(I)} \big(1+\frac{d(I, Q)}{s(I)} \big)^{-10d}=\sum_{2^k>s(Q)}\sum_{I: s(I)=2^k} \frac{s(Q)}{2^k} \big(1+\frac{d(I, Q)}{2^k}\big)^{-10d} \lesssim 1.
\end{equation*}
This completes the proof.
\end{proof}
\begin{corollary} \label{Section4CorollaryMassEstimate1}
Let $j\in [n]$. Let $\mathscr{T}_j$ be a lacunary $j\text{-tree}$. Then
\begin{equation*}
\frac{1}{|I_{\mathscr{T}_j}|}\big{\|} \big( \sum_{\mathcal{R}_j\in \mathscr{T}_j} \frac{|\langle f_j|\varphi_{\mathcal{R}_j} \rangle|^2}{I(\mathcal{R}_j)}1_{I(\mathcal{R}_j)} \big)^{\frac{1}{2}} \big{\|}_{1, \infty}\lesssim \frac{1}{|I_{\mathscr{T}_j}|}\int_{\mathbb{R}^d} |f_j| \big(1+\frac{d(x, I_{\mathscr{T}_j})}{s(I_{\mathscr{T}_j})} \big)^{-2N_{\epsilon}}\lesssim \inf_{x\in I_{\mathscr{T}_j}} Mf_j(x).
\end{equation*}
\end{corollary}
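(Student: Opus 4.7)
The proof splits into two inequalities. The second one is a routine annular estimate: decompose $\mathbb{R}^d$ into $A_0:=2I_{\mathscr{T}_j}$ and annuli $A_k:=2^{k+1}I_{\mathscr{T}_j}\setminus 2^k I_{\mathscr{T}_j}$ for $k\geq 1$. On $A_k$ the weight is comparable to $2^{-2N_\epsilon k}$ while $|A_k|\sim 2^{kd}|I_{\mathscr{T}_j}|$, so
\begin{equation*}
\frac{1}{|I_{\mathscr{T}_j}|}\int_{\mathbb{R}^d}|f_j|\Bigl(1+\tfrac{d(x,I_{\mathscr{T}_j})}{s(I_{\mathscr{T}_j})}\Bigr)^{-2N_\epsilon}dx \lesssim \sum_{k\geq 0}2^{-(2N_\epsilon-d)k}\cdot\frac{1}{|2^{k+1}I_{\mathscr{T}_j}|}\int_{2^{k+1}I_{\mathscr{T}_j}}|f_j|.
\end{equation*}
Since $N_\epsilon$ is chosen with $2N_\epsilon>d$, each average is bounded by $Mf_j(x)$ for any $x\in I_{\mathscr{T}_j}$, and the geometric series converges.

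For the first inequality the starting point is Theorem \ref{Section4TheoremLittlewood-PaleyDyadic}. The lacunarity of $\mathscr{T}_j$ provides a $\xi_j$ with $\xi_j\in C_2\circ\Xi(\mathcal{R}_j)\setminus 10\Xi(\mathcal{R}_j)$ for every $\mathcal{R}_j\in\mathscr{T}_j$, so each center $\mathfrak{c}(\Xi(\mathcal{R}_j))-\xi_j$ sits in an annulus of radius $\sim s(I(\mathcal{R}_j))^{-1}$. Conjugating $f_j$ and the wave packets by the modulation $e^{2\pi i\xi_j\cdot x}$ recasts $\mathfrak{s}f_j$ as the square function of Theorem \ref{Section4TheoremLittlewood-PaleyDyadic} applied to the modulated function, meeting its frequency-annulus hypothesis up to an admissible constant. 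A direct application yields the unweighted bound $\|\mathfrak{s}f_j\|_{1,\infty}\lesssim\|f_j\|_1$, but the weighted refinement must come from exploiting the spatial concentration of the wave packets.

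To upgrade to the weighted estimate, decompose $f_j=\sum_{k\geq 0}f_j\chi_{A_k}$ using the same annuli. Minkowski's inequality for the square function gives $\mathfrak{s}f_j\leq\mathfrak{s}(f_j\chi_{A_0})+\sum_{k\geq 1}\mathfrak{s}(f_j\chi_{A_k})$ pointwise. For $k=0$, Theorem \ref{Section4TheoremLittlewood-PaleyDyadic} gives $\|\mathfrak{s}(f_j\chi_{A_0})\|_{1,\infty}\lesssim\int_{A_0}|f_j|$. For $k\geq 1$, since each $I(\mathcal{R}_j)\subset I_{\mathscr{T}_j}$ the function $\mathfrak{s}(f_j\chi_{A_k})$ is supported in $I_{\mathscr{T}_j}$, so by Cauchy--Schwarz $\|\mathfrak{s}(f_j\chi_{A_k})\|_1\leq |I_{\mathscr{T}_j}|^{1/2}\|\mathfrak{s}(f_j\chi_{A_k})\|_2$, and the $L^2$ norm equals $\bigl(\sum_{\mathcal{R}_j\in\mathscr{T}_j}|\langle f_j\chi_{A_k},\varphi_{\mathcal{R}_j}\rangle|^2\bigr)^{1/2}$. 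The wave-packet decay \eqref{Section3EquationWavePacketDecay} yields
\begin{equation*}
|\langle f_j\chi_{A_k},\varphi_{\mathcal{R}_j}\rangle|\lesssim|I(\mathcal{R}_j)|^{-1/2}\Bigl(\frac{s(I(\mathcal{R}_j))}{2^k s(I_{\mathscr{T}_j})}\Bigr)^{N_1}\int_{A_k}|f_j|,
\end{equation*}
since $A_k$ lies at distance $\sim 2^k s(I_{\mathscr{T}_j})$ from $I(\mathcal{R}_j)\subset I_{\mathscr{T}_j}$. Summing geometrically over dyadic scales $\sigma\leq s(I_{\mathscr{T}_j})$, using that at most $(s(I_{\mathscr{T}_j})/\sigma)^d$ rectangles of scale $\sigma$ appear in $\mathscr{T}_j$, produces $\|\mathfrak{s}(f_j\chi_{A_k})\|_1\lesssim 2^{-N_1 k}\int_{A_k}|f_j|$. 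With $N_1\gg N_\epsilon$, summing over $k$ gives the weighted bound. The main technical point is this localization onto far annuli: one trades wave-packet decay against the sum over scales, and the dyadic tiling of $I_{\mathscr{T}_j}$ combined with the $C_3$-sparse frequency structure keeps the scale sum geometric.
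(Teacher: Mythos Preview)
Your argument is correct, but the paper reaches the first inequality by a cleaner maneuver that avoids the annular decomposition of $f_j$ altogether. Instead of splitting $f_j=\sum_k f_j\chi_{A_k}$ and treating near and far pieces separately, the paper \emph{absorbs the weight into the wave packets}: it sets
\[
\varphi_I(x):=\Bigl(1+\Bigl|\tfrac{x-\mathfrak{c}(I_{\mathscr{T}_j})}{s(I_{\mathscr{T}_j})}\Bigr|^2\Bigr)^{N_\epsilon}e^{-2\pi i\xi_j(x-\mathfrak{c}(I))}\varphi_{\mathcal{R}_I}(x),
\qquad
f(x):=\Bigl(1+\Bigl|\tfrac{x-\mathfrak{c}(I_{\mathscr{T}_j})}{s(I_{\mathscr{T}_j})}\Bigr|^2\Bigr)^{-N_\epsilon}e^{-2\pi i\xi_j x}f_j(x),
\]
so that $\langle f\mid\varphi_I\rangle=\langle f_j\mid\varphi_{\mathcal{R}_I}\rangle$. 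Because $N_\epsilon\ll N_1$, the polynomial prefactor is harmless and the new $\varphi_I$ still satisfy the decay hypotheses of Theorem~\ref{Section4TheoremLittlewood-PaleyDyadic}; a single application of the $L^1\to L^{1,\infty}$ bound to $f$ then delivers the weighted estimate directly, since $\|f\|_1$ is exactly the right-hand weighted integral.

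Your approach is more hands-on: it trades the one-shot weight absorption for explicit tail estimates using the decay \eqref{Section3EquationWavePacketDecay}. This works, and your scale-counting step (at most $(s(I_{\mathscr{T}_j})/\sigma)^d$ rectangles per scale, justified by the $C_3$-sparse structure making $\mathcal{R}_j\mapsto I(\mathcal{R}_j)$ injective) is the right bookkeeping. One small point you glossed over: combining the $L^{1,\infty}$ bound for $k=0$ with the $L^1$ bounds for $k\ge 1$ needs the observation that $\|\mathfrak{s}f_j\|_{1,\infty}\lesssim\|\mathfrak{s}(f_j\chi_{A_0})\|_{1,\infty}+\bigl\|\sum_{k\ge1}\mathfrak{s}(f_j\chi_{A_k})\bigr\|_1$, which follows from splitting the level set at height $2\lambda$. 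The paper's weight-absorption trick is worth internalizing, as it recurs (for instance in Corollary~\ref{Section4CorollaryMassSelectionBesselTypeEstimateLocalVersion}) and sidesteps precisely this kind of bookkeeping.
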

\begin{proof}
Pick $\xi_j\in \mathbb{R}^d$ such that $\xi_j\subset C_2\circ \Xi(\mathcal{R}_j)\backslash 10\Xi(\mathcal{R}_j), \forall \mathcal{R}_j\in \mathscr{T}_j$. Our $C_3\text{-sparse}$ property implies that the map $\mathcal{R}_j\mapsto I(\mathcal{R}_j)$ is a bijection in $\mathscr{T}_j$. For $I\in \{I(\mathcal{R_j}): \mathcal{R}_j\in \mathscr{T}_j\}$, denote the unique $\mathcal{R}_j\in \mathscr{T}_j$ such that $I(\mathcal{R}_j)=I$ by $\mathcal{R}_I$. Let
\begin{equation*}
\varphi_I(x):=\big(1+\big{|}\frac{x-\mathfrak{c}(I_{\mathscr{T}_j})}{s(I_{\mathscr{T}_j})}\big{|}^2 \big)^{N_{\epsilon}}e^{-2\pi i\xi_j(x-\mathfrak{c}(I))}\varphi_{\mathcal{R}_I}(x).
\end{equation*}
Since $\varphi_{\mathcal{R}_I}$ is a wave packet adapted to $\mathcal{R}_I$, we can write $\varphi_I$ into the form required by Theorem \ref{Section4TheoremLittlewood-PaleyDyadic}. In fact, assume
\begin{equation*}
\varphi_{\mathcal{R}_I}(x)=|I|^{-\frac{1}{2}}e^{2\pi i\mathfrak{c} \big(\Xi(\mathcal{R}_I) \big)\big(x-\mathfrak{c}(I) \big)}\zeta_{\mathcal{R}_I} \big( \frac{x-\mathfrak{c}(I)}{s(I)}\big).
\end{equation*}
We can take
\begin{equation*}
\zeta_I(x):=\big(1+\big{|}\frac{s(I)}{s(I_{\mathscr{T}_j})}x+\frac{\mathfrak{c}(I)-\mathfrak{c}(I_{\mathscr{T}_j})}{s(I_{\mathscr{T}_j})} \big{|}^2 \big)^{N_{\epsilon}}\zeta_{\mathcal{R}_I}(x)
\end{equation*}
with $\xi_I:=\mathfrak{c} \big(\Xi(\mathcal{R}_I) \big)-\xi_j$. It easy to check $\zeta_I$ satisfies all the requirements in Theorem \ref{Section4TheoremLittlewood-PaleyDyadic} since $1\ll N_{\epsilon}\ll N_1$ and $\partial^{\alpha} \widehat{\zeta_{\mathcal{R}_I}}$ is still supported in $(-\frac{1}{2}, \frac{1}{2})^d$. Now let
\begin{equation*}
f(x):=\big(1+\big{|}\frac{x-\mathfrak{c}(I_{\mathscr{T}_j})}{s(I_{\mathscr{T}_j})}\big{|}^2 \big)^{-N_{\epsilon}}e^{-2\pi i\xi_j(x-\mathfrak{c}(I))}f_j(x). 
\end{equation*}
We have $\langle f|\varphi_I \rangle=\langle f_j|\varphi_{\mathcal{R}_I} \rangle$. Applying the $L^1\rightarrow L^{1, \infty}$ bound in Theorem \ref{Section4TheoremLittlewood-PaleyDyadic} gives the first estimate. The second estimate is standard.
\end{proof}
There is still a difference between $\mathbf{M}_j$ and LHS of Corollary \ref{Section4CorollaryMassEstimate1}: One need to replace the $L^{1, \infty}$ norm in Corollary \ref{Section4CorollaryMassEstimate1} with the $L^2$ norm in $\mathbf{M}_j$. This is achieved by the now-standard dyadic John-Nirenberg inequality \cite{MTTBiest1}.
\begin{definition}
Let $\mathscr{I}$ be a finite collection of dyadic intervals in $\mathbb{R}^d$ and $(a_I)_{I\in\mathscr{I}}$ be a family of complex numbers. Define
\begin{align*}
\|(a_I)_{I\in\mathscr{I}}\|_p & :=\big{\|}\big(\sum_{I\in \mathscr{I}} \frac{|a_I|^2}{|I|}1_I\big)^{\frac{1}{2}}\big{\|}_p,\ 0<p<\infty,  \\
\|(a_I)_{I\in\mathscr{I}}\|_{bmo} & :=\sup_{I_0\in \mathscr{I}} \frac{1}{|I_0|^{\frac{1}{2}}}\big{\|}\big(\sum_{I\in \mathscr{I}: I\subset I_0} \frac{|a_I|^2}{|I|}1_I\big)^{\frac{1}{2}}\big{\|}_2.
\end{align*}
\end{definition}
\begin{lemma}[Dyadic John-Nirenberg Inequality]  \label{Section4LemmaDyadicJohn-Nirenberg}
Let $\mathscr{I}$ be a finite collection of dyadic intervals in $\mathbb{R}^d$ and $(a_I)_{I\in\mathscr{I}}$ be a family of complex numbers. Then for any $0<p, q<\infty$, we have
\begin{equation*}
\sup_{I_0\in \mathscr{I}} \frac{1}{|I_0|^{\frac{1}{p}}}\big{\|}\big(\sum_{I\in \mathscr{I}: I\subset I_0} \frac{|a_I|^2}{|I|}1_I\big)^{\frac{1}{2}}\big{\|}_{p}\sim \sup_{I_0\in \mathscr{I}} \frac{1}{|I_0|^{\frac{1}{q}}}\big{\|}\big(\sum_{I\in \mathscr{I}: I\subset I_0} \frac{|a_I|^2}{|I|}1_I\big)^{\frac{1}{2}}\big{\|}_{q, \infty}.
\end{equation*}
\end{lemma}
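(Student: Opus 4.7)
Write $S_{I_0}(x) := \bigl(\sum_{I \in \mathscr{I}: I \subset I_0} \frac{|a_I|^2}{|I|} 1_I(x)\bigr)^{1/2}$, and let $A_p, B_q$ denote the LHS and RHS of the claimed equivalence. The plan is to split the equivalence into an easy direction $B_q \lesssim A_p$ and a hard direction $A_p \lesssim B_q$; the hard direction is the content of the dyadic John--Nirenberg inequality.

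For the easy direction, I use the elementary fact that for any function $f$ supported on a set $E$ of finite measure, $\|f\|_{q,\infty}/|E|^{1/q} \leq \|f\|_p/|E|^{1/p}$: when $q \leq p$ this follows from H\"older composed with $L^q \subset L^{q,\infty}$, and when $q > p$ it follows by optimizing the elementary bound $|\{f > \lambda\}| \leq \min(|E|, \|f\|_p^p/\lambda^p)$ in $\lambda$. Applying this to $f = S_{I_0}$ on $E = I_0$ gives $B_q \leq A_p$.

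For the hard direction, I normalize $B_q = 1$ by homogeneity. Fix $I_0$; for any dyadic $I \ni x$ inside $I_0$ decompose
$$S_{I_0}^2(x) = \tau_I + S_I^2(x), \qquad \tau_I := \sum_{I': I \subsetneq I' \subset I_0} \frac{|a_{I'}|^2}{|I'|},$$
so that $\tau_I$ is constant on $I$ and grows as $I$ shrinks. A preliminary consequence of $B_q = 1$, obtained by applying the weak bound at each $I \in \mathscr{I}$ and using the pointwise inequality $S_I^2 \geq |a_I|^2/|I|$ on $I$, is the universal pointwise bound $|a_I|^2/|I| \leq 1$ on single-scale increments. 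Fix a large $\mu$ and let $\mathcal{F}_\mu$ be the collection of maximal dyadic $I' \subset I_0$ with $\tau_{I'} > \mu$. On $I_0 \setminus \bigcup \mathcal{F}_\mu$ one has $S_{I_0}^2 \leq \mu + 1$; on each $I' \in \mathcal{F}_\mu$ one has $S_{I_0}^2 \leq (\mu+1) + S_{I'}^2$; and $|\bigcup \mathcal{F}_\mu| \leq |\{S_{I_0} > \sqrt{\mu}\}| \leq |I_0|/\mu^{q/2}$ by the weak bound. Iterating the same stopping argument inside each $I'$ (possible since $B_q \leq 1$ is uniform over sub-cubes) yields $|\{S_{I_0}^2 > k(\mu+1)\}| \leq |I_0|\mu^{-kq/2}$, and fixing $\mu$ with $\mu^{-q/2} \leq 1/2$ produces exponential decay $|\{S_{I_0}^2 > \lambda\}| \lesssim |I_0|e^{-c\lambda}$. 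Integrating this tail gives $\|S_{I_0}\|_p^p \lesssim |I_0|$, hence $A_p \lesssim 1 = B_q$.

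The main obstacle is ensuring the iteration closes. Two delicate points arise: first, one needs the hypothesis $B_q \leq 1$ to transfer to the stopping sub-cubes $I'$, which is automatic if $\mathscr{I}$ is closed under such sub-cubes and otherwise can be arranged by enlarging $\mathscr{I}$ with zero coefficients without changing $B_q$; second, one must carefully track that the jump $\tau_{I'} - \mu$ across a single stopping level is bounded by the universal constant $1$ coming from the preliminary pointwise estimate on $|a_I|^2/|I|$. These together allow the geometric iteration at each scale to be carried out with a uniform constant, which is what converts the weak $L^q$ bound into exponential decay.
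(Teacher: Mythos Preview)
Your hard direction is correct and takes a different route from the paper. You run an iterated stopping-time argument that produces exponential decay $|\{S_{I_0}^2>\lambda\}|\lesssim |I_0|e^{-c\lambda}$ and then integrate; the paper instead does a single stopping at level $C$ inside the extremizing cube $I_*$, splits $S_{I_*}$ on each maximal stopping cube $J_{\max}$ into the part indexed by $I\subset J_{\max}$ (bounded using $C(p):=A_p$ itself) and the part indexed by $J_{\max}\subsetneq I$ (bounded by $C$ via maximality), and closes the self-referential inequality $C(p)^p\le C^p + C'(C^p+C(p)^p)C^{-q}$ for large $C$. Your approach yields the stronger exponential-integrability statement; the paper's bootstrap is shorter and goes straight to the $L^p$ bound. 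Your remark that enlarging $\mathscr{I}$ by zero-coefficient dyadic sub-cubes does not increase $B_q$ is correct (on a new cube $I_0$ the function $S_{I_0}$ is supported on the disjoint maximal original cubes inside $I_0$, and summing the weak bounds there gives the claim), so the iteration does close.

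Your easy direction, however, has a real error when $q>p$. The inequality $\|f\|_{q,\infty}/|E|^{1/q}\le \|f\|_p/|E|^{1/p}$ is false in that regime: take $E=[0,1]$ and $f=n\cdot 1_{[0,n^{-p}]}$, so $\|f\|_p=1$ but $\|f\|_{q,\infty}=n^{1-p/q}\to\infty$. Your ``optimization'' fails because for $q>p$ the upper bound $\lambda\min(|E|,\|f\|_p^p/\lambda^p)^{1/q}$ is increasing and unbounded in $\lambda$, so it gives no control on the weak norm. This gap is not fatal: since your hard direction gives $A_p\lesssim B_q$ for \emph{all} $p,q$, and trivially $B_r\le A_r$, one recovers $B_q\le A_q\lesssim B_p\le A_p$ without any separate ``easy'' argument. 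You should replace the $q>p$ clause of your easy direction by this chain.
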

\begin{remark}
Lemma \ref{Section4LemmaDyadicJohn-Nirenberg} immediately gives
\begin{equation*}
\sup_{I_0\in \mathscr{I}} \frac{1}{|I_0|^{\frac{1}{p}}}\big{\|}\big(\sum_{I\in \mathscr{I}: I\subset I_0} \frac{|a_I|^2}{|I|}1_I\big)^{\frac{1}{2}}\big{\|}_{p}\sim \sup_{I_0\in \mathscr{I}} \frac{1}{|I_0|^{\frac{1}{q}}}\big{\|}\big(\sum_{I\in \mathscr{I}: I\subset I_0} \frac{|a_I|^2}{|I|}1_I\big)^{\frac{1}{2}}\big{\|}_{q}
\end{equation*}
for any $0<p, q<\infty$. Hence the $L^2$ norm used in the definition of $\|\cdot\|_{bmo}$ is not important.
\end{remark}
\begin{proof}
We only need to prove
\begin{equation*}
\sup_{I_0\in \mathscr{I}} \frac{1}{|I_0|^{\frac{1}{p}}}\big{\|}\big(\sum_{I\in \mathscr{I}: I\subset I_0} \frac{|a_I|^2}{|I|}1_I\big)^{\frac{1}{2}}\big{\|}_{p}\lesssim \sup_{I_0\in \mathscr{I}} \frac{1}{|I_0|^{\frac{1}{q}}}\big{\|}\big(\sum_{I\in \mathscr{I}: I\subset I_0} \frac{|a_I|^2}{|I|}1_I\big)^{\frac{1}{2}}\big{\|}_{q, \infty}
\end{equation*}
for $0<q<p<\infty$. Assume $\text{RHS}=1$. Denote the supremum in LHS by $C(p)$, and assume it is obtained at $I_{*}$. We now estimate
\begin{equation}  \label{Section4EquationDyadicJohn-NirenbergIntermidiate1}
\int_{I_{*}} \big(\sum_{I\in \mathscr{I}: I\subset I_{*}} \frac{|a_I|^2}{|I|}1_I\big)^{\frac{p}{2}}.
\end{equation}
Pick a large constant $C$. Define
\begin{equation*}
\Omega:=\{x\in \mathbb{R}^d: \big(\sum_{I\in \mathscr{I}: I\subset I_{*}} \frac{|a_I|^2}{|I|}1_I(x)\big)^{\frac{1}{2}}>C\}.
\end{equation*}
Then the previous assumption implies
\begin{equation}  \label{Section4EquationDyadicJohn-NirenbergIntermidiate7}
|\Omega|\leq \frac{|I_{*}|}{C^q}  
\end{equation}
For $x\notin \Omega$, we estimate
\begin{equation}  \label{Section4EquationDyadicJohn-NirenbergIntermidiate2}
\int_{I_{*}\cap \Omega^c} \big(\sum_{I\in \mathscr{I}: I\subset I_{*}} \frac{|a_I|^2}{|I|}1_I\big)^{\frac{p}{2}}\leq C^p|I_{*}|.
\end{equation}
For $x\in \Omega$, consider
\begin{equation*}
\mathscr{J}:=\{J\in \mathscr{I}: \big(\sum_{I\in \mathscr{I}: I\subset I_{*}} \frac{|a_I|^2}{|I|}1_I(x)\big)^{\frac{1}{2}}>C,\ \forall x\in J\}.
\end{equation*}
Since the square function is constant on those smallest intervals in $\mathscr{I}$, we have $\Omega=\bigcup_{J\in \mathscr{J}} J$. Denote the collection of maximal elements in $\mathscr{J}$ by $\mathscr{J}_{max}$. We also have
\begin{equation}  \label{Section4EquationDyadicJohn-NirenbergIntermidiate3}
\Omega=\bigcup_{J_{max}\in \mathscr{J}_{max}} J_{max}.
\end{equation}
It suffices to estimate the integrals over each $J_{max}$. Write
\begin{equation}  \label{Section4EquationDyadicJohn-NirenbergIntermidiate4}
\int_{J_{max}} \big(\sum_{I\in \mathscr{I}: I\subset I_{*}} \frac{|a_I|^2}{|I|}1_I\big)^{\frac{p}{2}}\lesssim \int_{J_{max}} \big(\sum_{I\in \mathscr{I}: I\subset J_{max}} \frac{|a_I|^2}{|I|}1_I\big)^{\frac{p}{2}}+\int_{J_{max}} \big(\sum_{I\in \mathscr{I}: J_{max}\subsetneqq I\subset I_{*}} \frac{|a_I|^2}{|I|}1_I\big)^{\frac{p}{2}}    
\end{equation}
The second term of RHS can be handled by the definition of $J_{max}$: Let $I_0$ be the smallest element in $\{I\in \mathscr{I}: J_{max}\subsetneqq I\}$. The maximal property of $J_{max}$ implies the existence of $y\in I_0\backslash J_{max}$ satisfying
\begin{equation*}
\sum_{I\in \mathscr{I}: I\subset I_{*}} \frac{|a_I|^2}{|I|}1_I(y)\leq C.
\end{equation*}
Thus
\begin{equation*}
\big(\sum_{I\in \mathscr{I}: J_{max}\subsetneqq I\subset I_{*}} \frac{|a_I|^2}{|I|}1_I(x)\big)^{\frac{1}{2}}=\big(\sum_{I\in \mathscr{I}: J_{max}\subsetneqq I\subset I_{*}} \frac{|a_I|^2}{|I|}1_I(y)\big)^{\frac{1}{2}}\leq C,\ \forall x\in J_{max},
\end{equation*}
and we get
\begin{equation}  \label{Section4EquationDyadicJohn-NirenbergIntermidiate5}
\int_{J_{max}} \big(\sum_{I\in \mathscr{I}: J_{max}\subsetneqq I\subset I_{*}} \frac{|a_I|^2}{|I|}1_I(x)\big)^{\frac{p}{2}}\leq C^p|J_{max}|.
\end{equation}
We can estimate the first term on the right of \eqref{Section4EquationDyadicJohn-NirenbergIntermidiate4} as follows:
\begin{equation}  \label{Section4EquationDyadicJohn-NirenbergIntermidiate6}
\int_{J_{max}} \big(\sum_{I\in \mathscr{I}: I\subset I_{*}} \frac{|a_I|^2}{|I|}1_I\big)^{\frac{p}{2}}\leq C(p)^p|J_{max}|.
\end{equation}
Combining \eqref{Section4EquationDyadicJohn-NirenbergIntermidiate2}, \eqref{Section4EquationDyadicJohn-NirenbergIntermidiate4}\eqref{Section4EquationDyadicJohn-NirenbergIntermidiate5}\eqref{Section4EquationDyadicJohn-NirenbergIntermidiate6}, and \eqref{Section4EquationDyadicJohn-NirenbergIntermidiate3}\eqref{Section4EquationDyadicJohn-NirenbergIntermidiate7}, we obtain
\begin{equation*}
C(p)^p|I_{*}|\leq C^{p}|I_{*}|+C^{'} \big(\frac{C^p|I_{*}|}{C^q}+\frac{C(p)^p|I_{*}|}{C^q} \big),
\end{equation*}
where $C^{'}$ is a constant depending on $p$. Taking $C$ sufficiently large gives $C(p)\lesssim 1$ as desired.
\end{proof}
\begin{corollary}  \label{Section4CorollaryMassEstimate2}
Let $j\in [n]$ and $\mathscr{P}_j\subset \{\mathcal{R}_j: \mathcal{R}\in \mathscr{R}\}$ be a non-empty collection. Then
\begin{equation*}
\mathbf{M}_j(\mathscr{P}_j)\lesssim \sup_{\mathcal{R}_j\in \mathscr{P}_j} \frac{1}{|I(\mathcal{R}_j)|}\int_{\mathbb{R}^d} |f_j| \big(1+\frac{d \big(x, I(\mathcal{R}_j) \big)}{s \big(I(\mathcal{R}_j) \big)} \big)^{-2N_{\epsilon}}\lesssim \sup_{\mathcal{R}_j\in \mathscr{P}_j} \inf_{x\in I(\mathcal{R}_j)} Mf_j(x).
\end{equation*}
\end{corollary}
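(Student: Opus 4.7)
The plan is to handle the two inequalities separately. The right-hand one is immediate: any singleton $\{\mathcal{R}_j\}$ with $\mathcal{R}_j\in\mathscr{P}_j$ is itself a lacunary $j$-tree with top interval $I(\mathcal{R}_j)$ (choose any $\xi_j\in C_2\circ\Xi(\mathcal{R}_j)\setminus 10\Xi(\mathcal{R}_j)$, which is non-empty since $C_2\gg 10$), so the second estimate of Corollary~\ref{Section4CorollaryMassEstimate1} gives $\tfrac{1}{|I(\mathcal{R}_j)|}\int|f_j|(\cdots)^{-2N_{\epsilon}}\lesssim\inf_{x\in I(\mathcal{R}_j)}Mf_j(x)$; taking the supremum over $\mathcal{R}_j\in\mathscr{P}_j$ closes this part.

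For the left-hand inequality I would write $B$ for the middle quantity and fix an arbitrary lacunary $j$-tree $\mathscr{T}_j\subset\mathscr{P}_j$. First, using Definition~\ref{Section3DefinitionWavePacket} together with $N_{\epsilon}\ll N_1$, one gets the elementary pointwise coefficient bound
\[
|\langle f_j|\varphi_{\mathcal{R}_j}\rangle|\lesssim|I(\mathcal{R}_j)|^{-1/2}\int|f_j|\bigl(1+\tfrac{d(x,I(\mathcal{R}_j))}{s(I(\mathcal{R}_j))}\bigr)^{-2N_{\epsilon}}\,dx\leq|I(\mathcal{R}_j)|^{1/2}\cdot B,
\]
via the comparison $(1+|x-\mathfrak{c}(I)|/s(I))^{-N_1}\lesssim(1+d(x,I)/s(I))^{-2N_{\epsilon}}$. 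Squaring and summing reduces the desired estimate to proving
\[
\sum_{\mathcal{R}_j\in\mathscr{T}_j}|I(\mathcal{R}_j)|\lesssim|I_{\mathscr{T}_j}|,
\]
which is the heart of the matter.

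The hard part will be this geometric counting. The $C_3^{1/3}$-central grid structure of $\{C_2\circ\Xi(\mathcal{R}_j):\mathcal{R}_j\in\mathscr{R}\}$ (Lemma~\ref{Section4LemmaCentralGrid}), combined with the nestedness property of grids, ensures that at each dyadic size there is at most one central cube containing the lacunarity witness $\xi_j$; this forces at most one $\mathcal{R}_j\in\mathscr{T}_j$ per dyadic frequency scale. The $C_3$-sparseness (condition~4 of Theorem~\ref{MainTheorem1Discrete}) then spaces the occurring scales by factors of at least $C_3$, so the values $|I(\mathcal{R}_j)|=s(\Xi(\mathcal{R}_j))^{-d}$ form a geometric sequence with ratio $\geq C_3^d$; summing and using $|I(\mathcal{R}_j)|\leq|I_{\mathscr{T}_j}|$ for every $\mathcal{R}_j\in\mathscr{T}_j$ yields $\sum|I(\mathcal{R}_j)|\lesssim\max_{\mathcal{R}_j\in\mathscr{T}_j}|I(\mathcal{R}_j)|\leq|I_{\mathscr{T}_j}|$.

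Putting everything together, $\tfrac{1}{|I_{\mathscr{T}_j}|}\sum_{\mathcal{R}_j\in\mathscr{T}_j}|\langle f_j|\varphi_{\mathcal{R}_j}\rangle|^2\lesssim B^2$; taking the square root and then the supremum over lacunary $j$-trees $\mathscr{T}_j\subset\mathscr{P}_j$ finishes the proof. This direct route exploits the Schwartz decay of wave packets; as an alternative, one could follow the John--Nirenberg route flagged by the author immediately before the corollary (apply Lemma~\ref{Section4LemmaDyadicJohn-Nirenberg} with $p=2$, $q=1$, then Corollary~\ref{Section4CorollaryMassEstimate1} on each sub-tree), but that alternative then requires extra bookkeeping to identify each sub-tree top $I_{\mathscr{T}_j^{(I_0)}}$ with the physical interval of a specific rectangle in $\mathscr{P}_j$, which is awkward precisely because the weighted integral at the sub-tree top need not be directly dominated by a single $\frac{1}{|I(\mathcal{R}_j)|}\int|f_j|(\cdots)^{-2N_{\epsilon}}$.
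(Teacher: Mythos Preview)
Your counting claim $\sum_{\mathcal{R}_j\in\mathscr{T}_j}|I(\mathcal{R}_j)|\lesssim|I_{\mathscr{T}_j}|$ is false, and the argument you give for it conflates ``one frequency cube per scale'' with ``one rectangle per scale''. It is true that the grid and sparseness force at most one $\Xi(\mathcal{R}_j)$ per scale in a lacunary $j$-tree, but for that single frequency cube there can be up to $|I_{\mathscr{T}_j}|/|I(\mathcal{R}_j)|$ distinct dyadic physical cubes $I(\mathcal{R}_j)\subset I_{\mathscr{T}_j}$, each giving a separate rectangle in $\mathscr{T}_j$. Thus the contribution of a single frequency scale to $\sum|I(\mathcal{R}_j)|$ is already $|I_{\mathscr{T}_j}|$, and with $K$ scales present the sum is of order $K\,|I_{\mathscr{T}_j}|$, with $K$ unbounded uniformly in $\mathscr{R}$. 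Your pointwise bound $|\langle f_j|\varphi_{\mathcal{R}_j}\rangle|\lesssim|I(\mathcal{R}_j)|^{1/2}B$ discards all the Littlewood--Paley orthogonality that Corollary~\ref{Section4CorollaryMassEstimate1} captures, and that orthogonality is precisely what is needed.

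The John--Nirenberg route you dismiss as ``awkward'' is in fact clean, and it is exactly what the paper does. The bookkeeping issue you raise --- matching the sub-tree top with the physical interval of some rectangle in $\mathscr{P}_j$ --- is resolved by a one-line observation you overlooked: since $\mathcal{R}_j\mapsto I(\mathcal{R}_j)$ is a bijection inside any lacunary $j$-tree and restricting to $\{I(\mathcal{R}_j)\subset I_0\}$ again yields a lacunary $j$-tree, the supremum defining $\mathbf{M}_j(\mathscr{P}_j)$ is always attained at a tree $\widetilde{\mathscr{T}_j}$ whose top interval \emph{equals} $I(\mathcal{R}_j)$ for some $\mathcal{R}_j\in\widetilde{\mathscr{T}_j}\subset\mathscr{P}_j$. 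Once you know this, Lemma~\ref{Section4LemmaDyadicJohn-Nirenberg} (with $p=2$, $q=1$) reduces the $L^2$ average to a supremum of $L^{1,\infty}$ averages over sub-trees whose tops are, by construction, of the form $I(\mathcal{R}_j)$ with $\mathcal{R}_j\in\mathscr{P}_j$; Corollary~\ref{Section4CorollaryMassEstimate1} then bounds each of these directly by the middle quantity.
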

\begin{proof}
Inside a lacunary $j\text{-tree}$ $\mathscr{T}_j$, the map $\mathcal{R}_j\mapsto I(\mathcal{R}_j)$ is a bijection. And $\{\mathcal{R}_j\in \mathscr{T}_j: I(\mathcal{R}_j)\subset I_0\}$ is also a lacunary $j\text{-tree}$ if $I_0\in \{I(\mathcal{R}_j): \mathcal{R}_j\in \mathscr{T}_j\}$. This implies that the supremum in $\mathbf{M}_j(\mathscr{P}_j)$ can always be obtained at a lacunary $j\text{-tree}$ $\widetilde{\mathscr{T}_j}\subset \mathscr{P}_j$ such that $I_{\widetilde{\mathscr{T}_j}}=I(\mathcal{R}_j)$ for some $\mathcal{R}_j\in \widetilde{\mathscr{T}_j}$. It then suffices to control
\begin{equation*}
\big(\frac{1}{|I_{\widetilde{\mathscr{T}_j}}|} \sum_{\mathcal{R}_j\in \widetilde{\mathscr{T}_j}} |\langle f_j|\varphi_{\mathcal{R}_j} \rangle|^2 \big)^{\frac{1}{2}}=\frac{1}{|I_{\widetilde{\mathscr{T}_j}}|^{\frac{1}{2}}} \big{\|}\big(\sum_{\mathcal{R}_j\in\widetilde{\mathscr{T}_j}} \frac{|\langle f_j|\varphi_{\mathcal{R}_j} \rangle|^2}{|I(\mathcal{R}_j)|}1_{I(\mathcal{R}_j)} \big)^{\frac{1}{2}}\big{\|}_2.
\end{equation*}
Combing Lemma \ref{Section4LemmaDyadicJohn-Nirenberg} and Corollary \ref{Section4CorollaryMassEstimate1} gives the desired estimate.
\end{proof}

We sill also need an interpolation result between $\|\cdot\|_2$ and $\|\cdot\|_{bmo}$. The main idea appears implicitly in \cite{LaceyLittlewood-PaleyForArbitraryIntervals}.
\begin{lemma}  \label{Section4LemmaInterpolation}
Let $\mathscr{I}$ be a finite collection of dyadic intervals in $\mathbb{R}^d$ and $(a_I)_{I\in\mathscr{I}}$ be a family of complex numbers. Then for $2\leq p<\infty$, we have
\begin{equation*}
\|(a_I)_{I\in\mathscr{I}}\|_{p}\lesssim \|(a_I)_{I\in\mathscr{I}}\|_2^{\frac{2}{p}}\|(a_I)_{I\in\mathscr{I}}\|_{bmo}^{1-\frac{2}{p}}. 
\end{equation*}
\end{lemma}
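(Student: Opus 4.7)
The plan is to run a standard good-$\lambda$ / John--Nirenberg style argument, using the $\mathrm{bmo}$ hypothesis to localize the square function on stopping cubes. Set $B := \|(a_I)_{I\in\mathscr{I}}\|_{bmo}$ and write
\begin{equation*}
S(x) := \Big(\sum_{I\in\mathscr{I}} \tfrac{|a_I|^2}{|I|} 1_I(x)\Big)^{1/2},\qquad S_{J}(x) := \Big(\sum_{I\in\mathscr{I}: I\subset J} \tfrac{|a_I|^2}{|I|} 1_I(x)\Big)^{1/2}.
\end{equation*}
So $\|S\|_2 = \|(a_I)\|_2$ and $\|S_J\|_2 \leq B |J|^{1/2}$ for every $J\in\mathscr{I}$. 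The goal is the good-$\lambda$ inequality
\begin{equation}\label{Section4EquationGoodLambda}
|\{S > 2\lambda\}|\ \leq\ \tfrac{B^2}{3\lambda^2}\,|\{S>\lambda\}|,\qquad \lambda>0,
\end{equation}
from which the bound $\|S\|_p \lesssim \|S\|_2^{2/p} B^{1-2/p}$ will follow by a split-and-iterate argument.

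To prove \eqref{Section4EquationGoodLambda}, fix $\lambda>0$ and let $\Omega_\lambda := \{S>\lambda\}$. Since $S$ is constant on the minimal cubes of $\mathscr{I}$, the set $\Omega_\lambda$ is the disjoint union of the maximal cubes $\mathscr{J}_\lambda = \{J_k\}\subset\mathscr{I}$ contained in $\Omega_\lambda$. Fix one such $J\in\mathscr{J}_\lambda$ and let $\widehat{J}\in\mathscr{I}$ be its $\mathscr{I}$-parent (if it has one). By maximality there exists $y\in \widehat{J}$ with $S(y)\leq\lambda$, and the function $\sum_{I\in\mathscr{I}:\, I\supsetneq J} \tfrac{|a_I|^2}{|I|} 1_I$ is constant on $\widehat{J}$, hence bounded by $S(y)^2\leq \lambda^2$. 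Therefore for every $x\in J$,
\begin{equation*}
S(x)^2 \ =\ S_J(x)^2 + \sum_{I\supsetneq J} \tfrac{|a_I|^2}{|I|}\ \leq\ S_J(x)^2 + \lambda^2,
\end{equation*}
so $\{x\in J: S(x)>2\lambda\}\subset \{x\in J: S_J(x) > \sqrt{3}\lambda\}$, and Chebyshev plus the definition of $B$ gives $|\{x\in J: S(x)>2\lambda\}| \leq \tfrac{1}{3\lambda^2} \|S_J\|_2^2 \leq \tfrac{B^2}{3\lambda^2} |J|$. (If $J$ has no $\mathscr{I}$-parent the sum over $I\supsetneq J$ is empty and the same conclusion is even easier.) Summing over $J\in\mathscr{J}_\lambda$ yields \eqref{Section4EquationGoodLambda}.

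Now I iterate \eqref{Section4EquationGoodLambda}. Applying it $k$ times starting at $\lambda=2^{k-1} B$ and using $|\Omega_\mu|\leq \|S\|_2^2/\mu^2$ for the base step gives
\begin{equation*}
|\Omega_{2^k B}|\ \leq\ \prod_{j=0}^{k-1} \tfrac{B^2}{3\cdot 4^j B^2}\cdot |\Omega_{B}|\ =\ \tfrac{1}{3^k\, 4^{k(k-1)/2}}\,|\Omega_B|,
\end{equation*}
which is super-exponential decay. Splitting $\|S\|_p^p = p\int_0^B \lambda^{p-1}|\Omega_\lambda|\,d\lambda + p\int_B^\infty \lambda^{p-1}|\Omega_\lambda|\,d\lambda$, the first integral is bounded by $p\int_0^B \lambda^{p-1}\|S\|_2^2/\lambda^2\,d\lambda \lesssim B^{p-2}\|S\|_2^2$ because $p\geq 2$; the second integral is estimated dyadically by
\begin{equation*}
\sum_{k\geq 0} (2^{k+1}B)^p |\Omega_{2^k B}|\ \lesssim\ B^p|\Omega_B|\sum_{k\geq 0} \tfrac{2^{kp}}{3^k\, 4^{k(k-1)/2}}\ \lesssim\ B^p\cdot \tfrac{\|S\|_2^2}{B^2}\ =\ B^{p-2}\|S\|_2^2.
\end{equation*}
Combining the two, $\|S\|_p^p \lesssim B^{p-2}\|S\|_2^2$, which is the claim.

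The main (mild) obstacle is identifying the correct stopping generation inside $\mathscr{I}$: one must use that $S$ is constant on the minimal cubes of $\mathscr{I}$ and that the truncated square function $\sum_{I\supsetneq J}\tfrac{|a_I|^2}{|I|}1_I$ is constant on the parent $\widehat{J}$, so that the witness $y$ in $\widehat{J}\setminus\Omega_\lambda$ actually controls the pointwise contribution on all of $J$. Once the good-$\lambda$ inequality is in place, the remaining steps are routine.
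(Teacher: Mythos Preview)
Your proof is correct. The good-$\lambda$ inequality \eqref{Section4EquationGoodLambda} is established cleanly: the key observations---that $\Omega_\lambda$ is a union of cubes from $\mathscr{I}$ (since $S(y)\ge S(x)$ whenever $y$ lies in the minimal cube of $\mathscr{I}$ containing $x$), and that the tail $\sum_{I\supsetneq J}\frac{|a_I|^2}{|I|}1_I$ is constant on $\widehat J$---are exactly what is needed. The iteration and dyadic summation are routine. One cosmetic point: the estimate $p\int_0^B\lambda^{p-3}\,d\lambda\lesssim B^{p-2}$ requires $p>2$ strictly (the integral diverges at $p=2$), so you should note at the outset that the case $p=2$ is trivial and assume $p>2$.

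The paper takes a different, though related, route. Rather than proving a good-$\lambda$ inequality directly, it runs a stopping-time selection: starting from $\lambda_0\sim B$, at each stage it extracts the maximal cubes where the local $\ell^2$ average exceeds $\lambda_{n+1}=\lambda_n/2$, removes everything below them, and iterates. This yields a decomposition $\mathscr{I}=\bigcup_n(\mathscr{I}_n\setminus\mathscr{I}_{n+1})$ with $\|(a_I)_{I\in\mathscr{I}_n}\|_{bmo}\le\lambda_n$, and then the dyadic John--Nirenberg lemma (stated earlier in the paper as a separate result) is invoked as a black box to pass from the $bmo$ bound to a local $L^p$ bound on each stopped piece. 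Your argument is more self-contained---it does not call on that lemma, and in effect re-derives the exponential-decay content of John--Nirenberg via the iterated good-$\lambda$ bound. The paper's version is more modular, reusing a lemma already needed elsewhere in the text. Both are standard and of comparable length.
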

\begin{proof}
We can assume each $a_I\neq 0$, otherwise delete $I$ from $\mathscr{I}$. Let $\lambda_0$ be the smallest dyadic number larger than $|(a_I)_{I\in\mathscr{I}}\|_{bmo}$ and $\mathscr{I}_0:=\mathscr{I}, \mathscr{J}_0\neq \emptyset$ be the initial collections. Run the following algorithm with $\lambda_{n+1}:=\frac{\lambda_n}{2}, n\geq 0$:
\begin{description}
\item[Step 1] Consider all maximal $J_n\in \mathscr{I}_n$ such that
\begin{equation}  \label{Section4EquationInterpolation1}
\frac{1}{|J_n|^{\frac{1}{2}}}\big{\|}\big(\sum_{I\in \mathscr{I}_n: I\subset J_n} \frac{|a_I|^2}{|I|}1_I\big)^{\frac{1}{2}}\big{\|}_2>\lambda_{n+1}.
\end{equation}
Denote the collection of these $J_n$'s by $\mathscr{J}_n$.
\item[Step 2] Renew $\mathscr{I}_{n+1}:=\mathscr{I}_{n}\backslash \{I\in \mathscr{I}_{n}: I\subset J_n \text{\ for some\ }J_n\in \mathscr{J}_n\}$. Then return to Step 1 to proceed the $(n+1)\text{-th}$ iteration.
\end{description}
We get $\mathscr{I}_N=\emptyset$ after finitely many iterations. And the estimate
\begin{equation}  \label{Section4EquationInterpolation2}
\|(a_I)_{I\in\mathscr{I}_{n}}\|_{bmo}\leq \lambda_n,\ \forall 0\leq n\leq N
\end{equation}
holds. Now we have
\begin{equation}  \label{Section4EquationInterpolation3}
\|(a_I)_{I\in\mathscr{I}}\|_{p}\leq \sum_{n=0}^N \|(a_I)_{I\in\mathscr{I}_{n}\backslash \mathscr{I}_{n+1}}\|_{p}=\sum_{n=0}^N \Big(\sum_{J_n\in \mathscr{J}_n}\int_{J_n} \big(\sum_{I\in\mathscr{I}_{n}\backslash \mathscr{I}_{n+1}: I\subset J_n} \frac{|a_I|^2}{|I|}1_I \big)^{\frac{p}{2}} \Big)^{\frac{1}{p}}
\end{equation}
since $J_n$'s are disjoint. Combining Lemma \ref{Section4LemmaDyadicJohn-Nirenberg} and \eqref{Section4EquationInterpolation2} gives
\begin{equation}  \label{Section4EquationInterpolation4}
\sum_{J_n\in \mathscr{J}_n}\int_{J_n} \big(\sum_{I\in\mathscr{I}_{n}\backslash \mathscr{I}_{n+1}: I\subset J_n} \frac{|a_I|^2}{|I|}1_I \big)^{\frac{p}{2}}\lesssim \lambda_n^p \sum_{J_n\in \mathscr{J}_n} |J_n|.
\end{equation}
On the other hand, \eqref{Section4EquationInterpolation1} implies
\begin{equation}  \label{Section4EquationInterpolation5}
\sum_{J_n\in \mathscr{J}_n} |J_n|\leq \frac{\|(a_I)_{I\in\mathscr{I}}\|_2^2}{\lambda_{n+1}^2}\sim \frac{\|(a_I)_{I\in\mathscr{I}}\|_2^2}{\lambda_n^2}
\end{equation}
Plug \eqref{Section4EquationInterpolation5} into \eqref{Section4EquationInterpolation4}, and then plug \eqref{Section4EquationInterpolation4} into \eqref{Section4EquationInterpolation3}. Summing over $n$ completes the proof.
\end{proof}
\begin{definition}
Let $\mathbf{F}_j$ be a family of $j\text{-trees}$. Define the associated counting function:
\begin{equation*}
N_{\mathbf{F}_j}(x):=\sum_{\mathscr{T}_j\in \mathbf{F}_j} 1_{I_{\mathscr{T}_j}}(x).
\end{equation*}
Similarly, let $\mathbf{F}$ be a family of vector trees. Define the associated counting function:
\begin{equation*}
N_{\mathbf{F}}(x):=\sum_{\mathscr{T}\in \mathbf{F}} 1_{I_{\mathscr{T}}}(x).
\end{equation*}
\end{definition}
We are ready to divide $\{\mathcal{R}_j: \mathcal{R}\in \mathscr{R}\}$ into $j\text{-trees}$ with well controlled counting functions.
\begin{proposition}  \label{Section4PropositionMassSelection1}
Let $j\in [n]$, $\mathscr{P}_j\subset \{\mathcal{R}_j: \mathcal{R}\in \mathscr{R}\}$, and $\lambda_j>0$. Suppose $\mathbf{M}_j(\mathscr{P}_j)\leq \lambda_j$. Then there exists a partition $\mathscr{P}_j=\mathscr{P}_j^{'}\uplus \mathscr{P}_j^{''}$ such that:
\begin{enumerate}
\item The collection $\mathscr{P}_j^{'}$ is the disjoint union of a family of $j\text{-trees}$. Denote this family by $\mathbf{F}_j(\lambda_j):=\{\mathscr{T}_j\}$. Then the counting function $N_{\mathbf{F}_j(\lambda_j)}$ satisfies
\begin{align} \label{Section4EquationCountingFunctionEstimate}
\|N_{\mathbf{F}_j(\lambda_j)}\|_{p_j} & \lesssim \lambda_j^{-2}|E_j|^{\frac{1}{p_j}},\ \forall 1\leq p_j<\infty,  \\  \label{Section4EquationCountingFunctionEstimateBelow1}
\|N_{\mathbf{F}_j(\lambda_j)}\|_{p_j} & \lessapprox \lambda_j^{-(1+\frac{1}{p_j})}|E_j|^{\frac{1}{p_j}},\ \forall 0<p_j<1.
\end{align}
\item The collection $\mathscr{P}_j^{''}$ satisfies
\begin{equation}  \label{Section4EquationMassDecrease}
\mathbf{M}_j(\mathscr{P}_j^{''})\leq \frac{\lambda_j}{2}.
\end{equation}
\end{enumerate}
\end{proposition}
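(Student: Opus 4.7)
The approach is the classical iterative ``size'' selection from time--frequency analysis. I initialize $\mathscr{P}_j^{(0)} := \mathscr{P}_j$, and at each stage $\ell \geq 1$, so long as $\mathbf{M}_j(\mathscr{P}_j^{(\ell-1)}) > \lambda_j/2$, I select a lacunary $j$-tree $\mathscr{T}_j^{\ell} \subset \mathscr{P}_j^{(\ell-1)}$ realizing density above $\lambda_j/2$, with frequency center $\xi_j^\ell$; enlarge it to the full $j$-tree
\begin{equation*}
\mathscr{T}_j^{\ell,*} := \{\mathcal{R}_j \in \mathscr{P}_j^{(\ell-1)} : \mathcal{R}_j \lesssim (I_{\mathscr{T}_j^\ell}, \xi_j^\ell)\},
\end{equation*}
and set $\mathscr{P}_j^{(\ell)} := \mathscr{P}_j^{(\ell-1)} \setminus \mathscr{T}_j^{\ell,*}$. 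Finiteness of $\mathscr{P}_j$ forces termination at some stage $L$ with $\mathbf{M}_j(\mathscr{P}_j^{(L)}) \leq \lambda_j/2$. Declaring $\mathscr{P}_j'' := \mathscr{P}_j^{(L)}$, $\mathscr{P}_j' := \biguplus_{\ell=1}^L \mathscr{T}_j^{\ell,*}$, and $\mathbf{F}_j(\lambda_j) := \{\mathscr{T}_j^{\ell,*}\}_{\ell=1}^L$ immediately yields \eqref{Section4EquationMassDecrease}, and disjointness of the partition of $\mathscr{P}_j'$ follows from the removal rule.

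The counting function bound proceeds in three steps. First, each selected lacunary subtree satisfies $(\lambda_j/2)^2 |I_{\mathscr{T}_j^\ell}| < \sum_{\mathcal{R}_j \in \mathscr{T}_j^\ell}|\langle f_j | \varphi_{\mathcal{R}_j}\rangle|^2$, and the families $\{\mathcal{R}_j : \mathcal{R}_j \in \mathscr{T}_j^\ell\}$ are mutually disjoint across $\ell$; the lacunarity condition $\xi_j^\ell \in C_2\circ\Xi_j \setminus 10\Xi_j$ places the frequency centers in the annulus required by Theorem \ref{Section4TheoremLittlewood-PaleyDyadic}, so its $L^2$ square-function bound gives
\begin{equation*}
\lambda_j^2 \sum_\ell |I_{\mathscr{T}_j^\ell}| \lesssim \sum_\ell \sum_{\mathcal{R}_j \in \mathscr{T}_j^\ell}|\langle f_j | \varphi_{\mathcal{R}_j}\rangle|^2 \lesssim \|f_j\|_2^2 \leq |E_j|,
\end{equation*}
which is the $p_j = 1$ case. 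Second, running the same energy argument localized to an arbitrary dyadic cube $I_0$ (with the Schwartz tails of the wave packets absorbed via the $(1+d/s)^{-2N_\epsilon}$ weight, just as in the proof of Corollary \ref{Section4CorollaryMassEstimate1}) yields $\sum_{\ell : I_{\mathscr{T}_j^\ell} \subset I_0}|I_{\mathscr{T}_j^\ell}| \lesssim \lambda_j^{-2}|I_0|$, equivalently $\|(a_\ell)\|_{bmo} \lesssim \lambda_j^{-1}$ for $a_\ell := |I_{\mathscr{T}_j^\ell}|^{1/2}$. Since $N_{\mathbf{F}_j(\lambda_j)}^{1/2}$ is precisely the dyadic square function of $(a_\ell)$, applying Lemma \ref{Section4LemmaInterpolation} at exponent $2p_j$ upgrades $\|(a_\ell)\|_2^2 = \|N_{\mathbf{F}_j(\lambda_j)}\|_1 \lesssim \lambda_j^{-2}|E_j|$ to
\begin{equation*}
\|N_{\mathbf{F}_j(\lambda_j)}\|_{p_j} = \|(a_\ell)\|_{2p_j}^2 \lesssim \|(a_\ell)\|_2^{2/p_j}\|(a_\ell)\|_{bmo}^{2-2/p_j} \lesssim \lambda_j^{-2}|E_j|^{1/p_j},
\end{equation*}
which is \eqref{Section4EquationCountingFunctionEstimate}. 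Third, for the quasi-Banach range \eqref{Section4EquationCountingFunctionEstimateBelow1}, I combine the $L^1$ bound with the support estimate $|\mathrm{supp}(N_{\mathbf{F}_j(\lambda_j)})| \lesssim \lambda_j^{-1}|E_j|$: the $L^{1,\infty}$ bound in Corollary \ref{Section4CorollaryMassEstimate1} together with the John--Nirenberg comparison (Lemma \ref{Section4LemmaDyadicJohn-Nirenberg}) applied to the lacunary tree $\mathscr{T}_j^\ell$ forces $\lambda_j \lesssim \inf_{x \in I_{\mathscr{T}_j^\ell}} Mf_j(x)$, so each $I_{\mathscr{T}_j^\ell}$ lies in $\{Mf_j \gtrsim \lambda_j\}$; weak-$(1,1)$ then gives the support bound, and H\"older produces $\|N\|_{p_j}^{p_j} \leq \|N\|_1^{p_j}|\mathrm{supp}(N)|^{1-p_j} \lesssim \lambda_j^{-(p_j+1)}|E_j|$, as required.

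The main obstacle is the localized energy estimate driving $\|(a_\ell)\|_{bmo} \lesssim \lambda_j^{-1}$: a wave packet with $\mathcal{R}_j \lesssim (I_{\mathscr{T}_j^\ell}, \xi_j^\ell)$ and $I_{\mathscr{T}_j^\ell} \subset I_0$ has $I(\mathcal{R}_j) \subset I_0$ but Schwartz tails leaking outside, so controlling $\sum_{\mathcal{R}_j}|\langle f_j|\varphi_{\mathcal{R}_j}\rangle|^2$ by an $|I_0|$-scale quantity requires either a spatial cutoff $f_j \chi_{\tilde{I}_0}$ on a slight enlargement $\tilde{I}_0 \supsetneq I_0$ with the truncation error absorbed by the rapid decay $(1+d(\cdot,I(\mathcal{R}_j))/s(I(\mathcal{R}_j)))^{-2N_\epsilon}$, or a direct weighted-Plancherel computation in the spirit of the proof of Corollary \ref{Section4CorollaryMassEstimate1}. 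Once this localization is in hand, the remainder is a packaging of the greedy selection together with the abstract $BMO$--$L^2$ interpolation of Lemma \ref{Section4LemmaInterpolation}.
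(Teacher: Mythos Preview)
Your overall architecture is right, but the central step---the Bessel-type energy estimate
\[
\sum_{\ell}\sum_{\mathcal{R}_j\in\mathscr{T}_j^\ell}|\langle f_j\mid\varphi_{\mathcal{R}_j}\rangle|^2\lesssim\|f_j\|_2^2
\]
---does not follow from Theorem~\ref{Section4TheoremLittlewood-PaleyDyadic} as you claim. That theorem applies to a collection of wave packets whose frequency centers $\xi_I$ lie in the annulus $|\xi_I|\sim s(I)^{-1}$ \emph{relative to a single origin}. A lacunary tree meets this hypothesis only after modulating by its own center $\xi_j^\ell$; since different trees carry different centers, no single modulation puts the whole union into Littlewood--Paley form. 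Set-theoretic disjointness of the tile collections $\mathscr{T}_j^\ell$ is not enough: tiles from distinct trees can still have nested frequency cubes and overlapping spatial intervals, and those cross terms are exactly what must be controlled.

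The paper handles this by a two-part mechanism that your greedy algorithm bypasses. First, the selection in Proposition~\ref{Section4PropositionMassSelectionAlgorithm} is not ``pick any tree with density $>\lambda_j/2$'' but rather ``pick the one whose $k$-th frequency coordinate $(\xi_{\mathscr{T}})_k$ is extremal''; this ordering is what forces the resulting family to be \emph{strongly disjoint} (if $\Xi(\mathcal{R})\cap\Xi(\mathcal{R}')\neq\emptyset$ with $\mathcal{R},\mathcal{R}'$ in different trees and different scales, then $I(\mathcal{R}')\cap I_{\mathscr{T}}=\emptyset$). Your removal of the full tree $\mathscr{T}_j^{\ell,*}$ does not yield this: a later-selected tree can contain a coarse-scale tile $\mathcal{R}'$ with $I(\mathcal{R}')\supsetneq I_{\mathscr{T}_j^\ell}$ and $\Xi(\mathcal{R}')\supset\Xi(\mathcal{R})$ for some $\mathcal{R}\in\mathscr{T}_j^\ell$, violating strong disjointness. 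Second, once strong disjointness is in hand, the paper proves the Bessel estimate (Proposition~\ref{Section4PropositionMassSelectionBesselTypeEstimate}) by a $T^*T$ argument that uses strong disjointness to show the cross-scale interactions sum to $O(|I_{\mathscr{T}}|)$; this is the substitute for the Littlewood--Paley orthogonality you invoked. The localized version (Corollary~\ref{Section4CorollaryMassSelectionBesselTypeEstimateLocalVersion}) then gives the $bmo$ bound, after which your interpolation via Lemma~\ref{Section4LemmaInterpolation} is correct. Your treatment of \eqref{Section4EquationCountingFunctionEstimateBelow1} via a support bound is also different from the paper's (which interpolates between a pointwise maximal-function control and the global $L^2$ Bessel estimate via H\"older with an auxiliary exponent $r$), and your version inherits the same gap since it rests on the $\|N\|_1$ bound.
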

\begin{remark}
Inequality \eqref{Section4EquationCountingFunctionEstimate} appeared in \cite{KTZKTriangularHilbertDyadicPartialResult}. It improves the counting function estimate in \cite{DPTFractionalRank}. Inequality \eqref{Section4EquationCountingFunctionEstimateBelow1} appeared in \cite{LTBHT2}. Here we give a simpler proof. We only need \eqref{Section4EquationCountingFunctionEstimate} in the proof of Theorem \ref{MainTheorem1}, while \eqref{Section4EquationCountingFunctionEstimateBelow1} will be used in Section \ref{Section8} to simplify our argument for certain Lebesgue space exponents.
\end{remark}
Iterate Proposition \ref{Section4PropositionMassSelection1} with an initial dyadic number $\lambda_j\sim \mathbf{M}_j(\mathscr{P}_j)$. We obtain the following:
\begin{corollary}  \label{Section4PropositionMassSelection2}
Let $j\in [n]$ and $\mathscr{R}_j:=\{\mathcal{R}_j: \mathcal{R}\in \mathscr{R}\}$. We can write
\begin{equation*}
\mathscr{R}_j=\biguplus_{\substack{\lambda_j\text{\ dyadic},\\ \lambda_j\leq \mathbf{M}_j(\mathscr{R}_j)}}\mathscr{R}_j(\lambda_j):=\biguplus_{\substack{\lambda_j\text{\ dyadic},\\ \lambda_j\leq \mathbf{M}_j(\mathscr{R}_j)}}\biguplus_{\mathscr{T}_j\in \mathbf{F}_j(\lambda_j)} \mathscr{T}_j,
\end{equation*}
such that $\mathbf{M}_j\big(\mathscr{R}_j(\lambda_j) \big)\leq \lambda_j$ and \eqref{Section4EquationCountingFunctionEstimate} hold for each $\lambda_j$.
\end{corollary}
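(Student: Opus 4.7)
The plan is a direct iteration of Proposition \ref{Section4PropositionMassSelection1}, halving the mass threshold at each step. I would fix $\lambda_j^{(0)}$ to be a dyadic number with $\mathbf{M}_j(\mathscr{R}_j)\leq \lambda_j^{(0)}\sim \mathbf{M}_j(\mathscr{R}_j)$ (e.g.\ the smallest such dyadic number), set $\mathscr{Q}^{(0)}:=\mathscr{R}_j$, and put $\lambda_j^{(k)}:=2^{-k}\lambda_j^{(0)}$ for $k\geq 0$. Inductively apply Proposition \ref{Section4PropositionMassSelection1} with input $\mathscr{P}_j=\mathscr{Q}^{(k)}$ and threshold $\lambda_j=\lambda_j^{(k)}$: the hypothesis $\mathbf{M}_j(\mathscr{Q}^{(k)})\leq \lambda_j^{(k)}$ holds at $k=0$ by the choice of $\lambda_j^{(0)}$ and at later steps by the mass-decrement conclusion \eqref{Section4EquationMassDecrease} of the previous application. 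Denote the resulting selected subcollection by $\mathscr{R}_j(\lambda_j^{(k)})$, equal to the disjoint union of the tree family $\mathbf{F}_j(\lambda_j^{(k)})$, and set $\mathscr{Q}^{(k+1)}$ to be the residual $\mathscr{P}_j''$.

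Next I would argue that this process exhausts $\mathscr{R}_j$ after only finitely many non-empty levels. Because $\mathscr{R}_j$ is finite and each inner product $\langle f_j|\varphi_{\mathcal{R}_j}\rangle$ is non-zero (we have restricted to this case earlier), every singleton $\{\mathcal{R}_j\}$ is itself a lacunary $j$-tree — just pick any $\xi_j\in C_2\circ \Xi(\mathcal{R}_j)\setminus 10\,\Xi(\mathcal{R}_j)$, which is non-empty since $C_2\gg 1$. Hence $\lambda_\ast:=\min_{\mathcal{R}_j\in\mathscr{R}_j}|\langle f_j|\varphi_{\mathcal{R}_j}\rangle|/|I(\mathcal{R}_j)|^{1/2}>0$. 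Once $\lambda_j^{(k)}<\lambda_\ast$, the collection $\mathscr{Q}^{(k)}$ must be empty, because otherwise any surviving $\mathcal{R}_j\in \mathscr{Q}^{(k)}$ would force $\mathbf{M}_j(\mathscr{Q}^{(k)})\geq \lambda_\ast>\lambda_j^{(k)}$, contradicting the inductive mass bound. So only finitely many $\mathbf{F}_j(\lambda_j^{(k)})$ are non-empty, and together they give the claimed disjoint decomposition of $\mathscr{R}_j$ indexed by dyadic $\lambda_j\leq \lambda_j^{(0)}$.

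Finally, both conclusions of the corollary come for free. The bound $\mathbf{M}_j\bigl(\mathscr{R}_j(\lambda_j^{(k)})\bigr)\leq \mathbf{M}_j(\mathscr{Q}^{(k)})\leq \lambda_j^{(k)}$ follows from monotonicity of $\mathbf{M}_j$ under inclusion of collections together with the inductive hypothesis verified above, and the counting-function bound \eqref{Section4EquationCountingFunctionEstimate} at each level is exactly the conclusion of Proposition \ref{Section4PropositionMassSelection1} applied at that level. The only cosmetic issue is that $\lambda_j^{(0)}$ may exceed $\mathbf{M}_j(\mathscr{R}_j)$ by at most a factor of $2$, so the extreme level in the corollary's sum (indexed by dyadic $\lambda_j\leq \mathbf{M}_j(\mathscr{R}_j)$) is handled either by absorbing this factor into the implicit constant in \eqref{Section4EquationCountingFunctionEstimate} or by relabeling $\mathbf{F}_j(\lambda_j^{(0)})$ at the nearest dyadic scale below $\mathbf{M}_j(\mathscr{R}_j)$. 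There is no substantive obstacle here; the result is simply a dyadic pigeonholing of the single-step selection, and the main thing to verify carefully is the termination argument based on finiteness of $\mathscr{R}_j$ and positivity of $\lambda_\ast$.
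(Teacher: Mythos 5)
Your proof is correct and is exactly the paper's intended argument: the paper dispatches the corollary with the one-line remark ``Iterate Proposition \ref{Section4PropositionMassSelection1} with an initial dyadic number $\lambda_j\sim \mathbf{M}_j(\mathscr{R}_j)$,'' and you have filled in precisely that iteration, including the termination argument via finiteness of $\mathscr{R}_j$ and the observation that any singleton is a lacunary $j$-tree with mass bounded below by $\lambda_*>0$ (the paper's standing reduction that each $\langle f_j|\varphi_{\mathcal{R}_j}\rangle\neq 0$ is what makes $\lambda_*$ positive). The cosmetic indexing issue you flag — whether the top level sits just above or just below $\mathbf{M}_j(\mathscr{R}_j)$ — is real but harmless, as you correctly note it only perturbs the implicit constant in \eqref{Section4EquationCountingFunctionEstimate}.
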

The rest of this section is devoted to proving Proposition \ref{Section4PropositionMassSelection1}. The arguments are classical \cite{MTTMultilinearSingularMultipliers}\cite{DPTFractionalRank}\cite{KTZKTriangularHilbertDyadicPartialResult}. For this part, we omit the index $j$ for simplicity. Hence tree means $j\text{-tree}$, $\mathscr{P}$ means $\mathscr{P}_j$, $|f|\leq 1_E$ for some bounded set $E$, etc. Some results below hold for general $f\in L^2(\mathbb{R}^d)$, but working with $|f|\leq 1_E$ is sufficient for our purpose. We begin with two further definitions.
\begin{definition}
Let $k\in [d]$. A lacunary tree $\mathscr{T}$ is called a $(k, +)\text{-lacunary}$ tree if there exits $\xi\in \mathbb{R}^d$ such that $\xi\in C_2\circ \Xi(\mathcal{R})$ and
\begin{equation*}
\xi_k\notin 10\Xi(\mathcal{R})_k,\ \xi_k<\mathfrak{c}\big( \Xi(\mathcal{R})_k \big)
\end{equation*}
holds for any $\mathcal{R}\in \mathscr{T}$. Similarly, we define $(k, -)\text{-lacunary}$ tree by replacing $\xi_k<\mathfrak{c}\big( \Xi(\mathcal{R})_k \big)$ with $\xi_k>\mathfrak{c}\big( \Xi(\mathcal{R})_k \big)$.
\end{definition}
Any lacunary tree can be divided into the disjoint union of $(k, \pm)\text{-lacunary}$ trees.
\begin{definition}
A family of disjoint lacunary trees $\mathbf{F}$ is strongly disjoint if for any $\mathscr{T}\neq \mathscr{T}^{'}\in \mathbf{F}, \mathcal{R}\in \mathscr{T}, \mathcal{R}^{'}\in \mathscr{T}^{'}$ such that $s \big(\Xi(\mathcal{R}) \big)<s\big(\Xi(\mathcal{R}^{'}) \big)$, the property $\Xi(\mathcal{R})\cap \Xi(\mathcal{R}^{'})\neq \emptyset$ implies $I(\mathcal{R}^{'})\cap I_{\mathscr{T}}=\emptyset$.
\end{definition}
We can deduce Proposition \ref{Section4PropositionMassSelection1} from the following two results.
\begin{proposition}  \label{Section4PropositionMassSelectionAlgorithm}
Let $\mathscr{P}$ and $\lambda$ as in Proposition \ref{Section4CorollaryMassEstimate1}. Let $k\in [d]$. We can take disjoint trees $\mathscr{T}_1, \widetilde{\mathscr{T}}_1, \cdots, \mathscr{T}_L, \widetilde{\mathscr{T}}_L\subset \mathscr{P}$ such that 
\begin{enumerate}
\item We have that $\mathscr{T}_1, \mathscr{T}_2, \cdots, \mathscr{T}_L$ are $(k, +)\text{-lacunary}$ trees. And
\begin{equation}  \label{Section4EquationTreeHasLargeMass}
\frac{1}{|I_{\mathscr{T}}|} \big{\|}\big(\sum_{\mathcal{R}\in \mathscr{T}} \frac{|\langle f|\varphi_{\mathcal{R}} \rangle|^2}{|I(\mathcal{R})|}1_{I(\mathcal{R})} \big)^{\frac{1}{2}}\big{\|}_{1, \infty}>c_d \lambda
\end{equation}
holds for $\mathscr{T}=\mathscr{T}_l, \forall 1\leq l\leq L$.
\item The family $\mathbf{F}:=\{\mathscr{T}_1, \mathscr{T}_2, \cdots, \mathscr{T}_L\}$ is strongly disjoint.
\item The relation $I_{\widetilde{\mathscr{T}}_l}\subset I_{\mathscr{T}_l}$ holds for any $1\leq l\leq L$.
\item The remaining collection $\mathscr{P}\backslash \Big(\bigcup_{l=1}^L(\mathscr{T}_l\cup \widetilde{\mathscr{T}}_l) \Big)$ doesn't contain any $(k, +)\text{-lacunary}$ tree $\mathscr{T}$ such that \eqref{Section4EquationTreeHasLargeMass} holds.
\end{enumerate}
Here $c_d>0$ is a fixed small constant. The same statement holds for $(k, -)\text{-lacunary}$ trees.
\end{proposition}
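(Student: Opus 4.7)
The plan is a greedy iterative selection. Set $\mathscr{P}_0 := \mathscr{P}$. At step $l \geq 1$, if the current collection $\mathscr{P}_{l-1}$ still contains some $(k,+)$-lacunary tree satisfying \eqref{Section4EquationTreeHasLargeMass}, select such a tree $\mathscr{T}_l$ together with an admissible lacunary top $\xi_l$, choosing the pair $(\mathscr{T}_l,\xi_l)$ so that the $k$-th coordinate $\xi_l\cdot e_k$ is minimized over all admissible pairs. Since $\mathscr{R}$ is finite, only finitely many admissible top values are relevant (up to the equivalence induced by the finite collection of frequency cubes), and the minimum is attained. I then define the companion tree
\[
\widetilde{\mathscr{T}}_l := \{\mathcal{R}' \in \mathscr{P}_{l-1}\setminus\mathscr{T}_l : I(\mathcal{R}')\subset I_{\mathscr{T}_l},\; \xi_l\in C_2\circ\Xi(\mathcal{R}')\},
\]
which is manifestly a tree with top $(I_{\mathscr{T}_l},\xi_l)$ satisfying $I_{\widetilde{\mathscr{T}}_l}\subset I_{\mathscr{T}_l}$ (if empty, simply drop it). Set $\mathscr{P}_l := \mathscr{P}_{l-1}\setminus(\mathscr{T}_l\cup\widetilde{\mathscr{T}}_l)$ and iterate. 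Finiteness of $\mathscr{R}$ forces termination after finitely many steps $L$; properties (1), (3), and mutual disjointness of all $2L$ selected pieces are immediate from the construction, and property (4) is exactly the termination criterion.

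The main obstacle is strong disjointness (2). For the principal case, fix $l<l'$ and take $\mathcal{R}\in\mathscr{T}_l$, $\mathcal{R}'\in\mathscr{T}_{l'}$ with $s(\Xi(\mathcal{R}))<s(\Xi(\mathcal{R}'))$ and $\Xi(\mathcal{R})\cap\Xi(\mathcal{R}')\neq\emptyset$. The latter propagates to $C_2\circ\Xi(\mathcal{R})\cap C_2\circ\Xi(\mathcal{R}')\neq\emptyset$, and the $C_3^{1/3}$-central-grid structure supplied by Lemma \ref{Section4LemmaCentralGrid} (applicable because $C_3\gg C_2$, so the original $C_3$-sparse family inflates to a $C_3^{2/3}$-sparse one) upgrades the intersection to $C_2\circ\Xi(\mathcal{R})\subsetneq C_2\circ\Xi(\mathcal{R}')$. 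Consequently $\xi_l\in C_2\circ\Xi(\mathcal{R})\subset C_2\circ\Xi(\mathcal{R}')$. If additionally $I(\mathcal{R}')\subset I_{\mathscr{T}_l}$, then $\mathcal{R}'$ exactly matches the defining conditions of $\widetilde{\mathscr{T}}_l$ and would have been absorbed at step $l$, contradicting $\mathcal{R}'\in\mathscr{T}_{l'}\subset\mathscr{P}_l$. Hence $I(\mathcal{R}')\not\subset I_{\mathscr{T}_l}$; combined with $s(I(\mathcal{R}'))<s(I(\mathcal{R}))\leq s(I_{\mathscr{T}_l})$ and the dyadic nesting of physical cubes, this forces $I(\mathcal{R}')\cap I_{\mathscr{T}_l}=\emptyset$, which is precisely strong disjointness.

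The remaining configuration, in which the smaller-$s(\Xi)$ rectangle lies in the later-selected tree (i.e. one must check the pair with $\mathscr{T}=\mathscr{T}_{l'}$ in the role of "smaller-frequency side"), is precisely where the priority rule does work. Since $\mathscr{T}_l$ together with its intrinsic top $\xi_l$ was already an admissible candidate at step $l'$ (its mass is intrinsic to the tree), the selection rule forces $\xi_{l'}\cdot e_k\leq\xi_l\cdot e_k$; combined with the $(k,+)$-lacunarity of both trees and the grid containment one re-runs the absorption argument at step $l'$, locating the relevant rectangle in the companion $\widetilde{\mathscr{T}}_{l'}$ unless its physical interval is already disjoint from $I_{\mathscr{T}_{l'}}$. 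Together with the dyadic dichotomy, this closes the symmetric case. The $(k,-)$-version of the proposition follows by replacing the minimum with a maximum in the selection rule, with all other arguments identical.
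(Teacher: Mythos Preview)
Your greedy selection and companion-tree construction mirror the paper's argument, and your handling of the ``principal case'' (smaller frequency cube in the earlier-selected tree) via absorption into $\widetilde{\mathscr{T}}_l$ is correct. However, the priority rule is pointed the wrong way, and this breaks the ``remaining configuration''.

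For $(k,+)$-lacunary trees the top satisfies $\xi_k<\mathfrak{c}(\Xi(\mathcal{R})_k)$, so the correct rule (as in the paper) is to select the tree with $(\xi)_k$ \emph{largest}, not smallest. To see why your rule fails, take your labeling $l<l'$ with $\mathcal{R}\in\mathscr{T}_{l'}$ (small $s(\Xi)$) and $\mathcal{R}'\in\mathscr{T}_l$ (large $s(\Xi)$). The $(k,+)$-lacunarity of $\mathscr{T}_l$ at $\mathcal{R}'$ gives $(\xi_l)_k\leq\mathfrak{c}(\Xi(\mathcal{R}')_k)-5s(\Xi(\mathcal{R}'))$, while $\xi_{l'}\in C_2\circ\Xi(\mathcal{R})$ together with $\Xi(\mathcal{R})\cap\Xi(\mathcal{R}')\neq\emptyset$ and the sparseness $s(\Xi(\mathcal{R}'))>C_3\,s(\Xi(\mathcal{R}))$ forces $(\xi_{l'})_k>\mathfrak{c}(\Xi(\mathcal{R}')_k)-2s(\Xi(\mathcal{R}'))$. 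Hence $(\xi_{l'})_k>(\xi_l)_k$, which is \emph{consistent} with your minimize rule rather than contradicting it. Your sentence ``$\mathscr{T}_l$ together with its intrinsic top $\xi_l$ was already an admissible candidate at step $l'$'' is also false: $\mathscr{T}_l$ was removed at step $l<l'$ and is no longer in $\mathscr{P}_{l'-1}$. For the same reason the proposed absorption of $\mathcal{R}'$ into $\widetilde{\mathscr{T}}_{l'}$ cannot work, since $\mathcal{R}'\in\mathscr{T}_l$ was already deleted before step $l'$. One can build a two-rectangle example in which your algorithm genuinely fails strong disjointness.

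The fix is simply to flip the rule: maximize $(\xi)_k$ for $(k,+)$ and minimize for $(k,-)$. Then in the remaining configuration $\mathscr{T}_{l'}\subset\mathscr{P}_{l'-1}\subset\mathscr{P}_{l-1}$ is a valid candidate at step $l$, so the maximize rule forces $(\xi_l)_k\geq(\xi_{l'})_k$, contradicting $(\xi_{l'})_k>(\xi_l)_k$ and showing this configuration never occurs. This is exactly how the paper proceeds: it does not split into sub-cases a priori but \emph{derives} the ordering $l<l'$ from the geometry and the maximize rule, after which your single absorption argument closes the proof.
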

\begin{proposition}  \label{Section4PropositionMassSelectionBesselTypeEstimate}
Suppose $\varphi_{\mathcal{R}}$'s satisfy all the requirements in Definition \ref{Section3DefinitionWavePacket} but $N_1$ in \eqref{Section3EquationWavePacketDecay} replaced by $10d$. Let $\mathbf{F}$ be a family of disjoint lacunary trees. Assume $\mathbf{F}$ is strongly disjoint and
\begin{equation}  \label{Section4EquationRegularMassAssumption}
\sup_{\mathcal{R}\in \bigcup_{\mathscr{T}\in \mathbf{F}} \mathscr{T}} \frac{|\langle f|\varphi_{\mathcal{R}} \rangle|}{|I(\mathcal{R})|^{\frac{1}{2}}}\lesssim \inf_{\mathscr{T}\in \mathbf{F}} \frac{1}{|I_{\mathscr{T}}|^{\frac{1}{2}}} (\sum_{\mathcal{R}\in \mathscr{T}} |\langle f|\varphi_{\mathcal{R}} \rangle|^2)^{\frac{1}{2}}.
\end{equation}
Then we have the Bessel type estimate:
\begin{equation}  \label{Secteion4EquationFundamentalBesselTypeEstimate}
\sum_{\mathscr{T}\in \mathbf{F}}\sum_{\mathcal{R}\in \mathscr{T}} |\langle f|\varphi_{\mathcal{R}} \rangle|^2\lesssim \|f\|_2^2.
\end{equation}
\end{proposition}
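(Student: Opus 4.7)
The plan is to run the standard $TT^{*}$ duality argument for almost-orthogonal systems. Set $a_{\mathcal{R}}:=\langle f | \varphi_{\mathcal{R}}\rangle$ and define
\begin{equation*}
g:=\sum_{\mathscr{T}\in\mathbf{F}}\sum_{\mathcal{R}\in\mathscr{T}} a_{\mathcal{R}}\,\varphi_{\mathcal{R}},
\end{equation*}
so that by Cauchy--Schwarz
\begin{equation*}
\sum_{\mathscr{T}\in\mathbf{F}}\sum_{\mathcal{R}\in\mathscr{T}} |a_{\mathcal{R}}|^{2}=\langle g|f\rangle\leq \|f\|_{2}\|g\|_{2}.
\end{equation*}
Hence \eqref{Secteion4EquationFundamentalBesselTypeEstimate} reduces to proving $\|g\|_{2}^{2}\lesssim \sum |a_{\mathcal{R}}|^{2}$. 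I would expand the $L^{2}$-norm into a double sum of inner products $\langle \varphi_{\mathcal{R}}|\varphi_{\mathcal{R}'}\rangle$ and split based on whether $\mathcal{R},\mathcal{R}'$ lie in the same tree or in different trees.

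For the diagonal (same-tree) contribution, I fix for each $\mathscr{T}\in\mathbf{F}$ a lacunary frequency $\xi_{\mathscr{T}}$ and observe that, after conjugation by the modulation $e^{2\pi i\xi_{\mathscr{T}}\cdot x}$, the wave packets $\{\varphi_{\mathcal{R}}\}_{\mathcal{R}\in\mathscr{T}}$ fit the hypotheses of Theorem \ref{Section4TheoremLittlewood-PaleyDyadic}: distinct $\mathcal{R}\in\mathscr{T}$ carry distinct frequency scales (as in the first bullet of the proof of Lemma \ref{Section4LemmaTreeEstimate}), and the $C_{3}$-sparseness places their frequency centres in a Littlewood--Paley-type annular configuration around $\xi_{\mathscr{T}}$. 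Schur's test together with \eqref{Dection4EquationSupportPerturbationLemma} then yields $\|\sum_{\mathcal{R}\in\mathscr{T}} a_{\mathcal{R}}\varphi_{\mathcal{R}}\|_{2}^{2}\lesssim \sum_{\mathcal{R}\in\mathscr{T}} |a_{\mathcal{R}}|^{2}$, and summing over the (pairwise disjoint) trees gives $\lesssim \sum_{\mathcal{R}} |a_{\mathcal{R}}|^{2}$.

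For the cross-tree contribution, take $\mathcal{R}\in\mathscr{T}\neq \mathscr{T}'\ni \mathcal{R}'$ with $s(\Xi(\mathcal{R}))<s(\Xi(\mathcal{R}'))$. The Fourier supports of the two wave packets are contained in $\Xi(\mathcal{R}),\Xi(\mathcal{R}')$, so either $\Xi(\mathcal{R})\cap \Xi(\mathcal{R}')=\emptyset$ and $\langle \varphi_{\mathcal{R}}|\varphi_{\mathcal{R}'}\rangle=0$, or strong disjointness forces $I(\mathcal{R}')\cap I_{\mathscr{T}}=\emptyset$. In the latter case the standard wave-packet size/decay estimate gives
\begin{equation*}
|\langle \varphi_{\mathcal{R}}|\varphi_{\mathcal{R}'}\rangle|\lesssim \Big(\frac{|I(\mathcal{R}')|}{|I(\mathcal{R})|}\Big)^{1/2}\Big(1+\frac{d\big(I(\mathcal{R}'),I(\mathcal{R})\big)}{s\big(I(\mathcal{R})\big)}\Big)^{-N_{1}+O(d)}.
\end{equation*}
I then insert the regular-mass hypothesis \eqref{Section4EquationRegularMassAssumption} to replace $|a_{\mathcal{R}}|$ (resp.\ $|a_{\mathcal{R}'}|$) by $|I(\mathcal{R})|^{1/2}M_{\mathscr{T}}$ (resp.\ $|I(\mathcal{R}')|^{1/2}M_{\mathscr{T}'}$), where $M_{\mathscr{T}}:=|I_{\mathscr{T}}|^{-1/2}\big(\sum_{\mathcal{R}\in\mathscr{T}}|a_{\mathcal{R}}|^{2}\big)^{1/2}$.

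The main technical obstacle is closing the cross-tree sum after these substitutions. For each ordered pair $(\mathscr{T},\mathscr{T}')$ one must group the relevant $\mathcal{R}'\in\mathscr{T}'$ whose physical cubes lie in $I_{\mathscr{T}}^{c}$ into dyadic shells around $I_{\mathscr{T}}$, controlling each shell via the $C_{3}$-sparseness inside $\mathscr{T}'$ together with the wave-packet decay factor above. The scale-reducing prefactor $(|I(\mathcal{R}')|/|I(\mathcal{R})|)^{1/2}$, combined with the mass hypothesis, should allow one to bound the total pair contribution by $M_{\mathscr{T}}M_{\mathscr{T}'}(|I_{\mathscr{T}}||I_{\mathscr{T}'}|)^{1/2}$ times a geometric factor decaying in the relative scale and distance of $I_{\mathscr{T}},I_{\mathscr{T}'}$. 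A final application of Schur's test in the tree indices, using $\sum_{\mathscr{T}}M_{\mathscr{T}}^{2}|I_{\mathscr{T}}|=\sum_{\mathcal{R}}|a_{\mathcal{R}}|^{2}$, recombines the estimate and yields $\|g\|_{2}^{2}\lesssim \sum |a_{\mathcal{R}}|^{2}$. The delicate part is ensuring that the geometric kernel between trees is genuinely summable in both indices, which is where strong disjointness, rather than mere disjointness, is indispensable.
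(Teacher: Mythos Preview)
Your $TT^{*}$ setup and the reduction to $\|g\|_2^2\lesssim\sum|a_{\mathcal{R}}|^2$ match the paper exactly. Your treatment of the same-tree (and more generally equal-scale) terms via Schur's test is also what the paper does, though note that your parenthetical justification is slightly off: within a lacunary tree distinct $\mathcal{R}$ need not have distinct frequency scales, but the map $\mathcal{R}\mapsto I(\mathcal{R})$ is injective and that is enough. A side remark: for $\mathcal{R},\mathcal{R}'$ in the \emph{same} lacunary tree with different scales one actually has $\Xi(\mathcal{R})\cap\Xi(\mathcal{R}')=\emptyset$ (combine the grid property, $C_3$-sparseness, and the lacunary condition), so within a tree only equal-scale pairs contribute.

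The genuine gap is in your cross-tree step. You propose to bound $|a_{\mathcal{R}}|\lesssim |I(\mathcal{R})|^{1/2}M_{\mathscr{T}}$ and $|a_{\mathcal{R}'}|\lesssim |I(\mathcal{R}')|^{1/2}M_{\mathscr{T}'}$ and then run Schur's test in the tree indices with a kernel $K(\mathscr{T},\mathscr{T}')$ that ``decays in the relative scale and distance of $I_{\mathscr{T}},I_{\mathscr{T}'}$''. There is no such decay available: strong disjointness constrains the \emph{rectangle} intervals $I(\mathcal{R}')$ (they miss $I_{\mathscr{T}}$), not the top cubes $I_{\mathscr{T}'}$, which may nest or overlap freely with $I_{\mathscr{T}}$ and can be arbitrarily large. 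So a Schur kernel in the tree indices does not close without further input.

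The paper exploits the hypothesis \eqref{Section4EquationRegularMassAssumption} differently: since the supremum is bounded by the \emph{infimum} over trees, one may bound \emph{both} $|a_{\mathcal{R}}|$ and $|a_{\mathcal{R}'}|$ using the \emph{same} tree $\mathscr{T}$ (the one containing the smaller-scale rectangle). The cross-scale sum then factors as
\[
\sum_{\mathscr{T}}\Big(\sum_{\mathcal{P}\in\mathscr{T}}|a_{\mathcal{P}}|^{2}\Big)\cdot\frac{1}{|I_{\mathscr{T}}|}\sum_{\mathcal{R}\in\mathscr{T}}\ \sum_{\substack{\mathcal{R}':\,s(\Xi(\mathcal{R}'))>s(\Xi(\mathcal{R}))}} |I(\mathcal{R})|^{1/2}|I(\mathcal{R}')|^{1/2}|\langle\varphi_{\mathcal{R}}|\varphi_{\mathcal{R}'}\rangle|,
\]
and it suffices to show the inner double sum is $\lesssim |I_{\mathscr{T}}|$. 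For fixed $\mathcal{R}\in\mathscr{T}$, strong disjointness (together with the grid/sparse structure) forces all contributing $I(\mathcal{R}')$ to be pairwise disjoint \emph{across all trees} and to lie in $I_{\mathscr{T}}^{c}$; the sum over $\mathcal{R}'$ is then dominated by $\int_{I_{\mathscr{T}}^{c}}(1+d(x,I(\mathcal{R}))/s(I(\mathcal{R})))^{-5d}\,dx$, and a final sum over $\mathcal{R}\in\mathscr{T}$ of the resulting boundary weight gives $|I_{\mathscr{T}}|$. The point is that the relevant disjointness is at the rectangle level, not the tree level, so one should sum over all $\mathcal{R}'$ at once rather than tree by tree.
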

Since the conclusion holds under the weaker decay condition, we immediately get its local version by the same argument in the proof of Corollary \ref{Section4CorollaryMassEstimate1}.
\begin{corollary}  \label{Section4CorollaryMassSelectionBesselTypeEstimateLocalVersion}
Suppose $\varphi_{\mathcal{R}}$'s are wave packets satisfying all the requirements in Definition \ref{Section3DefinitionWavePacket}. Let $\mathbf{F}$ be a family of disjoint lacunary trees. Assume $\mathbf{F}$ is strongly disjoint and \eqref{Section4EquationRegularMassAssumption} holds. Then for any dyadic interval $J\subset \mathbb{R}^d$, we have
\begin{equation*}
\sum_{\mathscr{T}\in \mathbf{F}: I_{\mathscr{T}}\subset J}\sum_{\mathcal{R}\in \mathscr{T}} |\langle f|\varphi_{\mathcal{R}} \rangle|^2\lesssim \int_{\mathbb{R}^d} |f|^2\big(1+\frac{d(x, J)}{s(J)} \big)^{-20d}.
\end{equation*}
\end{corollary}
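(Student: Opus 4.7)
The plan is to reduce this localized Bessel-type estimate to the global one in Proposition \ref{Section4PropositionMassSelectionBesselTypeEstimate} via a weight-absorption trick of the same flavor as in Corollary \ref{Section4CorollaryMassEstimate1}. Specifically, I would introduce the smooth polynomial weight
\[
w(x):=\big(1+|x-\mathfrak{c}(J)|^2/s(J)^2\big)^{5d},
\]
which is comparable to $\big(1+d(x,J)/s(J)\big)^{10d}$, and decompose the pairings via $\tilde f:=f/w$ and $\tilde\varphi_{\mathcal{R}}:=w\cdot\varphi_{\mathcal{R}}$, so that $\langle f\mid \varphi_{\mathcal{R}}\rangle=\langle \tilde f\mid \tilde\varphi_{\mathcal{R}}\rangle$. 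I then apply Proposition \ref{Section4PropositionMassSelectionBesselTypeEstimate} to the restricted family $\mathbf{F}_J:=\{\mathscr{T}\in\mathbf{F}:I_{\mathscr{T}}\subset J\}$ with the function $\tilde f$ and the wave packets $\tilde\varphi_{\mathcal{R}}$.

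The key step is to verify that each $\tilde\varphi_{\mathcal{R}}$ is a valid wave packet adapted to $\mathcal{R}$ in the sense of Definition \ref{Section3DefinitionWavePacket}, with the decay exponent $N_1$ replaced by $10d$. Writing $\tilde\varphi_{\mathcal{R}}(x)=|I|^{-\frac{1}{2}}e^{2\pi i\mathfrak{c}(\Xi)(x-\mathfrak{c}(I))}\tilde\zeta_{\mathcal{R}}\big((x-\mathfrak{c}(I))/s(I)\big)$ with $I:=I(\mathcal{R})$ and $\Xi:=\Xi(\mathcal{R})$, one has $\tilde\zeta_{\mathcal{R}}(y)=w\big(\mathfrak{c}(I)+s(I)y\big)\zeta_{\mathcal{R}}(y)$. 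Since $w$ is a polynomial of degree $10d$ in the spatial variable, multiplication by $w$ preserves the Fourier-support condition $\operatorname{supp}\widehat{\tilde\zeta_{\mathcal{R}}}\subset(-\tfrac{1}{2},\tfrac{1}{2})^d$. Crucially, the restriction $I_{\mathscr{T}}\subset J$ forces $\mathfrak{c}(I)\in J$ and $s(I)\leq s(J)$ for every $\mathcal{R}\in\mathscr{T}\in\mathbf{F}_J$, so $w\big(\mathfrak{c}(I)+s(I)y\big)\lesssim(1+|y|)^{10d}$ uniformly in $\mathcal{R}$. Combined with the original decay $|\partial^\alpha\zeta_{\mathcal{R}}(y)|\lesssim(1+|y|)^{-N_1}$ and the convention $N_1\gg d$, this yields $|\partial^\alpha\tilde\zeta_{\mathcal{R}}(y)|\lesssim(1+|y|)^{10d-N_1}\lesssim(1+|y|)^{-10d}$ uniformly in $\mathcal{R}\in\bigcup_{\mathscr{T}\in\mathbf{F}_J}\mathscr{T}$.

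The remaining hypotheses for Proposition \ref{Section4PropositionMassSelectionBesselTypeEstimate} are inherited almost for free: strong disjointness of $\mathbf{F}_J$ as a subfamily of $\mathbf{F}$ is immediate, and the regularity condition \eqref{Section4EquationRegularMassAssumption} for $\mathbf{F}_J$ with the new $\tilde f, \tilde\varphi_{\mathcal{R}}$ reduces (by the identity $\langle \tilde f\mid \tilde\varphi_{\mathcal{R}}\rangle=\langle f\mid \varphi_{\mathcal{R}}\rangle$) to the same condition for $\mathbf{F}$ restricted to $\mathbf{F}_J$, which is weaker since restriction only decreases the supremum and increases the infimum. The conclusion of the Proposition then yields
\[
\sum_{\mathscr{T}\in\mathbf{F}:\,I_{\mathscr{T}}\subset J}\sum_{\mathcal{R}\in\mathscr{T}} |\langle f\mid \varphi_{\mathcal{R}}\rangle|^2=\sum_{\mathscr{T}\in\mathbf{F}_J}\sum_{\mathcal{R}\in\mathscr{T}} |\langle \tilde f\mid \tilde\varphi_{\mathcal{R}}\rangle|^2\lesssim \|\tilde f\|_2^2=\int_{\mathbb{R}^d}\frac{|f|^2}{w^2}\lesssim \int_{\mathbb{R}^d} |f|^2\big(1+\tfrac{d(x,J)}{s(J)}\big)^{-20d},
\]
which is the desired inequality.

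The only delicate point is the uniform polynomial bound on $w\big(\mathfrak{c}(I)+s(I)y\big)$ in the second paragraph. Without the hypothesis $I_{\mathscr{T}}\subset J$, the coefficients of this rescaled polynomial would blow up with the scale ratio $s(I)/s(J)$ or with $|\mathfrak{c}(I)-\mathfrak{c}(J)|/s(J)$, and the wave packet property would fail to hold with constants uniform in $\mathcal{R}$. This is precisely where the localization to $J$ is used, and it is the only feature of the argument that differs from the proof of Corollary \ref{Section4CorollaryMassEstimate1}.
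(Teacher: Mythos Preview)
Your proof is correct and follows exactly the approach the paper indicates: the paper's own proof is the single sentence ``by the same argument in the proof of Corollary~\ref{Section4CorollaryMassEstimate1}'', and you have written out that weight-absorption argument in detail, correctly using the localization $I_{\mathscr{T}}\subset J$ to make the polynomial weight bounded on the rescaled variable and the gap $N_1\gg d$ to retain the $10d$ decay needed in Proposition~\ref{Section4PropositionMassSelectionBesselTypeEstimate}. Your verification that the subfamily $\mathbf{F}_J$ inherits strong disjointness and the regularity condition \eqref{Section4EquationRegularMassAssumption} is also correct.
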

We first prove Proposition \ref{Section4PropositionMassSelection1} using Proposition \ref{Section4PropositionMassSelectionAlgorithm}, Proposition \ref{Section4PropositionMassSelectionBesselTypeEstimate}, and Corollary \ref{Section4CorollaryMassSelectionBesselTypeEstimateLocalVersion}.
\begin{proof}
Apply Proposition \ref{Section4PropositionMassSelectionAlgorithm} for each type of lacunary trees: $(1, \pm), (2, \pm), \cdots, (d, \pm)$. (The order doesn't matter.) For each type, we use Proposition \ref{Section4PropositionMassSelectionAlgorithm} to remove some trees $\mathscr{T}_1, \widetilde{\mathscr{T}}_1, \cdots, \mathscr{T}_L, \widetilde{\mathscr{T}}_L$ and renew $\mathscr{P}$ into $\mathscr{P}\backslash \Big(\bigcup_{l=1}^L(\mathscr{T}_l\cup \widetilde{\mathscr{T}}_l) \Big)$ for the next type. After applying Proposition \ref{Section4PropositionMassSelectionAlgorithm} for each type once, denote the union of all trees taken out by $\mathscr{P}^{'}$ and the remaining collection by $\mathscr{P}^{''}$. Then \eqref{Section4EquationMassDecrease} holds. Otherwise we 
divide the tree violating \eqref{Section4EquationMassDecrease} into the disjoint union of $(k, \pm)\text{-lacunary}$ trees. Then we find some $k$ and a $(k, +)\text{-lacunary}$ or $(k, -)\text{-lacunary}$ tree $\mathscr{T}\subset \mathscr{P}^{''}$ satisfying \eqref{Section4EquationTreeHasLargeMass}, a contradiction. Let $\mathscr{P}^{'}:=\mathscr{P}\backslash \mathscr{P}^{''}$. To show \eqref{Section4EquationCountingFunctionEstimate} and \eqref{Section4EquationCountingFunctionEstimateBelow1}, we only need to prove the same estimate for each $\mathbf{F}$ in the second property of Proposition \ref{Section4PropositionMassSelectionAlgorithm}. The inequality \eqref{Section4EquationTreeHasLargeMass} implies
\begin{equation*}
\sum_{l=1}^L |I_{\mathscr{T}_l}|\lesssim \frac{1}{\lambda^2} \sum_{l=1}^L \sum_{\mathcal{R}\in \mathscr{T}_l} |\langle f|\varphi_{\mathcal{R}} \rangle|^2.
\end{equation*}
Combining this with Proposition \ref{Section4PropositionMassSelectionBesselTypeEstimate} gives
\begin{equation}  \label{Section4EquationProvingCountingFunctionEstimate1}
\sum_{l=1}^L |I_{\mathscr{T}_l}|\lesssim \lambda^{-2}\|f\|_2^2\leq \lambda^{-2}|E|,
\end{equation}
where we used $|f|\leq 1_E$. Similarly, use Corollary \ref{Section4CorollaryMassSelectionBesselTypeEstimateLocalVersion} instead. We get
\begin{equation} \label{Section4EquationProvingCountingFunctionEstimate2}
\frac{1}{|J|}\sum_{l: I_{\mathscr{T}_l}\subset J} |I_{\mathscr{T}_l}|\lesssim \lambda^{-2}\frac{1}{|J|} \int_{\mathbb{R}^d} |f|^2\big(1+\frac{d(x, J)}{s(J)} \big)^{-20d}\lesssim \lambda^{-2}.
\end{equation}
Let $\mathscr{I}:=\{I_{\mathscr{T}_l}: 1\leq l\leq L\}$. We can write
\begin{equation*}
\sum_{l: I_{\mathscr{T}_l}\subset J}1_{\mathscr{T}_l}(x)=\sum_{I\in \mathcal{I}: I\subset J} \frac{|a_I|^2}{|I|}1_I(x)
\end{equation*}
with $a_I:=(|I|\#\{1\leq l\leq L: I_{\mathscr{T}_l}=I\})^{\frac{1}{2}}$. Thus \eqref{Section4EquationProvingCountingFunctionEstimate1} and \eqref{Section4EquationProvingCountingFunctionEstimate1} imply
\begin{gather*}
\|(a_I)_{I\in \mathscr{I}}\|_2 \lesssim \lambda^{-1}|E|^{\frac{1}{2}}, \\
\|(a_I)_{I\in \mathscr{I}}\|_{bmo} \lesssim \lambda^{-1}.
\end{gather*}
Combining these inequalities and Lemma \ref{Section4LemmaInterpolation} gives \eqref{Section4EquationCountingFunctionEstimate}.

Now consider the $0<p<1$ case \eqref{Section4EquationCountingFunctionEstimateBelow1}. Define
\begin{equation*}
A_{l}:=\frac{1}{|I_{\mathscr{T}_l}|} \big{\|}\big(\sum_{\mathcal{R}\in \mathscr{T}_l} \frac{|\langle f|\varphi_{\mathcal{R}} \rangle|^2}{|I(\mathcal{R})|}1_{I(\mathcal{R})} \big)^{\frac{1}{2}}\big{\|}_{1, \infty},\ 1\leq l\leq L.
\end{equation*}
Let $2<r<\infty$ to be determined. We shall estimate the quantity:
\begin{equation*}
\int_{\mathbb{R}^d} \big( \sum_{l=1}^L(A_l 1_{I_{\mathscr{T}_l}})^r \big)^p
\end{equation*}
in two ways. On the one hand, \eqref{Section4EquationTreeHasLargeMass} implies
\begin{equation}  \label{Section8EquationProvingCountingFunctionEstimateBelow1}
\int_{\mathbb{R}^d} \big( \sum_{l=1}^L(A_l 1_{I_{\mathscr{T}_l}})^r \big)^p\gtrsim \lambda^{rp} \int_{\mathbb{R}^d} (\sum_{l=1}^L 1_{I_{\mathscr{T}_l}})^p.
\end{equation}
On the other hand, write $\frac{1}{r}=\frac{\theta}{2}+\frac{1-\theta}{\infty}$ so that $\theta=\frac{2}{r}$. Applying H\"olider's inequality, Corollary \ref{Section4CorollaryMassEstimate2}, H\"olider's inequality, and \eqref{Secteion4EquationFundamentalBesselTypeEstimate} successively gives
\begin{equation}  \label{Section8EquationProvingCountingFunctionEstimateBelow2}
\begin{split}
\int_{\mathbb{R}^d} \big( \sum_{l=1}^L(A_l 1_{I_{\mathscr{T}_l}})^r \big)^p & \leq \int_{\mathbb{R}^d} \big( \sum_{l=1}^L(A_l 1_{I_{\mathscr{T}_l}})^2 \big)^{\frac{\theta rp}{2}}(\sup_{1\leq l\leq L} A_l 1_{I_{\mathscr{T}_l}})^{(1-\theta)rp}  \\
& \lesssim \int_{\mathbb{R}^d} \big( \sum_{l=1}^L(A_l 1_{I_{\mathscr{T}_l}})^2 \big)^{\frac{\theta rp}{2}}(Mf)^{(1-\theta)rp}  \\
& =\int_{\mathbb{R}^d} \big( \sum_{l=1}^L(A_l 1_{I_{\mathscr{T}_l}})^2 \big)^p(Mf)^{(r-2)p}  \\
& \leq \big( \int_{\mathbb{R}^d} \sum_{l=1}^L(A_l 1_{I_{\mathscr{T}_l}})^2 \big)^p \big( \int_{\mathbb{R}^d} (Mf)^{\frac{(r-2)p}{1-p}}\big)^{1-p}  \\
& \lesssim |E|^p\cdot |E|^{1-p}
\end{split}
\end{equation}
provided $\frac{(r-2)p}{1-p}>1$. Take $r:=1+\frac{1}{p}+\epsilon$. Combining \eqref{Section8EquationProvingCountingFunctionEstimateBelow1} and \eqref{Section8EquationProvingCountingFunctionEstimateBelow2} proves \eqref{Section4EquationCountingFunctionEstimateBelow1}.
\end{proof}
It remains to prove Proposition \ref{Section4PropositionMassSelectionAlgorithm} and Proposition \ref{Section4PropositionMassSelectionBesselTypeEstimate}. We prove the former first.
\begin{proof}
It suffices to prove the $(k, +)$ version by symmetry. To each $(k, +)\text{-lacunary}$ tree $\mathscr{T}\subset \mathscr{P}$, we fix one $\xi_{\mathscr{T}}\in \mathbb{R}^d$ such that $\xi_{\mathscr{T}}\in C_2\circ \Xi(\mathcal{R})$ and
\begin{equation*}
(\xi_{\mathscr{T}})_k\notin 10\Xi(\mathcal{R})_k,\ (\xi_{\mathscr{T}})_k<\mathfrak{c}\big( \Xi(\mathcal{R})_k \big)
\end{equation*}
holds for any $\mathcal{R}\in \mathscr{T}$. Run the following algorithm for $l\geq 1$:
\begin{description}
\item[Step 1] Take one $(k, +)\text{-lacunary}$ tree $\mathscr{T}\subset \mathscr{P}$ such that \eqref{Section4EquationTreeHasLargeMass} holds and $(\xi_{\mathscr{T}})_k$ is the largest. (If not unique, take arbitrarily one.) Denote $\mathscr{T}$ by $\mathscr{T}_l$.
\item[Step 2] Define the accompanying tree
\begin{equation*}
\widetilde{\mathscr{T}}_l:=\{\mathcal{R}\in \mathscr{P}\backslash \mathscr{T}_l: \mathcal{R}\lesssim (I_{\mathscr{T}_l}, \xi_{\mathscr{T}_l})\}.
\end{equation*}
\item[Step 3] Renew $\mathscr{P}$ into $\mathscr{P}\backslash (\mathscr{T}_l\cup \widetilde{\mathscr{T}}_l)$. Then return to Step 1 to proceed the $(l+1)\text{-th}$ iteration.
\end{description}
The algorithm terminates when there is no $(k, +)\text{-lacunary}$ tree satisfying \eqref{Section4EquationTreeHasLargeMass}. Assume it terminates after $L$ iterations. We need to prove the second property in Proposition \ref{Section4PropositionMassSelectionAlgorithm}. Suppose $\mathbf{F}$ is not strongly disjoint. Then there exists $l\neq l^{'}, \mathcal{R}\in \mathscr{T}_l, \mathcal{R}^{'}\in \mathscr{T}_{l^{'}}$ such that $s \big(\Xi(\mathcal{R}) \big)<s \big(\Xi(\mathcal{R}^{'}) \big), \Xi(\mathcal{R})\cap \Xi(\mathcal{R}^{'})\neq \emptyset, I(\mathcal{R}^{'})\cap I_{\mathscr{T}_l}\neq \emptyset$. The sparse property implies $s \big(\Xi(\mathcal{R}^{'}) \big)>C_3\big(\Xi(\mathcal{R}) \big)$.

On the one hand, the above conditions imply $\xi_{\mathscr{T}_l}\in C_2\circ\Xi(\mathcal{R})\subset C_2\circ \Xi(\mathcal{R}^{'})$ and $I(\mathcal{R}^{'})\subset I_{\mathscr{T}_l}$ by the grid property. Thus
\begin{equation}  \label{Section4EquationProvingMassSelectionAlgorithm}
\mathcal{R}^{'}\lesssim (I_{\mathscr{T}_l}, \xi_{\mathscr{T}_l}).
\end{equation}
On the other hand, we have
\begin{align*}
(\xi_{\mathscr{T}_l})_k-\mathfrak{c} \big(\Xi(\mathcal{R})_k \big) & \geq -2C_2s \big(\Xi(\mathcal{R}) \big), \\
(\xi_{\mathscr{T}_{l^{'}}})_k-\mathfrak{c} \big(\Xi(\mathcal{R}^{'})_k \big) & \leq -5s \big(\Xi(\mathcal{R}^{'}) \big)
\end{align*}
because $\mathscr{T}_l$ is a tree and $\mathscr{T}_{l^{'}}$ is a $(k, +)\text{-lacunary}$ tree. Then $s \big(\Xi(\mathcal{R}^{'}) \big)>C_3\big(\Xi(\mathcal{R}) \big)$ implies $(\xi_{\mathscr{T}_{l^{'}}})_k<(\xi_{\mathscr{T}_l})_k$. Hence $l<l^{'}$ and $\mathscr{T}_l$ is selected first, which contradicts \eqref{Section4EquationProvingMassSelectionAlgorithm} and Step 2. This finishes the proof.
\end{proof}
Below we show Proposition \ref{Section4PropositionMassSelectionBesselTypeEstimate}. The main idea is utilizing a $T^*T$ argument to explore orthogonality in the time-frequency plane.
\begin{proof}
Let $T: f\mapsto (\langle f|\varphi_{\mathcal{R}})_{\mathcal{R}}$. Note that
\begin{equation*}
\|Tf\|_2^2=\langle f|T^{*}Tf \rangle\leq \|f\|_2\|T^*T\|_{2},
\end{equation*}
so it suffices to show $\|T^*T\|_{2}\lesssim \|Tf\|_2$. Square the $L^2$ norms on both sides and expand them. We only need to prove
\begin{equation}  \label{Section4EquationProvingMassSelectionBesselTypeEstimate1}
\sum_{\mathscr{T}, \mathscr{T}^{'}} \sum_{\mathcal{R}\in \mathscr{T}, \mathcal{R}^{'}\in \mathscr{T}^{'}} |\langle f|\varphi_{\mathcal{R}} \rangle||\langle f|\varphi_{\mathcal{R}^{'}} \rangle||\langle \varphi_{\mathcal{R}}|\varphi_{\mathcal{R}^{'}} \rangle|\lesssim \sum_{\mathscr{T}} \sum_{\mathcal{R}\in \mathscr{T}} |\langle f|\varphi_{\mathcal{R}} \rangle|^2.
\end{equation}
First consider those terms satisfying $s\big( \Xi(\mathcal{R}) \big)=s\big( \Xi(\mathcal{R}^{'}) \big)$. We must have $ \Xi(\mathcal{R})\cap  \Xi(\mathcal{R}^{'})$ for non-zero contribution. Hence $\Xi(\mathcal{R})=\Xi(\mathcal{R}^{'})$ by the sparse condition. Write
\begin{equation*}
\sum_{\mathscr{T}, \mathscr{T}^{'}} \sum_{\substack{\mathcal{R}\in \mathscr{T}, \mathcal{R}^{'}\in \mathscr{T}^{'},\\ s\big( \Xi(\mathcal{R}) \big)=s\big( \Xi(\mathcal{R}^{'}) \big)}} \cdots=\sum_{\mathscr{T}}\sum_{\mathcal{R}\in \mathscr{T}}\sum_{\substack{\mathscr{T}^{'}, \mathcal{R}^{'}\in \mathscr{T}^{'},\\ \Xi(\mathcal{R}^{'})=\Xi(\mathcal{R})}} \cdots.
\end{equation*}
By Schur's test, it suffices to show
\begin{equation}  \label{Section4EquationProvingMassSelectionBesselTypeEstimate2}
\sum_{\substack{\mathscr{T}^{'}, \mathcal{R}^{'}\in \mathscr{T}^{'},\\ \Xi(\mathcal{R}^{'})=\Xi(\mathcal{R})}} |\langle \varphi_{\mathcal{R}}|\varphi_{\mathcal{R}^{'}} \rangle|\lesssim 1
\end{equation}
for fixed $\mathscr{T}$ and $\mathcal{R}\in \mathscr{T}$. But all those $\mathcal{R}^{'}$ in \eqref{Section4EquationProvingMassSelectionBesselTypeEstimate2} are different rectangles sharing the same frequency interval. Hence their physical intervals, having the same scale, are disjoint. Applying \eqref{Dection4EquationSupportPerturbationLemma} then gives \eqref{Section4EquationProvingMassSelectionBesselTypeEstimate2}.

Then consider those terms in LHS of \eqref{Section4EquationProvingMassSelectionBesselTypeEstimate1} satisfying $s\big( \Xi(\mathcal{R}) \big)\neq s\big( \Xi(\mathcal{R}^{'}) \big)$. Then $\Xi(\mathcal{R})\cap  \Xi(\mathcal{R}^{'})\neq \emptyset$ implies $\mathscr{T}\neq \mathscr{T}^{'}$ by the lacunary property. We only need to tackle the $s\big( \Xi(\mathcal{R}) \big)<s\big( \Xi(\mathcal{R}^{'}) \big)$ case by symmetry. Now \eqref{Section4EquationRegularMassAssumption} implies
\begin{gather*}
\frac{|\langle f|\varphi_{\mathcal{R}} \rangle|}{|I(\mathcal{R})|^{\frac{1}{2}}}\lesssim \frac{1}{|I_{\mathscr{T}}|^{\frac{1}{2}}} (\sum_{\mathcal{R}\in \mathscr{T}} |\langle f|\varphi_{\mathcal{R}} \rangle|^2)^{\frac{1}{2}}, \\
\frac{|\langle f|\varphi_{\mathcal{R}^{'}} \rangle|}{|I(\mathcal{R}^{'})|^{\frac{1}{2}}}\lesssim \frac{1}{|I_{\mathscr{T}}|^{\frac{1}{2}}} (\sum_{\mathcal{R}\in \mathscr{T}} |\langle f|\varphi_{\mathcal{R}} \rangle|^2)^{\frac{1}{2}}.
\end{gather*}
Plug the above two estimates into
\begin{equation*}
\sum_{\mathscr{T}, \mathscr{T}^{'}: \mathscr{T}\neq \mathscr{T}^{'}} \sum_{\substack{\mathcal{R}\in \mathscr{T}, \mathcal{R}^{'}\in \mathscr{T}^{'},\\ s\big( \Xi(\mathcal{R}) \big)<s\big( \Xi(\mathcal{R}^{'}) \big)}} |\langle f|\varphi_{\mathcal{R}} \rangle||\langle f|\varphi_{\mathcal{R}^{'}} \rangle||\langle \varphi_{\mathcal{R}}|\varphi_{\mathcal{R}^{'}} \rangle|.
\end{equation*}
We get it is bounded by
\begin{equation*}
\sum_{\mathscr{T}, \mathscr{T}^{'}: \mathscr{T}\neq \mathscr{T}^{'}} \sum_{\substack{\mathcal{R}\in \mathscr{T}, \mathcal{R}^{'}\in \mathscr{T}^{'},\\ s\big( \Xi(\mathcal{R}) \big)<s\big( \Xi(\mathcal{R}^{'}) \big)}} \frac{1}{|I_{\mathscr{T}}|}|I(\mathcal{R})|^{\frac{1}{2}}|I(\mathcal{R}^{'})|^{\frac{1}{2}}|\langle \varphi_{\mathcal{R}}|\varphi_{\mathcal{R}^{'}} \rangle| (\sum_{\mathcal{P}\in \mathscr{T}} |\langle f|\varphi_{\mathcal{P}} \rangle|^2).
\end{equation*}
It then suffices to prove
\begin{equation}  \label{Section4EquationProvingMassSelectionBesselTypeEstimate3}
\sum_{\mathcal{R}\in \mathscr{T}}\sum_{\substack{\mathscr{T}^{'}, \mathcal{R}^{'}\in \mathscr{T}^{'},\\ s\big(\Xi(\mathcal{R}^{'})\big)>s\big(\Xi(\mathcal{R})\big)}} |I(\mathcal{R})|^{\frac{1}{2}}|I(\mathcal{R}^{'})|^{\frac{1}{2}}|\langle \varphi_{\mathcal{R}}|\varphi_{\mathcal{R}^{'}} \rangle|\lesssim |I_{\mathscr{T}}|
\end{equation}
for fixed $\mathscr{T}$. Fix $\mathcal{R}$ as well. Let $\mathcal{R}^{'}, \mathcal{R}^{''}$ be two different rectangles such that $s\big(\Xi(\mathcal{R}^{'})\big)>s\big(\Xi(\mathcal{R})\big), s\big(\Xi(\mathcal{R}^{''})\big)>s\big(\Xi(\mathcal{R})\big), \Xi(\mathcal{R}^{'})\cap \Xi(\mathcal{R})\neq \emptyset, \Xi(\mathcal{R}^{''})\cap \Xi(\mathcal{R})\neq \emptyset$. We show $I(\mathcal{R}^{'})\cap I(\mathcal{R}^{''})$ by considering the following three cases:
\begin{enumerate}
\item If $\mathcal{R}^{'}, \mathcal{R}^{''}$ belong to the same tree, then $\Xi(\mathcal{R}^{'})=\Xi(\mathcal{R}^{''})$. Otherwise $d\big( \Xi(\mathcal{R}^{'}), \Xi(\mathcal{R}^{''})\big)>\max\big{\{}s\big( \Xi(\mathcal{R}^{'})\big), s\big( \Xi(\mathcal{R}^{''})\big)\big{\}}$, which contradicts $\Xi(\mathcal{R}^{'})\cap \Xi(\mathcal{R})\neq \emptyset, \Xi(\mathcal{R}^{''})\cap \Xi(\mathcal{R})\neq \emptyset$. Hence $I(\mathcal{R}^{'})\cap I(\mathcal{R}^{''})=\emptyset$.
\item If $\mathcal{R}^{'}, \mathcal{R}^{''}$ belong to different trees such that $s\big(\Xi(\mathcal{R}^{'})\big)=s\big(\Xi(\mathcal{R}^{''})\big)$, then we still get $\Xi(\mathcal{R}^{'})=\Xi(\mathcal{R}^{''})$ by the sparse property. Thus $I(\mathcal{R}^{'})\cap I(\mathcal{R}^{''})=\emptyset$ as above.
\item If $\mathcal{R}^{'}, \mathcal{R}^{''}$ belong to different trees such that $s\big(\Xi(\mathcal{R}^{'})\big)\neq s\big(\Xi(\mathcal{R}^{''})\big)$, then $\Xi(\mathcal{R}\subset \Xi(\mathcal{R}^{'}\cap \Xi(\mathcal{R}^{''}$ implies $I(\mathcal{R}^{'})\cap I(\mathcal{R}^{''})=\emptyset$ by the strongly disjoint property.
\end{enumerate}
Moreover, the strongly disjoint property also implies $I(\mathcal{R}^{'})\cap I_{\mathscr{T}}=\emptyset$ for any $\mathcal{R}$ such that $s\big(\Xi(\mathcal{R}^{'})\big)>s\big(\Xi(\mathcal{R})\big), \Xi(\mathcal{R}^{'})\cap \Xi(\mathcal{R})\neq \emptyset$. For each $\mathcal{R}$, We can estimate
\begin{equation*}
\begin{split}
\sum_{\substack{\mathscr{T}^{'}, \mathcal{R}^{'}\in \mathscr{T}^{'},\\ s\big(\Xi(\mathcal{R}^{'})\big)>s\big(\Xi(\mathcal{R})\big)}} |I(\mathcal{R})|^{\frac{1}{2}}|I(\mathcal{R}^{'})|^{\frac{1}{2}}|\langle \varphi_{\mathcal{R}}|\varphi_{\mathcal{R}^{'}} \rangle| & \lesssim \sum_{\text{all involved\ }\mathcal{R}^{'}} \Big(1+\frac{d\big(I(\mathcal{R}), I(\mathcal{R}^{'})\big)}{s\big(I(\mathcal{R}) \big)} \Big)^{-5d}|I(\mathcal{R}^{'})| \\
&\leq \sum_{\text{all involved\ }\mathcal{R}^{'}} \int_{I(\mathcal{R}^{'})} \Big(1+\frac{d\big(x, I(\mathcal{R})\big)}{s\big(I(\mathcal{R}) \big)} \Big)^{-5d} dx \\
&\leq \int_{(I_{\mathscr{T}})^c} \Big(1+\frac{d\big(x, I(\mathcal{R})\big)}{s\big(I(\mathcal{R}) \big)} \Big)^{-5d} dx \\
&\lesssim \Big(1+\frac{d\big(I(\mathcal{R}), (I_{\mathscr{T}})^c \big)}{s\big(I(\mathcal{R}) \big)} \Big)^{-2d} s\big(I(\mathcal{R}) \big)^d.
\end{split}    
\end{equation*}
Plug this into \eqref{Section4EquationProvingMassSelectionBesselTypeEstimate3}. Note that the map $\mathcal{R}\mapsto I(\mathcal{R})$ is bijective inside $\mathscr{T}$. We can finally bound LHS of \eqref{Section4EquationProvingMassSelectionBesselTypeEstimate3} by
\begin{equation*}
\begin{split}
\sum_{I\subset I_{\mathscr{T}}} \Big(1+\frac{d\big(I, (I_{\mathscr{T}})^c \big)}{s(I)} \Big)^{-2d} s(I)^d & =\sum_{2^k\leq s(I_{\mathscr{T}})}2^{kd}\sum_{I\subset I_{\mathscr{T}}: s(I)=2^k}\Big(1+\frac{d\big(I, (I_{\mathscr{T}})^c \big)}{2^k} \Big)^{-2d} \\
&\lesssim \sum_{2^k\leq s(I_{\mathscr{T}})}2^{kd} \Big( \big(\sum_{l\geq 1}\sum_{\substack{I\subset I_{\mathscr{T}}: s(I)=2^k,\\ d\big(I, (I_{\mathscr{T}})^c \big)\sim 2^{k+l}}}2^{-2dl} \big)+ \big(\frac{s(I_{\mathscr{T}})}{2^k} \big)^{d-1} \Big) \\
&\lesssim \sum_{2^k\leq s(I_{\mathscr{T}})}2^{kd} \Big( \big(\sum_{l\geq 1} \frac{s(I_{\mathscr{T}})^{d-1}\cdot2^{k+l}}{2^{kd}}2^{-2dl} \big)+ \big(\frac{s(I_{\mathscr{T}})}{2^k} \big)^{d-1} \Big)  \\
&\lesssim s(I_{\mathscr{T}})^d.
\end{split}
\end{equation*}
This completes the proof of Proposition \ref{Section4PropositionMassSelectionBesselTypeEstimate}.
\end{proof}

%%%%%

\section{Reorganization for Vector Trees}  \label{Section5}
From now on, we add the index $j$ back to distinguish vector rectangles and rectangles, etc. We will divide $\mathscr{R}$ into the disjoint union of vector trees and estimate their counting functions.
\begin{definition}
Consider the decomposition of $\mathscr{R}_j=\{\mathcal{R}_j: \mathcal{R}\in \mathscr{R}\}$ given by Corollary \ref{Section4PropositionMassSelection2}. For a vector of dyadic numbers $\lambda=(\lambda_1, \cdots, \lambda_n)$ such that $\lambda_j\leq \mathbf{M}_j(\mathscr{R}_j), \forall j\in [n]$, define
\begin{equation*}
\mathscr{R}(\lambda):=\{\mathcal{R}\in \mathscr{R}: \mathcal{R}_j\in \mathscr{R}_j(\lambda_j), \forall j\in [n]\}.
\end{equation*}
\end{definition}
Fix $\lambda$. We organize $\mathscr{R}(\lambda)$ into vector trees as follows: Denote the collection of all maximal vector rectangles in $\mathscr{R}(\lambda)$ by $\mathscr{P}(\lambda)$.
\begin{description}
\item[Step 1] Choose one $\mathcal{P}\in \mathscr{P}(\lambda)$ such that $s\big(\Xi(\mathcal{P}) \big)$ is the smallest. (If not unique, take arbitrarily one.) Put the vector tree $\mathscr{T}_{\mathcal{P}}:=\{\mathcal{R}\in \mathscr{R}(\lambda): \mathcal{R}\lesssim \mathcal{P}\}$ into $\mathbf{F}(\lambda)$.
\item[Step 2] Renew $\mathscr{R}(\lambda)$ into $\mathscr{R}(\lambda)\backslash \mathscr{T}_{\mathcal{P}}$ and $\mathscr{P}$ into $\mathscr{P}\backslash\{\mathcal{P}\}$. Then return to Step 1 to iterate.
\end{description}
In this way, we get
\begin{equation*}
\mathscr{R}(\lambda)=\bigcup_{\mathcal{P}\in \mathscr{P}(\lambda)} \mathscr{T}_{\mathcal{P}}=\bigcup_{\mathscr{T}\in \mathbf{F}(\lambda)} \mathscr{T}.
\end{equation*}
We further write $\mathbf{F}(\lambda)=\mathbf{F}^{\geq 2}(\lambda)\cup \mathbf{F}^{=1}(\lambda)$, where
\begin{gather*}
\mathbf{F}^{\geq 2}(\lambda):=\{\mathscr{T}\in \mathbf{F}(\lambda): \#\mathscr{T}\geq 2\}, \\
\mathbf{F}^{=1}(\lambda):=\{\mathscr{T}\in \mathbf{F}(\lambda): \#\mathscr{T}=1\}.
\end{gather*}
The rest of this section is devoted to controlling $N_{\mathbf{F}^{\geq 2}(\lambda)}$ and $N_{\mathbf{F}^{=1}(\lambda)}$. Consider $N_{\mathbf{F}^{\geq 2}(\lambda)}$ first.
\begin{proposition}  \label{Section5PropositionCountingFunctionEstimate1}
Let $\lambda=(\lambda_1, \cdots, \lambda_n)$ be a vector of dyadic numbers with $\lambda_j\leq \mathbf{M}_j(\mathscr{R}_j), \forall j\in [n]$. We can estimate $N_{\mathbf{F}^{\geq 2}(\lambda)}$ in terms of $N_{\mathbf{F}_{j}(\lambda_j)}, j\in [n]$ as follows:
\begin{enumerate}
\item Assume we are in the first case of Theorem \ref{MainTheorem1}. Then
\begin{equation*}
N_{\mathbf{F}^{\geq 2}(\lambda)}\lesssim \prod_{j\in \Upsilon} N_{\mathbf{F}_j(\lambda_j)},\ \forall \Upsilon\subset [n]: \#\Upsilon=\lceil \frac{m}{d} \rceil.
\end{equation*}
\item Assume we are in the second case of Theorem \ref{MainTheorem1}. Then
\begin{equation*}
N_{\mathbf{F}^{\geq 2}(\lambda)}\lesssim \prod_{j\in \Upsilon} N_{\mathbf{F}_j(\lambda_j)}^{\frac{1}{2}},\ \forall \Upsilon\subset [n]: \#\Upsilon=n-1.
\end{equation*}
\item Assume we are in the third case of Theorem \ref{MainTheorem1}. Then
\begin{equation*}
N_{\mathbf{F}^{\geq 2}(\lambda)}\lesssim \prod_{j\in \Upsilon} N_{\mathbf{F}_j(\lambda_j)}^{\frac{m}{d(n-1)}},\ \forall \Upsilon\subset [n]: \#\Upsilon=n-1.
\end{equation*}
\end{enumerate}
\end{proposition}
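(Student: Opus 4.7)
The plan is to control, pointwise in $x$, the counting function $N_{\mathbf{F}^{\geq 2}(\lambda)}(x)$, which enumerates maximal vector rectangles $\mathcal{P} \in \mathscr{P}(\lambda)$ with $x \in I(\mathcal{P})$ and $\#\mathscr{T}_{\mathcal{P}} \geq 2$. For each such $\mathcal{P}$, each coordinate $\mathcal{P}_j$ lies in a unique $j$-tree $\tau_j(\mathcal{P}) \in \mathbf{F}_j(\lambda_j)$ with $x \in I(\mathcal{P}_j) \subset I_{\tau_j(\mathcal{P})}$, so $\tau_j(\mathcal{P})$ is counted by $N_{\mathbf{F}_j(\lambda_j)}(x)$. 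All three bounds reduce to controlling the multiplicity of the map $\mathcal{P} \mapsto \bigl(\tau_j(\mathcal{P})\bigr)_{j \in \Upsilon}$ using the appropriate non-degeneracy hypothesis.

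For case 1, I fix $\Upsilon$ of size $\lceil m/d\rceil$. Property 3 of Theorem \ref{MainTheorem1Discrete} (the discrete content of Type \uppercase\expandafter{\romannumeral1}) says that $\bigl(\Xi(\mathcal{P}_j)\bigr)_{j\in \Upsilon}$ determines all $\Xi(\mathcal{P}_j)$; this fixes the common scale $s(I(\mathcal{P}))$ and hence $I(\mathcal{P})$ itself (the unique dyadic cube of that scale containing $x$). Within each $\tau_j$, the $C_3^{1/3}$-central grid structure forces $\{C_2\circ\Xi(\mathcal{R}_j):\mathcal{R}_j\in\tau_j\}$ to form a chain through the tree-top frequency, so at each scale there is at most one candidate rectangle; the Whitney relation $d(\Xi(\mathcal{P}),\Gamma)\sim C_1 s(\Xi(\mathcal{P}))$ together with $\#\mathscr{T}_{\mathcal{P}}\geq 2$ pins down the scale of $\mathcal{P}_j$ in $\tau_j$ up to $O(1)$. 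The map is $O(1)$-to-one and the product bound follows directly.

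For case 2 (medium rank, Type \uppercase\expandafter{\romannumeral2} with $L=2$), Type \uppercase\expandafter{\romannumeral1} alone yields only $\lceil m/d\rceil \geq n/2$ "coordinates of freedom", which is insufficient for a product over $|\Upsilon|=n-1$ indices. I instead count ordered pairs: for $\mathcal{P},\mathcal{P}'\in\mathscr{P}(\lambda)$ with $x\in I(\mathcal{P})\cap I(\mathcal{P}')$ and a disjoint partition $A\sqcup B^{(1)}\sqcup B^{(2)}\subset \Upsilon$ with $|A|=1$, $|B^{(i)}|=n'$, the injectivity of $\mathfrak{L}$ on $\Gamma^2$ from Definition \ref{Section1Non-DegenerateCondition2} gives quantitative rigidity: each pair is determined up to $O(1)$ by $\tau_A(\mathcal{P})=\tau_A(\mathcal{P}')$ together with $(\tau_j(\mathcal{P}))_{j\in B^{(1)}}$ and $(\tau_j(\mathcal{P}'))_{j\in B^{(2)}}$. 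Summing yields $N_{\mathbf{F}^{\geq 2}(\lambda)}(x)^2 \lesssim \prod_{j\in\Upsilon} N_{\mathbf{F}_j(\lambda_j)}(x)$, and a square root completes case 2. Case 3 is the same scheme with $d$-tuples: the partition $A,U_1,\dots,U_{2d-1},W$ and $B^{(k)}=(U_k\cup\dots\cup U_{k+d-1})\cup W$ from Definition \ref{Section1Non-DegenerateCondition2} are designed so that $\mathfrak{L}$ is injective on $\Gamma^d$, forcing the $d$-tuple $(\mathcal{P}^{(1)},\dots,\mathcal{P}^{(d)})$ to be determined by shared tree-data on $A$ plus independent tree-data on each $B^{(k)}$; the distinct indices used lie in $A\cup\bigcup_i U_i\cup W$, and balancing the count against $|\Upsilon|=n-1$ yields the exponent $\tfrac{m}{d(n-1)}$ upon taking $d$-th roots.

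The main obstacle is case 3: making the quantitative/discrete version of Type \uppercase\expandafter{\romannumeral2} injectivity precise so that the combinatorial bookkeeping — in particular the overlap pattern of the $B^{(k)}$'s, the tally of distinct indices used, and the way the exponent $\tfrac{m}{d(n-1)}$ falls out of the ratios $\tfrac{(d-r)+(2d-1)a+b}{d(n-1)}$ after the $d$-th root — comes together cleanly. A secondary subtlety, shared across all three cases, is verifying the $O(1)$ finiteness of the tree-to-tuple map, which depends on the interplay between the Whitney separation, the $C_3$-sparseness, and the $C_3^{1/3}$-central grid structure established in Sections \ref{Section3} and \ref{Section4}.
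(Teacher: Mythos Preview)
Your overall strategy---project maximal rectangles to their containing $j$-trees and exploit Type~\uppercase\expandafter{\romannumeral1}/\uppercase\expandafter{\romannumeral2} non-degeneracy---matches the paper's. But there are two real gaps.

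\textbf{Cases 2 and 3: the Katz--Tao step is missing.} You bound only the \emph{constrained} tuples, i.e.\ those $(\mathcal{P}^{(1)},\dots,\mathcal{P}^{(L)})$ with $\tau_A(\mathcal{P}^{(1)})=\cdots=\tau_A(\mathcal{P}^{(L)})$. Type~\uppercase\expandafter{\romannumeral2} injectivity (the paper's Proposition~\ref{Section5PropositionInjectiveMap2}) does give
\[
\#\{\text{constrained $L$-tuples}\}\;\le\;\prod_{k=1}^{L}\prod_{j\in B^{(k)}} N_{\mathbf{F}_j(\lambda_j)}(x),
\]
but this does \emph{not} by itself control $N_{\mathbf{F}^{\geq 2}(\lambda)}(x)^{L}$, which counts \emph{all} $L$-tuples. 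Your sentence ``Summing yields $N^2\lesssim\prod N_j$'' hides exactly the missing inequality. What is needed is a \emph{lower} bound on the constrained count: the Katz--Tao entropy lemma (Lemma~\ref{Section5LemmaKatz-Tao} in the paper) gives
\[
\#\{\text{constrained $L$-tuples}\}\;\gtrsim\;\frac{N_{\mathbf{F}^{\geq 2}(\lambda)}(x)^{L}}{\bigl(\prod_{j\in A} N_{\mathbf{F}_j(\lambda_j)}(x)\bigr)^{L-1}},
\]
and chaining the two inequalities produces the stated bounds. For $L=2$ this is just Cauchy--Schwarz on the fibres of $\tau_A$, but for $L=d$ in case~3 the full Katz--Tao lemma is genuinely required and you never invoke it.

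\textbf{Case 1: the injectivity mechanism is off.} You try to \emph{reconstruct} $\Xi(\mathcal{P}_j)$ from $\tau_j(\mathcal{P})$ by arguing that the Whitney relation plus $\#\mathscr{T}_{\mathcal{P}}\ge 2$ ``pins down the scale up to $O(1)$''. This does not work: different maximal $\mathcal{P}$'s in $\mathbf{F}^{\geq 2,x}(\lambda)$ can sit at arbitrarily different scales inside the same $j$-tree, and neither the Whitney separation nor the tree structure singles one out. The paper's mechanism (Lemma~\ref{Section5LemmaKeyobservationForInjection1}) is different: given two $\mathcal{P},\widetilde{\mathcal{P}}$ mapping to the same tree-tuple, one first uses Type~\uppercase\expandafter{\romannumeral1} to show $d\bigl(\Xi(\mathcal{P}),\Xi(\widetilde{\mathcal{P}})\bigr)\lesssim_{C_1,C_2}\max\{s(\Xi(\mathcal{P})),s(\Xi(\widetilde{\mathcal{P}}))\}$, and \emph{then} exploits $\#\mathscr{T}\ge 2$ by picking a second rectangle $\mathcal{R}\ne\mathcal{P}$ in the larger tree; the $C_3^{1/3}$-central grid property forces $C_2\circ\Xi(\widetilde{\mathcal{P}})\subset C_2\circ\Xi(\mathcal{R})$, so $\mathcal{R}\lesssim\widetilde{\mathcal{P}}$, contradicting the order in which trees were selected. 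The role of $\#\mathscr{T}\ge 2$ is thus to supply a strictly larger frequency cube, not to determine a scale.
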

To prove Proposition \ref{Section5PropositionCountingFunctionEstimate1}, we fix $\lambda, x$ and construct certain injective maps.
\begin{definition}
Let $x\in \mathbb{R}^d$. Let $\mathbf{F}$ be a collection of vector trees. We use $\mathbf{F}$ together with the superscript $x$ to denote the collection of $\mathscr{T}\in \mathbf{F}$ satisfying $x\in I_{\mathscr{T}}$. Use such notations for collections of $j\text{-trees}$ as well. For example,
\begin{equation*}
\mathbf{F}^{\geq 2, x}(\lambda):=\{\mathscr{T}\in \mathbf{F}^{\geq 2}(\lambda): x\in I_{\mathscr{T}}\}.
\end{equation*}
\end{definition}
\begin{remark}
By definition, we have $N_{\mathbf{F}}(x)=\#\mathbf{F}^x$.
\end{remark}
The construction of injective maps replies on the following observation. This is the only place we use the central grid property.
\begin{lemma}  \label{Section5LemmaKeyobservationForInjection1}
Let $\mathscr{T}, \widetilde{\mathscr{T}}\in \mathbf{F}^{\geq 2, x}(\lambda)$ correspond to the maximal vector rectangles $\mathcal{P}, \widetilde{\mathcal{P}}$ respectively. Then $\mathscr{T}=\widetilde{\mathscr{T}}$ if
\begin{equation*}
d\big( \Xi(\mathcal{P}), \Xi( \widetilde{\mathcal{P}}) \big)\lesssim_{C_1, C_2} \max\big{\{} s\big( \Xi(\mathcal{P})\big), s\big(\Xi( \widetilde{\mathcal{P}}) \big)\big{\}}.
\end{equation*}
\end{lemma}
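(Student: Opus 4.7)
I would argue by splitting on whether $s(\Xi(\mathcal{P}))$ and $s(\Xi(\widetilde{\mathcal{P}}))$ are equal or strictly ordered, and in each case extract either direct equality $\mathcal{P} = \widetilde{\mathcal{P}}$ or an inconsistency with the maximality of $\widetilde{\mathcal{P}}$ in $\mathscr{P}(\lambda)$. Without loss of generality assume $s(\Xi(\mathcal{P})) \leq s(\Xi(\widetilde{\mathcal{P}}))$, so the maximum appearing in the hypothesis is $s(\Xi(\widetilde{\mathcal{P}}))$. Since $I_{\mathscr{T}} = I(\mathcal{P})$ and $I_{\widetilde{\mathscr{T}}} = I(\widetilde{\mathcal{P}})$ are dyadic cubes both containing $x$, with reciprocal physical/frequency scales, the first observation is that $I(\widetilde{\mathcal{P}}) \subset I(\mathcal{P})$.

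In the equal-scale case, I would invoke the $C_3$-sparse condition on the grid $\{\Xi(\mathcal{R}_j): \mathcal{R} \in \mathscr{R}\}$: two same-scale frequency cubes in this grid are either identical or separated by more than $C_3 s(\Xi(\mathcal{P}_j))$. Since the implicit constant in the hypothesis depends only on $C_1, C_2$ and the paper fixes $C_1 \ll C_2 \ll C_3$, the distance bound rules out separation, so $\Xi(\mathcal{P}_j) = \Xi(\widetilde{\mathcal{P}}_j)$ for every $j$. Combined with equal physical scales and the shared point $x$ (two dyadic cubes of the same scale containing a common point must coincide), this yields $\mathcal{P} = \widetilde{\mathcal{P}}$ and hence $\mathscr{T} = \widetilde{\mathscr{T}}$.

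In the strict-scale case, sparseness upgrades the gap to $s(\Xi(\widetilde{\mathcal{P}}_j)) \geq C_3 s(\Xi(\mathcal{P}_j))$ for every $j$, and the plan is to show $\widetilde{\mathcal{P}} \lesssim \mathcal{P}$ and derive a contradiction with $\widetilde{\mathcal{P}} \in \mathscr{P}(\lambda)$. The distance bound together with the much smaller size of $\Xi(\mathcal{P}_j)$ places $\Xi(\mathcal{P}_j)$ inside $C_2 \Xi(\widetilde{\mathcal{P}}_j) \subset C_2 \circ \Xi(\widetilde{\mathcal{P}}_j)$, so the two inflated central-grid cubes meet; the grid property produced by Lemma \ref{Section4LemmaCentralGrid} then nests the smaller inside the larger, giving $C_2 \circ \Xi(\mathcal{P}_j) \subset C_2 \circ \Xi(\widetilde{\mathcal{P}}_j)$ for every $j$. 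Combined with $I(\widetilde{\mathcal{P}}) \subset I(\mathcal{P})$ this is exactly $\widetilde{\mathcal{P}} \lesssim \mathcal{P}$ with $\widetilde{\mathcal{P}} \neq \mathcal{P}$, contradicting the maximality of $\widetilde{\mathcal{P}}$, so this case cannot occur.

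The main obstacle I anticipate is a careful constant chase: the hypothesis packages an implicit $K = K(C_1, C_2)$, and one must verify that the step from $d(\Xi(\mathcal{P}_j), \Xi(\widetilde{\mathcal{P}}_j)) \leq K s(\Xi(\widetilde{\mathcal{P}}_j))$ to $\Xi(\mathcal{P}_j) \subset C_2 \circ \Xi(\widetilde{\mathcal{P}}_j)$ is legitimate, and similarly that $K \ll C_3$ so the sparse condition bites in the equal-scale case. Both inclusions are available precisely because the paper's convention $C_1 \ll C_2 \ll C_3$ supplies the hierarchy of scales needed. The $\#\mathscr{T} \geq 2$ hypothesis in the statement does not appear to be used in this lemma itself; it is presumably retained for consistency with its eventual role in Proposition \ref{Section5PropositionCountingFunctionEstimate1}.
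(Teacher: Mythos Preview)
Your equal-scale case is fine, but the strict-scale case has a genuine gap at exactly the point you flag as ``the main obstacle.'' The hypothesis reads $d(\Xi(\mathcal{P}),\Xi(\widetilde{\mathcal{P}}))\lesssim_{C_1,C_2}\max\{\ldots\}$, meaning the implicit constant $K$ is some function of \emph{both} $C_1$ and $C_2$; in the applications (see the proof of Proposition~\ref{Section5PropositionInjectiveMap1}) $K$ arises from composing several estimates each carrying a $C_2$ factor. There is therefore no reason to have $K\le C_2$, and the hierarchy $C_1\ll C_2\ll C_3$ does not help: you cannot conclude $\Xi(\mathcal{P}_j)\subset C_2\Xi(\widetilde{\mathcal{P}}_j)$, nor even that $C_2\circ\Xi(\mathcal{P}_j)$ and $C_2\circ\Xi(\widetilde{\mathcal{P}}_j)$ intersect, so the grid argument never gets started. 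Your final remark that the $\#\mathscr{T}\ge 2$ hypothesis ``does not appear to be used'' is accordingly mistaken.

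The paper's proof fixes this precisely by using $\#\mathscr{T}\ge 2$ together with the \emph{central} (not just ordinary) grid property of the collection $\{C_2\circ\Xi(\mathcal{R}_j)\}$ produced by Lemma~\ref{Section4LemmaCentralGrid}. Taking $s(\Xi(\mathcal{P}))>s(\Xi(\widetilde{\mathcal{P}}))$, one picks $\mathcal{R}\in\mathscr{T}$ with $\mathcal{R}\ne\mathcal{P}$; necessarily $s(\Xi(\mathcal{R}))>s(\Xi(\mathcal{P}))$, so $C_2\circ\Xi(\mathcal{P}_j)\subsetneq C_2\circ\Xi(\mathcal{R}_j)$, and the $C_3^{1/3}$-central grid property gives $C_3^{1/3}\bigl(C_2\circ\Xi(\mathcal{P}_j)\bigr)\subset C_2\circ\Xi(\mathcal{R}_j)$. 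Since $C_3\gg$ any constant depending on $C_1,C_2$, this $C_3^{1/3}$-dilation swallows the $K(C_1,C_2)$ distance and forces $C_2\circ\Xi(\widetilde{\mathcal{P}}_j)\subset C_2\circ\Xi(\mathcal{R}_j)$, hence $\mathcal{R}\lesssim\widetilde{\mathcal{P}}$. The contradiction then comes not from maximality of $\widetilde{\mathcal{P}}$ but from the selection order of the algorithm: $\widetilde{\mathscr{T}}$ was chosen before $\mathscr{T}$, so $\mathcal{R}$ should already have been removed. The paper notes just before the lemma that this is the only place the central grid property is used.
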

\begin{proof}
Suppose $\mathscr{T}\neq \widetilde{\mathscr{T}}$. Then the sparse condition and the assumption on $d\big( \Xi(\mathcal{P}), \Xi( \widetilde{\mathcal{P}}) \big)$ imply $s\big( \Xi(\mathcal{P})\big)\neq s\big(\Xi( \widetilde{\mathcal{P}}) \big)$. Assume $s\big( \Xi(\mathcal{P})\big)>s\big(\Xi( \widetilde{\mathcal{P}}) \big)$ without loss of generality. Since $\#\mathscr{T}\geq 2$, we can take $\mathcal{R}\in \mathscr{T}$ such that $\mathcal{R}\neq \mathcal{P}$. We must have $s\big( \Xi(\mathcal{R})\big)>s\big( \Xi(\mathcal{P})\big)$, which implies $C_2\circ \Xi(\mathcal{P}_j)\varsubsetneqq C_2\circ \Xi(\mathcal{R}_j), \forall j\in [n]$. The fact that $\{\Xi(\mathcal{R}_j): \mathcal{R}\in \mathscr{R}\}$ is a $C_3^{\frac{1}{3}}\text{-central}$ grid (Lemma \ref{Section4LemmaCentralGrid}) implies $C_3^{\frac{1}{3}}\big( C_2\circ \Xi(\mathcal{P}) \big)\subset C_2\circ \Xi(\mathcal{R})$. Then the assumption on $d\big( \Xi(\mathcal{P}), \Xi( \widetilde{\mathcal{P}}) \big)$ gives
\begin{equation*}
C_2\circ \Xi( \widetilde{\mathcal{P}})\subset C_3^{\frac{1}{3}}\big( C_2\circ \Xi(\mathcal{P}) \big)\subset C_2\circ \Xi(\mathcal{R}).
\end{equation*}
Combing this with $x\in I_{\mathscr{T}}\cap I_{\widetilde{\mathscr{T}}}$, we see $\mathcal{R}\lesssim \widetilde{\mathcal{P}}$. Recall $s\big( \Xi(\mathcal{P})\big)>s\big(\Xi( \widetilde{\mathcal{P}}) \big)$. Hence $\widetilde{\mathscr{T}}$ should be selected first in our algorithm, which contradicts the fact $\mathcal{R}\in \mathscr{T}$. This finishes the proof.
\end{proof}
We are now ready to construct the injective map for the first case of Theorem \ref{MainTheorem1}.
\begin{definition}
Let $\mathscr{T}\in \mathbf{F}^{\geq 2, x}(\lambda)$ correspond to the maximal vector rectangle $\mathcal{P}$. Then for any $j\in [n]$, $\mathcal{P}_j\in \mathscr{R}_j$ belongs to a unique $\mathscr{T}_j\in \mathbf{F}_j(\lambda_j)$. Denote this $j\text{-tree}$ by $\sigma_j(\mathscr{T})$. Let $\Upsilon\subset [n]$. We also write $\sigma_{\Upsilon}({\mathscr{T}}):=\big(\sigma_j(\mathscr{T}) \big)_{j\in \Upsilon}$.
\end{definition}
\begin{proposition}  \label{Section5PropositionInjectiveMap1}
Under Type \uppercase\expandafter{\romannumeral1} non-degenerate condition for $\Gamma$, the restricted map $\sigma_{\Upsilon}\vert_{\mathbf{F}^{\geq 2, x}(\lambda)}: $
\begin{equation*}
\mathbf{F}^{\geq 2, x}(\lambda)\rightarrow \prod_{j\in \Upsilon} \mathbf{F}_j^x(\lambda_j)
\end{equation*}
is an injection for any $\Upsilon\in [n]: \#\Upsilon=\lceil \frac{m}{d}\rceil$.
\end{proposition}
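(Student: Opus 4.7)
The plan is to reduce to Lemma \ref{Section5LemmaKeyobservationForInjection1}: given two distinct vector trees $\mathscr{T}, \widetilde{\mathscr{T}} \in \mathbf{F}^{\geq 2, x}(\lambda)$ with maximal vector rectangles $\mathcal{P}, \widetilde{\mathcal{P}}$, I will show that $\sigma_\Upsilon(\mathscr{T}) = \sigma_\Upsilon(\widetilde{\mathscr{T}})$ forces the distance $d\big(\Xi(\mathcal{P}), \Xi(\widetilde{\mathcal{P}})\big)$ to be $\lesssim_{C_1, C_2} \max\{s(\Xi(\mathcal{P})), s(\Xi(\widetilde{\mathcal{P}}))\}$, contradicting $\mathscr{T} \neq \widetilde{\mathscr{T}}$.

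First, suppose $\sigma_j(\mathscr{T}) = \sigma_j(\widetilde{\mathscr{T}})$ for every $j \in \Upsilon$. By definition of a $j$-tree, for each such $j$ there is a common point $\xi_j^\circ \in C_2\circ \Xi(\mathcal{P}_j) \cap C_2\circ \Xi(\widetilde{\mathcal{P}}_j)$, so
\begin{equation*}
d\big(\Xi(\mathcal{P}_j), \Xi(\widetilde{\mathcal{P}}_j)\big) \lesssim_{C_2} \max\{s(\Xi(\mathcal{P})), s(\Xi(\widetilde{\mathcal{P}}))\}, \quad \forall j \in \Upsilon.
\end{equation*}
Assume without loss of generality that $s(\Xi(\mathcal{P})) \leq s(\Xi(\widetilde{\mathcal{P}}))$; write $s := s(\Xi(\widetilde{\mathcal{P}}))$. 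Using the second property of $\mathscr{R}$ in Theorem \ref{MainTheorem1Discrete}, pick $\xi \in \Gamma$ with $d(\xi, \Xi(\mathcal{P})) \lesssim C_1 s(\Xi(\mathcal{P}))$ and $\widetilde{\xi} \in \Gamma$ with $d(\widetilde{\xi}, \Xi(\widetilde{\mathcal{P}})) \lesssim C_1 s$. The triangle inequality across $\Xi(\mathcal{P}_j)$ and $\Xi(\widetilde{\mathcal{P}}_j)$ gives, for every $j \in \Upsilon$,
\begin{equation*}
|\xi_j - \widetilde{\xi}_j| \lesssim_{C_1, C_2} s.
\end{equation*}

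Now I invoke Type \uppercase\expandafter{\romannumeral1} non-degeneracy exactly as in the proof of Lemma \ref{Section3LemmaRankOfCubes}: since $\#\Upsilon = \lceil m/d \rceil$, one can choose $\Theta \subset \Upsilon \times [d]$ with $\#\Theta = m$ such that $\Gamma$ is the graph over the $\Theta$-coordinates, i.e.\ there exist linear maps $\mathfrak{L}_{j,k}$ with $\eta_{j,k} = \mathfrak{L}_{j,k}\eta_\Theta$ for all $\eta \in \Gamma$ and $(j,k) \in [n]\times[d]$. Applying this to $\eta := \xi - \widetilde{\xi} \in \Gamma$ and using the bound above on the $\Theta$-coordinates, we conclude $|\xi_j - \widetilde{\xi}_j| \lesssim_{C_1, C_2} s$ for \emph{every} $j \in [n]$, not just $j \in \Upsilon$. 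Combined with $d(\xi_j, \Xi(\mathcal{P}_j)), d(\widetilde{\xi}_j, \Xi(\widetilde{\mathcal{P}}_j)) \lesssim C_1 s$, the triangle inequality yields
\begin{equation*}
d\big(\Xi(\mathcal{P}), \Xi(\widetilde{\mathcal{P}})\big) \lesssim_{C_1, C_2} s = \max\{s(\Xi(\mathcal{P})), s(\Xi(\widetilde{\mathcal{P}}))\}.
\end{equation*}

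Lemma \ref{Section5LemmaKeyobservationForInjection1} then forces $\mathscr{T} = \widetilde{\mathscr{T}}$, proving injectivity. The step requiring genuine input from the hypotheses is the propagation of the distance control from the $\Upsilon$-coordinates to all $n$ coordinates; this is precisely where Type \uppercase\expandafter{\romannumeral1} non-degeneracy is used, and it is the only real obstacle — once all coordinates are controlled, Lemma \ref{Section5LemmaKeyobservationForInjection1} closes the argument immediately.
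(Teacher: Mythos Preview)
Your proof is correct and follows essentially the same route as the paper: from $\sigma_\Upsilon(\mathscr{T})=\sigma_\Upsilon(\widetilde{\mathscr{T}})$ you bound the $\Upsilon$-coordinate distances of the frequency cubes, pass to nearby points $\xi,\widetilde{\xi}\in\Gamma$, use Type~\uppercase\expandafter{\romannumeral1} non-degeneracy to propagate the bound to all coordinates, and then invoke Lemma~\ref{Section5LemmaKeyobservationForInjection1}. The only cosmetic difference is that you spell out the graph maps $\mathfrak{L}_{j,k}$ explicitly (as in Lemma~\ref{Section3LemmaRankOfCubes}), whereas the paper simply cites Type~\uppercase\expandafter{\romannumeral1} non-degeneracy for the implication $d(\tau_{\Upsilon\times[d]},\tilde\tau_{\Upsilon\times[d]})\lesssim s \Rightarrow d(\tau,\tilde\tau)\lesssim s$.
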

\begin{proof}
Let $\mathscr{T}, \widetilde{\mathscr{T}}\in \mathbf{F}^{\geq 2, x}(\lambda)$ correspond to the maximal vector rectangles $\mathcal{P}, \widetilde{\mathcal{P}}$ respectively. Assume $\sigma_{\Upsilon}(\mathscr{T})=\sigma_{\Upsilon}(\widetilde{\mathscr{T}})$. Then
\begin{equation} \label{Section5EquationProvingPropositionInjectiveMap1}
d\big( \Xi(\mathcal{P}_j), \Xi(\widetilde{\mathcal{P}}_j) \big)\lesssim C_2\max\big{\{} s\big( \Xi(\mathcal{P})\big), s\big(\Xi( \widetilde{\mathcal{P}}) \big)\big{\}}
\end{equation}
for any $j\in [n]$. Take $\tau, \tilde{\tau}\in \Gamma$ such that
\begin{gather*}
d\big( \tau, \Xi(\mathcal{P}) \big)\lesssim C_1 s\big( \Xi(\mathcal{P})\big), \\
d\big( \tilde{\tau}, \Xi(\widetilde{\mathcal{P}}) \big)\lesssim C_1 s\big( \Xi(\widetilde{\mathcal{P}})\big).
\end{gather*}
Combining these two inequalities and \eqref{Section5EquationProvingPropositionInjectiveMap1} gives
\begin{equation}  \label{Section5EquationProvingPropositionInjectiveMap2}
d(\tau_{\Upsilon\times [d]}, \tilde{\tau}_{\Upsilon\times [d]})\lesssim_{C_1, C_2} \max\big{\{} s\big( \Xi(\mathcal{P})\big), s\big(\Xi( \widetilde{\mathcal{P}}) \big)\big{\}}.
\end{equation}
Then Type \uppercase\expandafter{\romannumeral1} non-degenerate condition implies
\begin{equation}  \label{Section5EquationProvingPropositionInjectiveMap3}
d(\tau, \tilde{\tau})\lesssim_{C_1, C_2} \max\big{\{} s\big( \Xi(\mathcal{P})\big), s\big(\Xi( \widetilde{\mathcal{P}}) \big)\big{\}},
\end{equation}
which in turn implies
\begin{equation}  \label{Section5EquationProvingPropositionInjectiveMap4}
d\big( \Xi(\mathcal{P}), \Xi( \widetilde{\mathcal{P}}) \big)\lesssim_{C_1, C_2} \max\big{\{} s\big( \Xi(\mathcal{P})\big), s\big(\Xi( \widetilde{\mathcal{P}}) \big)\big{\}}.  
\end{equation}
Applying Lemma \ref{Section5LemmaKeyobservationForInjection1} gives the desired result.
\end{proof}
Proposition \ref{Section5PropositionInjectiveMap1} implies the first case of Proposition \ref{Section5PropositionCountingFunctionEstimate1}. To handle other cases, we need to following lemma by Katz and Tao \cite{KTEntropyInequality}. We include their proof for completeness.
\begin{lemma}  \label{Section5LemmaKatz-Tao}
Let $L\in \mathbb{N}_+$. Let $\Omega, Z_1, \cdots, Z_{L-1}$ be finite sets and $h_l: \Omega\rightarrow Z_l, 1\leq l\leq L-1$ be maps. There exists a constant $c_L>0$ such that
\begin{equation*}
\#\{(\omega_1, \cdots, \omega_L)\in \Omega^L: h_l(\omega_j)=h_l(\omega_{l+1}), \forall 1\leq l\leq L-1\}\geq c_L\frac{(\#\Omega)^L}{\prod_{l=1}^{L-1} \#Z_l}.
\end{equation*}
\end{lemma}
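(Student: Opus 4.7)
The plan is to recognize the count $N_L := \#\{(\omega_1, \ldots, \omega_L) \in \Omega^L : h_l(\omega_l) = h_l(\omega_{l+1}),\ \forall\, l \in [L-1]\}$ as a probabilistic quantity and apply H\"older's inequality with $L$ conjugate exponents all equal to $L$. Drawing $\omega_1, \ldots, \omega_L$ independently and uniformly from $\Omega$, introduce the single marginal $p^{(l)}(z) := \#h_l^{-1}(z)/\#\Omega$ and the pair joint $P^{(l,l+1)}(z,z') := \#\{\omega\in\Omega : h_l(\omega)=z,\, h_{l+1}(\omega)=z'\}/\#\Omega$. Conditioning on the common value $z_l := h_l(\omega_l) = h_l(\omega_{l+1})$ for each $l$ factors the constraint event, yielding
\[
\frac{N_L}{(\#\Omega)^L} \;=\; \sum_{z_1, \ldots, z_{L-1}} p^{(1)}(z_1)\, P^{(1,2)}(z_1, z_2)\, P^{(2,3)}(z_2,z_3) \cdots P^{(L-2, L-1)}(z_{L-2}, z_{L-1})\, p^{(L-1)}(z_{L-1}),
\]
where the product of $P$'s is empty when $L=2$ (and the statement reduces to plain Cauchy--Schwarz).

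For $L \geq 3$ I would then construct the auxiliary probability measure
\[
\mu(z_1, \ldots, z_{L-1}) \;:=\; P^{(1,2)}(z_1, z_2) \prod_{l=2}^{L-2} \frac{P^{(l, l+1)}(z_l, z_{l+1})}{p^{(l)}(z_l)}
\]
on $Z_1 \times \cdots \times Z_{L-1}$, with the convention that a summand vanishes whenever some intermediate $p^{(l)}(z_l) = 0$. This is a Markov-chain law whose consecutive pair marginals coincide with the $P^{(l,l+1)}$; a straightforward telescoping gives $\sum \mu = 1$, and integrating out all coordinates but one shows that the one-dimensional marginal of $\mu$ on $z_l$ is exactly $p^{(l)}$ for every $l \in [L-1]$. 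The key observation is that the summand in the previous display factors as $\mu(z_1, \ldots, z_{L-1}) \cdot \prod_{l=1}^{L-1} p^{(l)}(z_l)$, so writing $T := N_L/(\#\Omega)^L$ one obtains the clean identity $T = \mathbb{E}_{\mu}\!\bigl[\prod_{l=1}^{L-1} p^{(l)}(Z_l)\bigr]$.

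To conclude, I would factor $\mu = \bigl(\mu \textstyle\prod_l p^{(l)}\bigr)^{1/L} \cdot \prod_{l=1}^{L-1} \bigl(\mu/p^{(l)}\bigr)^{1/L}$ into $L$ pieces and apply H\"older's inequality with all $L$ exponents equal to $L$:
\[
1 \;=\; \sum \mu \;\leq\; T^{1/L} \prod_{l=1}^{L-1} \Bigl(\sum \mu/p^{(l)}\Bigr)^{1/L}.
\]
Since the marginal of $\mu$ on $z_l$ is $p^{(l)}$, each sum satisfies $\sum \mu/p^{(l)} = \sum_{z_l : p^{(l)}(z_l) > 0} 1 \leq \#Z_l$. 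Raising to the $L$-th power yields $T \geq 1/\prod_l \#Z_l$, which is the desired bound with $c_L = 1$. The main subtlety is the construction of $\mu$ and the verification of its marginals telescoping correctly from both ends of the chain; beyond that, the essential insight is the use of H\"older with exponent $L$, as iterated Cauchy--Schwarz (exponent $2$) would lose a factor of $\#\Omega$ -- for instance, when $L=3$ it would give only $N_3 \gtrsim (\#\Omega)^2/(\#Z_1 \#Z_2)$ instead of $(\#\Omega)^3/(\#Z_1 \#Z_2)$.
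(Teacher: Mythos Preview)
Your proof is correct and genuinely different from the paper's. The paper argues by induction on $L$: at each step it restricts to the preimage of ``popular'' values of $h_L$ (those hit by at least $\tfrac12\,\#\Omega/\#Z_L$ elements), applies the inductive hypothesis on this restricted set, and then appends $\omega_{L+1}$. This pigeonhole-and-induction argument yields $c_{L+1}=c_L/2^L$, hence $c_L=2^{-L(L-1)/2}$; the paper then remarks that the tensor power trick upgrades this to $c_L=1$.

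Your route instead builds the Markov-chain measure $\mu$ with the correct one-dimensional marginals $p^{(l)}$, rewrites $T=N_L/(\#\Omega)^L$ as $\mathbb{E}_\mu\bigl[\prod_l p^{(l)}(Z_l)\bigr]$, and applies H\"older with $L$ equal exponents. The telescoping verification of the marginals and the factorization $\mu=(\mu\prod_l p^{(l)})^{1/L}\prod_l(\mu/p^{(l)})^{1/L}$ are both fine, and the division-by-zero issue is handled by your convention since $p^{(l)}(z_l)=0$ forces the corresponding $P^{(l-1,l)}$ and $P^{(l,l+1)}$ to vanish as well. What your approach buys is the sharp constant $c_L=1$ directly, without the tensor power trick; the cost is the slightly more delicate construction of $\mu$ compared to the paper's elementary pigeonhole step. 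Both proofs are short, but yours is closer in spirit to the original entropy-inequality formulation of Katz--Tao.
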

\begin{proof}
We prove by induction on $L$. When $L=1$, both sides are $\#\Omega$. Assume the inequality is true for $L$, we show it is also true for $L+1$. Call an element $z_L\in Z_L$ popular if
\begin{equation*}
\#\big(h_L^{-1}(z_L)\big)\geq \frac{1}{2}\frac{\#\Omega}{\#Z_L}.
\end{equation*}
Define
\begin{equation*}
\Omega^{'}:=\{\omega\in \Omega: h_L(\omega)\text{\ is popular.}\}    
\end{equation*}
Then
\begin{equation*}
\#\Omega^{'}\geq \#\Omega-\#Z_L\cdot\frac{1}{2}\frac{\#\Omega}{\#Z_L}=\frac{\#\Omega}{2}.
\end{equation*}
Apply the induction hypothesis to $\Omega^{'}, Z_1, \cdots, Z_{L-1}$. We get
\begin{equation*}
\#\{(\omega_1, \cdots, \omega_L)\in (\Omega^{'})^L: h_l(\omega_l)=h_l(\omega_{l+1}), \forall 1\leq l\leq L-1\}\geq c_L\frac{(\#\Omega^{'})^L}{\prod_{l=1}^{L-1} \#Z_l}
\end{equation*}
Now add $\omega_{L+1}$ to those $(\omega_1, \cdots, \omega_L)$'s in the above set. For each fixed $L\text{-tuple}$, we have at least $\frac{1}{2}\frac{\#\Omega}{\#Z_L}$ choices of $\omega_{L+1}$ such that $h_L(\omega_L)=h_L(\omega_{L+1})$. Hence the inequality for $L+1$ follows with $c_{L+1}:=\frac{c_L}{2^L}$.
\end{proof}
\begin{remark}
One can improve $c_L$ to $1$ by the tensor power trick \cite{KTEntropyInequality}. The explicit value of $c_L$ is not important for our application.
\end{remark}
Recall $A, B^{(k)}$ are given in Definition \ref{Section1Non-DegenerateCondition2}. Let $L=2$ for the second case and $L=d$ for the third case of Proposition \ref{Section5PropositionCountingFunctionEstimate1}. Apply Lemma \ref{Section5LemmaKatz-Tao} with $\Omega:=\mathbf{F}^{\geq 2, x}(\lambda)$, $Z_1=\cdots=Z_{L-1}:=\prod_{j\in A} \mathbf{F}_j^x(\lambda_j)$, and $h_1=\cdots=h_{L-1}:=\sigma_{A}\vert_{\mathbf{F}^{\geq 2, x}(\lambda)}$. Define
\begin{equation*}
\mathbf{G}:=\big{\{}(\mathscr{T}^{(1)}, \cdots, \mathscr{T}^{(L)})\in \big(\mathbf{F}^{\geq 2, x}(\lambda) \big)^L: \sigma_{A}(\mathscr{T}^{(1)})=\cdots=\sigma_A(\mathscr{T}^{(L)})\big{\}}.
\end{equation*}
Then we get
\begin{equation}  \label{Section5EquationApplyingKatz-Tao}
(\#\mathbf{F}^{\geq 2, x})^L\lesssim \#\mathbf{G}\big(\prod_{j\in A} \#\mathbf{F}_j^x(\lambda_j)\big)^{L-1}.
\end{equation}
We shall prove
\begin{proposition}  \label{Section5PropositionInjectiveMap2}
Under Type \uppercase\expandafter{\romannumeral1} and Type \uppercase\expandafter{\romannumeral2} non-degenerate conditions for $\Gamma$, the restricted map $\prod_{k=1}^L\sigma_{B^{(k)}}\vert_{\mathbf{G}}:$
\begin{equation*}
\mathbf{G}\rightarrow \prod_{k=1}^L\prod_{j\in B^{(k)}}\mathbf{F}_j^x(\lambda_j)
\end{equation*}
is an injection for any $A, B^{(k)}$ given in Definition \ref{Section1Non-DegenerateCondition2}.
\end{proposition}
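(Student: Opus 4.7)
The plan is to show that any two tuples $(\mathscr{T}^{(1)}, \ldots, \mathscr{T}^{(L)})$ and $(\widetilde{\mathscr{T}}^{(1)}, \ldots, \widetilde{\mathscr{T}}^{(L)})$ in $\mathbf{G}$ with the same image under $\prod_k \sigma_{B^{(k)}}$ must coincide. Denote their associated maximal vector rectangles $\mathcal{P}^{(k)}, \widetilde{\mathcal{P}}^{(k)}$, and pick $\tau^{(k)}, \tilde{\tau}^{(k)} \in \Gamma$ within $C_1 M_k$ of $\mathfrak{c}(\Xi(\mathcal{P}^{(k)}))$ and $\mathfrak{c}(\Xi(\widetilde{\mathcal{P}}^{(k)}))$ respectively, where $M_k := \max\{s(\Xi(\mathcal{P}^{(k)})), s(\Xi(\widetilde{\mathcal{P}}^{(k)}))\}$. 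Set $v^{(k)} := \tau^{(k)} - \tilde{\tau}^{(k)} \in \Gamma$ and $M := \max_k M_k$.

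First I would estimate $\mathfrak{L}(v^{(1)}, \ldots, v^{(L)})$ coordinatewise, in direct analogy with the proof of Proposition \ref{Section5PropositionInjectiveMap1}. Two maximal vector rectangles lying in the same $j$-tree must have $C_2$-dilates of their $j$-th frequency components intersecting, so $\sigma_{B^{(k)}}(\mathscr{T}^{(k)}) = \sigma_{B^{(k)}}(\widetilde{\mathscr{T}}^{(k)})$ yields $|P_{B^{(k)}} v^{(k)}| \lesssim_{C_1, C_2} M_k$. Similarly the $\mathbf{G}$-coherence $\sigma_A(\mathscr{T}^{(1)}) = \sigma_A(\mathscr{T}^{(k)})$ and $\sigma_A(\widetilde{\mathscr{T}}^{(1)}) = \sigma_A(\widetilde{\mathscr{T}}^{(k)})$ gives $|P_A(v^{(1)} - v^{(k)})| \lesssim_{C_1, C_2} M$. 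Hence $|\mathfrak{L}(v^{(1)}, \ldots, v^{(L)})| \lesssim M$.

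Next I would invoke Type II non-degeneracy: since $\ker(\mathfrak{L}) = \{0\}$ on the finite-dimensional space $\Gamma^L$, there is a fixed constant $C > 0$ with $\|u\| \leq C\|\mathfrak{L}(u)\|$ for any $u \in \Gamma^L$. This yields $|v^{(k)}| \lesssim M$ for every $k$. For $k^*$ attaining $M_{k^*} = M$ one obtains $d(\Xi(\mathcal{P}^{(k^*)}), \Xi(\widetilde{\mathcal{P}}^{(k^*)})) \lesssim M_{k^*}$, and Lemma \ref{Section5LemmaKeyobservationForInjection1} forces $\mathscr{T}^{(k^*)} = \widetilde{\mathscr{T}}^{(k^*)}$. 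To propagate this to the remaining indices, observe that once $\sigma_A(\mathscr{T}^{(k^*)}) = \sigma_A(\widetilde{\mathscr{T}}^{(k^*)})$ holds, the $\mathbf{G}$-structure forces $\sigma_A(\mathscr{T}^{(k)}) = \sigma_A(\widetilde{\mathscr{T}}^{(k)})$ for every $k$, which upgrades the earlier bound to $|P_A v^{(k)}| \lesssim M_k$ individually. Combined with $|P_{B^{(k)}} v^{(k)}| \lesssim M_k$ this gives $|P_{A \cup B^{(k)}} v^{(k)}| \lesssim M_k$. A quick count shows $|A \cup B^{(k)}| = \lceil m/d \rceil$ in both subcases of Definition \ref{Section1Non-DegenerateCondition2}: in the medium rank case, $1 + n' = n/2 = \lceil m/d \rceil$; in the large rank case, $(d-r) + (da + b) = l + 1 = \lceil m/d \rceil$. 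Type I non-degeneracy then gives injectivity of $P_{A \cup B^{(k)}}$ on $\Gamma$, whence $|v^{(k)}| \lesssim M_k$, and Lemma \ref{Section5LemmaKeyobservationForInjection1} concludes $\mathscr{T}^{(k)} = \widetilde{\mathscr{T}}^{(k)}$ for all $k$.

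The main obstacle I anticipate is precisely the scale mismatch between different indices: the quantitative inversion of $\mathfrak{L}$ only delivers the bound $|v^{(k)}| \lesssim M$, which is far weaker than the $|v^{(k)}| \lesssim M_k$ needed to directly apply Lemma \ref{Section5LemmaKeyobservationForInjection1} when $M_k \ll M$. The resolution is the two-stage argument: use Type II to unlock one pair at the maximal scale, then exploit the $\mathbf{G}$-coherence on $A$ to transfer equality of $A$-trees to every index, and finally use Type I non-degeneracy on $A \cup B^{(k)}$ to sharpen each remaining estimate to its own scale $M_k$.
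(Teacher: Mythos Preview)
Your proposal is correct and follows essentially the same route as the paper's proof. The only cosmetic difference is in the final propagation step: after establishing $\mathscr{T}^{(k^*)}=\widetilde{\mathscr{T}}^{(k^*)}$ at the maximal scale, the paper simply invokes the already-proved Proposition~\ref{Section5PropositionInjectiveMap1} with $\Upsilon=A\cup B^{(k')}$ to conclude for the remaining indices, whereas you unpack that step explicitly by verifying $\#(A\cup B^{(k)})=\lceil m/d\rceil$ and reapplying Type~\uppercase\expandafter{\romannumeral1} plus Lemma~\ref{Section5LemmaKeyobservationForInjection1}.
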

Combining \eqref{Section5EquationApplyingKatz-Tao} and Proposition \ref{Section5PropositionInjectiveMap2} immediately proves the second case of Proposition \ref{Section5PropositionCountingFunctionEstimate1}. For the third case, we obtain an estimate of the type:
\begin{equation*}
\#\mathbf{F}^{\geq 2, x}\lesssim \prod_{j\in A\cup B^{(1)}\cup \cdots \cup B^{(L)}} \big(\#\mathbf{F}_j^x(\lambda_j)\big)^{\alpha_j},
\end{equation*}
where
\begin{equation*}
\sum_{j\in A\cup B^{(1)}\cup \cdots \cup B^{(d)}} \alpha_j=\frac{d-1}{d}\cdot\#A+\frac{1}{d}\cdot d\cdot\#B^{(1)}=\frac{m}{d}.
\end{equation*}
Note that we always have $\# (A\cup B^{(1)}\cup \cdots \cup B^{(d)})\leq n-1$. Hence taking a geometric average over different choices of $(A, B^{(1)}, \cdots, B^{(d)})$ proves the third case of Proposition \ref{Section5PropositionCountingFunctionEstimate1}. Now we show Proposition \ref{Section5PropositionInjectiveMap2}.
\begin{proof}
Let $(\mathscr{T}^{(1)}, \cdots, \mathscr{T}^{(L)}), (\widetilde{\mathscr{T}^{(1)}}, \cdots, \widetilde{\mathscr{T}^{(L)}})\in \mathbf{G}$ correspond to the tuples of maximal vector rectangles $(\mathcal{P}^{(1)}, \cdots, \mathcal{P}^{(L)}), (\widetilde{\mathcal{P}^{(1)}}, \cdots, \widetilde{\mathcal{P}^{(L)}})$. Assume
\begin{equation*}
\prod_{k=1}^L\sigma_{B^{(k)}}\big( (\mathscr{T}^{(1)}, \cdots, \mathscr{T}^{(L)})\big)=\prod_{k=1}^L\sigma_{B^{(k)}}\big( (\widetilde{\mathscr{T}^{(1)}}, \cdots, \widetilde{\mathscr{T}^{(L)}})\big).
\end{equation*}
Then
\begin{equation} \label{Section5EquationProvingPropositionSection5PropositionInjectiveMap21}
d\big( \Xi(\mathcal{P}_j^{(k)}), \Xi(\widetilde{\mathcal{P}^{(k)}}_j) \big)\lesssim C_2 S
\end{equation}
for any $1\leq k\leq L, j\in B^{(k)}$. Here
\begin{equation*}
S:=\max\big{\{} s\big( \Xi(\mathcal{P}^{(1)})\big), \cdots, s\big( \Xi(\mathcal{P}^{(L)})\big), s\big(\Xi( \widetilde{\mathcal{P}^{(1)}}), \cdots, s\big(\Xi( \widetilde{\mathcal{P}^{(L)}}) \big)\big{\}}
\end{equation*}
For each $1\leq k\leq L$, choose $\xi^{(k)}, \widetilde{\xi^{(k)}}\in \Gamma$ such that
\begin{equation}  \label{Section5EquationProvingPropositionSection5PropositionInjectiveMap22}
d\big( \Xi(\mathcal{P}^{(k)}), \xi^{(k)} \big)\lesssim C_1 S,\ d\big( \Xi(\widetilde{\mathcal{P}^{(k)}}), \widetilde{\xi^{(k)}} \big)\lesssim C_1 S.
\end{equation}
Let $w^{(k)}:=\xi^{(k)}-\widetilde{\xi^{(k)}}, 1\leq k\leq L$. The definition of $\mathbf{G}$ and \eqref{Section5EquationProvingPropositionSection5PropositionInjectiveMap22} imply
\begin{equation}  \label{Section5EquationProvingPropositionSection5PropositionInjectiveMap23}
\begin{split}
|P_{A}(w^{(1)})-P_{A}(w^{(k)})| & =|\big(\xi^{(1)}_{A\times [d]}-\widetilde{\xi^{(1)}}_{A\times [d]}\big)-\big(\xi^{(k)}_{A\times [d]}-\widetilde{\xi^{(k)}}_{A\times [d]} \big)|  \\
&=|\big(\xi^{(1)}_{A\times [d]}-\xi^{(k)}_{A\times [d]}\big)-\big(\widetilde{\xi^{(1)}}_{A\times [d]}-\widetilde{\xi^{(k)}}_{A\times [d]} \big)| \\
&\lesssim_{C_1, C_2} S.
\end{split}
\end{equation}
for any $2\leq k\leq L$. Moreover, \eqref{Section5EquationProvingPropositionSection5PropositionInjectiveMap21} and \eqref{Section5EquationProvingPropositionSection5PropositionInjectiveMap22} imply
\begin{equation}  \label{Section5EquationProvingPropositionSection5PropositionInjectiveMap24}
\begin{split}
|P_{B^{(k)}}(w^{(k)})| & =|\xi^{(k)}_{B^{(k)}\times [d]}-\widetilde{\xi^{(k)}}_{B^{(k)}\times [d]}|  \\
&\lesssim_{C_1, C_2} S.
\end{split}
\end{equation}
for any $1\leq k\leq L$. Now consider the linear map $\mathfrak{L}$ given by \eqref{Section1EquationTheCrucialMap}. On the one hand, \eqref{Section5EquationProvingPropositionSection5PropositionInjectiveMap23} and \eqref{Section5EquationProvingPropositionSection5PropositionInjectiveMap24} means
\begin{equation}  \label{Section5EquationProvingPropositionSection5PropositionInjectiveMap25}
|\mathfrak{L}\big( (w^{(1)}, \cdots, w^{(L)}) \big)|\lesssim_{C_1, C_2} S.
\end{equation}
On the other hand, Type \uppercase\expandafter{\romannumeral2} non-degenerate condition tells us that $\ker(\mathfrak{L})=\{0\}$. Hence $\mathfrak{L}$ is a invertible linear map from $\Gamma^L$ to its image. Thus we obtain
\begin{equation}  \label{Section5EquationProvingPropositionSection5PropositionInjectiveMap26}
|(w^{(1)}, \cdots, w^{(L)})|\lesssim_{C_1, C_2} S
\end{equation}
from \eqref{Section5EquationProvingPropositionSection5PropositionInjectiveMap25}.
Pick $k$ such that
\begin{equation*}
\max\big{\{} s\big( \Xi(\mathcal{P}^{(k)})\big), s\big(\Xi( \widetilde{\mathcal{P}^{(k)}}) \big{\}}=S.
\end{equation*}
Combining \eqref{Section5EquationProvingPropositionSection5PropositionInjectiveMap26} and Lemma \ref{Section5LemmaKeyobservationForInjection1} gives $\mathscr{T}^{(k)}=\widetilde{\mathscr{T}^{(k)}}$. This and the definition of $\mathbf{G}$ further imply
\begin{equation*}
\sigma_{A\cup B^{(k^{'})}}(\mathscr{T}^{k^{'}})=\sigma_{A\cup B^{(k^{'})}}(\widetilde{\mathscr{T}^{k^{'}}}),\ \forall k^{'}\neq k.
\end{equation*}
Applying Proposition \ref{Section5PropositionInjectiveMap1} then gives $\mathscr{T}^{k^{'}}=\widetilde{\mathscr{T}^{k^{'}}}$ for each $k^{'}\neq k$. This finishes the proof.
\end{proof}
It remains to estimate $N_{\mathbf{F}^{=1}(\lambda)}$. Since each vector tree in $\mathbf{F}^{=1}(\lambda)$ has cardinality one, we will not distinguish $\mathscr{T}\in \mathbf{F}^{=1}(\lambda)$ from its maximal vector rectangle $\mathcal{P}$. The main difficulty is that Lemma \ref{Section5LemmaKeyobservationForInjection1} is not valid for $\mathbf{F}^{=1, x}(\lambda)$. Instead, we have the following weaker version:
\begin{lemma}  \label{Section5LemmaKeyobservationForInjection2}
Let $\mathcal{P}, \widetilde{\mathcal{P}}\in \mathbf{F}^{=1, x}(\lambda)$. Then $\mathcal{P}=\widetilde{\mathcal{P}}$ if
\begin{equation*}
d\big( \Xi(\mathcal{P}), \Xi( \widetilde{\mathcal{P}}) \big)\lesssim_{C_1} \max\big{\{} s\big( \Xi(\mathcal{P})\big), s\big(\Xi( \widetilde{\mathcal{P}}) \big)\big{\}}.
\end{equation*}
\end{lemma}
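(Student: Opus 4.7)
The plan is to argue by contradiction, adapting the strategy of Lemma \ref{Section5LemmaKeyobservationForInjection1} but compensating for the absence of an auxiliary rectangle $\mathcal{R}\in\mathscr{T}$ distinct from $\mathcal{P}$. The weaker hypothesis $d\lesssim_{C_1}\max\{\ldots\}$ instead of $\lesssim_{C_1, C_2}$ is exactly what preserves enough slack between the fixed scales $C_1\ll C_2\ll C_3$ to let the argument run without such a witness.

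Suppose for contradiction $\mathcal{P}\neq\widetilde{\mathcal{P}}$. First consider the case $s\big(\Xi(\mathcal{P})\big)=s\big(\Xi(\widetilde{\mathcal{P}})\big)$. If $\Xi(\mathcal{P}_j)=\Xi(\widetilde{\mathcal{P}}_j)$ for every $j$, then $I(\mathcal{P})$ and $I(\widetilde{\mathcal{P}})$ are dyadic cubes of the same side length both containing $x$, hence $I(\mathcal{P})=I(\widetilde{\mathcal{P}})$ and $\mathcal{P}=\widetilde{\mathcal{P}}$, a contradiction. Otherwise some coordinate $j$ differs, and the second $C_3$-sparseness property of $\{\Xi(\mathcal{R}_j):\mathcal{R}\in\mathscr{R}\}$ yields $d\big(\Xi(\mathcal{P}_j),\Xi(\widetilde{\mathcal{P}}_j)\big)>C_3 s\big(\Xi(\mathcal{P})\big)$, contradicting the hypothesis since $C_3\gg C_1$.

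For unequal scales, assume without loss of generality $s\big(\Xi(\mathcal{P})\big)<s\big(\Xi(\widetilde{\mathcal{P}})\big)$; the first sparseness property upgrades this to $s\big(\Xi(\widetilde{\mathcal{P}})\big)\geq C_3 s\big(\Xi(\mathcal{P})\big)$. The goal is to show $\widetilde{\mathcal{P}}\lesssim\mathcal{P}$. Dyadically, since $I(\widetilde{\mathcal{P}})$ and $I(\mathcal{P})$ both contain $x$ and $s\big(I(\widetilde{\mathcal{P}})\big)<s\big(I(\mathcal{P})\big)$, we get $I(\widetilde{\mathcal{P}})\subset I(\mathcal{P})$. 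For each $j$, the plan is to pick nearby points $\tau_j\in\Xi(\mathcal{P}_j)$, $\tilde{\tau}_j\in\Xi(\widetilde{\mathcal{P}}_j)$ with $|\tau_j-\tilde{\tau}_j|\lesssim C_1 s\big(\Xi(\widetilde{\mathcal{P}})\big)$ afforded by the hypothesis. Since $\tilde{\tau}_j\in\Xi(\widetilde{\mathcal{P}}_j)$ lies deep inside $C_2\circ\Xi(\widetilde{\mathcal{P}}_j)\supset C_2\Xi(\widetilde{\mathcal{P}}_j)$, at distance $\gtrsim C_2 s\big(\Xi(\widetilde{\mathcal{P}})\big)$ from its boundary, and $C_2\gg C_1$, the point $\tau_j$ also lies in $C_2\circ\Xi(\widetilde{\mathcal{P}}_j)$. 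Thus $C_2\circ\Xi(\mathcal{P}_j)$ and $C_2\circ\Xi(\widetilde{\mathcal{P}}_j)$ intersect, and the $C_3^{1/3}$-central grid property together with the scale ratio of at least $C_3$ forces $C_2\circ\Xi(\mathcal{P}_j)\subset C_2\circ\Xi(\widetilde{\mathcal{P}}_j)$, completing $\widetilde{\mathcal{P}}\lesssim\mathcal{P}$.

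The contradiction then comes from the scheduling of the selection algorithm. Since $s\big(\Xi(\mathcal{P})\big)<s\big(\Xi(\widetilde{\mathcal{P}})\big)$, $\mathcal{P}$ is chosen strictly before $\widetilde{\mathcal{P}}$; because $\widetilde{\mathcal{P}}$ is itself later selected as a maximal element of $\mathbf{F}^{=1}(\lambda)$, it cannot have been absorbed into an earlier tree, so it remains in $\mathscr{R}(\lambda)$ at the moment $\mathscr{T}_{\mathcal{P}}$ is formed. Hence $\widetilde{\mathcal{P}}\in\mathscr{T}_{\mathcal{P}}$ together with $\mathcal{P}$, giving $\#\mathscr{T}_{\mathcal{P}}\geq 2$ and contradicting $\mathscr{T}_{\mathcal{P}}\in\mathbf{F}^{=1}(\lambda)$. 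I expect the main obstacle to be keeping track of the constant hierarchy cleanly in the different-scale case: the step showing that $\tau_j$ stays inside $C_2\circ\Xi(\widetilde{\mathcal{P}}_j)$ must use $C_2\gg C_1$ in an essential way, and the subsequent grid-containment step must use $C_3^{1/3}$ dominating the ambient constants — it is precisely the dropping of $C_2$ from the hypothesis that makes this chain of inequalities close.
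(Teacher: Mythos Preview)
Your proof is correct and follows essentially the same route as the paper: rule out equal scales by $C_3$-sparseness, then in the unequal-scale case use $C_1\ll C_2$ and the grid structure to obtain the nesting $C_2\circ\Xi(\mathcal{P}_j)\subset C_2\circ\Xi(\widetilde{\mathcal{P}}_j)$, hence $\widetilde{\mathcal{P}}\lesssim\mathcal{P}$.

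The only substantive difference is the source of the contradiction. The paper stops one step earlier: since both $\mathcal{P}$ and $\widetilde{\mathcal{P}}$ lie in $\mathscr{P}(\lambda)$, the collection of \emph{maximal} vector rectangles of $\mathscr{R}(\lambda)$, the relation $\widetilde{\mathcal{P}}\lesssim\mathcal{P}$ with $\widetilde{\mathcal{P}}\neq\mathcal{P}$ already contradicts the maximality of $\widetilde{\mathcal{P}}$. Your detour through the selection algorithm (forcing $\widetilde{\mathcal{P}}\in\mathscr{T}_{\mathcal{P}}$ and hence $\#\mathscr{T}_{\mathcal{P}}\geq 2$) is valid but unnecessary. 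Also, your closing remark overstates the role of the $C_3^{1/3}$-central property here: for this lemma the plain grid property (or a direct size comparison using $C_1\ll C_2$ and $s(\Xi(\mathcal{P}))\leq C_3^{-1}s(\Xi(\widetilde{\mathcal{P}}))$) suffices; the central property is what is genuinely needed in Lemma~\ref{Section5LemmaKeyobservationForInjection1}, not here.
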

\begin{proof}
Suppose $\mathcal{P}\neq \widetilde{\mathcal{P}}$. Then the sparse condition and the assumption on $d\big( \Xi(\mathcal{P}), \Xi( \widetilde{\mathcal{P}}) \big)$ imply $s\big( \Xi(\mathcal{P})\big)\neq s\big(\Xi( \widetilde{\mathcal{P}}) \big)$. Assume $s\big( \Xi(\mathcal{P})\big)>s\big(\Xi( \widetilde{\mathcal{P}}) \big)$ without loss of generality. We get $C_2\circ \Xi( \widetilde{\mathcal{P}})\subset C_2\circ \Xi(\mathcal{P})$ from the condition on $d\big( \Xi(\mathcal{P}), \Xi( \widetilde{\mathcal{P}}) \big)$. Thus $\mathcal{P}\lesssim \widetilde{\mathcal{P}}$, a contradiction.
\end{proof}
To bound $N_{\mathbf{F}^{=1, x}(\lambda)}$ using Lemma \ref{Section5LemmaKeyobservationForInjection2} as in the $\mathbf{F}^{\geq 2, x}(\lambda)$ case, we need to replace the order relation between rectangles $\lesssim$ with $\leq $. This means we have to split each $j\text{-tree}$ in $\mathbf{F}_j(\lambda_j)$ further to fulfill the stronger requirement $\mathcal{R}_j\leq (I_{\mathscr{T}_j}, \xi_{\mathscr{T}_j})$.
\begin{definition}
Let $j\in [n]$ and dyadic number $\lambda_j\leq \mathbf{M}_j(\mathscr{R}_j)$. For each dyadic number $\kappa_j\leq \lambda_j$, define
\begin{equation*}
\mathscr{R}_j(\lambda_j, \kappa_j):=\{\mathcal{R}_j\in \mathscr{R}_j(\lambda_j): \frac{\kappa_j}{2}<\frac{|\langle f_j|\varphi_{\mathcal{R}_j} \rangle|}{|I(\mathcal{R}_j)|^{\frac{1}{2}}}\leq \kappa_j\}.
\end{equation*}
And for two vectors of dyadic numbers $\lambda=(\lambda_1, \cdots, \lambda_n), \kappa=(\kappa_1, \cdots, \kappa_n)$ such that $\kappa_j\leq \lambda_j\leq \mathbf{M}_j(\mathscr{R}_j), \forall j\in [n]$, define
\begin{equation*}
\mathscr{R}(\lambda, \kappa):=\{\mathcal{R}\in \mathscr{R}(\lambda): \mathcal{R}_j\in \mathscr{R}_j(\lambda_j, \kappa_j), \forall j\in [n]\}.
\end{equation*}
Recall we identify each vector tree in $\mathbf{F}^{=1}(\lambda)$ with its unique vector rectangle. Write
\begin{equation*}
\mathbf{F}^{=1}(\lambda)=\bigcup_{\kappa: \kappa_j\leq \lambda_j, \forall j\in [n]} \big( \mathbf{F}^{=1}(\lambda)\cap \mathscr{R}(\lambda, \kappa)\big)=:\bigcup_{\kappa: \kappa_j\leq \lambda_j, \forall j\in [n]} \mathbf{F}^{=1}(\lambda, \kappa).
\end{equation*}
\end{definition}
\begin{definition}
Let $j\in [n]$. Let $\lambda_j, \kappa_j$ be dyadic numbers such that $\kappa_j\leq\lambda_j\leq \mathbf{M}_j(\mathscr{R}_j)$. Define $\mathscr{F}_j(\lambda_j, \kappa_j)$ to be the collection of all maximal elements in $\mathscr{R}_j(\lambda_j, \kappa_j)$ under the order relation $\leq$. We also consider $\mathscr{F}_j(\lambda_j, \kappa_j)$ a family of lacunary $j\text{-trees}$ by regarding each $(\mathcal{R}_{max})_j\in \mathscr{F}_j(\lambda_j, \kappa_j)$ as a single-element $j\text{-tree}$.
\end{definition}
\begin{proposition}  \label{Section5PropositionCountingFunctionEstimateForSingle-ElementTrees}
Let $j\in [n]$. Let $\lambda_j, \kappa_j$ be dyadic numbers such that $\kappa_j\leq\lambda_j\leq \mathbf{M}_j(\mathscr{R}_j)$. Then the counting function $N_{\mathscr{F}_j(\lambda_j, \kappa_j)}$ satisfies
\begin{equation*}
\|N_{\mathscr{F}_j(\lambda_j, \kappa_j)}\|_{p_j}\lesssim \kappa_j^{-2}|E_j|^{\frac{1}{p_j}},\ \forall 1\leq p_j<\infty.
\end{equation*}
\begin{proof}
Note that $\mathscr{F}_j(\lambda_j, \kappa_j)$, considered as a family of lacunary $j\text{-trees}$, is strongly disjoint. Thus we can apply Proposition \ref{Section4PropositionMassSelectionBesselTypeEstimate} and Corollary \ref{Section4CorollaryMassSelectionBesselTypeEstimateLocalVersion}. Then using Lemma \ref{Section4LemmaDyadicJohn-Nirenberg} as in the proof of \eqref{Section4EquationCountingFunctionEstimate} gives the desired estimate.
\end{proof}
\end{proposition}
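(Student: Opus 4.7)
The plan is to mirror the proof of \eqref{Section4EquationCountingFunctionEstimate} inside the proof of Proposition \ref{Section4PropositionMassSelection1}, with the old mass level $\lambda_j$ replaced by the finer scale $\kappa_j$. The first step is to verify that $\mathscr{F}_j(\lambda_j, \kappa_j)$, regarded as a family of single-element lacunary $j$-trees, is strongly disjoint. Suppose two distinct elements $\mathcal{R}_j, \mathcal{R}_j' \in \mathscr{F}_j(\lambda_j, \kappa_j)$ satisfy $s(\Xi(\mathcal{R}_j)) < s(\Xi(\mathcal{R}_j'))$ and $\Xi(\mathcal{R}_j) \cap \Xi(\mathcal{R}_j') \neq \emptyset$; the sparse grid structure then forces $\Xi(\mathcal{R}_j') \subset \Xi(\mathcal{R}_j)$. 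If in addition $I(\mathcal{R}_j') \cap I(\mathcal{R}_j) \neq \emptyset$, then since $s(I(\mathcal{R}_j')) < s(I(\mathcal{R}_j))$ the dyadic nesting gives $I(\mathcal{R}_j') \subset I(\mathcal{R}_j)$, hence $\mathcal{R}_j' \leq \mathcal{R}_j$, contradicting the maximality of $\mathcal{R}_j'$ in $\mathscr{R}_j(\lambda_j, \kappa_j)$ under $\leq$. The regularity condition \eqref{Section4EquationRegularMassAssumption} is automatic for single-element trees, because both the supremum and the infimum appearing there reduce to $|\langle f_j | \varphi_{\mathcal{R}_j}\rangle| / |I(\mathcal{R}_j)|^{1/2}$, which lies in $(\kappa_j/2, \kappa_j]$ by definition of $\mathscr{R}_j(\lambda_j, \kappa_j)$.

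With strong disjointness and \eqref{Section4EquationRegularMassAssumption} in place, Proposition \ref{Section4PropositionMassSelectionBesselTypeEstimate} yields
\[
\sum_{\mathcal{R}_j \in \mathscr{F}_j(\lambda_j, \kappa_j)} |\langle f_j | \varphi_{\mathcal{R}_j}\rangle|^2 \lesssim \|f_j\|_2^2 \leq |E_j|,
\]
while Corollary \ref{Section4CorollaryMassSelectionBesselTypeEstimateLocalVersion} gives the corresponding localized bound when one restricts to $I(\mathcal{R}_j) \subset J$ for any dyadic cube $J$. Using $|\langle f_j | \varphi_{\mathcal{R}_j}\rangle|^2 > (\kappa_j/2)^2 |I(\mathcal{R}_j)|$, these two inequalities convert into
\[
\sum_{\mathcal{R}_j \in \mathscr{F}_j(\lambda_j, \kappa_j)} |I(\mathcal{R}_j)| \lesssim \kappa_j^{-2} |E_j|, \qquad \frac{1}{|J|} \sum_{\mathcal{R}_j \,:\, I(\mathcal{R}_j) \subset J} |I(\mathcal{R}_j)| \lesssim \kappa_j^{-2}.
\]
Writing $N_{\mathscr{F}_j(\lambda_j, \kappa_j)} = \sum_I (a_I^2 / |I|) 1_I$ with $a_I^2 := |I| \cdot \#\{\mathcal{R}_j \in \mathscr{F}_j(\lambda_j, \kappa_j) : I(\mathcal{R}_j) = I\}$, the two displays above are exactly the bounds $\|(a_I)\|_2^2 \lesssim \kappa_j^{-2} |E_j|$ and $\|(a_I)\|_{bmo}^2 \lesssim \kappa_j^{-2}$.

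Finally, the interpolation Lemma \ref{Section4LemmaInterpolation}, combined if necessary with the dyadic John--Nirenberg Lemma \ref{Section4LemmaDyadicJohn-Nirenberg}, yields $\|(a_I)\|_p \lesssim \kappa_j^{-1} |E_j|^{1/p}$ for all $2 \leq p < \infty$. Since $\|N_{\mathscr{F}_j(\lambda_j, \kappa_j)}\|_{p/2} = \|(a_I)\|_p^2$, reindexing $p_j := p/2 \in [1, \infty)$ produces the claimed estimate $\|N_{\mathscr{F}_j(\lambda_j, \kappa_j)}\|_{p_j} \lesssim \kappa_j^{-2} |E_j|^{1/p_j}$. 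The only delicate step in the whole argument is the strong-disjointness verification via maximality under the strict ordering $\leq$; once that is secured, everything else is a faithful transcription of the Bessel-plus-interpolation calculation already carried out for \eqref{Section4EquationCountingFunctionEstimate}, which is the reason the author can dispose of this proposition in a single sentence.
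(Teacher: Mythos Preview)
Your proof is correct and follows exactly the approach the paper sketches: verify strong disjointness of the maximal-element family, check the regularity hypothesis \eqref{Section4EquationRegularMassAssumption}, apply the global and local Bessel inequalities (Proposition \ref{Section4PropositionMassSelectionBesselTypeEstimate} and Corollary \ref{Section4CorollaryMassSelectionBesselTypeEstimateLocalVersion}), and then interpolate via Lemma \ref{Section4LemmaInterpolation} as in the derivation of \eqref{Section4EquationCountingFunctionEstimate}. One small slip: from $s(\Xi(\mathcal{R}_j)) < s(\Xi(\mathcal{R}_j'))$ and the grid property you get $\Xi(\mathcal{R}_j) \subset \Xi(\mathcal{R}_j')$, not the reverse as you wrote; but this is precisely the containment needed for $\mathcal{R}_j' \leq \mathcal{R}_j$, so your conclusion and the contradiction with maximality are unaffected.
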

Morally, we split each $j\text{-tree}\in \mathbf{F}_j(\lambda_j)$ into new trees, each corresponding to one element in $\mathscr{F}_j(\lambda_j, \kappa_j)$. But these new trees won't appear explicitly. We can then use $N_{\mathscr{F}_j(\lambda_j, \kappa_j)}, j\in [n]$ to estimate $N_{\mathbf{F}^{=1}(\lambda, \kappa)}$.
\begin{proposition}  \label{Section5PropositionCountingFunctionEstimate2}
Let $\lambda=(\lambda_1, \cdots, \lambda_n), \kappa=(\kappa_1, \cdots, \kappa_n)$ be two vectors of dyadic numbers with $\kappa_j\leq \lambda_j\leq \mathbf{M}_j(\mathscr{R}_j), \forall j\in [n]$. We can estimate $N_{\mathbf{F}^{=1}(\lambda, \kappa)}$ in terms of $N_{\mathscr{F}(\lambda_j, \kappa_j)}, j\in [n]$ as follows:
\begin{enumerate}
\item Assume we are in the first case of Theorem \ref{MainTheorem1}. Then
\begin{equation*}
N_{\mathbf{F}^{=1}(\lambda, \kappa)}\lesssim \prod_{j\in \Upsilon} N_{\mathscr{F}_j(\lambda_j, \kappa_j)},\ \forall \Upsilon\subset [n]: \#\Upsilon=\lceil \frac{m}{d} \rceil.
\end{equation*}
\item Assume we are in the second case of Theorem \ref{MainTheorem1}. Then
\begin{equation*}
N_{\mathbf{F}^{=1}(\lambda, \kappa)}\lesssim \prod_{j\in \Upsilon} N_{\mathscr{F}_j(\lambda_j, \kappa_j)}^{\frac{1}{2}},\ \forall \Upsilon\subset [n]: \#\Upsilon=n-1.
\end{equation*}
\item Assume we are in the third case of Theorem \ref{MainTheorem1}. Then
\begin{equation*}
N_{\mathbf{F}^{=1}(\lambda, \kappa)}\lesssim \prod_{j\in \Upsilon} N_{\mathscr{F}_j(\lambda_j, \kappa_j)}^{\frac{m}{d(n-1)}},\ \forall \Upsilon\subset [n]: \#\Upsilon=n-1.
\end{equation*}
\end{enumerate}
\end{proposition}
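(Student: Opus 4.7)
The plan is to mirror the proof of Proposition \ref{Section5PropositionCountingFunctionEstimate1}, replacing the map $\sigma_j$ (which lands in $\mathbf{F}_j(\lambda_j)$ via the relation $\lesssim$) with a refined map $\tilde{\sigma}_j$ landing in $\mathscr{F}_j(\lambda_j,\kappa_j)$ via the stronger relation $\leq$, and replacing Lemma \ref{Section5LemmaKeyobservationForInjection1} with Lemma \ref{Section5LemmaKeyobservationForInjection2}. For each $\mathcal{R}_j\in\mathscr{R}_j(\lambda_j,\kappa_j)$, fix one maximal element of $\mathscr{R}_j(\lambda_j,\kappa_j)$ under $\leq$ lying above $\mathcal{R}_j$; this yields a map $\tilde{\sigma}_j:\mathbf{F}^{=1}(\lambda,\kappa)\to\mathscr{F}_j(\lambda_j,\kappa_j)$ depending only on the $j$-th coordinate, whose restriction to $\mathbf{F}^{=1,x}(\lambda,\kappa)$ lands in $\mathscr{F}_j^x(\lambda_j,\kappa_j)$ since $I(\mathcal{P}_j)\subset I(\tilde{\sigma}_j(\mathcal{P}))$. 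Set $\tilde{\sigma}_\Upsilon(\mathcal{P}):=\big(\tilde{\sigma}_j(\mathcal{P})\big)_{j\in\Upsilon}$.

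For case (1), I would establish the analogue of Proposition \ref{Section5PropositionInjectiveMap1}: under Type I non-degeneracy, $\tilde{\sigma}_\Upsilon\vert_{\mathbf{F}^{=1,x}(\lambda,\kappa)}$ is injective whenever $\#\Upsilon=\lceil m/d\rceil$. Indeed, if $\tilde{\sigma}_\Upsilon(\mathcal{P})=\tilde{\sigma}_\Upsilon(\widetilde{\mathcal{P}})$, then for each $j\in\Upsilon$ the common frequency cube $\Xi(\tilde{\sigma}_j(\mathcal{P}))$ is contained in both $\Xi(\mathcal{P}_j)$ and $\Xi(\widetilde{\mathcal{P}}_j)$, so by the grid property these two cubes are nested, giving $d(\Xi(\mathcal{P}_j),\Xi(\widetilde{\mathcal{P}}_j))=0$. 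Choosing $\tau,\tilde{\tau}\in\Gamma$ with $d(\tau,\Xi(\mathcal{P}))\lesssim C_1 S$ and $d(\tilde{\tau},\Xi(\widetilde{\mathcal{P}}))\lesssim C_1 S$ for $S:=\max\{s(\Xi(\mathcal{P})),s(\Xi(\widetilde{\mathcal{P}}))\}$, I obtain $d(\tau_{\Upsilon\times[d]},\tilde{\tau}_{\Upsilon\times[d]})\lesssim C_1 S$, and Type I propagates this to $d(\tau,\tilde{\tau})\lesssim C_1 S$, hence $d(\Xi(\mathcal{P}),\Xi(\widetilde{\mathcal{P}}))\lesssim_{C_1} S$. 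Lemma \ref{Section5LemmaKeyobservationForInjection2} then yields $\mathcal{P}=\widetilde{\mathcal{P}}$, and counting gives the first claim.

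For cases (2) and (3), I would apply Lemma \ref{Section5LemmaKatz-Tao} to $\Omega:=\mathbf{F}^{=1,x}(\lambda,\kappa)$ with $h_1=\cdots=h_{L-1}:=\tilde{\sigma}_A$, reducing matters to injectivity of $\prod_{k=1}^L\tilde{\sigma}_{B^{(k)}}$ on the analogue of $\mathbf{G}$. Mimicking Proposition \ref{Section5PropositionInjectiveMap2}, the agreement on $A$ and each $B^{(k)}$ forces zero coordinate-wise distance there, so the input $(w^{(1)},\cdots,w^{(L)})$ to $\mathfrak{L}$ satisfies $|\mathfrak{L}((w^{(1)},\cdots,w^{(L)}))|\lesssim C_1 S$; Type II inverts $\mathfrak{L}$ on $\Gamma^L$ to give $|w^{(k)}|\lesssim_{C_1}S$ for all $k$, and Lemma \ref{Section5LemmaKeyobservationForInjection2} applied to an index $k$ realizing $S$ forces $\mathcal{P}^{(k)}=\widetilde{\mathcal{P}^{(k)}}$. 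The remaining coordinates then agree by invoking case (1) with $\Upsilon:=A\cup B^{(k')}$, which has cardinality $\lceil m/d\rceil$ by the arithmetic in Definition \ref{Section1Non-DegenerateCondition2}. The third case follows after a geometric average over the choices of $(A,B^{(1)},\cdots,B^{(d)})$, exactly as in Proposition \ref{Section5PropositionCountingFunctionEstimate1}.

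The main obstacle will be the distance-bound bookkeeping: Lemma \ref{Section5LemmaKeyobservationForInjection2} only tolerates a $C_1$-scale perturbation rather than the looser $C_1,C_2$-scale tolerated by Lemma \ref{Section5LemmaKeyobservationForInjection1}, so every intermediate estimate must be kept free of $C_2$-factors. This is precisely why the prior refinement into $\mathscr{R}_j(\lambda_j,\kappa_j)$ (with the $\leq$ order, not $\lesssim$) was installed: it ensures that $\Xi(\mathcal{P}_j)$ and $\Xi(\widetilde{\mathcal{P}}_j)$ literally share the common subcube $\Xi(\tilde{\sigma}_j(\mathcal{P}))$, so the coordinate-wise distance on $\Upsilon$ (respectively $A$ and $B^{(k)}$) vanishes and only the $C_1$-slack coming from $d(\cdot,\Gamma)\sim C_1 s(\cdot)$ survives the propagation through $P_\Upsilon^{-1}$ or $\mathfrak{L}^{-1}$.
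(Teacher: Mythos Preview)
Your proposal is correct and follows essentially the same route as the paper: it defines the refined projection $\tilde{\sigma}_j$ (the paper's $\sigma_j'$) via the $\leq$ order into $\mathscr{F}_j(\lambda_j,\kappa_j)$, proves the analogue of Proposition \ref{Section5PropositionInjectiveMap1} using Lemma \ref{Section5LemmaKeyobservationForInjection2}, and for cases (2)--(3) reruns Katz--Tao and the analogue of Proposition \ref{Section5PropositionInjectiveMap2}, finishing the remaining indices via the case (1) injectivity with $\Upsilon=A\cup B^{(k')}$. Your explicit remark that the shared subcube $\Xi(\tilde{\sigma}_j(\mathcal{P}))$ forces the coordinate-wise frequency distance to vanish---so that only the $C_1$-slack from $d(\cdot,\Gamma)\sim C_1 s(\cdot)$ survives---is exactly the reason the paper can say ``the implicit constants don't depend on $C_2$'', which is what makes Lemma \ref{Section5LemmaKeyobservationForInjection2} applicable.
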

The proof of Proposition \ref{Section5PropositionCountingFunctionEstimate2} is almost identical to that of Proposition \ref{Section5PropositionCountingFunctionEstimate1}. We first define the ``projection maps''.
\begin{definition}
Let $\lambda=(\lambda_1, \cdots, \lambda_n), \kappa=(\kappa_1, \cdots, \kappa_n)$ be two vectors of dyadic numbers such that $\kappa_j\leq \lambda_j\leq \mathbf{M}_j(\mathscr{R}_j), \forall j\in [n]$. Let $\mathcal{P}\in \mathbf{F}^{=1}(\lambda, \kappa)$. For any $j\in [n]$, there exists at least one $(\mathcal{R}_{max})_j\in \mathscr{F}_j(\lambda_j, \kappa_j)$ satisfying $\mathcal{P}_j\leq (\mathcal{R}_{max})_j$. Fix one choice of $(\mathcal{R}_{max})_j$ and denote it by $\sigma_j^{'}(\mathcal{P})$. Let $\Upsilon\subset [n]$. We also write $\sigma_{\Upsilon}^{'}(\mathcal{P}):=\big(\sigma_j^{'}(\mathcal{P}) \big)_{j\in \Upsilon}$.
\end{definition}
Then we have the following analogue of Proposition \ref{Section5PropositionInjectiveMap1}, which immediately proves the first case of Proposition \ref{Section5PropositionCountingFunctionEstimate2}.
\begin{proposition}  \label{Section5PropositionInjectiveMap1ForSingle-ElementTrees}
Under Type \uppercase\expandafter{\romannumeral1} non-degenerate condition for $\Gamma$, the restricted map $\sigma_{\Upsilon}^{'}\vert_{\mathbf{F}^{=1, x}(\lambda, \kappa)}: $
\begin{equation*}
\mathbf{F}^{=1, x}(\lambda, \kappa)\rightarrow \prod_{j\in \Upsilon} \mathscr{F}_j^x(\lambda_j, \kappa_j)
\end{equation*}
is an injection for any $\Upsilon\in [n]: \#\Upsilon=\lceil \frac{m}{d}\rceil$.
\end{proposition}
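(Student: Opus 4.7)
The plan is to mirror the proof of Proposition \ref{Section5PropositionInjectiveMap1}, with Lemma \ref{Section5LemmaKeyobservationForInjection1} replaced by Lemma \ref{Section5LemmaKeyobservationForInjection2}. The structural match is exact: $\sigma_j^{'}$ is defined in terms of the relation $\leq$, and Lemma \ref{Section5LemmaKeyobservationForInjection2} is the $\leq$-analogue of the $\lesssim$-statement used before, with implicit constant depending only on $C_1$ (and not on $C_2$). So I would fix $\mathcal{P}, \widetilde{\mathcal{P}} \in \mathbf{F}^{=1, x}(\lambda, \kappa)$ with $\sigma_{\Upsilon}^{'}(\mathcal{P}) = \sigma_{\Upsilon}^{'}(\widetilde{\mathcal{P}})$ and aim to conclude $\mathcal{P} = \widetilde{\mathcal{P}}$.

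For each $j \in \Upsilon$, write $(\mathcal{R}_{max})_j := \sigma_j^{'}(\mathcal{P}) = \sigma_j^{'}(\widetilde{\mathcal{P}})$. By the very definition of $\sigma_j^{'}$, both $\mathcal{P}_j \leq (\mathcal{R}_{max})_j$ and $\widetilde{\mathcal{P}}_j \leq (\mathcal{R}_{max})_j$, so $\Xi\big((\mathcal{R}_{max})_j\big) \subset \Xi(\mathcal{P}_j) \cap \Xi(\widetilde{\mathcal{P}}_j)$. In particular $\Xi(\mathcal{P}_j) \cap \Xi(\widetilde{\mathcal{P}}_j) \neq \emptyset$, hence $d\big(\Xi(\mathcal{P}_j), \Xi(\widetilde{\mathcal{P}}_j)\big) = 0$ for every $j \in \Upsilon$. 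Using the Whitney property, I would pick $\tau, \tilde{\tau} \in \Gamma$ with $d\big(\tau, \Xi(\mathcal{P})\big) \lesssim C_1 s\big(\Xi(\mathcal{P})\big)$ and $d\big(\tilde{\tau}, \Xi(\widetilde{\mathcal{P}})\big) \lesssim C_1 s\big(\Xi(\widetilde{\mathcal{P}})\big)$; writing $S := \max\{s(\Xi(\mathcal{P})), s(\Xi(\widetilde{\mathcal{P}}))\}$ and comparing coordinates on $\Upsilon \times [d]$ gives $|P_{\Upsilon}(\tau) - P_{\Upsilon}(\tilde{\tau})| \lesssim C_1 S$. Since $\#\Upsilon = \lceil m/d \rceil$, Type I non-degeneracy says that $P_{\Upsilon}|_{\Gamma}$ is an injective linear map from $\Gamma$ onto its $m$-dimensional image, so its inverse is bounded, and I obtain $|\tau - \tilde{\tau}| \lesssim C_1 S$. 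This in turn forces $d\big(\Xi(\mathcal{P}), \Xi(\widetilde{\mathcal{P}})\big) \lesssim_{C_1} S$.

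This last estimate is precisely the hypothesis of Lemma \ref{Section5LemmaKeyobservationForInjection2}, which yields $\mathcal{P} = \widetilde{\mathcal{P}}$ and completes the injectivity. I do not expect any genuine obstacle: the only point worth double-checking is that the constants propagated through the argument above depend on $C_1$ alone and are independent of $C_2$, which is what makes Lemma \ref{Section5LemmaKeyobservationForInjection2} (rather than the weaker Lemma \ref{Section5LemmaKeyobservationForInjection1}) available. This independence is exactly the payoff from passing from $\mathbf{F}_j(\lambda_j)$ to the finer partition $\mathscr{F}_j(\lambda_j, \kappa_j)$ and from the relation $\lesssim$ to the tighter $\leq$, which was the motivation for introducing $\sigma_j^{'}$ in the first place.
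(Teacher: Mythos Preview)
Your proposal is correct and matches the paper's approach exactly: the paper's proof simply states that one repeats the argument of Proposition \ref{Section5PropositionInjectiveMap1} with Lemma \ref{Section5LemmaKeyobservationForInjection2} in place of Lemma \ref{Section5LemmaKeyobservationForInjection1}, noting that the implicit constants in \eqref{Section5EquationProvingPropositionInjectiveMap1}--\eqref{Section5EquationProvingPropositionInjectiveMap4} no longer depend on $C_2$. Your observation that $d\big(\Xi(\mathcal{P}_j), \Xi(\widetilde{\mathcal{P}}_j)\big)=0$ for $j\in\Upsilon$ makes this independence from $C_2$ fully explicit, and your identification of why Lemma \ref{Section5LemmaKeyobservationForInjection2} becomes applicable is spot on.
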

\begin{proof}
The proof is the same as that of Proposition \ref{Section5PropositionInjectiveMap1}. This time, the implicit constants in \eqref{Section5EquationProvingPropositionInjectiveMap1}\eqref{Section5EquationProvingPropositionInjectiveMap2}\eqref{Section5EquationProvingPropositionInjectiveMap3}\eqref{Section5EquationProvingPropositionInjectiveMap4} don't depend on $C_2$. And we apply Lemma \ref{Section5LemmaKeyobservationForInjection2} instead.
\end{proof}
Moreover, Let $L=2$ for the second case and $L=d$ for the third case of Proposition \ref{Section5PropositionCountingFunctionEstimate2}. Apply Lemma \ref{Section5LemmaKatz-Tao} with $\Omega:=\mathbf{F}^{=1, x}(\lambda, \kappa)$, $Z_1=\cdots=Z_{L-1}:=\prod_{j\in A} \mathscr{F}_j^x(\lambda_j, \kappa_j)$, and $h_1=\cdots=h_{L-1}:=\sigma_{A}^{'}\vert_{\mathbf{F}^{=1, x}(\lambda, \kappa)}$. Define
\begin{equation*}
\mathbf{G}^{'}:=\big{\{}(\mathcal{P}^{(1)}, \cdots, \mathcal{P}^{(L)})\in \big( \mathbf{F}^{=1, x}(\lambda, \kappa) \big)^L: \sigma_{A}^{'}(\mathcal{P}^{(1)})=\cdots=\sigma_A^{'}(\mathcal{P}^{(L)})\big{\}}.
\end{equation*}
Arguing as before, the second and the third cases of Proposition \ref{Section5PropositionCountingFunctionEstimate2} follow from the analogue of Proposition \ref{Section5PropositionInjectiveMap2} below.
\begin{proposition}  \label{Section5PropositionInjectiveMap2ForSingle-ElementTrees}
Under Type \uppercase\expandafter{\romannumeral1} and Type \uppercase\expandafter{\romannumeral2} non-degenerate conditions for $\Gamma$, the restricted map $\prod_{k=1}^L\sigma_{B^{(k)}}^{'}\vert_{\mathbf{G}^{'}}:$
\begin{equation*}
\mathbf{G}^{'}\rightarrow \prod_{k=1}^L\prod_{j\in B^{(k)}}\mathscr{F}_j^x(\lambda_j, \kappa_j)
\end{equation*}
is an injection for any $A, B^{(k)}$ given in Definition \ref{Section1Non-DegenerateCondition2}.
\end{proposition}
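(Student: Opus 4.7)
The plan is to mirror the proof of Proposition \ref{Section5PropositionInjectiveMap2} essentially line by line, with two modifications that exploit the stronger order relation $\leq$ encoded in $\sigma_j'$. Specifically, I would take two $L$-tuples $(\mathcal{P}^{(1)},\ldots,\mathcal{P}^{(L)})$ and $(\widetilde{\mathcal{P}^{(1)}},\ldots,\widetilde{\mathcal{P}^{(L)}})$ in $\mathbf{G}'$ whose images under $\prod_{k=1}^L \sigma_{B^{(k)}}'$ agree, let $S$ denote the maximum of the $2L$ frequency scales $s(\Xi(\mathcal{P}^{(k)})),s(\Xi(\widetilde{\mathcal{P}^{(k)}}))$, and work to show that all corresponding components coincide.

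The first modification is at the step of estimating $d(\Xi(\mathcal{P}_j^{(k)}),\Xi(\widetilde{\mathcal{P}^{(k)}}_j))$. Because $\sigma_j'$ is defined via the relation $\mathcal{P}_j\leq \sigma_j'(\mathcal{P})$, which in particular forces $\Xi(\sigma_j'(\mathcal{P}))\subset \Xi(\mathcal{P}_j)$ (rather than merely $\xi\in C_2\circ \Xi(\mathcal{P}_j)$), agreement of $\sigma_{B^{(k)}}'$ on the two tuples gives that $\Xi(\mathcal{P}^{(k)}_j)\cap \Xi(\widetilde{\mathcal{P}^{(k)}}_j)\neq \emptyset$ for $j\in B^{(k)}$, and similarly agreement of $\sigma_A'$ inside each tuple gives $\Xi(\mathcal{P}^{(1)}_j)\cap \Xi(\mathcal{P}^{(k)}_j)\neq \emptyset$ and $\Xi(\widetilde{\mathcal{P}^{(1)}}_j)\cap \Xi(\widetilde{\mathcal{P}^{(k)}}_j)\neq \emptyset$ for $j\in A$. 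In each case the resulting distance is bounded by the larger scale, hence by $S$, with a constant \emph{independent of $C_2$}. Choosing $\xi^{(k)},\widetilde{\xi^{(k)}}\in\Gamma$ within $O(C_1 S)$ of $\Xi(\mathcal{P}^{(k)}),\Xi(\widetilde{\mathcal{P}^{(k)}})$ (possible by the Whitney condition) and setting $w^{(k)}:=\xi^{(k)}-\widetilde{\xi^{(k)}}\in\Gamma$, the same algebraic manipulation as in the vector-tree case yields
\begin{equation*}
|P_A(w^{(1)})-P_A(w^{(k)})|\lesssim_{C_1} S,\ \forall 2\leq k\leq L, \qquad |P_{B^{(k)}}(w^{(k)})|\lesssim_{C_1} S,\ \forall 1\leq k\leq L.
\end{equation*}
Type \uppercase\expandafter{\romannumeral2} non-degeneracy, which supplies injectivity of $\mathfrak{L}$ on $\Gamma^L$, then produces $|(w^{(1)},\ldots,w^{(L)})|\lesssim_{C_1} S$, and consequently $d(\Xi(\mathcal{P}^{(k)}),\Xi(\widetilde{\mathcal{P}^{(k)}}))\lesssim_{C_1} S$ for every $k$.

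The second modification occurs at the concluding step. Picking the index $k$ at which the scale $S$ is attained and applying Lemma \ref{Section5LemmaKeyobservationForInjection2} — whose hypothesis only needs a $C_1$-scale distance bound, and which is valid precisely for single-element vector trees — forces $\mathcal{P}^{(k)}=\widetilde{\mathcal{P}^{(k)}}$. For every remaining $k'\neq k$, membership in $\mathbf{G}'$ then upgrades the data to $\sigma_{A\cup B^{(k')}}'(\mathcal{P}^{(k')})=\sigma_{A\cup B^{(k')}}'(\widetilde{\mathcal{P}^{(k')}})$, and Proposition \ref{Section5PropositionInjectiveMap1ForSingle-ElementTrees} (applied with the index set $A\cup B^{(k')}$, which contains a set of size $\lceil m/d\rceil$) closes out $\mathcal{P}^{(k')}=\widetilde{\mathcal{P}^{(k')}}$.

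The principal point requiring care is verifying that the passage from $\sigma_j$ to $\sigma_j'$ truly removes the $C_2$-dependence in every distance estimate above: this is essential because Lemma \ref{Section5LemmaKeyobservationForInjection2} only tolerates a $C_1$-scale gap, unlike the $C_2$-scale gap absorbed by Lemma \ref{Section5LemmaKeyobservationForInjection1}. Once this book-keeping is confirmed, the entire proof is a direct transcription of the proof of Proposition \ref{Section5PropositionInjectiveMap2}.
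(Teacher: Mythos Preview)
Your proposal is correct and follows exactly the approach of the paper: the paper's own proof simply states that one repeats the argument of Proposition~\ref{Section5PropositionInjectiveMap2} with the constants in \eqref{Section5EquationProvingPropositionSection5PropositionInjectiveMap21}--\eqref{Section5EquationProvingPropositionSection5PropositionInjectiveMap26} now independent of $C_2$, and with Lemma~\ref{Section5LemmaKeyobservationForInjection2} and Proposition~\ref{Section5PropositionInjectiveMap1ForSingle-ElementTrees} substituted at the appropriate steps. You have correctly identified both modifications and, in particular, pinpointed the reason the $C_2$-dependence disappears (the relation $\leq$ underlying $\sigma_j'$ gives genuine containment $\Xi(\sigma_j'(\mathcal{P}))\subset\Xi(\mathcal{P}_j)$ rather than merely $C_2$-proximity), which is the only substantive point the paper leaves implicit.
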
 
\begin{proof}
The proof is the same as that of Proposition \ref{Section5PropositionInjectiveMap2}. This time, the implicit constants in \eqref{Section5EquationProvingPropositionSection5PropositionInjectiveMap21}\eqref{Section5EquationProvingPropositionSection5PropositionInjectiveMap23}\eqref{Section5EquationProvingPropositionSection5PropositionInjectiveMap24}\eqref{Section5EquationProvingPropositionSection5PropositionInjectiveMap25}\eqref{Section5EquationProvingPropositionSection5PropositionInjectiveMap26} don't depend on $C_2$. And we apply Lemma \ref{Section5LemmaKeyobservationForInjection2}, Proposition \ref{Section5PropositionInjectiveMap1ForSingle-ElementTrees} instead.
\end{proof}
Thus we complete the proofs of Proposition \ref{Section5PropositionCountingFunctionEstimate1} and Proposition \ref{Section5PropositionCountingFunctionEstimate2}.

%%%%%

\section{Boundedness of Multilinear Singular Operators} \label{Section6}
Back to \eqref{Section3EquationMainGoalDiscreteVersion}, the tools developed in previous sections imply the following estimate:
\begin{proposition}  \label{Section6PropositionGenerivEstimate}
Let $\mathscr{R}$ and $\varphi_{\mathcal{R}_j}, \mathcal{R}\in \mathscr{R}, j\in [n]$ be as in Theorem \ref{MainTheorem1Discrete}. Assume $E_j, j\in [n]$ are bounded sets and $f_j, j\in [n]$ are functions satisfying $|f_j|\leq 1_{E_j}, \forall j\in [n]$. We write the estimates derived from Proposition \ref{Section5PropositionCountingFunctionEstimate1} and Proposition \ref{Section5PropositionCountingFunctionEstimate2} into a unified form:
\begin{equation}  \label{Section6EquationCountingFunctionEstimateUnified}
N_{\mathbf{F}}\lesssim \prod_{j=1}^n N_{\mathbf{F}_j}^{\alpha_j}.
\end{equation}
Here $(\alpha_1, \cdots, \alpha_n)\in (0, \frac{1}{2})^n$ is a fixed tuple. We will determine its precise value in each case later. Suppose $\theta_j\in (0, \alpha_j], j\in [n]$ satisfy $\sum_{j=1}^n \theta_j=1$. Then we have
\begin{equation*}
\sum_{\mathcal{R}\in \mathscr{R}} |I(\mathcal{R})|^{1-\frac{n}{2}}\prod_{j=1}^n |\langle f_j | \varphi_{\mathcal{R}_j} \rangle|\lesssim \prod_{j=1}^n \mathbf{M}_j(\mathscr{R}_j)^{1-2\alpha_j}|E_j|^{\theta_j}.
\end{equation*}
\end{proposition}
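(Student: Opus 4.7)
The plan is to decompose $\mathscr{R}$ using the mass partition $\mathscr{R} = \biguplus_\lambda \mathscr{R}(\lambda)$ from Corollary \ref{Section4PropositionMassSelection2} and the vector-tree splitting $\mathscr{R}(\lambda) = \bigcup_{\mathscr{T} \in \mathbf{F}(\lambda)} \mathscr{T}$ with $\mathbf{F}(\lambda) = \mathbf{F}^{\geq 2}(\lambda) \sqcup \mathbf{F}^{=1}(\lambda)$ from Section \ref{Section5}, estimate the two families of vector trees separately, and then collapse everything via two nested geometric sums. The key numerical observation is that $\alpha_j < \frac{1}{2}$ makes the outer series in $\lambda$ (and the inner one in $\kappa$ for the $\mathbf{F}^{=1}$ contribution) geometrically convergent, while $\theta_j \leq \alpha_j$ forces the H\"older exponents $p_j := \alpha_j/\theta_j \geq 1$, rendering the $L^{p_j}$-counting-function estimates of Corollary \ref{Section4PropositionMassSelection2} and Proposition \ref{Section5PropositionCountingFunctionEstimateForSingle-ElementTrees} applicable.

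For fixed $\lambda$ and the family $\mathbf{F}^{\geq 2}(\lambda)$, I would invoke Lemma \ref{Section4LemmaTreeEstimate} together with the inequality $\mathbf{M}_j(\mathscr{T}_j) \leq \mathbf{M}_j(\mathscr{R}_j(\lambda_j)) \leq \lambda_j$ to bound the sum over trees by $\bigl(\prod_j \lambda_j\bigr) \int N_{\mathbf{F}^{\geq 2}(\lambda)}$. Plugging in \eqref{Section6EquationCountingFunctionEstimateUnified} and applying H\"older with exponents $r_j := p_j/\alpha_j$ (whose reciprocals sum to $\sum_j \alpha_j/p_j = \sum_j \theta_j = 1$), followed by $\|N_{\mathbf{F}_j(\lambda_j)}\|_{p_j} \lesssim \lambda_j^{-2}|E_j|^{1/p_j}$, yields
\begin{equation*}
\sum_{\mathscr{T} \in \mathbf{F}^{\geq 2}(\lambda)} \sum_{\mathcal{R} \in \mathscr{T}} |I(\mathcal{R})|^{1-\frac{n}{2}} \prod_{j=1}^n |\langle f_j | \varphi_{\mathcal{R}_j}\rangle| \lesssim \prod_{j=1}^n \lambda_j^{1-2\alpha_j}|E_j|^{\theta_j}.
\end{equation*}

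For $\mathbf{F}^{=1}(\lambda) = \bigcup_\kappa \mathbf{F}^{=1}(\lambda,\kappa)$, the trivial per-rectangle bound $|\langle f_j | \varphi_{\mathcal{P}_j}\rangle| \leq \kappa_j |I(\mathcal{P}_j)|^{1/2}$ reduces the sum to $\bigl(\prod_j \kappa_j\bigr) \int N_{\mathbf{F}^{=1}(\lambda,\kappa)}$; the identical H\"older routine, now with Proposition \ref{Section5PropositionCountingFunctionEstimate2} and Proposition \ref{Section5PropositionCountingFunctionEstimateForSingle-ElementTrees} in place of their $\mathbf{F}^{\geq 2}$ analogues, produces $\prod_j \kappa_j^{1-2\alpha_j}|E_j|^{\theta_j}$, which sums geometrically in dyadic $\kappa_j \leq \lambda_j$ (because $1-2\alpha_j > 0$) back to $\prod_j \lambda_j^{1-2\alpha_j}|E_j|^{\theta_j}$. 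A final geometric summation over dyadic $\lambda_j \leq \mathbf{M}_j(\mathscr{R}_j)$ in each coordinate then delivers the claimed bound $\prod_j \mathbf{M}_j(\mathscr{R}_j)^{1-2\alpha_j}|E_j|^{\theta_j}$. The only delicate point is the choice $p_j = \alpha_j/\theta_j$: it is the unique exponent making $\sum_j \alpha_j/p_j = 1$, giving the correct $|E_j|^{\theta_j}$ factor after combining $(|E_j|^{1/p_j})^{\alpha_j}$, and staying in $[1,\infty)$ so Corollary \ref{Section4PropositionMassSelection2} applies; all three demands are encoded exactly in the hypothesis $\theta_j \in (0,\alpha_j]$. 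The bulk of the work has already been done in Sections \ref{Section4}--\ref{Section5}, so the present proposition is essentially the bookkeeping that assembles those ingredients.
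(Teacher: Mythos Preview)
Your proposal is correct and follows essentially the same approach as the paper's proof: decompose into $\mathbf{F}^{\geq 2}(\lambda)$ and $\mathbf{F}^{=1}(\lambda,\kappa)$, apply the tree estimate (resp.\ the pointwise $\kappa_j$ bound), use \eqref{Section6EquationCountingFunctionEstimateUnified} and H\"older with exponents $\alpha_j/\theta_j$, invoke the $L^{p_j}$ counting-function bounds, and sum the resulting geometric series in $\lambda$ (and $\kappa$). Your explicit identification of why $\theta_j\in(0,\alpha_j]$ is exactly the hypothesis needed---simultaneously forcing $p_j\ge 1$, making the H\"older exponents sum to $1$, and producing the correct $|E_j|^{\theta_j}$ power---is a nice clarification that the paper leaves implicit.
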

\begin{proof}
Organize $\mathscr{R}$ into vector trees as in the previous section:
\begin{equation}  \label{Section6EquationOrganizingTheCollection}
\begin{split}
\sum_{\mathcal{R}\in \mathscr{R}} |I(\mathcal{R})|^{1-\frac{n}{2}}\prod_{j=1}^n |\langle f_j | \varphi_{\mathcal{R}_j} \rangle| & =\sum_{\lambda}\sum_{\mathcal{R}\in \mathscr{R}(\lambda)} |I(\mathcal{R})|^{1-\frac{n}{2}}\prod_{j=1}^n |\langle f_j | \varphi_{\mathcal{R}_j} \rangle|  \\
&=\sum_{\lambda}\sum_{\mathscr{T}\in \mathbf{F}^{\geq 2}(\lambda)}\sum_{\mathcal{R}\in \mathscr{T}} \cdots+\sum_{\lambda}\sum_{\kappa}\sum_{\mathcal{R}\in \mathbf{F}^{=1}(\lambda, \kappa)} \cdots,
\end{split}   
\end{equation}
where the vectors of dyadic numbers $\lambda=(\lambda_1, \cdots, \lambda_n), \kappa=(\kappa_1, \cdots, \kappa_n)$ satisfy $\kappa_j\leq \lambda_j\leq \mathbf{M}_j(\mathscr{R}_j), \forall j\in [n]$. Apply Lemma \ref{Section4LemmaTreeEstimate}, Proposition \ref{Section4PropositionMassSelection1}, \eqref{Section6EquationCountingFunctionEstimateUnified}, H\"older's inequality, and Proposition \ref{Section4PropositionMassSelection1} successively. We can estimate the first term on the right of \eqref{Section6EquationOrganizingTheCollection} as follows:
\begin{equation} \label{Section6EquationEstimatingTheOrganizedCollection1}
\begin{split}
\sum_{\lambda}\sum_{\mathscr{T}\in \mathbf{F}^{\geq 2}(\lambda)}\sum_{\mathcal{R}\in \mathscr{T}} \cdots & \lesssim \sum_{\lambda}\sum_{\mathscr{T}\in \mathbf{F}^{\geq 2}(\lambda)} |I_{\mathscr{T}}|\prod_{j=1}^n \lambda_j=\sum_{\lambda} (\prod_{j=1}^n \lambda_j)\|N_{ \mathbf{F}^{\geq 2}(\lambda)}\|_1  \\
&\lesssim \sum_{\lambda} (\prod_{j=1}^n \lambda_j)\|N_{\mathbf{F}_j(\lambda_j)}^{\alpha_j}\|_1 \leq \sum_{\lambda} (\prod_{j=1}^n \lambda_j)(\prod_{j=1}^n\|N_{\mathbf{F}_j(\lambda_j)}\|_{\frac{\alpha_j}{\theta_j}}^{\alpha_j})  \\
&\lesssim \sum_{\lambda} (\prod_{j=1}^n \lambda_j)(\prod_{j=1}^n \lambda_j^{-2\alpha_j}|E_j|^{\theta_j}) \lesssim \prod_{j=1}^n \mathbf{M}_j(\mathscr{R}_j)^{1-2\alpha_j}|E_j|^{\theta_j}.
\end{split}
\end{equation}
Similarly, apply the definition of $\mathbf{F}^{=1}(\lambda, \kappa)$, \eqref{Section6EquationCountingFunctionEstimateUnified}, H\"older's inequality, and Proposition \ref{Section5PropositionCountingFunctionEstimateForSingle-ElementTrees} successively. We get the second term on the right of \eqref{Section6EquationOrganizingTheCollection} is bounded by the same quantity:
\begin{equation}  \label{Section6EquationEstimatingTheOrganizedCollection2}
\sum_{\lambda}\sum_{\kappa}\sum_{\mathcal{R}\in \mathbf{F}^{=1}(\lambda, \kappa)} \cdots\lesssim \prod_{j=1}^n \mathbf{M}_j(\mathscr{R}_j)^{1-2\alpha_j}|E_j|^{\theta_j}.
\end{equation}
Combining \eqref{Section6EquationOrganizingTheCollection}, \eqref{Section6EquationEstimatingTheOrganizedCollection1}, and \eqref{Section6EquationEstimatingTheOrganizedCollection2} gives the desired estimate.
\end{proof}
To handle $f_j\in L^{p_j}$ with a small $p_j$, we remove the peaks of $f_j$ and carefully exploit the gain from such removal as in \cite{MTTMultilinearSingularMultipliers}.
\begin{proposition}  \label{Section6PropositionMainTheoremRestrictedWeak}
Keep the conditions of Proposition \ref{Section6PropositionGenerivEstimate}. Let $C>0$ be a constant. Further take $f_n:=1_{E_n\backslash \Omega}$ with $\Omega$ defined by
\begin{equation}  \label{Section6PropositionMainTheoremRestrictedWeakDefiningTheExceptionalSet}
\Omega:=\{x\in \mathbb{R}^d: M\big( \frac{1_{E_j}}{|E_j|}\big)(x)>C|E_n|^{-1} \text{\ for some $j\in [n-1]$.}\}
\end{equation}
Then we have
\begin{equation*}
\sum_{\mathcal{R}\in \mathscr{R}} |I(\mathcal{R})|^{1-\frac{n}{2}} \big(\prod_{j=1}^{n-1} |\langle f_j | \varphi_{\mathcal{R}_j} \rangle| \big)|\langle 1_{E_n\backslash \Omega} | \varphi_{\mathcal{R}_n} \rangle|\lesssim \big(\prod_{j=1}^{n-1} |E_j|^{1-2\alpha_j+\theta_j} \big)|E_n|^{1-\sum_{j=1}^{n-1} (1-2\alpha_j+\theta_j)}.
\end{equation*}
\end{proposition}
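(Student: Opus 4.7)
The plan is to reduce everything to Proposition \ref{Section6PropositionGenerivEstimate} by proving, on appropriate subcollections of $\mathscr{R}$, the mass bounds
\begin{equation*}
\mathbf{M}_j(\mathscr{R}_j)\lesssim\frac{|E_j|}{|E_n|}\quad(j\in[n-1]),\qquad \mathbf{M}_n(\mathscr{R}_n)\lesssim 1.
\end{equation*}
Granting these for $\mathscr{R}$ itself, Proposition \ref{Section6PropositionGenerivEstimate} yields
\[
\prod_{j=1}^{n-1}\Bigl(\frac{|E_j|}{|E_n|}\Bigr)^{1-2\alpha_j}|E_j|^{\theta_j}\cdot|E_n|^{\theta_n}=\prod_{j=1}^{n-1}|E_j|^{1-2\alpha_j+\theta_j}\cdot|E_n|^{\theta_n-\sum_{j=1}^{n-1}(1-2\alpha_j)},
\]
and $\sum_{j=1}^{n}\theta_j=1$ turns the exponent of $|E_n|$ into $1-\sum_{j=1}^{n-1}(1-2\alpha_j+\theta_j)$, matching the statement.

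The bound on $\mathbf{M}_n$ is immediate from Corollary \ref{Section4CorollaryMassEstimate2} and $|f_n|\le 1$. For $j\in[n-1]$, Corollary \ref{Section4CorollaryMassEstimate2} reduces the task to bounding $\sup_{\mathcal{R}_j}\inf_{x\in I(\mathcal{R}_j)}M(1_{E_j}/|E_j|)(x)$ by $C/|E_n|$ and multiplying by $|E_j|$. For a rectangle $\mathcal{R}$ with $I(\mathcal{R})\cap\Omega^c\neq\emptyset$, one may choose $x_0\in I(\mathcal{R})\cap\Omega^c$, at which the very definition \eqref{Section6PropositionMainTheoremRestrictedWeakDefiningTheExceptionalSet} of $\Omega$ guarantees $M(1_{E_j}/|E_j|)(x_0)\le C/|E_n|$ for every $j\in[n-1]$ simultaneously.

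The main obstacle is the rectangles $\mathcal{R}$ with $I(\mathcal{R})\subset\Omega$, for which no such $x_0$ exists inside $I(\mathcal{R})$. To handle these I would split
\[
\mathscr{R}=\mathscr{R}^{(0)}\sqcup\bigsqcup_{k\ge 1}\mathscr{R}^{(k)},\qquad \mathscr{R}^{(k)}:=\{\mathcal{R}:d(I(\mathcal{R}),\Omega^c)\sim 2^{k}s(I(\mathcal{R}))\},
\]
and apply Proposition \ref{Section6PropositionGenerivEstimate} separately on each $\mathscr{R}^{(k)}$, which inherits all the hypotheses of Theorem \ref{MainTheorem1Discrete}. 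For $\mathcal{R}\in\mathscr{R}^{(k)}$ with $k\ge 1$, selecting $x_0\in 2^{k+2}I(\mathcal{R})\cap\Omega^c$ and controlling the integrand in Corollary \ref{Section4CorollaryMassEstimate2} by a dyadic-shell decomposition around $I(\mathcal{R})$ yields $\mathbf{M}_j(\mathscr{R}_j^{(k)})\lesssim 2^{kd}|E_j|/|E_n|$ for $j\in[n-1]$. For the $n$-th index, since $\operatorname{supp}f_n\subset\Omega^c$ lies at distance $\gtrsim 2^{k}s(I(\mathcal{R}))$ from $I(\mathcal{R})$, splitting the weight $(1+d/s)^{-2N_{\epsilon}}$ as $(1+d/s)^{-N_{\epsilon}}\cdot(1+d/s)^{-N_{\epsilon}}$ and bounding the first factor pointwise by $2^{-kN_{\epsilon}}$ gives $\mathbf{M}_n(\mathscr{R}_n^{(k)})\lesssim 2^{-kN_{\epsilon}}$. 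Inserting these into Proposition \ref{Section6PropositionGenerivEstimate} produces the same main term as above multiplied by
\[
2^{k\bigl[d\sum_{j=1}^{n-1}(1-2\alpha_j)-N_{\epsilon}(1-2\alpha_n)\bigr]},
\]
and because $1-2\alpha_j>0$ for all $j$ while $N_{\epsilon}$ may be chosen much larger than $dn$, this geometric factor is summable in $k$. The main technical point is thus calibrating $N_{\epsilon}$ so that the polynomial loss $2^{kd}$ coming from the maximal-function tails on the first $n-1$ indices is comfortably beaten by the rapid-decay gain $2^{-kN_{\epsilon}(1-2\alpha_n)}$ contributed by the $n$-th factor.
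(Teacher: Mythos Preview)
Your proposal is correct and follows essentially the same route as the paper: decompose $\mathscr{R}$ according to the normalized distance $d(I(\mathcal{R}),\Omega^c)/s(I(\mathcal{R}))$, apply Proposition \ref{Section6PropositionGenerivEstimate} on each piece with the mass bounds $\mathbf{M}_j\lesssim 2^{kd}|E_j|/|E_n|$ for $j\in[n-1]$ and $\mathbf{M}_n\lesssim 2^{-kN_{\epsilon}}$, and sum the resulting geometric series after choosing $N_{\epsilon}$ large enough. The paper packages the decomposition as $\mathscr{R}^{(l)}:=\{\mathcal{R}:2^{l}\le 1+d(I(\mathcal{R}),\Omega^c)/s(I(\mathcal{R}))<2^{l+1}\}$, absorbing your $\mathscr{R}^{(0)}$ into the $l=0$ case, but this is only a cosmetic difference.
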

\begin{proof}
For $l\in \mathbb{N}$, define
\begin{equation*}
\mathscr{R}^{(l)}:=\big{\{} \mathcal{R}\in \mathscr{R}: 2^l\leq 1+\frac{d\big( I(\mathcal{R}), \Omega^c \big)}{s\big( I(\mathcal{R}) \big)}<2^{l+1} \big{\}}.
\end{equation*}
We shall apply Proposition \ref{Section6PropositionGenerivEstimate} to each $\mathscr{R}^{(l)}$ instead of $\mathscr{R}$. Corollary \ref{Section4CorollaryMassEstimate2} and \eqref{Section6PropositionMainTheoremRestrictedWeakDefiningTheExceptionalSet} imply
\begin{gather*}
\mathbf{M}_j(\mathscr{R}_j^{(l)})\lesssim 2^{dl} |E_j||E_n|^{-1},\ \forall j\in [n-1],  \\
\mathbf{M}_n(\mathscr{R}_n^{(l)})\lesssim 2^{-N_{\epsilon}l}.
\end{gather*}
for each $l\in \mathbb{N}$. Thus combining these two estimates and Proposition \ref{Section6PropositionGenerivEstimate} gives
\begin{equation*}
\sum_{\mathcal{R}\in \mathscr{R}^{(l)}} |I(\mathcal{R})|^{1-\frac{n}{2}} \big(\prod_{j=1}^{n-1} |\langle f_j | \varphi_{\mathcal{R}_j} \rangle| \big)|\langle 1_{E_n\backslash \Omega} | \varphi_{\mathcal{R}_n} \rangle|\lesssim 2^{-l}\big(\prod_{j=1}^{n-1} |E_j|^{1-2\alpha_j+\theta_j} \big)|E_n|^{1-\sum_{j=1}^{n-1} (1-2\alpha_j+\theta_j)}
\end{equation*}
provided we take $N_{\epsilon}$ sufficiently large depending on $\alpha_n$ (which is defined in terms of $\epsilon$ below). Summing the above estimate over $l$ completes the proof.
\end{proof}
How to use the $\alpha_j, \theta_j, j\in [n]$ in Proposition \ref{Section6PropositionMainTheoremRestrictedWeak} to generate all allowed tuples $(\frac{1}{p_1}, \cdots, \frac{1}{p_{n-1}}, \frac{1}{q^{'}})$ in Theorem \ref{MainTheorem1}? We apply the following lemma from \cite{MTTMultilinearSingularMultipliers}. The proof there lacks some details, so we include a proof.
\begin{lemma} \label{Section6LemmaConvexHull}
Let $a_1, \cdots, a_n\in \mathbb{R}$ satisfy $a_1\geq \cdots\geq a_n$. Then the closed convex hull of all permutations of $(a_1, \cdots, a_n)$ is
\begin{equation*}
\mathcal{Z}:=\{(x_1, \cdots, x_n)\in \mathbb{R}^n: \sum_{j\in \Upsilon} x_j\leq \sum_{j=1}^{\#\Upsilon} a_j, \forall \Upsilon\subsetneqq [n]\text{\ and\ } \sum_{j=1}^n x_j=\sum_{j=1}^n a_j\}.    
\end{equation*}
\end{lemma}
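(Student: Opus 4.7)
One inclusion is immediate. The set $\mathcal{Z}$ is defined by finitely many affine inequalities plus one affine equation, hence closed and convex. Every permutation $\pi(a) := (a_{\pi(1)}, \ldots, a_{\pi(n)})$ lies in $\mathcal{Z}$: for any $\Upsilon \subsetneqq [n]$ of cardinality $k$, the sum $\sum_{j \in \Upsilon} a_{\pi(j)}$ picks $k$ of the values $a_1,\ldots,a_n$, which by the decreasing ordering of the $a_j$ is at most $a_1 + \cdots + a_k$. The total sum condition holds by construction. Hence the closed convex hull of all permutations of $(a_1,\ldots,a_n)$ is contained in $\mathcal{Z}$.

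For the reverse inclusion, fix $x \in \mathcal{Z}$. Since the inequality $\sum_{j \in \Upsilon} x_j \leq a_1 + \cdots + a_{\#\Upsilon}$ is required for \emph{every} $\Upsilon$ of each given size, $\mathcal{Z}$ is invariant under permutations of the $x_j$, and we may reorder to assume $x_1 \geq x_2 \geq \cdots \geq x_n$. The defining constraints then collapse to the Hardy--Littlewood--P\'olya majorization relation $x \prec a$, namely $\sum_{j=1}^k x_j \leq \sum_{j=1}^k a_j$ for $1 \leq k \leq n-1$ with equality at $k=n$.

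The plan is to express $x$ as a convex combination of permutations of $a$ via iterated $T$-transforms. If $x = a$ there is nothing to do. Otherwise let $i$ be the smallest index with $x_i \neq a_i$; comparing partial sums at $k = i-1$ and $k = i$ forces $x_i < a_i$. Because total sums agree, there is a smallest $j > i$ with $x_j > a_j$, and the sorted orders of $a$ and $x$ imply $a_i > a_j$. Set
\[
\delta := \min(a_i - x_i,\, x_j - a_j,\, a_i - a_j) > 0, \qquad a' := a - \delta e_i + \delta e_j, \qquad \mu := \delta/(a_i - a_j) \in (0,1].
\]
Then $a' = (1-\mu)a + \mu\, \sigma_{ij}(a)$, where $\sigma_{ij}$ swaps coordinates $i$ and $j$, exhibiting $a'$ as a convex combination of two permutations of $a$. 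A direct check on partial sums confirms $x \prec a'$: the partial sums $\sum_{\ell=1}^k a'_\ell$ change only for $i \leq k < j$, and there they strictly decrease while remaining above $\sum_{\ell=1}^k x_\ell$. Moreover, by the three-way minimum in $\delta$, either $a'_i = x_i$, or $a'_j = x_j$, or $a'_i = a'_j$, so a suitable potential (number of coordinates agreeing with $x$, tie-broken by $\ell^1$-distance to $x$) strictly improves. Reordering $a'$ into decreasing form only permutes a convex combination of permutations of $a$ and so stays in the convex hull; iterating terminates in $O(n)$ steps and expresses $x$ as a convex combination of permutations of $a$.

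\textbf{Main obstacle.} The delicate part is the combinatorial bookkeeping that each $T$-transform step simultaneously (i) remains inside the convex hull of permutations of $a$, (ii) preserves $x \prec a'$ after re-sorting, and (iii) makes strictly monotone progress toward $x$ so that the procedure halts. The three-way minimum defining $\delta$ together with the choice of $i$ (smallest disagreement) and $j$ (next sign flip of $a - x$) is engineered precisely so that all three hold at once; verifying this, plus the bound on the number of iterations, is the content of the formal proof.
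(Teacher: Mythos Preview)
Your approach is correct and classical --- it is the Hardy--Littlewood--P\'olya $T$-transform proof that majorization $x\prec a$ implies $x$ lies in the convex hull of permutations of $a$ --- but it is genuinely different from the paper's argument. The paper instead invokes the Krein--Milman theorem: since $\mathcal{Z}$ is compact and convex, it suffices to show every extreme point of $\mathcal{Z}$ is a permutation of $a$, which is done by a perturbation argument (if some partial-sum inequality is strict at an extreme point, one can move two coordinates by $\pm t$ and stay in $\mathcal{Z}$). Your route is constructive and avoids any appeal to Krein--Milman; the paper's route is shorter to state but non-constructive. Both are standard.

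One small wrinkle in your write-up: the re-sorting step is unnecessary and slightly muddies your termination claim. After the $T$-transform $a\mapsto a'$, you verified directly that the prefix-sum inequalities $\sum_{\ell\le k}x_\ell\le\sum_{\ell\le k}a'_\ell$ persist for the \emph{unsorted} $a'$, and the choice of $i$ as the smallest disagreement index together with $x_i\ge x_j$ already forces $a'_i>x_i$ or $a'_i=x_i$, and $a^{(k)}_i>a^{(k)}_j$ at every stage. So you can iterate without ever re-sorting: each step makes one more coordinate of $a^{(k)}$ agree with $x$ (as sequences, not multisets), giving termination in at most $n$ steps. With re-sorting, the ``number of agreeing coordinates'' potential is not obviously monotone (sorting can undo matches), and the $\ell^1$ tie-break only gives strict decrease, not a step bound. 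Dropping the re-sort fixes this cleanly.
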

\begin{proof}
It's easy to see the convex hull is contained in $\mathcal{Z}$. We prove the reverse inclusion. By the Krein-Milman theorem, we only need to show that any the extreme point of $\mathcal{Z}$ is a permutation of $a:=(a_1, \cdots, a_n)$. Given any $x=(x_1, \cdots, x_n)\in \mathbb{R}^n$, define
\begin{equation*}
S_l(x):=\sum_{k=1}^l x_{j_k},\ 1\leq l\leq n,
\end{equation*}
where $x_{j_1}\geq \cdots\geq x_{j_n}$. Let $y=(y_1, \cdots, y_n)$ be an extreme point of $\mathcal{Z}$ such that $y_1\geq \cdots\geq y_n$. it suffices to prove
\begin{equation*}
S_l(y)=S_l(a),\ \forall 1\leq l\leq n-1.
\end{equation*}
Assume this is not true. Then there exists $l$ such that $S_l(y)<S_l(a)$. Let $L$ be the largest number having this property. Then $y_{L+1}>a_{L+1}$ and $y_j=a_j$ for any $L+2\leq j\leq n$. Below we prove the point $\title{y}:=(y_1, \cdots, y_{L-1}, y_L+t, y_{L+1}-t, y_{L+2}, \cdots, y_n)\in \mathcal{Z}$ provided $|t|$ is sufficiently small, which contradicts the fact that $y$ is an extreme point.

Let $L^{'}\leq L$ be the integer such that $y_{L^{'}-1}>y_{L^{'}}=\cdots=y_{L}$. First consider the $t>0$ case. Since $|t|$ is very small, we have
\begin{equation*}
S_l(\title{y})=S_l(y)\leq S_l(a),\ \forall 1\leq l\leq L^{'}-1.
\end{equation*}
For $L^{'}\leq l\leq L$, note that $S_l(y)<S_l(a)$ must hold. Otherwise we have $S_{l_1}(y)=S_{l_1}(a)$ for some $L^{'}\leq l_1\leq L$, which implies $a_{l_1}\leq y_{l_1}$. On the other hand, $S_L(y)<S_L(a)$ implies there exists some $l_2: l_1<l_2\leq L$ such that $a_{l_2}>y_{l_2}$. Thus
\begin{equation*}
a_{l_1}\leq y_{l_1}=y_{l_2}<a_{l_2},
\end{equation*}
a contradiction. Now since $S_l(y)<S_l(a)$ for any $L^{'}\leq l\leq L$, we can guarantee 
\begin{equation*}
S_l(\title{y})\leq S_l(a),\ \forall L^{'}\leq l\leq L.
\end{equation*}
by taking $|t|$ very small. For $L+1\leq l\leq n$, we use $y_{L+1}>a_{L+1}\geq a_{L+2}=y_{L+2}$. Hence for sufficiently small $t$, we have
\begin{equation*}
S_l(\title{y})=S_{L+1}(y)+(y_{L+2}+\cdots+y_{l})\leq S_{L+1}(a)+(a_{L+2}+\cdots+a_{l})=S_{l}(a).
\end{equation*}
We have proved $\tilde{y}\in \mathcal{Z}$ in the $t>0$ case. Now we consider the $t<0$ case. If $y_L>y_{L+1}$, we still have $y_1\geq \cdots \geq y_{L}+t>y_{L+1}-t\geq \cdots\geq y_n$ provided $|t|$ is sufficiently small. And the conclusion follows. If If $y_L=y_{L+1}$, change the position of $y_L$ and $y_{L+1}$. Then we are back to the $t>0$ case. This finishes the proof of Lemma \ref{Section6LemmaConvexHull}.
\end{proof}
\begin{corollary}  \label{Section6CorollaryFinalBoundRestrictedWeak}
Let $\mathscr{R}$ and $\varphi_{\mathcal{R}_j}, \mathcal{R}\in \mathscr{R}, j\in [n]$ be as in Theorem \ref{MainTheorem1Discrete}. Assume $E_j, j\in [n]$ are bounded sets and $f_j, j\in [n]$ are functions satisfying $|f_j|\leq 1_{E_j}, \forall j\in [n-1]$ and $f_n:=1_{E_n}\backslash \Omega$ with $\Omega$ defined by \eqref{Section6PropositionMainTheoremRestrictedWeakDefiningTheExceptionalSet}. For each case of Theorem \ref{MainTheorem1}, let $(p_1, \cdots, p_{n-1}, q)$ be any tuple in the corresponding range. Then we have
\begin{equation}  \label{Section6EquationFinalBoundRestrictedWeak}
\sum_{\mathcal{R}\in \mathscr{R}} |I(\mathcal{R})|^{1-\frac{n}{2}} \big(\prod_{j=1}^{n-1} |\langle f_j | \varphi_{\mathcal{R}_j} \rangle| \big)|\langle 1_{E_n\backslash \Omega} | \varphi_{\mathcal{R}_n} \rangle|\lesssim \big(\prod_{j=1}^{n-1} |E_j|^{\frac{1}{p_j}} \big)|E_n|^{\frac{1}{q^{'}}}.
\end{equation}
\end{corollary}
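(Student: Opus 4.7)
The plan is to derive \eqref{Section6EquationFinalBoundRestrictedWeak} from Proposition \ref{Section6PropositionMainTheoremRestrictedWeak} by a convex-combination interpolation, matching the achievable range of exponents with the allowed range of Theorem \ref{MainTheorem1} via Lemma \ref{Section6LemmaConvexHull}. Since the geometric mean of two bounds of the form $\mathrm{LHS}\lesssim\prod_j|E_j|^{s_j^{(i)}}$ yields a bound at the exponent $(s_j^{(1)}+s_j^{(2)})/2$, the set of exponent tuples for which \eqref{Section6EquationFinalBoundRestrictedWeak} holds is convex, so it suffices to cover the allowed range by convex combinations of ``vertex'' tuples directly produced by Proposition \ref{Section6PropositionMainTheoremRestrictedWeak}.

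The first step is to broaden the counting estimate \eqref{Section6EquationCountingFunctionEstimateUnified}: geometric averaging of the bounds in Propositions \ref{Section5PropositionCountingFunctionEstimate1}--\ref{Section5PropositionCountingFunctionEstimate2} over the subsets $\Upsilon$ shows that it persists for every $(\alpha_j)$ in the convex hull of the extremal tuples -- with $\sum\alpha_j=\lceil m/d\rceil$ in Case 1, $\sum\alpha_j=(n-1)/2$ in Case 2, and $\sum\alpha_j=m/d$ in Case 3 -- and this polytope always admits a non-trivial subset with $\alpha_j\in(0,\tfrac{1}{2})$ thanks to $\tfrac{m}{d}<\tfrac{n}{2}$. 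For every such $(\alpha_j)$ and every $(\theta_j)\in(0,\alpha_j]^n$ with $\sum\theta_j=1$, Proposition \ref{Section6PropositionMainTheoremRestrictedWeak} yields the bound \eqref{Section6EquationFinalBoundRestrictedWeak} at the exponent $s_j=1-2\alpha_j+\theta_j$ for $j\in[n-1]$ and $s_n=1-\sum_{j=1}^{n-1}s_j$.

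For each case I would then exhibit a ``base'' tuple $\bar s=(\bar s_1,\ldots,\bar s_n)$ whose sorted partial sums $\sum_{j=1}^{\ell}\bar s_{(j)}$ realize, at the critical cardinalities $\ell$, the right-hand sides of the inequalities in Theorem \ref{MainTheorem1}: in Case 1, $\sum_{j=1}^{n-2\alpha+1}\bar s_{(j)}=n-2\alpha+\tfrac{1}{2}$ and $\sum_{j=1}^{n-1}\bar s_{(j)}=n-2\alpha+1$, which forces $\bar s_{(n)}=2\alpha-n$; Case 2 is analogous with $\alpha=(n-1)/2$; Case 3 uses $\bar s_{(1)}=(n+1)/2-\alpha$, $\bar s_{(1)}+\bar s_{(2)}=(n+2)/2-\alpha$ and $\sum_{j=1}^{n-1}\bar s_{(j)}=n-2\alpha+1$ with $\alpha=m/d$. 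By Lemma \ref{Section6LemmaConvexHull}, the convex hull of all permutations of $\bar s$ on the simplex $\sum x_j=1$ equals the closure of the allowed range in Theorem \ref{MainTheorem1}, so every interior point of that range is a non-degenerate convex combination of permutations of $\bar s$.

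The main obstacle will be realizing, for each permutation of $\bar s$ (or a dense sub-family thereof), a corresponding valid choice of $(\alpha_j,\theta_j)$: inverting $s_j=1-2\alpha_j+\theta_j$ for $j\in[n-1]$ constrains $\alpha_j\in((1-s_j)/2,\,1-s_j]$, after which the residual parameter $\alpha_n\in(0,\tfrac{1}{2})$ must be chosen so that $\sum\alpha_j$ attains the case-specific value and so that $\theta_n=s_n+(n-1)-2\sum_{j=1}^{n-1}\alpha_j$ lies in $(0,\alpha_n]$. A careful calculation should show that the feasibility interval for $\alpha_n$ is non-empty precisely when $(s_j)$ lies in the closure of the allowed range; boundary cases, such as the coordinate $\bar s_{(n)}=2\alpha-n$ in Case 1 where the naive interval shrinks to a point, are handled by an $\varepsilon$-perturbation exploiting the strict inequalities in Theorem \ref{MainTheorem1} (the constants in the resulting estimate then depend on the target $(p_j)$, which is permitted). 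Case 3 is especially delicate because some extreme coordinates of $\bar s$ sit exactly on the feasibility boundary for $\alpha_j<\tfrac{1}{2}$, so the convex-combination argument must be arranged to avoid those boundary tuples rather than try to realize them directly.
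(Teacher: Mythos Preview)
Your proposal is correct and follows essentially the same route as the paper: both obtain \eqref{Section6EquationFinalBoundRestrictedWeak} by applying Proposition~\ref{Section6PropositionMainTheoremRestrictedWeak} for a family of admissible $(\alpha_j,\theta_j)$ obtained by geometric averaging of the tree-counting bounds, then invoking Lemma~\ref{Section6LemmaConvexHull} together with an $\epsilon$-perturbation to cover the open range. The only difference is presentational---the paper writes down explicit $(\alpha_j,\theta_j)$ (depending on $\epsilon$) and directly verifies the partial-sum inequalities, whereas you describe the reverse-engineering of $(\alpha_j,\theta_j)$ from the target base tuple $\bar s$; these are two ways of organizing the same computation.
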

\begin{proof}
Let $0<\epsilon<\frac{\min\{\frac{n}{2}-\frac{m}{d}, 1\}}{100n}$ be a constant. In each case of Theorem \ref{MainTheorem1}, define $(\alpha_1, \cdots, \alpha_n)$ to be the following tuple or its permutation:
\begin{enumerate}
\item In the first case of Theorem \ref{MainTheorem1}, let
\begin{equation*}
\alpha_1=\cdots=\alpha_{n-2\lceil \frac{m}{d} \rceil}:=\frac{2\lceil \frac{m}{d} \rceil \epsilon}{n-2\lceil \frac{m}{d} \rceil}
\end{equation*}
and $\alpha_{n-2\lceil \frac{m}{d} \rceil+1}=\cdots=\alpha_n:=\frac{1}{2}-\epsilon$.
\item In the second case of Theorem \ref{MainTheorem1}, let $\alpha_1:=(n-1)\epsilon$ and $\alpha_2=\cdots=\alpha_n:=\frac{1}{2}-\epsilon$.
\item In the third case of Theorem \ref{MainTheorem1}, let $\alpha_1:=\frac{m}{d}-\frac{n-1}{2}+(n-1)\epsilon$ and $\alpha_2=\cdots=\alpha_n:=\frac{1}{2}-\epsilon$.
\end{enumerate}
We need to show that our choices of $(\alpha_1, \cdots, \alpha_n)$ make \eqref{Section6EquationCountingFunctionEstimateUnified} hold. We argue as follows:
\begin{enumerate}
\item In the first case of Theorem \ref{MainTheorem1}, Proposition \ref{Section5PropositionCountingFunctionEstimate1} and Proposition \ref{Section5PropositionCountingFunctionEstimate2} imply we can let $(\alpha_1, \cdots, \alpha_n)$ have $\lceil \frac{m}{d} \rceil$ many $1$'s and $n-\lceil \frac{m}{d} \rceil$ many $0$'s. On the one hand, we take a geometric average between two disjoint $\Upsilon$. Then we get $(\alpha_1, \cdots, \alpha_n)$ having $2\lceil \frac{m}{d} \rceil$ many $(\frac{1}{2})$'s and $n-2\lceil \frac{m}{d} \rceil$ many $0$'s. On the other hand, we take the geometric average over all possible $\Upsilon$. Then we get $(\alpha_1, \cdots, \alpha_n)$ with all elements equal to $\frac{\lceil \frac{m}{d} \rceil}{n}$. Finally, taking the geometric average between these two new types of $(\alpha_1, \cdots, \alpha_n)$ gives the one stated in Proposition \ref{Section6PropositionGenerivEstimate}.
\item In the second case of Theorem \ref{MainTheorem1}, Proposition \ref{Section5PropositionCountingFunctionEstimate1} and Proposition \ref{Section5PropositionCountingFunctionEstimate2} imply we can let $(\alpha_1, \cdots, \alpha_n)$ have $n-1$ many $(\frac{1}{2})$'s and one $0$s. Then taking the geometric average over all possible $\Upsilon$ gives $(\alpha_1, \cdots, \alpha_n)$ with all elements equal to $\frac{n-1}{2n}$. Finally, take the average between this new type of $(\alpha_1, \cdots, \alpha_n)$ and the old type of $(\alpha_1, \cdots, \alpha_n)$ corresponding to one fixed $\Upsilon$. We get the one stated in Proposition \ref{Section6PropositionGenerivEstimate}.
\item In the third case of Theorem \ref{MainTheorem1}, we argue as in the second case.
\end{enumerate}
Then we use Proposition \ref{Section6PropositionMainTheoremRestrictedWeak} with $(\theta_1, \cdots, \theta_n)$ defined as follows:
\begin{enumerate}
\item In the first case of Theorem \ref{MainTheorem1}, let $\theta_j:=\alpha_j, 1\leq j\leq n-2\lceil \frac{m}{d} \rceil+1$, $\theta_{n-2\lceil \frac{m}{d} \rceil+2}:=\frac{1}{2}-(4\lceil \frac{m}{d} \rceil-3)\epsilon$, and $\theta_{n-2\lceil \frac{m}{d} \rceil+3}=\cdots=\theta_n=\epsilon$. Then
\begin{equation*}
1-2\alpha_j+\theta_j=
\begin{cases}
1-O(\epsilon),\ 1\leq j\leq n-2\lceil \frac{m}{d} \rceil, \\
\frac{1}{2}+O(\epsilon),\ j=n-2\lceil \frac{m}{d} \rceil+1, \\
\frac{1}{2}-O(\epsilon),\ j=n-2\lceil \frac{m}{d} \rceil+2, \\
O(\epsilon),\ n-2\lceil \frac{m}{d} \rceil+3\leq j\leq n-1.
\end{cases}
\end{equation*}
Here we only have the first two lines if $\lceil \frac{m}{d} \rceil=1$.
\item In the second case of Theorem \ref{MainTheorem1}, let $\theta_j:=\alpha_j, 1\leq j\leq 2$, $\theta_3:=\frac{1}{2}-(2n-5)\epsilon$, and $\theta_4=\cdots=\theta_n=\epsilon$. Then
\begin{equation*}
1-2\alpha_j+\theta_j=
\begin{cases}
1-O(\epsilon),\ j=1, \\
\frac{1}{2}+O(\epsilon),\ j=2, \\
\frac{1}{2}-O(\epsilon),\ j=3, \\
O(\epsilon),\ 4\leq j\leq n-1.
\end{cases}
\end{equation*}
\item In the third case of Theorem \ref{MainTheorem1}, let $(\theta_1, \cdots, \theta_n)$ be the same as in the second case. Then
\begin{equation*}
1-2\alpha_j+\theta_j=
\begin{cases}
\frac{n+1}{2}-\frac{m}{d}-O(\epsilon),\ j=1, \\
\frac{1}{2}+O(\epsilon),\ j=2, \\
\frac{n}{2}-\frac{m}{d}-O(\epsilon),\ j=3, \\
O(\epsilon),\ 4\leq j\leq n-1.
\end{cases}
\end{equation*}
\end{enumerate}
In each case, we see
\begin{equation*}
\sum_{j\in \Upsilon} \frac{1}{p_j}\leq \sum_{j=1}^{\#\Upsilon} (1-2\alpha_j+\theta_j),\ \forall \Upsilon\subset [n-1],
\end{equation*}
as long as $\epsilon$ is sufficiently small. And
\begin{equation*}
(\sum_{j=1}^{n-1} \frac{1}{p_j})+\frac{1}{q^{'}}=\big(\sum_{j=1}^{n-1} (1-2\alpha_j+\theta_j) \big)+\big(1-\sum_{j=1}^{n-1} (1-2\alpha_j+\theta_j) \big)=1.
\end{equation*}
Thus we can obtain the desired estimate for $(p_1, \cdots, p_{n-1}, q)$ by applying Lemma \ref{Section6LemmaConvexHull} and taking a geometric average over the estimates of Proposition \ref{Section6PropositionMainTheoremRestrictedWeak}.
\end{proof}
Recall the following classical lemma for weak $L^q$ norms \cite{MS2}:
\begin{lemma}  \label{Section6LemmaWeakLebesgueSpace}
Let $(X, \mu)$ be a $\sigma\text{-finite}$ space. Let $0<q<\infty$. Then for any function $f$ we have
\begin{equation*}
\|f\|_{q, \infty}\sim \sup_{0<\mu(E)<\infty} \inf_{E^{'}\subset E: \mu(E^{'})\sim \mu(E)} |\int_{E^{'}} f|\mu(E)^{-\frac{1}{q^{'}}}.
\end{equation*}
\end{lemma}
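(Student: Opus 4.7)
The plan is to establish both inequalities separately; both follow from layer-cake manipulations combined with sign splitting. The one delicate point is that $f$ may be complex-valued, so the infimum in the right hand side could a priori be zero due to cancellation in $\int_{E'} f$; this will force the lower bound on $\|f\|_{q, \infty}$ to begin by picking an $E$ on which $f$ admits no cancellation. Fix once and for all a small constant $c_0 \in (0,1)$ and interpret the relation $\mu(E') \sim \mu(E)$ as $c_0 \mu(E) \leq \mu(E') \leq \mu(E)$ throughout.

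For the direction $\mathrm{RHS} \lesssim \|f\|_{q, \infty}$, I fix $E$ with $0 < \mu(E) < \infty$ and exhibit one admissible $E' \subset E$ realizing the infimum by cutting away the peaks of $|f|$. Set $\lambda := \bigl(\|f\|_{q, \infty}^q / ((1-c_0)\mu(E))\bigr)^{1/q}$ and $E' := E \setminus \{|f| > \lambda\}$. The weak type bound $\mu(\{|f|>\lambda\}) \leq \|f\|_{q, \infty}^q \lambda^{-q}$ gives $\mu(E \cap \{|f|>\lambda\}) \leq (1-c_0)\mu(E)$, hence $\mu(E') \geq c_0 \mu(E)$. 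Since $|f| \leq \lambda$ on $E'$, the trivial estimate $|\int_{E'} f| \leq \lambda \mu(E')$ combined with the identity $\mu(E) \cdot \mu(E)^{-1/q} = \mu(E)^{1/q'}$ (valid for every $q \in (0,\infty)$) yields $|\int_{E'} f| \mu(E)^{-1/q'} \lesssim \|f\|_{q, \infty}$, which is the desired bound on the infimum for this $E$.

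For the reverse direction $\|f\|_{q, \infty} \lesssim \mathrm{RHS}$ I must produce, for each $\lambda > 0$ with $\mu(\{|f|>\lambda\}) > 0$, some $E$ on which no subset admits cancellation. Given such $\lambda$, let $A := \mu(\{|f|>\lambda\})$; if $A$ is infinite I choose $E \subset \{|f|>\lambda\}$ of arbitrarily large finite measure, otherwise I take $E := \{|f|>\lambda\}$ directly. Cover $E$ by the four sets $\{\pm \mathrm{Re}(f) > \lambda/\sqrt{2}\}$ and $\{\pm \mathrm{Im}(f) > \lambda/\sqrt{2}\}$; by pigeonhole one of them, call it $\widetilde E$, satisfies $\mu(\widetilde E) \geq \mu(E)/4$. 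On $\widetilde E$ the relevant real coordinate of $f$ has constant sign and magnitude at least $\lambda/\sqrt{2}$, so for every $E' \subset \widetilde E$ with $\mu(E') \geq c_0 \mu(\widetilde E)$,
\begin{equation*}
\Bigl| \int_{E'} f \Bigr| \geq \Bigl| \int_{E'} \mathrm{Re}(f) \Bigr| \geq (\lambda/\sqrt{2}) \mu(E') \gtrsim \lambda \mu(\widetilde E).
\end{equation*}
Dividing by $\mu(\widetilde E)^{1/q'}$ shows that the infimum over such $E'$ is $\gtrsim \lambda \mu(\widetilde E)^{1/q} \gtrsim \lambda A^{1/q}$ (or arbitrarily large when $A = \infty$), and taking the supremum over $\lambda$ recovers $\|f\|_{q, \infty}$. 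The constants match between the two halves because the same $c_0$ appears both in the construction of the admissible $E'$ in the upper bound and in the admissibility condition for the infimum in the lower bound, so no compatibility issue arises.
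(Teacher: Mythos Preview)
Your proof is correct and follows essentially the same approach as the paper's: for $\mathrm{RHS}\lesssim\|f\|_{q,\infty}$ you remove the peaks $\{|f|>\lambda\}$ from $E$ with $\lambda\sim\mu(E)^{-1/q}$, exactly as the paper does with its exceptional set $\Omega$; for $\|f\|_{q,\infty}\lesssim\mathrm{RHS}$ you split into the four sign sectors of $\mathrm{Re}\,f$ and $\mathrm{Im}\,f$ and pigeonhole, which is again precisely the paper's argument (the paper uses threshold $\lambda/2$ where you use $\lambda/\sqrt{2}$, an inessential difference). Your write-up is in fact somewhat more careful than the paper's sketch, e.g.\ in handling the case $\mu(\{|f|>\lambda\})=\infty$ explicitly.
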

\begin{proof}
For the $\lesssim$ direction, assume $RHS=1$. Let $\lambda>0$. Take $E$ to be each of the following sets (intersecting with a finite-measure set if necessary):
\begin{equation*}
\{x: \Re f>\frac{1}{2}\lambda\},\ \{x: \Re f<-\frac{1}{2}\lambda\},\ \{x: \Im f>\frac{1}{2}\lambda\},\ \{x: \Im f<-\frac{1}{2}\lambda\}.
\end{equation*}
We get
\begin{equation*}
\lambda \mu(\{x: |f|>\lambda\})^{\frac{1}{q}}\lesssim 1
\end{equation*}
as desired. For the $\gtrsim$ direction, assume $LHS=1$. Given $E$, define
\begin{equation*}
\Omega:=\{x: |f|>C\mu(E)^{-\frac{1}{q}}\}    
\end{equation*}
with a large constant $C$ and take $E^{'}:=E\backslash\Omega$. We see $\mu(E^{'})\sim \mu(E)$ and
\begin{equation*}
|\int_{E^{'}} f|\lesssim \mu(E)^{\frac{1}{q^{'}}},
\end{equation*}
which finishes the proof.
\end{proof}
By Lemma \ref{Section6LemmaWeakLebesgueSpace}, we see \eqref{Section6EquationFinalBoundRestrictedWeak} is equivalent to
\begin{equation*}
\|T_{\mathscr{R}}^{dis}(f_1, \cdots, f_{n-1})\|_{q, \infty}\lesssim \prod_{j=1}^{n-1} |E_j|^{\frac{1}{p_j}}.
\end{equation*}
Applying multilinear interpolation then gives Theorem \ref{MainTheorem1Discrete}. Moreover, \eqref{Section6EquationFinalBoundRestrictedWeak} implies the similar estimate \eqref{Section3EquationTensorizedMultilinearFormNewBounds} for $\widetilde{\Lambda_{\mathfrak{m}}}$. Then follow the approximation procedure stated below \eqref{Section3EquationTensorizedMultilinearFormNewBounds}. We also completes the proof of Theorem \ref{MainTheorem1}.

%%%%%

\section{The Mildly Degenerate Case} \label{Section7}
To tackle Example \ref{Section1ExampleFractionalRankDegenerateMildly}, we do the entirely same argument as Section \ref{Section3} and Section \ref{Section4}. Recall its singular space $\Gamma$ in \eqref{Section1EquationMildlyDegenerateSingularSpace} is degenerate, so only partial estimates in Section \ref{Section5} are valid. First, $\Gamma$ satisfies Type \uppercase\expandafter{\romannumeral1} non-degenerate condition. Hence Proposition \ref{Section5PropositionInjectiveMap1} and Proposition \ref{Section5PropositionInjectiveMap1ForSingle-ElementTrees} are valid:
\begin{align}
N_{\mathbf{F}^{\geq 2}(\lambda)} & \lesssim \prod_{j\in \Upsilon} N_{\mathbf{F}_j(\lambda_j)},\ \forall \Upsilon\subset [n]: \#\Upsilon=2,  \label{Section7SmallRankTreeCounting1}  \\
N_{\mathbf{F}^{=1}(\lambda, \kappa)} & \lesssim \prod_{j\in \Upsilon} N_{\mathscr{F}_j(\lambda_j, \kappa_j)},\ \forall \Upsilon\subset [n]: \#\Upsilon=2. \label{Section7SmallRankTreeCounting2}
\end{align}
Moreover, we calculate the kernels of $P_{\{j\}}: \Gamma\to \mathbb{R}^2$:
\begin{align*}
\ker(P_{\{1\}}) & =\big{\{} \big( (0, 0), (0, \tau^{''}), (-\tau^{''}, 0), (\tau^{''}, -\tau^{''})\big): \tau^{''}\in \mathbb{R} \big{\}},  \\
\ker(P_{\{2\}}) & =\big{\{} \big( (\tau, 0), (0, 0), (0, -\tau), (-\tau, \tau)\big): \tau\in \mathbb{R} \big{\}},  \\
\ker(P_{\{3\}}) & =\big{\{} \big( (0, \tau^{'}), (-\tau^{'}, 0), (0, 0), (\tau^{'}, \tau^{'})\big): \tau^{'}\in \mathbb{R} \big{\}},  \\
\ker(P_{\{4\}}) & =\big{\{} \big( (\tau^{'}+\tau^{''}, \tau^{'}), (-\tau^{'}, \tau^{''}), (-\tau^{''}, -\tau^{'}-\tau^{''}), (0, 0)\big): \tau^{'}, \tau^{''}\in \mathbb{R} \big{\}}.
\end{align*}
We see $\ker(\mathfrak{L})=\{0\}$ for $A=\{3\}, B^{(1)}=\{1\}, B^{(2)}=\{2\}$. Hence the proofs of Proposition \ref{Section5PropositionInjectiveMap2} and Proposition \ref{Section5PropositionInjectiveMap2ForSingle-ElementTrees} are valid for $A=\{3\}, B^{(1)}=\{1\}, B^{(2)}=\{2\}$. And we get
\begin{align}
N_{\mathbf{F}^{\geq 2}(\lambda)} & \lesssim \prod_{j=1}^3 N_{\mathbf{F}_j(\lambda_j)}^{\frac{1}{2}}, \label{Section7MediumRankTreeCounting1} \\
N_{\mathbf{F}^{=1}(\lambda, \kappa)} & \lesssim \prod_{j=1}^3 N_{\mathscr{F}_j(\lambda_j, \kappa_j)}^{\frac{1}{2}}. \label{Section7MediumRankTreeCounting2}
\end{align}
We should only use \eqref{Section7SmallRankTreeCounting1} and \eqref{Section7SmallRankTreeCounting2} with $\Upsilon$ containing $4$ because we have the more efficient \eqref{Section7MediumRankTreeCounting1} and \eqref{Section7MediumRankTreeCounting2}. Taking geometric averages of \eqref{Section7SmallRankTreeCounting1} and \eqref{Section7MediumRankTreeCounting1}, \eqref{Section7SmallRankTreeCounting2} and \eqref{Section7MediumRankTreeCounting2} gives
\begin{align*}
N_{\mathbf{F}^{\geq 2}(\lambda)} & \lesssim (\prod_{j=1}^3 N_{\mathbf{F}_j(\lambda_j)}^{\frac{1}{2}\beta+\beta_j})N_{\mathbf{F}_4(\lambda_4)}^{\beta_1+\beta_2+\beta_3},  \\
N_{\mathbf{F}^{=1}(\lambda, \kappa)} & \lesssim (\prod_{j=1}^3 N_{\mathscr{F}_j(\lambda_j, \kappa_j)}^{\frac{1}{2}\beta+\beta_j})N_{\mathscr{F}_4(\lambda_4, \kappa_4)}^{\beta_1+\beta_2+\beta_3},
\end{align*}
where $0\leq \beta, \beta_1, \beta_2, \beta_3\leq 1$ satisfy $\beta+\beta_1+\beta_2+\beta_3=1$. We can then arrange each exponent smaller than $\frac{1}{2}$ and argue as in Section \ref{Section6}. This proves the boundedness of $T_{\mathfrak{m}}$. The range of $(p_1, p_2,p_3, q)$ is smaller than that in the second case of Theorem \ref{MainTheorem1}. We only write down the symmetric bounds for simplicity.
\begin{theorem}
Let $T_{\mathfrak{m}}$ be the operator given in Example \ref{Section1ExampleFractionalRankDegenerateMildly} and $2<p<\infty$. Then we have
\begin{equation*}
\|T_{\mathfrak{m}}(f_1, f_2,f_3)\|_{\frac{p}{3}}\lesssim \prod_{j=1}^3 \|f_j\|_{p},\ \forall f_1, f_2, f_3\in \mathcal{S}(\mathbb{R}^2).
\end{equation*}
\end{theorem}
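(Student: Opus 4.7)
The plan is to follow the framework of Sections \ref{Section3}--\ref{Section6} verbatim, substituting only the tree-counting step by the asymmetric bounds already assembled earlier in this section. Discretization, wave-packet expansion, mass-based tree selection, and the restricted-weak interpolation all go through unchanged; the geometry of $\Gamma$ enters only through the counting function estimates and through Type \uppercase\expandafter{\romannumeral1} non-degeneracy, which $\Gamma$ does satisfy.

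The heart of the argument is to combine the Type \uppercase\expandafter{\romannumeral1} counting $N_{\mathbf{F}^{\geq 2}(\lambda)} \lesssim N_{\mathbf{F}_j(\lambda_j)} N_{\mathbf{F}_k(\lambda_k)}$ (valid for every pair $\{j, k\} \subset [4]$) with the partial Type \uppercase\expandafter{\romannumeral2}-style bound $N_{\mathbf{F}^{\geq 2}(\lambda)} \lesssim \prod_{j=1}^3 N_{\mathbf{F}_j(\lambda_j)}^{1/2}$ (symmetric in $\{1,2,3\}$ but singling the index $4$ out), together with the analogous estimates for $N_{\mathbf{F}^{=1}(\lambda,\kappa)}$. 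Taking a symmetric weighted geometric mean with weight $\beta$ on the Type \uppercase\expandafter{\romannumeral2} piece and $(1-\beta)/3$ on each pair $\{j, 4\}$ produces
\begin{equation*}
N_{\mathbf{F}^{\geq 2}(\lambda)} \lesssim \Big(\prod_{j=1}^3 N_{\mathbf{F}_j(\lambda_j)}^{(\beta+2)/6}\Big)\, N_{\mathbf{F}_4(\lambda_4)}^{1-\beta}.
\end{equation*}
For every $\beta \in (1/2, 1)$ all four exponents $\alpha_1=\alpha_2=\alpha_3=(\beta+2)/6$ and $\alpha_4=1-\beta$ lie strictly below $1/2$, so these are admissible inputs to Proposition \ref{Section6PropositionGenerivEstimate}.

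Next I would feed these $(\alpha_j)$'s into Proposition \ref{Section6PropositionGenerivEstimate} and run the exceptional-set argument of Proposition \ref{Section6PropositionMainTheoremRestrictedWeak}. For the range $p \geq 3$ the natural target index is $n = 4$, and one solves $1-2\alpha_j+\theta_j = 1/p$ for $j \leq 3$. For the range $2 < p < 3$, since $1 - 3/p < 0$, the exceptional cutoff must instead be placed on one of $\{1,2,3\}$; this is legitimate because the tree counting is symmetric in $\{1,2,3\}$, and because Proposition \ref{Section6PropositionMainTheoremRestrictedWeak} is indifferent to which index plays the role of ``$n$'' after relabelling the definition of $\Omega$. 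Varying $\beta$ within $(1/2, 1)$ and invoking Lemma \ref{Section6LemmaConvexHull} to fill out the convex hull, one obtains restricted weak-type bounds for the symmetric tuple throughout $p \in (2, \infty)$. Lemma \ref{Section6LemmaWeakLebesgueSpace} converts these into weak-$L^{p/3}$ bounds on $T_{\mathfrak{m}}$, and the multilinear interpolation and approximation procedure at the end of Section \ref{Section3} upgrades the conclusion to the strong symmetric estimate claimed.

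The main obstacle is the bookkeeping near the endpoint $p = 2$: to keep $1/p$ close to $1/2$ one is forced to push $\alpha_1=\alpha_2=\alpha_3$ close to $1/2$, hence $\beta \to 1$, and consequently $\alpha_4 = 1 - \beta \to 0$. This shrinks the admissible window $(0, \alpha_4]$ for $\theta_4$ to an interval of vanishing length, and one must verify that the resulting $(\theta_j)$'s remain simultaneously positive and satisfy $\sum_j \theta_j = 1$. The saving feature is that for $p < 3$ the index $4$ is no longer the target of the exceptional set, so the negative $|E_4|$-exponent is absorbed by a small positive $\theta_4$; this delicate balance is exactly what admits every $p > 2$ while excluding the endpoint $p = 2$.
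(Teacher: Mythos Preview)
Your overall plan matches the paper's: reuse Sections \ref{Section3}--\ref{Section4} verbatim, replace the tree-counting step by the hybrid bound coming from \eqref{Section7SmallRankTreeCounting1}--\eqref{Section7MediumRankTreeCounting2}, and then run the machinery of Section \ref{Section6}. The geometric average you write down, with $\alpha_1=\alpha_2=\alpha_3=(\beta+2)/6$ and $\alpha_4=1-\beta$, is exactly the symmetric instance of the paper's family $\alpha_j=\tfrac{\beta}{2}+\beta_j$, $\alpha_4=1-\beta$, and is adequate for the symmetric exponent tuple.

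However, your assignment of the exceptional index is reversed, and as stated the argument does not close. With exceptional index $4$ one has $\theta_4=1-\sum_{j\le 3}\theta_j$ and a short computation gives $e_4=1-\sum_{j\le 3}e_j\in(\beta-1,0]$; thus exceptional index $4$ can only produce $1/q'=1-3/p\le 0$, i.e.\ $p\le 3$, never $p\ge 3$. Conversely, for $2<p<3$ the target exponent $1-3/p$ is negative, and if the exceptional index is $j_0\in\{1,2,3\}$ then $e_4=(2\beta-1)+\theta_4>0$, so you cannot reach $e_4<0$ at all. In short: for $2<p\le 3$ the exceptional set \emph{must} sit on index $4$ (precisely because $1-3/p<0$), while for $p>3$ one must place it on some $j_0\in\{1,2,3\}$ and then average over $j_0\in\{1,2,3\}$ (this averaging is legitimate since the counting bounds are symmetric in $\{1,2,3\}$, and it is what brings the resulting asymmetric tuple back to the diagonal). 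Your rationale ``since $1-3/p<0$ the cutoff goes on $\{1,2,3\}$'' inverts the logic of Proposition \ref{Section6PropositionMainTheoremRestrictedWeak}: the cutoff always sits on the index whose exponent is allowed to be negative.

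Once you swap the two cases, the bookkeeping does go through: for $p\in(2,3]$ take $\beta\in(1/2,\,2-3/p)$ with exceptional index $4$; for $p>3$ take $\beta\in(\max(1/2,1-3/p),\,1-3/(2p))$ with exceptional index in $\{1,2,3\}$ and symmetrize. Your final paragraph about the endpoint $p=2$ is then correct in spirit, but it belongs to the first case (exceptional index $4$), not the second. The paper's own proof is terse (``argue as in Section \ref{Section6}'') and does not spell out this split either; it relies on the reader noticing that, unlike in Corollary \ref{Section6CorollaryFinalBoundRestrictedWeak}, full permutation symmetry among all four indices is unavailable here and only the $S_3$-symmetry in $\{1,2,3\}$ may be used when invoking Lemma \ref{Section6LemmaConvexHull}.
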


\section{Final Remarks} \label{Section8}
\begin{enumerate}
\item In our discretization process Section \ref{Section3}, $\mathfrak{m}$ behaves like a bump function on each Whitney cube. This means $\mathfrak{m}$ has been tensorized on each cube and the following Fourier expansion procedure may not be necessary. We include this procedure because it is free and makes the operator have a concise form.
\item Our original discrete sum \eqref{Section3EquationAlmostFinalDiscreteModelSum} is a linear combination of \eqref{Section3EquationMainGoalDiscreteVersion}, so it's necessary to make the exceptional set \eqref{Section6PropositionMainTheoremRestrictedWeakDefiningTheExceptionalSet} only depend on $E_j$.
\item Our way to define Type \uppercase\expandafter{\romannumeral2} non-degenerate condition is not unique. It's possible to find similar conditions of the same spirit.
\item Further assume $(p_1, \cdots, p_{n-1}, q)\in (1, \infty)^n$ in each case of Theorem \ref{MainTheorem1}. Then we can simplify the proof using \eqref{Section4EquationCountingFunctionEstimateBelow1}: Write $p_n:=q^{'}$ as usual. We choose $\theta_j:=\frac{1}{p_j}, j\in [n]$ in \eqref{Section6EquationEstimatingTheOrganizedCollection1} and apply \eqref{Section4EquationCountingFunctionEstimateBelow1} if necessary. We can suitably select $\alpha_j$ to guarantee the exponents of $\lambda_j$ is positive. Then simply using $\mathbf{M}_j(\mathscr{R}_j)\lesssim 1$ gives
\begin{equation*}
\sum_{\lambda}\sum_{\mathscr{T}\in \mathbf{F}^{\geq 2}(\lambda)}\sum_{\mathcal{R}\in \mathscr{T}} \cdots\lesssim \prod_{j=1}^n |E_j|^{\frac{1}{p_j}}.  
\end{equation*}
Similarly, we get the estimate:
\begin{equation*}
\sum_{\lambda}\sum_{\kappa}\sum_{\mathcal{R}\in \mathbf{F}^{=1}(\lambda, \kappa)} \cdots\lesssim \prod_{j=1}^n |E_j|^{\frac{1}{p_j}}.
\end{equation*}
This finishes the proof without removing the exceptional set \eqref{Section6PropositionMainTheoremRestrictedWeakDefiningTheExceptionalSet}.
\item If Type \uppercase\expandafter{\romannumeral1} or Type \uppercase\expandafter{\romannumeral2} non-degenerate condition fails mildly, then we can argue like Section \ref{Section7} to show the boundedness of $T_{\mathfrak{m}}$ in a certain range. However, we chose to only discuss one example since it is tedious to completely describe which degenerate cases can be handled.
\item For Example \ref{Section1ExampleFractionalRankSmalln}, we can also anticipate its difficulty from a projection perspective. The following is morally a generalization of Theorem 1.17 and Theorem 1.18 in \cite{BDLClassificationOfTrilinear}. Let $d, \Delta$ be positive integers and $\mathfrak{L}_j: \mathbb{R}^{\Delta}\rightarrow \mathbb{R}^{d}, j\in [n-1]$ be linear operators. Let $f_j\in L^{\infty}(\mathbb{R}^{d})$ and $\rho\in L^{\infty}(\mathbb{R}^{\Delta})$ such that $\rho$ is compactly supported. Define
\begin{equation*}
T_{(\mathfrak{L}_j)_j, \rho}(f_1, \cdots, f_{n-1})(x):=\int_{\mathbb{R}^{\Delta}} \prod_{j=1}^{n-1} f_j(x+\mathfrak{L}_jt) \rho(t) dt.    
\end{equation*}
\begin{lemma}  \label{Section8LemmaProjectionLemma}
Let $d^{'}<d$ be positive integers and $A: \mathbb{R}^{d}\rightarrow \mathbb{R}^{d^{'}}$ be a linear surjection. Fix $0<p_j, q<\infty$ such that $\frac{1}{q}=\sum_{j=1}^{n-1}\frac{1}{p_j}$. Suppose
\begin{equation}  \label{Section8LemmaProjectionLemmaAssumption}
\|T_{(\mathfrak{L}_j)_j, \rho}(F_1, \cdots, F_{n-1})\|_{q}\leq C\prod_{j=1}^{n-1}\|F_j\|_{p_j},\ \forall F_j\in L^{\infty}(\mathbb{R}^d)\cap L^{p_j}(\mathbb{R}^d).
\end{equation}
Then we have
\begin{equation}  \label{Section8LemmaProjectionLemmaConclusion}
\|T_{(A\mathfrak{L}_j)_j, \rho}(f_1, \cdots, f_{n-1})\|_{q}\lesssim_{d, d^{'}, n, (\mathfrak{L})_j, A} C\prod_{j=1}^{n-1}\|f_j\|_{p_j},\ \forall f_j\in L^{\infty}(\mathbb{R}^{d^{'}})\cap L^{p_j}(\mathbb{R}^{d^{'}}).
\end{equation}
\begin{proof}
Denote the ball $\{x: |x|<R\}$ by $B_R$. Assume $\rho$ is supported in $B_{R_0}$ for some $R_0$. Let $R\gg R_0$. For given $f_j\in L^{\infty}(\mathbb{R}^{d^{'}})\cap L^{p_j}(\mathbb{R}^{d^{'}})$, take
\begin{equation*}
F_{j, R}(x):=f_{j}(Ax)1_{B_R}(x).    
\end{equation*}
We have $|x+\mathfrak{L}_jt|\leq R$ for $|x|\leq cR, |t|\leq R_0$, which implies
\begin{equation*}
T_{(\mathfrak{L}_j)_j, \rho}(F_{1, R}, \cdots, F_{n-1, R})(x)=T_{(A\mathfrak{L}_j)_j, \rho}(f_1, \cdots, f_{n-1})(Ax),\ \forall |x|\lesssim R.   
\end{equation*}
Write $x=y+z$ with $y\in \ker(A)^{\perp}, z\in \ker(A)$. Then we obtain
\begin{equation}  \label{Section8LemmaProjectionLemmaProof1}
\begin{split}
\int_{\mathbb{R}^d} |T_{(\mathfrak{L}_j)_j, \rho}(F_{1, R}, \cdots, & F_{n-1, R}) (x)|^q dx \geq \int_{B_{cR}} |T_{(A\mathfrak{L}_j)_j, \rho}(f_{1}, \cdots, f_{n-1})(Ax)|^q dx \\
& \geq \int_{\ker (A)\cap B_{\frac{c}{2}R}} \int_{\ker(A)^{\perp}\cap B_{\frac{c}{2}R}} |T_{(A\mathfrak{L}_j)_j, \rho}(f_{1}, \cdots, f_{n-1})(Ay)|^q dy dz \\
& \gtrsim R^{d-d^{'}} \int_{B_{c^{'}R}} T_{(A\mathfrak{L}_j)_j, \rho}(f_{1}, \cdots, f_{n-1})(x^{'})|^q dx^{'}.
\end{split}    
\end{equation}
Similarly, we have
\begin{equation}   \label{Section8LemmaProjectionLemmaProof2}
\int_{\mathbb{R}^d} |F_{j, R}|^{p_j} dx\lesssim R^{d-d^{'}} \int_{\mathbb{R}^{d^{'}}} |f_j|^{p_j} dx^{'}. 
\end{equation}
Apply \eqref{Section8LemmaProjectionLemmaAssumption} with the functions $F_{j, R}$, plug \eqref{Section8LemmaProjectionLemmaProof1} and \eqref{Section8LemmaProjectionLemmaProof2} into the inequality. We see the powers of $R$ on both sides cancel out. Letting $R\to \infty$ gives \eqref{Section8LemmaProjectionLemmaConclusion}.
\end{proof}
The assumptions that $\rho\in L^{\infty}$ and $\rho$ has a compact support are purely technical. Note that the implicit constant in \eqref{Section8LemmaProjectionLemmaConclusion} doesn't depend on $\rho$, so we can remove such technical assumptions in many cases by approximation. Now let $\rho$ be a singular integral kernel $K$. We expect, at least intuitively, that $T_{(A\mathfrak{L}_j)_j, K}$ is just a projection of $T_{(\mathfrak{L}_j)_j, K}$ and is easier to tackle than $T_{(\mathfrak{L}_j)_j, K}$.

From the operator in Example \ref{Section1ExampleFractionalRankSmalln}, we can choose $A$ suitably to recover all the six cases of the classification in \cite{DTTwoDimensionalBHT}. The first five cases are handled in \cite{DTTwoDimensionalBHT} using time-frequency analysis, but the last case is handled in \cite{KovacTwistedParaproduct} using a very different technique. Thus we believe tackling Example \ref{Section1ExampleFractionalRankSmalln} will require a non-trivial combination of the two very different techniques or a completely new idea.

\end{lemma}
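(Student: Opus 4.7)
The plan is to transfer the hypothesized $d$-dimensional bound down to $\mathbb{R}^{d'}$ by lifting each $f_j$ along $A$ and truncating to a large ball. Given $f_j\in L^{\infty}\cap L^{p_j}(\mathbb{R}^{d'})$, I would fix $R$ much larger than the diameter of $\mathrm{supp}(\rho)$ and set $F_{j,R}(x):=f_j(Ax)\mathbf{1}_{B_R}(x)$. Because $A(x+\mathfrak{L}_j t)=Ax+(A\mathfrak{L}_j)t$, as long as every translate $x+\mathfrak{L}_j t$ with $t\in\mathrm{supp}(\rho)$ remains inside $B_R$, one has the pointwise identity
\[
T_{(\mathfrak{L}_j)_j,\rho}(F_{1,R},\dots,F_{n-1,R})(x)\;=\;T_{(A\mathfrak{L}_j)_j,\rho}(f_1,\dots,f_{n-1})(Ax).
\]
This stays valid for $x\in B_{cR}$ with a constant $c>0$ depending only on the $\mathfrak{L}_j$'s and $\mathrm{supp}(\rho)$.

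Next I would compare the relevant norms. Splitting $\mathbb{R}^d=\ker(A)^{\perp}\oplus\ker(A)$ and using that $A$ restricts to a linear isomorphism on $\ker(A)^{\perp}$, Fubini immediately gives $\|F_{j,R}\|_{p_j}^{p_j}\lesssim R^{d-d'}\|f_j\|_{p_j}^{p_j}$. For the output, integrating the pointwise identity above over the $\ker(A)$-slice of $B_{cR}$, which has $(d-d')$-dimensional volume $\sim R^{d-d'}$, and then pushing forward by $A|_{\ker(A)^{\perp}}$ yields
\[
\|T_{(\mathfrak{L}_j)_j,\rho}(F_{1,R},\dots,F_{n-1,R})\|_q^q\;\gtrsim\;R^{d-d'}\int_{B_{c'R}\subset\mathbb{R}^{d'}}\bigl|T_{(A\mathfrak{L}_j)_j,\rho}(f_1,\dots,f_{n-1})(y)\bigr|^q\,dy.
\]
Substituting both estimates into the assumed $L^{p}$ bound and using $\tfrac{1}{q}=\sum_j\tfrac{1}{p_j}$, the powers of $R$ appearing on the two sides, namely $R^{(d-d')/q}$, cancel exactly. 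Letting $R\to\infty$ and invoking monotone convergence on the target side then produces \eqref{Section8LemmaProjectionLemmaConclusion}.

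The main obstacle will simply be the careful boundary bookkeeping needed to make the pointwise identity valid on all of $B_{cR}$; this is where the compact support and boundedness of $\rho$ enter, since they ensure the integral defining $T_{(\mathfrak{L}_j)_j,\rho}(F_{1,R},\dots,F_{n-1,R})(x)$ only sees values of the $F_{j,R}$ inside $B_R$ when $x\in B_{cR}$. Beyond that, the scheme needs no analytic input past Fubini together with the two linear-algebraic facts that $A|_{\ker(A)^{\perp}}$ is an isomorphism onto $\mathbb{R}^{d'}$ and that slicing against $\ker(A)$ decouples functions of the form $g\circ A$. The implicit constant in \eqref{Section8LemmaProjectionLemmaConclusion} then depends only on $d,d',n$, the norms of the $\mathfrak{L}_j$'s, and the geometry of $A$, as claimed.
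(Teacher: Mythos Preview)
Your proposal is correct and follows essentially the same approach as the paper: lift $f_j$ to $F_{j,R}(x)=f_j(Ax)\mathbf{1}_{B_R}(x)$, use the pointwise identity on $B_{cR}$, decompose $\mathbb{R}^d=\ker(A)^\perp\oplus\ker(A)$ to pick up matching factors of $R^{d-d'}$ on both sides, cancel them via $\tfrac{1}{q}=\sum_j\tfrac{1}{p_j}$, and send $R\to\infty$. Your write-up is in fact slightly more explicit than the paper's about why the $R$-powers cancel and about the role of monotone convergence at the end.
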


\end{enumerate}

%%%%%

\bibliographystyle{plain}
\bibliography{ref}

\Address

\end{document}